\definecolor{green}{rgb}{0,0.5,0} 
\newcommand{\nrm}[1]{\Vert#1\Vert}
\newcommand{\brk}[1]{\langle#1\rangle}
\newcommand{\set}[1]{\{#1\}}
\newcommand{\tr}{\textrm{tr}}
\newcommand{\aeq}{\sim}
\newcommand{\aleq}{\lesssim}
\newcommand{\ud}{\mathrm{d}}
\newcommand{\rd}{\partial}
\newcommand{\nb}{\nabla}
\newcommand{\bb}{\Big}
\newcommand{\alp}{\alpha}
\newcommand{\bt}{\beta}
\newcommand{\gmm}{\gamma}
\newcommand{\Gmm}{\Gamma}
\newcommand{\dlt}{\delta}
\newcommand{\Dlt}{\Delta}
\newcommand{\eps}{\epsilon}
\newcommand{\lmb}{\lambda}
\newcommand{\Sgm}{\Sigma}
\newcommand{\tht}{\theta}
\newcommand{\bfd}{{\bf d}}
\newcommand{\bfh}{{\bf h}}
\newcommand{\bfD}{{\bf D}}
\newcommand{\bfR}{{\bf R}}
\newcommand{\bbD}{\mathbb D}
\newcommand{\bbH}{\mathbb H}
\newcommand{\bbN}{\mathbb N}
\newcommand{\bbR}{\mathbb R}
\newcommand{\bbS}{\mathbb S}
\newcommand{\bbZ}{\mathbb Z}
\newcommand{\calE}{\mathcal E}
\newcommand{\calN}{\mathcal N}
\newcommand{\calP}{\mathcal P}
\newcommand{\calR}{\mathcal R}
\newcommand{\calS}{\mathcal S}
\newcommand{\calX}{\mathcal X}
\newcommand{\calY}{\mathcal Y}
\newcommand{\arctanh}{\mathrm{arctanh} \, }
\definecolor{light-gray1}{gray}{0.90}
\definecolor{light-gray2}{gray}{0.80}
\definecolor{deepgreen}{cmyk}{0.1,0.7,0.8,0.1}
\newcommand{\E}{\mathcal{E}}
\newcommand{\LL}{\mathcal{L}}
\newcommand{\NN}{\mathcal{N}}
\newcommand{\MM}{\mathcal{M}}
\newcommand{\cS}{\mathcal{S}}
\newcommand{\RR}{\mathcal{R}}
\newcommand{\Hp}{\mathbb{H}}
\newcommand{\N}{\mathbb{N}}
\newcommand{\R}{\mathbb{R}}
\newcommand{\Sp}{\mathbb{S}}
\newcommand{\D}{\mathbb{D}}
\newcommand{\g}{\mathbf{g}}
\newcommand{\h}{\mathbf{h}}
\newcommand{\m}{\mathbf{m}}
\newcommand{\etab}{\boldsymbol{\eta}}
\newcommand{\Db}{\mathbf{D}}
\newcommand{\al}{\alpha}
\newcommand{\be}{\beta}
\newcommand{\ga}{\gamma}
\newcommand{\de}{\delta}
\newcommand{\om}{\omega}
\newcommand{\la}{\lambda}
\newcommand{\te}{\theta}
\newcommand{\s}{\sigma}
\newcommand{\De}{\Delta}
\newcommand{\Ga}{\Gamma}
\newcommand{\La}{\Lambda}
\newcommand{\Sig}{\Sigma}
\newcommand{\p}{\partial}
\newcommand{\na}{\nabla}
\newcommand{\Rmnum}[1]{\expandafter\@slowromancap\romannumeral #1@}
\newcommand{\I}{\infty}
\newcommand{\ti}{\widetilde}
\newcommand{\ang}[1]{\left\langle{#1}\right\rangle}
\newcommand{\abs}[1]{\left\lvert{#1}\right\rvert}
\newcommand{\ds}{\displaystyle}
\newcommand{\ali}[1]{\begin{align}\begin{split} #1 \end{split}\end{align}}
\newcommand{\ant}[1]{\begin{align*}\begin{split} #1 \end{split}\end{align*}}
\newcommand{\EQ}[1]{\begin{equation}\begin{split} #1 \end{split}\end{equation}}
\newcommand{\Del}[1]{}
\numberwithin{equation}{section}
\newtheorem{thm}{Theorem}[section]
\newtheorem{cor}[thm]{Corollary}
\newtheorem{lem}[thm]{Lemma}
\newtheorem{prop}[thm]{Proposition}
\newtheorem{conj}{Conjecture}
\theoremstyle{remark}
\newtheorem{rem}{Remark}
\newtheorem{defn}{Definition}
\newcommand{\mand}{{\ \ \text{and} \ \  }}
\newcommand{\mif}{{\ \ \text{if} \ \ }}
\newcommand{\mfor}{{\ \ \text{for} \ \ }}
\newcommand{\mas}{{\ \ \text{as} \ \ }}
\newcommand{\dvol}{\operatorname{dvol}}
\newcommand{\zero}{\mathbf{0}}
\newcommand{\Ls}{L_{\frac{ds}{s}}}
\renewcommand{\ds}{\frac{ds}{s}}
\newcommand{\dsp}{\frac{ds^\prime}{s^\prime}}
\newcommand{\rest}{\!\!\restriction}
\begin{document}

\title[Wave maps on hyperbolic space]{The Cauchy problem for wave maps on hyperbolic space in dimensions $d \ge 4$}

\author{Andrew Lawrie}
\author{Sung-Jin Oh}
\author{Sohrab Shahshahani}

\begin{abstract}
We establish global well-posedness  and scattering for wave maps from $d$-dimensional hyperbolic space into Riemannian manifolds  of bounded geometry for initial data that is small in the critical Sobolev space for $d \geq 4$. The main theorem is proved using the moving frame  approach introduced  by Shatah and Struwe. However, rather than imposing the Coulomb gauge  we formulate the wave maps problem in Tao's caloric gauge, which is constructed using the harmonic map heat flow. 
In this setting the caloric gauge has the remarkable property that the main `gauged' dynamic equations reduce to a system of nonlinear \emph{scalar} wave equations on $\Hp^d$ that are amenable to Strichartz estimates  rather than \emph{tensorial} wave equations (which arise in other gauges such as the Coulomb gauge) for which useful dispersive estimates are not known.  This last point makes the heat flow approach crucial in the context of wave maps on curved domains. 
\end{abstract}

\thanks{Support of the National Science Foundation (NSF), DMS-1302782 and NSF 1045119 for the first and third authors, respectively, is gratefully acknowledged. The second author is a Miller Research Fellow, and acknowledges support from the Miller Institute. The authors were also supported by the NSF under Grant~No.0932078000 while in residence at the MSRI in Berkeley, CA during Fall 2015.}

\maketitle

\section{Introduction}

Let $(\MM, \etab)$ be a Lorentzian manifold with metric $\etab$ given by the product $\MM = \R \times  \Sigma$ where $\Sigma = (\Sigma, \h)$ is a Riemannian manifold of dimension $d$ with metric $\h$. Let $(\NN, \g)$ be a complete Riemannian manifold without boundary of dimension $n$. A wave map $u: \MM \to \NN$ is a formal critical point of the Lagrangian action 
\EQ{ \label{lag} 
\LL(u) := \int_{\MM}  \ang{ du, du}_{  \etab^{-1}\otimes u^* \g} \, \dvol_{\etab}. 
}
Above we view the differential $du$ as a section of the vector bundle $T^* \MM \otimes u^* T \NN$, which is endowed with the metric $ \etab^{-1}\otimes u^* \g$. Here $u^*T \NN$ denotes the pullback of $T\NN$ by the map $u$ and $u^*\g$ is the pullback metric. In coordinates $\{x^{\al}\}_{\al  =0}^d$ on  $\MM$ and $\{u^j\}_{j=1}^n$ on $\NN$ the Lagrangian density becomes 
\EQ{
\ang{ du, du}_{  \etab^{-1}\otimes u^* \g} \, \dvol_{\etab}
= \etab^{\al \be} \g_{jk}(u) \p_\al u^j \p_\be u^k  \, \sqrt{ \abs{ \etab}} \, dx
}
The Euler-Lagrange equations for~\eqref{lag} are given  by 
\EQ{ \label{wmi} 
\etab^{\alp \bt} D_{\alp} \rd_{\bt} u = 0
}
where $D$ is the pullback covariant derivative on $u^* T\NN \otimes T^{\ast} \MM$. Expanding the above we obtain following system of nonlinear wave equations on $\MM = \R \times \Sigma$: 
\ant{ 
\Box_{\Sigma} u^k =  \etab^{\al \be} \, {}^{(\NN)} \Ga^k_{ij}(u) \p_{\al}u^i \p_{\be} u^j
}
where  $\Box_{\Sig} = -\p_t^2 + \Delta_{\Sigma}$ is the D'Alembertian on $\MM$, $\Delta_{\Sigma}$ is the Laplace-Beltrami operator on $\Sigma$, and the ${}^{(\NN)} \Ga^k_{ij}$ are the Christoffel symbols associated to the metric connection on $T\NN$. 

Equivalently, by the Nash embedding theorem we can view $\NN$ as an isometrically embedded submanifold of $(\R^N, \ang{ \cdot, \cdot})$. In this case we write $u = (u^1, \dots, u^N)$ where $u: \MM \to  \NN \hookrightarrow \R^N$. A wave map $u$ is then a solution to 
\EQ{ \label{wme}
 \Box_{\Sigma} u  \perp T_u \NN
 }
which can be expressed in coordinates as
\EQ{ \label{wme-2}
\Box_{\Sigma} u^K  =  \etab^{\al \be} S^K_{IJ}(u)\p_\al u^I  \p_\be u^J 
}
where $S = S^K_{IJ}$ is the second fundamental form of the embedding $\NN \hookrightarrow  \R^N$. 

In this paper we consider  the domain 
\ant{
\Sigma  = \Hp^d, \quad d \ge 4
}
where $\Hp^d$ is $d$-dimensional hyperbolic space. To simplify the exposition we restrict to dimension $d =4$ and $\Sigma =  \Hp^4$, noting that the results and methods used here apply equally well to domains $\Sigma = \Hp^d$ for all  $d \ge 4$. To avoid technical distractions, we also assume that $\NN$ is a \emph{closed} manifold (i.e., compact without boundary), which ensures uniform boundedness of the second fundamental form $S(u)$ and its derivatives in the Nash embedding. We refer to Remark~\ref{rem:noncpt} for a discussion on the case when $\NN$ is non-compact.

We study the Cauchy problem for~\eqref{wme} for smooth finite energy initial data $(u,  \p_t u)\!\!\restriction_{t=0} = (u_0, u_1)$ given by maps 
\EQ{ \label{eq:data} 
u_0:  \Hp^4 \to \NN , \quad 
u_1:  \Hp^4 \to u_0^* T\NN , \quad u_1(x) \in  T_{u_0(x)}  \NN
} 
This means that we can think of the pair of  initial data $(u_0, u_1)$ as a map into the tangent space of the target  $T\NN$ and   we will also write $\vec u(0) :=(u_0, u_1): \Hp^4 \to  T\NN$.  We will further assume that  we can find a point $u_{\infty} := u_0(\infty) \in \NN$ and a compact subset $K \Subset \Hp^4$ so that 
\EQ{ \label{eq:data1} 
(u_0(x), u_1(x)) = (u_0( \infty), 0) \in T_{u_0(\infty)} \NN \mif x \in \Hp^4 - K, 
}
that is, the data is a constant map in the exterior of a compact subset of $\Hp^4$. 

A simple version of our main result is as follows. 




\begin{thm} \label{t:main} 
There exists an $\eps_0>0$ small enough so that for all smooth  $(u_0, u_1)$ as in~\eqref{eq:data}, \eqref{eq:data1} with 
\ant{
\| (d u_0, u_1)\|_{H^1 \times H^1( \Hp^4; T\NN)} < \eps_0
}
there exists a unique, global smooth solution $u: \R \times \Hp^4 \to \NN \subset \R^N$ to~\eqref{wme} with initial data $(u, \p_t u) \!\! \restriction_{t=0}  = (u_0, u_1)$ and  satisfying 
\ant{
\sup_{t \in \R} \| (du(t), \p_t u(t)) \|_{H^1 \times H^1( \Hp^4; T\NN)}  \lesssim \eps_0. 
}
Moreover we have the following qualitative pointwise decay 
\ant{
 \| u(t, \cdot) - u_0({\I}) \|_{L^{\infty}_x} \to 0 \mas t \to \pm \infty.
}
\end{thm}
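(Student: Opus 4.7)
The plan is to follow the caloric gauge / moving frame strategy advertised in the abstract. Given initial data $(u_0, u_1)$ satisfying the smallness and compact-support assumptions, I would first extend $u_0 : \Hp^4 \to \NN$ along the harmonic map heat flow to obtain a map $\widetilde u_0 : \Hp^4 \times [0,\infty) \to \NN$ with $\widetilde u_0(\cdot, 0) = u_0$, $\widetilde u_0(\cdot, s) \to u_\infty$ as $s \to \infty$; because the target has bounded geometry and the data are small, a standard heat flow smallness/parabolic estimate argument produces such a global extension and good decay estimates for $\p_s \widetilde u_0$ in the relevant Sobolev norms. Assuming a solution $u$ to \eqref{wme} exists on some time interval, one similarly extends $u(t,\cdot)$ in an auxiliary heat-time variable $s \geq 0$ to get $\widetilde u(t,x,s)$, and chooses an orthonormal frame $\{e_a\}_{a=1}^n$ on $\widetilde u^* T\NN$ by parallel transport in $s$ from $s = \infty$ (where we can identify $T_{u_\infty}\NN$ with $\R^n$). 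This is Tao's caloric gauge. It determines the temporal/spatial connection 1-form components $A_\al$ and the frame representations $\psi_\al^a := \ang{\p_\al \widetilde u, e_a}$, together with their heat-time analogue $\psi_s^a := \ang{\p_s \widetilde u, e_a}$.

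Next, I would derive the gauged system. From the zero-torsion and zero-curvature relations among mixed covariant derivatives, combined with the harmonic map heat flow equation $\p_s \widetilde u = (\widetilde u)^* \tau$ (tension field), one gets parabolic transport identities
\EQ{
(\p_s - \Delta_{\Hp^4}) \psi_\al = \text{(quadratic and cubic in } \psi, A\text{)},
\qquad \p_s A_\al = \text{(bilinear in } \psi, A\text{)},
}
with the caloric gauge condition $A_\al(s=\infty) = 0$, which lets one integrate these from $s = \infty$ backwards to $s = 0$ and control $A_\al$ by $\psi$. Critically, using the wave map equation $D^\al \p_\al u = 0$ together with the caloric gauge, the equations for $\psi_\al$ at $s=0$ reduce to scalar (not tensorial) wave equations
\EQ{
\Box_{\Hp^4} \psi_\al = \NN_\al(\psi, A),
}
on $\R \times \Hp^4$, where $\Box_{\Hp^4} = -\p_t^2 + \Delta_{\Hp^4}$ and $\NN_\al$ is a nonlinearity at least quadratic in $(\psi, A)$. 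This is the essential payoff of working in the caloric gauge: rather than a connection Laplacian on a vector bundle (for which no good Strichartz theory is available on $\Hp^4$), we obtain genuine scalar wave equations to which the sharp dispersive theory on hyperbolic space applies.

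I would then set up the main bootstrap. Define a norm $\|\psi\|_S$ built from Strichartz norms for $\Box_{\Hp^4}$ at the critical regularity (for $d = 4$, at the level of one derivative of $\psi$), together with the natural energy norm controlling $\|\psi\|_{L^\infty_t H^1_x}$. Use the improved Strichartz estimates on $\Hp^d$ (stronger than on $\R^d$ due to negative curvature) for the linear wave equation $\Box_{\Hp^4} \phi = F$ to bound $\|\psi\|_S \lesssim \|\vec\psi(0)\|_{H^1 \times L^2} + \|\NN(\psi,A)\|_{N}$ in a suitable dual norm $N$. In parallel, use parabolic regularity for the transport equations in $s$ to bound
\EQ{
\sup_{s \geq 0} \| \psi(\cdot, \cdot, s)\|_S + \|A(\cdot, \cdot, s)\|_{\text{related norm}} \lesssim \|\psi(\cdot,\cdot,0)\|_S + \text{higher order},
}
where the point is that $A_\al$ comes with a gain over $\psi$ (an extra factor of $\psi$ via the caloric gauge transport from $s = \infty$). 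Feeding these heat flow bounds into the nonlinearity and using multilinear/Leibniz estimates in the Strichartz-type spaces, one closes a contraction estimate for small data and gets a global solution $\psi$ with $\|\psi\|_S \lesssim \eps_0$.

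Finally, I would reconstruct $u$. Integrating the relation $\p_\al u = \psi_\al^a e_a$ (in the Nash-embedded picture one instead recovers $u$ from $\widetilde u$ at $s=0$), one obtains a wave map $u$ with $\vec u(0) = (u_0, u_1)$ and $\sup_t \|(du(t), \p_t u(t))\|_{H^1 \times H^1} \lesssim \eps_0$. Uniqueness in the small data class follows from the same contraction. Scattering, in particular the decay $\|u(t,\cdot) - u_\infty\|_{L^\infty_x} \to 0$, is extracted by combining the global Strichartz control of $\psi$ (which forces $\|\psi(t)\|_{L^p_x} \to 0$ along suitable $p$'s) with Sobolev embedding on $\Hp^4$ and the fact that $u(t,x) - u_\infty$ can be written as a nonlinear functional of $\int_0^\infty \p_s \widetilde u\, ds$ whose size is controlled by $\|\psi\|$.

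\emph{Main obstacle.} The heart of the difficulty is the interplay between the heat flow construction and the wave flow on a curved, non-amenable domain. Concretely: (i) making the caloric gauge globally well-defined requires controlling the parabolic flow on $\Hp^4$ with sharp decay as $s \to \infty$, and the heat kernel on $\Hp^4$ has different (faster) decay than on $\R^4$, which must be exploited without breaking scale-invariance at the critical Sobolev level; and (ii) the Strichartz-type function spaces $S$ and their dual $N$ must be flexible enough to accommodate both the hyperbolic wave semigroup and the multilinear structure of $\NN_\al(\psi, A)$, including nonlinearities that are non-local in the heat-time $s$ (since $A$ is defined by an $s$-integral from infinity). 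Designing these spaces so that Strichartz, parabolic regularity, and multilinear nonlinear estimates simultaneously close is expected to be the technical crux of the argument.
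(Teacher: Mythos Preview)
Your overall architecture (heat flow resolution, caloric gauge, Strichartz bootstrap, reconstruction) matches the paper, but there is a genuine conceptual gap at the heart of the argument: you have misidentified which variable obeys a scalar wave equation.

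You write that in the caloric gauge ``the equations for $\psi_\al$ at $s=0$ reduce to scalar (not tensorial) wave equations $\Box_{\Hp^4}\psi_\al = \NN_\al(\psi,A)$.'' This is false, and the paper is explicit about it. The components $\psi_a$ form a 1-form on $\Hp^4$; the wave equation they satisfy has principal part the Hodge--de Rham d'Alembertian (see \eqref{eq:psi-wave}), which on a curved background differs from the componentwise scalar $\Box_{\Hp^4}$ by domain curvature terms. No useful global Strichartz estimates are known for this tensorial operator on $\Hp^4$, and this is precisely the obstruction the paper sets out to overcome. The caloric gauge does not cure the tensoriality of $\psi_\al$; it instead furnishes a \emph{different} main dynamic variable.

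The paper's key move is to treat the heat tension field $\psi_s = \Db^a\psi_a$ as the primary unknown. Since $\psi_s$ is genuinely a scalar ($\R^n$-valued function) on $\R\times\Hp^4$, its wave equation \eqref{eq:wmp} has principal part the honest scalar $\Box_{\Hp^4}$, to which Theorem~\ref{t:str} applies. The controlling norm $\mathcal{S}(I)$ is an $s$-weighted mixed norm $L^\infty_{ds/s}\cap L^2_{ds/s}(\R^+;\,\mathcal{S}_s(I))$ with $\mathcal{S}_s(I)$ built from Strichartz norms of $s^{1/2}\psi_s(s)$; the $s$-structure (Littlewood--Paley in disguise) is essential, not incidental. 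The quantities $\psi_\al$ and $A_\al$ are then recovered from $\psi_s$ by integrating the caloric transport equations \eqref{eq:AF}, \eqref{eq:paps} from $s=\infty$, and parabolic regularity for the covariant heat equations \eqref{eq:psh}, \eqref{eq:pth}, \eqref{eq:pxh} is used only to propagate bounds in $s$, not to run the dispersive argument. An additional subtlety you do not mention is the wave tension field $w=\Db^\al\psi_\al$, which enters the $\psi_s$ wave equation through $\partial_s w$ and must be estimated via its own covariant heat equation \eqref{eq:hw} with zero data at $s=0$.

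In short: your bootstrap would not close as written, because you would be applying scalar Strichartz estimates to a tensorial equation. Shifting the analysis from $\psi_\al$ to $\psi_s$ (and building the $s$-weighted norm accordingly) is the missing idea.
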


Theorem~\ref{t:main} is a global well-posedness and scattering result for small critical Sobolev data, which is of similar flavor to the results of \cite{Tao1, KR01, SS02, NSU} in the flat high dimensional case ($\Sgm = \bbR^{d}$ with $d \geq 4$).  A major motivation for the latter results was to develop techniques to approach the two dimensional case $\Sgm = \bbR^{2}$, which is challenging due to weaker dispersion but particularly interesting in view of energy criticality of the wave maps equation. In fact, based on the small critical Sobolev data results \cite{Tao2, Kri04, Tat05}, a satisfactory large data result on $\bbR \times \bbR^{2}$ was recently established in \cite{ST1, ST2, KS, Tao37}. By the same token, we view Theorem~\ref{t:main} as a first step towards the interesting case $\Sgm = \bbH^{2}$, whose study was initiated by the authors \cite{LOS1, LOS3, LOS4} under symmetry assumptions. We refer to Section~\ref{s:motivate} for a more detailed discussion of the history and motivation of the problem.

A particularly simple proof of the flat space version of Theorem~\ref{t:main} was given by Shatah and Struwe \cite{SS02}, which relied on the use of the derivative formulation of the wave maps equation in the Coulomb gauge (see also \cite{NSU} for a similar approach). However the analysis of \cite{SS02} does not extend to our case, due to tensoriality of the main equations. Instead, we formulate the wave map problem in Tao's caloric gauge, which leads to equations with scalar principal part with nonlinearity amenable to perturbative analysis. In view of the effectiveness of the caloric gauge on $\bbR \times \bbR^{2}$ \cite{Tao37}, we expect the framework developed in this paper to be useful in the important case of $\Sgm = \bbH^{2}$ as well. In Section~\ref{s:main-idea} below, we will give a further explanation of the main difficulties of our problem, and how Tao's caloric gauge provides their resolution.

\begin{rem}
In Section~\ref{s:outline} we will give a more precise formulation of Theorem~\ref{t:main} that includes the boundedness of appropriate Strichartz-type norms in the caloric gauge as well as a notion of scattering. Note that the norm $H^1( \Hp^4; T\NN)$ is a Sobolev space, which will be defined in \eqref{eq:extr-TN-nrm} in Section~\ref{s:fs}. 
\end{rem} 

\begin{rem}
Theorem~\ref{t:main} can be easily extended to wave maps $u: \R \times \Hp^d \to \NN$ for all $d \ge 4$ using arguments identical to those contained in this paper.  We have chosen to present the proof only for dimension $d =4$ to keep the exposition  as  simple as possible. Unfortunately,  the techniques presented here do not suffice to obtain the same results in dimensions $d=2,3$, due to the failure of favorable Strichartz estimates (see Theorem~\ref{t:str}).
 \end{rem} 

\begin{rem}\label{rem:noncpt}
The restriction to a compact target manifold $\NN$ is not expected to be essential, and Theorem~\ref{t:main} should hold for any smooth Riemannian manifold $\NN$ with bounded geometry (i.e., the curvature tensor and its covariant derivatives are uniformly bounded). One way to extend our proof to the noncompact case would be to replace the a priori estimates we proved using the isometric embedding by their intrinsic counterparts, as was done in the case of the flat domain $\Sgm = \bbR^{d}$ \cite{Tao37, Sm}. To avoid distracting technicalities, we will not address this issue in detail.
\end{rem} 
 
 \subsection{Main ideas}  \label{s:main-idea}
A key difficulty for establishing well-posedness of the wave maps equation at the critical Sobolev regularity is the presence of a quadratic nonlinearity with derivatives (henceforth referred to as a `nonperturbative part' of the nonlinearity) which precludes a direct iteration argument based on estimates for $\Box_{\Sgm}$; see \cite{DAG04}. 

In the breakthrough work of Tao \cite{Tao1, Tao2}, it was realized that the gauge structure of the wave maps equation can be used to renormalize away the nonperturbative part of the nonlinearity. This idea has its roots in the harmonic maps literature, e.g., see H\'elein \cite{Hel}. The work of Shatah and Struwe \cite{SS02} and Nahmod, Stefanov and Uhlenbeck \cite{NSU} further clarified the picture by exhibiting a particular representation of the wave maps equation, namely the \emph{derivative formulation} in the \emph{Coulomb gauge}, in which the whole nonlinearity is essentially cubic or higher. It was shown in \cite{SS02, NSU} that in this formulation, the wave maps equation is amenable to perturbative analysis based on $\Box_{\Sgm}$ in the small critical Sobolev data setting on $\bbR \times \bbR^{d}$ when $d \geq 4$. We begin by summarizing the main ideas of this approach, as it will provide a starting point for our discussion.

\subsubsection*{Derivative formulation and the Coulomb gauge}
In the derivative formulation, one works with the differential $d u$ instead of the map $u$ itself. Being a section of the vector bundle $u^{\ast} T \NN$ (pullback of the tangent bundle $T \NN$ by $u$), the differential $d u$ has the advantage of being linear. With respect to a global field $e$ of orthonormal frames of $u^{\ast} T \NN$, which exists since the domain $\bbR \times \Sgm$ is  contractible,  $d u$ can be conveniently written as an $\bbR^{n}$-valued 1-form $\Psi = \psi_{\alp} \, d x^{\alp}$. Denoting by $\bfD_{\alp}$ the pullback covariant derivative of $u^{\ast} T \NN$ in the frame $e$, the wave maps equation can be rephrased as the div-curl system\footnote{We adopt the usual index notation for tensors, where the greek letters $\alp, \bt, \ldots$ correspond to the global space-time coordinates $x^{\alp} = (t = x^{0}, x^{1}, \ldots, x^{d})$ and the latin letters $a, b, \ldots$ represent the space coordinates $x^{a} = (x^{1}, \ldots, x^{d})$. For further explanation of the notation, we refer to Section~\ref{s:prelim}.}
\begin{equation} \label{eq:psi-div-curl}
\etab^{\alp \bt} \bfD_{\alp} \psi_{\bt} = 0, \quad
\bfD_{\alp} \psi_{\bt} - \bfD_{\bt} \psi_{\alp} = 0
\end{equation}
The nonlinearity in this formulation is encoded in $\bfD_{\alp}$, which takes the form $\rd_{\alp} + A_{\alp}$ for some real skew-symmetric $n \times n$-matrix valued 1-form $A_{\alp} d x^{\alp}$ (connection 1-form). Considering the action of the commutator $[\bfD_{\alp}, \bfD_{\bt}]$ on sections of $u^{\ast} T \NN$, it follows that $A_{\alp}$ obeys the curl equation
\begin{equation} \label{eq:A-curl}
	\nb_{\alp} A_{\bt} - \nb_{\bt} A_{\alp} + [A_{\alp}, A_{\bt}] = \bfR(u)(\psi_{\alp}, \psi_{\bt})
\end{equation} 
where $\bfR(u)$ is the pullback Riemann curvature. However, $A_{\alp}$ is otherwise undetermined. The gauge invariance of the wave maps equation manifests as the freedom of the choice of $e$, which in turn determines $A_{\alp}$.

The Coulomb gauge is given by a frame $e$ with a connection 1-form $A$ whose restriction to the constant-$t$ hypersurfaces $\set{t} \times \Sgm$ has zero divergence, i.e., 
\begin{equation} \label{eq:A-div}
	\bfh^{ab} \nb_{a} A_{b} = 0
\end{equation}
Such a frame always exists in the small critical data setting by the classical argument of Uhlenbeck \cite{Uhl}. 
By the div-curl system \eqref{eq:A-curl}, \eqref{eq:A-div}, it follows that $A$ can be recovered from $\psi_{\alp}$ by solving the nonlinear elliptic equations
\begin{equation} \label{eq:A-lap}
	\bfh^{bc} \nb_{b} \nb_{c} A_{\alp} - \bfh^{bc} [\nb_{b}, \nb_{\alp}] A_{c} = \bfh^{bc} \nb_{b} \bb( \bfR(u) (\psi_{c}, \psi_{\alp}) - [A_{c}, A_{\alp}] \bb)
\end{equation}
On the other hand, from the div-curl system \eqref{eq:psi-div-curl}, we may derive the nonlinear wave equation
\begin{equation} \label{eq:psi-wave}
\begin{aligned}
	\etab^{\bt \gmm} \nb_{\bt} \nb_{\gmm} \psi_{\alp} - \etab^{\bt \gmm} [\nb_{\bt}, \nb_{\alp}] \psi_{\gmm} = &- 2 \etab^{\bt \gmm} A_{\bt} \nb_{\gmm} \psi_{\alp} - (\etab^{\bt \gmm} \nb_{\bt} A_{\gmm}) \psi_{\alp} \\
	& - \etab^{\bt \gmm} A_{\bt} A_{\gmm} \psi_{\alp} + \etab^{\bt \gmm} \bfR(u) (\psi_{\bt}, \psi_{\alp}) \psi_{\gmm} 
\end{aligned}
\end{equation}
In the flat case $\Sgm = \bbR^{d}$, the left-hand sides of \eqref{eq:A-lap} and \eqref{eq:psi-wave} reduce to well-understood linear operators, namely the (componentwise) scalar Laplacian and d'Alembertian, respectively. As a consequence, when $d \geq 4$ the coupled system \eqref{eq:A-lap} and \eqref{eq:psi-wave} can be used to establish appropriate critical regularity a priori bounds for the smooth wave map $u$ with the given smooth initial data, via a direct perturbative analysis based on $L^{p}$ estimates for $A_{\alp}, \nb A_{\alp}$ and global Strichartz estimates for $\psi_{\alp}$\footnote{We remark that a similar scheme also works in dimensions $d =2, 3$, but the proof is far more complex as global Strichartz estimates are insufficient; it is now necessary to reveal the null structure of the wave maps equation in the Coulomb gauge, and utilize multilinear null form estimates. See \cite{Kri2, Kri04} for details.}. Global existence and scattering of $u$ can then be inferred from these a priori bounds, which completes the proof of Theorem~\ref{t:main} in the case $\Sgm = \bbR^{d}$ and $d \geq 4$. We refer to \cite{SS02, NSU} for more details.

\subsubsection*{Issue of tensoriality on a curved background}
This type of analysis, however, does not immediately extend to curved $\Sgm$ in general. The main issue is \emph{tensoriality} of $\psi_{\alp}$ and $A_{\alp}$, which is inconspicuous in the flat case but becomes pronounced on a curved background. First of all, observe that the left-hand side of \eqref{eq:psi-wave}, which is the Hodge-de Rham d'Alembertian on 1-forms, does not coincide with the (componentwise) scalar d'Alembertian on $\bbR \times \Sgm$ in general. In particular, unlike the scalar d'Alembertian, no useful dispersive estimates, such as \emph{global} Strichartz estimates, seem to be known for this linear operator in our setting $\Sgm = \bbH^{d}$. To make matters worse, the left-hand side of~\eqref{eq:A-lap}, which is the Hodge-de Rham Laplacian on 1-forms, is also distinct from the scalar Laplacian in general. Even in the special case $\Sgm = \bbH^{d}$, much less is known about $L^{p}$ estimates for the Hodge-de Rham Laplacian compared to the scalar Laplacian; e.g., see \cite{Br05}.

Nevertheless, there is a reason to believe that this obstruction is not essential, on the ground that the wave map itself is \emph{not} tensorial, i.e., its transformation laws under coordinate changes of the domain are those of a scalar. Indeed, we will show in this paper that a geometrically natural realization of $u$ as a linear \emph{scalar} field with (in fact, more) favorable nonlinearity can be obtained by a different choice of gauge, namely the \emph{caloric gauge} introduced by Tao \cite{Tao04, Tao37}.

\subsubsection*{Idea of the caloric gauge}
The main component, as well as the namesake, of the caloric gauge is the \emph{harmonic map heat flow}, a celebrated geometric PDE in its own right. In simplest terms, the idea of the caloric gauge can be phrased as follows. Given a smooth wave map $u(t, x)$ defined on a time interval $I$, we introduce a new time parameter $s \in \bbR^{+}$ and solve the harmonic map heat flow in $s$ with initial data $u(t, x)$. Since the initial data $u(t, x)$ is suitably small,  the heat flow can be shown to converge to a fixed point $u_{\infty}$ on $\NN$ as $s \to \infty$.  This now gives a canonical choice of frame at $s = \infty$, namely the \emph{same set of orthonormal vectors} $e( \infty, t, x) = e_{\infty}$  for each point $(t, x) \in  I \times \Sigma$. The \emph{caloric gauge} determined by the choice $e_{\infty}$ is then constructed  by parallel transporting the frame $e_\infty = e( \infty, t, x)$  from $s= \infty$  back along the harmonic map flow. 


The reason why the caloric gauge is so effective can be understood clearly by making an analogy between the harmonic map heat flow resolution and the Littlewood-Paley theory. One way to define Littlewood-Paley theory on a general complete Riemannian manifold $\Sgm$ is to introduce a new heat-time parameter $s \in \bbR^{+}$ and to use the linear heat equation $(\rd_{s} - \Dlt_{\Sgm}) f = 0$; see, for instance, \cite{St70Book, KR06}. The main ideas can be sketched as follows. Given a function $g$ on $\Sgm$ and the solution $f$ to the linear heat equation with $f \rest_{s = 0} = g$, we interpret $f(s)$ as the projection of $g$ to frequencies less than $s^{-1/2}$, which may be justified in the case $\Sgm = \bbR^{d}$ by observing that the Fourier transform of $f(s)$ is simply that of $g$ multiplied by a rescaled Gaussian adapted to the ball $\set{\abs{\xi} \simeq s^{-1/2}}$. A Littlewood-Paley projection to frequencies comparable to $s^{-1/2}$ is defined to be $f^{(k)}(s) := s^{k} \rd_{s}^{k} f(s) = s^{k} \Dlt^{k} f(s)$ for a fixed $k \geq 1$, the idea being that $s^{k} \Dlt^{k}$ damps the frequencies much lower than $s^{-1/2}$. Using the fact that $f \to 0$ as $s \to \infty$ and performing Taylor expansion at $s = \infty$, one obtains the reconstruction formula $g = \int_{0}^{\infty} \frac{(-1)^{k}}{(k-1)!} f^{(k)} (s) \, \ds$. It can be readily seen that the main results of Littlewood-Paley theory (e.g., the square function theorem) extends to this heat flow definition.

In analogy with this formulation of Littlewood-Paley theory, we may interpret the harmonic map heat flow resolution of a map $u$ as a geometric Littlewood-Paley decomposition of $u$. The analogue of the Littlewood-Paley projection $f^{(1)}(s)$ is $s \rd_{s} u(s)$, which is a section of $u(s)^{\ast} T \NN$. Given a frame $e(s)$ of this bundle, $s \rd_{s} u(s)$ is expressed as an $\bbR^{n}$-valued function $s \psi_{s}(s)$, which we call the \emph{heat-tension field}. According to this interpretation, it is natural to view the `geometric Littlewood-Paley projection' $\psi_{s}$ as the main dynamic variable. More precisely, in order to establish an appropriate a priori estimate at the critical regularity for $u$ (as in the Coulomb gauge case), we will analyze the dynamic equation obeyed by $\psi_{s}$. To make this scheme work, we need an analogue of the reconstruction formula, i.e., a means to reconstruct the harmonic map $u$ and other variables of the derivative formulation such as $\psi_{\alp}$ and $A_{\alp}$ in terms of $\psi_{s}$.

This is exactly where Tao's caloric gauge condition enters. As alluded to earlier, the caloric gauge condition corresponds to taking $e (\infty) = e_{\infty}$ for a fixed constant frame $e_{\infty}$ and parallel transporting this frame, i.e., imposing $D_{s} e(s) = 0$ for $0 < s < \infty$. As a consequence, it follows that
\begin{equation} \label{eq:cal-int}
\begin{gathered}
	\rd_{s} \psi_{\alp} = \bfD_{\alp} \psi_{s}, \quad
	\rd_{s} A_{\alp} = \bfR(u(s)) (\psi_{s}, \psi_{\alp}) \quad \hbox{ for } 0 < s < \infty \\
	\psi_{\alp} \to 0, \quad A_{\alp} \to  0 \quad \hbox{ as } s \to \infty
\end{gathered}
\end{equation}
Therefore, $\psi_{\alp}$ and $A_{\alp}$ can be recovered from $\psi_{s}$ (under mild bootstrap assumptions) by simply integrating to $s$ from $\infty$. 

It is now clear how the caloric gauge resolves the difficulties faced by the Coulomb gauge. The main dynamic variable is now $\psi_{s}$ instead of $\psi_{\alp}$, which is evidently scalar. Indeed, $\psi_{s}$ obeys a nonlinear wave equation\footnote{We remark that the nonlinearity of this equation involves $\psi_{s}$ at different $s$, resembling the equation satisfied by standard Littlewood-Paley projection of $u$ in, say, the extrinsic formulation \eqref{wme-2}. The difference, of course, is that the nonlinearity for $\psi_{s}$ enjoys more favorable structure.} \eqref{eq:wmp} with the scalar d'Alembertian as the principal part, for which global Strichartz estimates are known \cite{MT12, AP14}.  From $\psi_{s}$, the other variables $\psi_{\alp}$ and $A_{\alp}$ are recovered by a simple integration from $s = \infty$. In effect, elliptic theory for the tensor $A_{\alp}$ in the Coulomb gauge is replaced by well-posedness theory for the harmonic map heat flow, which only relies on parabolic theory for the scalar heat equation. 

\subsubsection*{Further discussion on the caloric gauge}
So far we have emphasized the \emph{scalar} nature of the main dynamic variable $\psi_{s}$ as a key advantage of the caloric gauge over the Coulomb gauge, whose effectiveness becomes evident on a curved background such as ours. 

Another important feature of the caloric gauge, applicable to any background, is that it is well-adapted to the large data setting. The construction of the caloric gauge relies on the harmonic map heat flow, whose large data theory is well-understood in settings where one may hope to prove a large data result for the wave map. For further discussion, see Smith \cite{Sm}, where the caloric gauge was constructed for any time-dependent map on $\bbR \times \bbR^{2}$ with sub-ground state energy. We also note the work \cite{Oh14, Oh15} of the second author on the extension of the caloric gauge to more general gauge theories by replacing the harmonic map heat flow by the Yang-Mills heat flow, where the key advantage over the Coulomb gauge seems to be applicability in the large data setting.

Finally, as already highlighted by Tao \cite{Tao04, Tao37}, perhaps the deepest advantage of the caloric gauge is that it leads to a more favorable nonlinearity than the Coulomb gauge.  More precisely, the worst interaction of $\psi_{\alp}$ in the equation for $A$ in the Coulomb gauge, namely the high-high to low interaction, does not occur in the caloric gauge when suitably interpreted using the Littlewood-Paley analogy above\footnote{More precisely, \eqref{eq:cal-int} implies that $A_{\alp}(s)$ can be recovered just from $s' \psi_{s}(s')$ at larger $s'$, which corresponds to lower frequency projections according to the Littlewood-Paley analogy discussed above.}. This point was used crucially in the works \cite{BIKT11, Sm, DS15} on Schr\"odinger maps, where such an extra cancellation appears to be essential. In our paper, this improved structure is reflected by the sufficiency of the usual Strichartz estimates to close the estimates; in particular, refined Strichartz estimates as in \cite{SS02, NSU} are not used.

\begin{rem}  \label{r:null}
Another idea that played a key role in the understanding the wave maps is that of null structure, introduced by Klainerman and Machedon \cite{KlaMac93} in this context, which refers to a special cancellation structure in the nonlinearity of the wave maps equation. We note that the null structure was also observed under symmetry in the work \cite{CTZcpam}. In this paper, the null structure of the wave maps does not play any role, since we work in high dimensions where the linear wave equation enjoys strong enough dispersion. Nevertheless, the null structure of the wave maps equation is expected to be essential in the case $\Sgm = \bbH^{2}$, which is the most interesting case.
\end{rem}

 \subsection{History and motivation} \label{s:motivate}

 The Cauchy problem for wave maps arises as a model problem in both general relativity, see e.g.,~\cite{W90}, and in particle physics, see e.g.,~\cite{MS, GML}, where it is called a nonlinear $\s$-model. From a purely mathematical point of view, the wave maps equation is perhaps the simplest  and most natural geometric wave equation  -- the nonlinearity arises from the geometry of the target manifold -- and is the hyperbolic analogue of the elliptic harmonic maps equation and the parabolic harmonic map heat flow equation.

\subsubsection*{History} 
At this point there is quite a lot known about the critical Cauchy problem for wave maps on $\R^{1+d}_{t, x}$, with a robust understanding in the very interesting energy critical dimension $d=2$. To start, 
 a detailed critical well-posedness theory was given in the classical works by 
 Christodoulou, Tahvildar-Zadeh~\cite{CTZduke, CTZcpam} for spherically symmetric wave maps and  by Shatah, Tahvildar-Zadeh \cite{STZ92, STZ94} for equivariant wave maps. Previously, Shatah~\cite{Shatah} proved the existence of self-similar blow up solutions for energy supercritical wave maps into the $3$-sphere. 

Outside of spherical symmetry the sharp subcritical local well-posedness theory was developed by Klainerman, Machedon~\cite{KlaMac93, KlaMac95, KlaMac97} and Klainerman, Selberg~\cite{KlaSel97, KlaSel02} by making essential use of the~\emph{null form} structure in the nonlinearity, see Remark~\ref{r:null}. 
 The very difficult critical small data problem was partially settled by Tataru~\cite{Tat01}, who proved global well-posedness for data with small critical Besov norm  $\dot B^{\frac{d}{2}, 2}_1 \times \dot B^{\frac{d}{2} -1, 2}_1$ , and then completed by Tao~\cite{Tao1, Tao2} for initial data that is small in $\dot{H}^{\frac{d}{2}} \times \dot{H}^{\frac{d}{2}-1}$ for targets $\NN = \Sp^{N-1}$.  The seminal Tataru, Tao theory of small data wave maps was then extended to more general targets by Krieger~\cite{Kri04, Kri2}, Klainerman, Rodnianski~\cite{KR01}, Tataru~\cite{Tat05},  Shatah, Struwe~\cite{SS02}, and Nahmod, Stefanov, and Uhlenbeck~\cite{NSU}; see Section~\ref{s:main-idea} for more on these works. 

The bubbling analysis of Struwe~\cite{Struwe} linked singularity formation for energy critical equivariant wave maps to the presence of nontrivial finite energy harmonic maps. This made apparent the decisive role that the geometry of the target plays in the energy critical problem. Later, explicit blow up constructions demonstrating the bubbling of a harmonic map were achieved in breakthrough works of Krieger, Schlag, Tataru~\cite{KST}, Rapha\"el, Rodnianski~\cite{RR}, and Rodnianski, Sterbenz~\cite{RS} in the positively curved case $\NN = \bbS^{2}$. Since there are no nontrivial finite energy harmonic maps from  $\R^2$ into negatively curved manifolds it was long conjectured that all smooth, decaying finite energy data lead to a global and scattering wave map evolution for maps $\R^{1+2}_{t, x} \to \NN$ in the case that all the sectional curvatures of $\NN$ are negative. This theorem was proved in 2009 in three remarkable and independent works by Krieger, Schlag~\cite{KS}, Sterbenz, Tataru~\cite{ST1, ST2}, and Tao~\cite{Tao37}. Note that Krieger and Schlag ~\cite{KS} also developed a `twisted' nonlinear profile decomposition in the spirit of the work of Bahouri and G\'erard~\cite{BG}. We also remark that in~\cite{ST1, ST2} Sterbenz and Tataru proved the more general Threshold Theorem, which states that any smooth data with energy less than that of the minimal energy nontrivial harmonic map lead to a globally regular and scattering solution. For a further refinement of the threshold theorem, see \cite{LO1}, and for more detailed study of singularity formation for equivariant wave maps, see \cite{CKLS1, CKLS2, Cote13, JK, BKT}. 

There are fewer works concerning wave maps on curved domains. Shatah, Tahvildar-Zadeh  showed the existence  and orbital stability of equivariant time-periodic wave maps, $\R \times \Sp^2 \to \Sp^2$. This was extended by the third author \cite{Shah2} to allow for maps $\R \times \Sigma \to  \Sp^2$ where $\Sigma$ is diffeomorphic to $\Sp^2$ and admits an $SO(2)$ action. The first author established a critical small data global theory for wave maps on  small asymptotically flat perturbations of $\R^4$~\cite{L}  using the linear estimates of Metcalfe, Tataru~\cite{MT12}. Also, we note the recent work of D'Ancona, Zhang~\cite{DZ} who proved global existence for small equivariant wave maps on rotationally symmetric spacetimes for $d \ge 3$.

\subsubsection*{Motivation} 

There are several reasons to consider the wave maps equation on a curved spacetime $\MM$ as in~\eqref{wme}. For one, note that such a wave map arises as a nonlinear model for Einstein's equations. To be more specific, the Ernst potential of a Killing vector field $X$ on a $1+3$ dimensional  Einstein-vacuum manifold $\MM$ can be viewed as a wave map $\Phi: \ti \MM \to \Hp^2$, where $\ti \MM = \MM/ X$ is now a $1+2$ dimensional spacetime; see for example~\cite{W90} and the recent work of Ionescu and Klainerman~\cite{IK}, who addressed the stability of the Kerr Ernst potential.   We also refer the reader to the recent work of Andersson, Gudapati, and Szeftel~\cite{AGS}, who considered the wave map-Einstein system in an equivariant setting, and also to the work of Luk~\cite{Luk13} who proved a global existence result for (3+1)-dimensional semilinear equations with null form structure on a Kerr spacetime.


Our main motivation for introducing curvature to the domain is that we view the treatment of wave maps on curved backgrounds as a natural extension of the detailed theory on flat spacetime. A natural starting point is the following  question: what effects does the geometry of the domain have on the global dynamics of the wave maps problem, especially for large data? In other words,  are there new \emph{nonlinear} phenomena that arise in the curved setting? Surely, one can affect the dynamics of the wave map equation by  say, studying the problem on a compact domain or on manifolds $\MM$ that allow for the trapping of bicharacteristic rays.  However, many of the most pressing  issues that arise in such settings are already present at the level of the scalar linear wave equation on $\MM$. There is a large body of literature devoted to the long time dispersive behavior of linear waves on curved spacetimes, and many interesting open questions remain; see for example~\cite{MT12} for one interesting direction. 

To narrow the focus on the interplay between the curved geometry of the domain and the nonlinear structure of the wave maps equation, one can consider the constant sectional curvature model $\Sgm = \Hp^d$ as a natural starting place.  The global dispersive theory of linear waves $\Box_{\Hp^d} v = 0$ is now well understood, and in fact dispersion is stronger than for free waves on $\R^{1+d}$ due to the exponential volume growth of balls in $\Hp^d$; see~\cite{MT11, AP14, MTay12, B07, IS} and~\cite{BCS, BCD, BD, IPS} for nonlinear applications. 

To view the small data result in Theorem~\ref{t:main} (or more precisely, Theorem~\ref{t:main1}) and the techniques developed in this paper in this broader context, let us first describe our recent work on the energy critical wave maps equation on hyperbolic space, and the interesting nonlinear dynamics that arise in $2$-dimensions. In~\cite{LOS1, LOS2, LOS3} we considered \emph{equivariant} wave maps from $\R \times \Hp^2$ into two model targets, the $2d$ hyperbolic space, $\NN = \Hp^2$ and the $2$-sphere, $\NN = \Sp^2$. In these settings the $1$-equivariant wave map equation can be expressed entirely in terms of the polar coordinate $\psi$ on the target, that is,  $ u(t, r, \theta) = ( \psi(t, r), \theta)$ and~\eqref{wmi} reduces to
\EQ{ \label{eq:ewm} 
\psi_{tt} - \frac{1}{ \sinh r} \partial_r( \sinh r \partial_r \psi) + \frac{g(\psi) g'(\psi)}{ \sinh^2 r} = 0
}
Here the target is endowed with the metric $ds^2 =  d \psi^2 + g^2(\psi) d \theta^2$ where $g(\psi) = \sinh \psi$ in the case $\NN = \Hp^2$ and $g(\psi) =  \sin \psi$ if $\NN =  \Sp^2$. 
 
By combining two well-known geometric facts -- the conformal invariance of $2d$ harmonic maps and their energy, and the conformal equivalence between $\Hp^2$ and the unit disc $\D^2$ -- one finds explicit continuous $1$-parameter families of finite energy harmonic maps for both targets $\NN = \Hp^2$ and $\NN =  \Sp^2$. 

In the case of the target $\NN = \Hp^2$ the family of finite energy harmonic maps and their energies $\E(P_{\la}, 0)$ are 
\EQ{\label{eq:pla} 
 P_{\la}(r):= 2 \arctanh( \la \arctanh(r/2)),\,   \la \in [0, 1), \, \,  \E(P_\la, 0) =  4 \pi  \frac{\la^2}{1- \la^2}.
}
The very existence of these maps is in stark contrast to the corresponding Euclidean problem; there are no finite energy harmonic maps $\bbR^{2} \to \bbH^{2}$ due to the well-known Eells-Sampson theory~\cite{ES}. In~\cite{LOS1} we proved that every $P_\la$ is asymptotically stable under small finite energy equivariant perturbations. Moreover, there can be no finite time blow up for wave maps $\R \times \Hp^2 \to \Hp^2$ since otherwise, by the same Struwe bubbling analysis as above,  there would have to exist a nontrivial finite energy Euclidean harmonic map into $\Hp^2$, yielding a contradiction. 
Next, note that the harmonic map $P_{\la}(r)$ satisfies 
\ant{
P_\la(r)  \to  2 \arctanh( \la)  \mas r \to \infty.
}
In fact, the space of equivariant finite energy data  naturally splits into disjoint classes $\E_\la$ for $\la \in [0, 1)$ where each $(\psi_0, \psi_1) \in \E_\la$ is characterized by $\psi_0(r) \to  2 \arctanh( \la)$ as $r \to \infty$. Moreover, the classes $\E_\la$ are preserved by the smooth wave map flow and $(P_\la, 0)$ uniquely minimizes the energy in $\E_\la$; see~\cite{LOS1, LOS3}.  One is led to the following conjecture regarding the asymptotic behavior of solutions in the equivariant setting.  
\begin{conj}[Soliton resolution for equivariant wave maps $\R \times \Hp^2 \to \Hp^2$] \label{c:ESR}  
Consider the Cauchy problem~\eqref{eq:ewm} with finite energy initial data $(\psi_{0}, \psi_{1})$. Let 
\ant{
\la :=\tanh (\psi_{0}(\infty)/2) \in [0, 1).
} 
Then~\eqref{eq:ewm} is globally well-posed, and the solution scatters to $P_{\la}$ as $t \to \pm \infty$.
\end{conj}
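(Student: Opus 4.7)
The plan is to split the conjecture into (i) global regularity and (ii) scattering to $P_{\la}$, and to attack (ii) via a Kenig--Merle type concentration-compactness / rigidity scheme adapted to the non-scaling hyperbolic setting. First, global regularity in $\E_{\la}$ should follow from Struwe's bubbling analysis applied to the equivariant equation \eqref{eq:ewm}. Namely, if a smooth solution develops a singularity at $(t_0,r_0)$, one rescales along the backward light cone, uses the local energy decay at the tip to remove the time derivative in the limit, and produces a nontrivial finite energy equivariant harmonic map $\bbR^{2}\to\Hp^{2}$. The Eells--Sampson theory forbids such a map, giving global regularity. One also needs Hopf-type energy monotonicity adapted to $\Hp^{2}$-geometry, which is available since the volume element $\sinh r\, dr\, d\te$ still produces a favorable boundary term on truncated cones.

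For scattering, I would write $\psi = P_{\la} + \fy$ and derive the linearized equation $(-\p_{t}^{2} + \LL_{\la})\fy = N(\fy)$ with
\[
\LL_{\la} \fy = \frac{1}{\sinh r}\p_r(\sinh r\, \p_r \fy) - \frac{g'(P_{\la})^{2}+g(P_{\la})g''(P_{\la})}{\sinh^{2}r}\fy,
\]
and $N(\fy)$ at least quadratic in $\fy$. The small data asymptotic stability of $P_{\la}$ proven in \cite{LOS1} already yields the needed Strichartz and local energy decay estimates for $e^{\pm i t \sqrt{\LL_{\la}}}$, including the absence of eigenvalues and threshold resonances in the equivariant sector, since $\LL_{\la}$ is a non-negative Schr\"odinger operator with potential well whose zero-energy modes are accounted for by the known one-parameter family of harmonic maps (a fact that must be re-proved spectrally). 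The hyperbolic geometry actually helps here: the exponential volume growth of $\Hp^{2}$ produces stronger dispersion for $\Box_{\Hp^{2}}$ than for $\Box_{\bbR^{2}}$, which should let one close nonlinear estimates globally on compact time intervals in terms of the profile's size in a critical norm, provided one controls the nonlinear interaction of $\fy$ with $P_{\la}$.

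Next, following Kenig--Merle, I would run the standard concentration-compactness scheme adapted to this setting: if scattering fails in $\E_{\la}$, extract a minimal non-scattering solution using a profile decomposition for $\fy = \psi - P_{\la}$. Because $\Hp^{2}$ has no dilation symmetry, the usual scale parameter is absent; the profiles live on either $\Hp^{2}$ itself (non-concentrating, with translation parameters in the isometry group, restricted by equivariance) or on Euclidean tangent spaces appearing in a rescaling limit at points where the profile concentrates. Equivariant profiles centered at the origin are the only surviving ones after restricting to the symmetry class, which simplifies the compactness-modulo-symmetry statement considerably: the critical element must have a spatially compact orbit in the equivariant energy space relative to the stationary solution $P_{\la}$.

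The rigidity step is where the main obstacle lies. One must show this compact-orbit solution is identically $P_{\la}$. I would try a virial/Morawetz identity of the form $\frac{d}{dt}\int a(r)\, \p_{t}\fy\, \p_{r}\fy \sinh r\, dr\, d\te$ with $a(r)$ chosen to exploit the exponential volume growth in $r$ and to dominate the negative contribution of the potential well of $\LL_{\la}$ around the core of $P_{\la}$. The delicate point is that $P_{\la}$ itself generates a one-parameter modulus, and the family $\{P_{\la'}\}$ produces threshold-like behavior of $\LL_{\la}$ as $\la'\to\la$, so the virial must be coupled with a modulation analysis that projects out the infinitesimal tangent $\p_{\la}P_{\la}$. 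A channel-of-energy mechanism on $\Hp^{2}$, analogous to Duyckaerts--Kenig--Merle but using the exterior decay coming from exponential volume growth rather than scale invariance, would then force the critical element to radiate and therefore be trivial in $\E_{\la}$, i.e., equal to $P_{\la}$. Constructing a workable virial weight $a(r)$ that is simultaneously coercive against the $P_{\la}$-potential and compatible with the equivariant exterior-energy decay on $\Hp^{2}$ is, in my view, the hardest technical step and the reason the conjecture remains open in this generality.
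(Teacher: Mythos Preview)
This statement is labeled as a \emph{conjecture} in the paper and the paper does \emph{not} provide a proof. Immediately after stating it, the authors remark that in \cite{LOS3} the conjecture was verified only for endpoints $\la \le \La_0$ with $\La_0 \ge 0.57$, and that the range $\la \in [\La_0, 1)$ remains open. The conjecture is presented purely as motivation for the non-equivariant theory developed in the body of the paper; there is nothing to compare your proposal against.

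Your proposal is not a proof but an outline of a strategy, and you yourself flag the rigidity step as ``the reason the conjecture remains open in this generality.'' That candid assessment is correct. A few comments on the outline: the global regularity portion via Struwe bubbling and Eells--Sampson is indeed how \cite{LOS1, LOS3} proceed and is not in dispute. For scattering, the spectral input you invoke from \cite{LOS1} (absence of eigenvalues and resonances for $\LL_\la$) is established there only for $\la$ in a restricted range, and extending it uniformly as $\la \to 1$ is itself nontrivial, since the potential degenerates. More substantially, the concentration-compactness scheme you sketch has a real obstruction in this setting: without a scaling symmetry there is no natural ``critical threshold'' energy, and the minimal-counterexample argument requires a monotone quantity along which to order putative counterexamples. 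In \cite{LOS3} this role is played by the endpoint $\la$ itself, and the argument there stalls precisely because the linearized estimates deteriorate as $\la$ grows. Your proposed virial/channel-of-energy rigidity would need to be uniform in $\la$, and no such identity is currently known. So your diagnosis of where the difficulty lies is accurate, but the proposal does not resolve it.
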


In~\cite{LOS3} the conjecture was verified for all data with endpoints $\la \le \La_0$ where $\La_0 \ge 0.57$, leaving open the range of endpoints $\la \in [\La_0, 1)$. 

More ambitiously, one can formulate the soliton resolution conjecture in the nonequivariant setting. 
In this case, using conformal equivalence of $\bbH^{2}$ and $\bbD^{2}$, the space of finite energy maps splits into disjoint classes $\calE_{\rd u}$ parametrized by boundary data $\rd u$ at spatial infinity that are consistent with finite energy. These classes are preserved by the wave map flow, and the minimizer $\calP_{\rd u}$ of the Dirichlet energy in each class is a harmonic map. The soliton resolution conjecture may then be phrased in terms $\calP_{\rd u}$ as in Conjecture~\ref{c:ESR}.  

An interesting milestone in this direction would be the global asymptotic stability of $\calP_\la(r, \te):=( P_\la(r),  \te)$ under arbitrarily large non-equivariant perturbations, where $\calP_{\la}$ has the advantage of being explicit. 
\begin{conj} \label{c:SRP} 
Let $(u_0, u_1)$ be smooth finite energy initial data for~\eqref{wme} for $\MM = \R \times \Hp^2$ and $\NN = \Hp^2$. Let $\la \in [0, 1)$ be fixed  and assume that outside some compact set $K \Subset \Hp^2$ we have 
\EQ{ 
u_0(x)  
= \calP_\la(x)  \mfor   x  \in  \Hp^2- K,  \label{2dbc} 
}
Then the unique wave map evolution $(u, \p _t u)(t)$ associated to the data $(u_0, u_1)$ is globally regular  and scatters to $(\calP_\la, 0)$ as $t \to \pm \infty$. 
\end{conj}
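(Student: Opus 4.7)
\textbf{Proof plan for Conjecture~\ref{c:SRP}.} The plan is to combine the caloric gauge framework introduced in this paper with a Kenig-Merle type concentration compactness and rigidity argument, adapted to the energy critical setting $d=2$. First, since $\calP_\la$ uniquely minimizes the Dirichlet energy in its class and $\Hp^2$ is nonpositively curved, an adaptation of the Eells-Sampson theory should show that the harmonic map heat flow initialized at $u(t, \cdot)$ converges uniformly to $\calP_\la$ as $s \to \infty$, for each fixed $t$. One then constructs the caloric gauge by parallel transporting a fixed orthonormal frame at $\calP_\la(\infty) \in \Hp^2$ back along the heat flow, producing the scalar heat-tension field $\psi_s$ as the main dynamic variable. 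The deviation of $u(t)$ from $\calP_\la$ is encoded in the size of $\psi_s$ and related caloric variables.

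Local well-posedness at the critical regularity for $\psi_s$ should follow from the Strichartz estimates on $\Hp^2$ of Metcalfe-Tataru and Anker-Pierfelice, combined with energy identities. Unlike in the dimensions $d \ge 4$ treated in this paper, in two dimensions Strichartz estimates alone do not suffice, and one must exploit the null structure of the wave maps nonlinearity in the caloric gauge (see Remark~\ref{r:null}). Small data scattering in the gauge then corresponds to asymptotic stability of $\calP_\la$ under small non-equivariant perturbations, generalizing the equivariant result of~\cite{LOS1}.

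To upgrade this to global existence for arbitrary finite energy data of the specified form, one would use a Struwe-type bubbling analysis. Any hypothetical finite time blow up would, after suitable rescaling around the singular point, produce a non-trivial finite energy harmonic map $\R^2 \to \Hp^2$; the Eells-Sampson theorem rules out such maps for this negatively curved target, yielding a contradiction. The rescaling analysis is essentially local and flattens the $\Hp^2$ geometry of the domain, so the classical argument should adapt with only technical modifications.

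The main obstacle, and the heart of the proof, is scattering to $(\calP_\la, 0)$. This requires a non-equivariant profile decomposition on $\R \times \Hp^2$ respecting the full isometry group of $\Hp^2$, followed by a rigidity argument showing that any compact \emph{critical element} must coincide with $\calP_\la$. The rigidity step is the hard part: one needs a virial or Morawetz type monotonicity formula on $\Hp^2$ that simultaneously exploits the negative curvature of the target, the preservation of the class $\calE_{\calP_\la(\infty)}$, and the minimization property of $\calP_\la$ within that class. Ruling out critical elements whose ``center of mass'' drifts toward the ideal boundary of $\Hp^2$ appears particularly delicate, as standard concentration compactness tools do not immediately handle escape to conformal infinity; new ideas specific to the hyperbolic geometry of the domain seem to be required here.
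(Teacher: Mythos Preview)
The statement is labeled a \emph{Conjecture} in the paper, and the paper does not prove it; it is presented as motivation for the $d\ge 4$ result and as a target for future work. So there is no ``paper's own proof'' to compare against. What you have written is explicitly a proof \emph{plan}, not a proof, and you yourself flag the main gaps: the null-form analysis in $d=2$, the non-equivariant profile decomposition on $\bbH^2$, and above all the rigidity step. Each of these is a genuine open problem, not a technicality.

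A few comments on the plan itself. Your first step already departs nontrivially from the paper's construction: here the heat flow should converge to the nonconstant harmonic map $\calP_\la$, not to a single point $u_\infty$, so the caloric gauge cannot be built by parallel transporting a \emph{constant} frame $e_\infty$ as in Proposition~\ref{p:cg}. You would instead need to fix a frame field along $\calP_\la$ (e.g.\ the frame adapted to the equivariant structure) and parallel transport from that; the resulting connection $A$ no longer vanishes at $s=\infty$ but equals the connection of that limiting frame, and the estimates in Section~\ref{s:cg} must be redone relative to this background. Your phrase ``a fixed orthonormal frame at $\calP_\la(\infty)\in\bbH^2$'' is not quite right, since $\calP_\la$ is not constant at spatial infinity (its image is a circle in the target). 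The global regularity sketch via Struwe bubbling is on solid ground, as it is essentially local and already carried out in the equivariant case. But the scattering part is, as you say, the heart of the matter: no Morawetz/virial identity adapted to nonconstant harmonic-map backgrounds on $\bbH^2$ is currently available, and ruling out drifting critical elements is a genuinely new difficulty. In short, the outline is reasonable as a research program, but it is not a proof, and the conjecture remains open.
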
 


 Aside from establishing the small data scattering theory in high dimensions, this paper can be viewed as a first step towards the non-equivariant theory for $\Sigma = \Hp^2$, and in particular Conjecture~\ref{c:SRP}. 

 

\begin{rem} 
There are even more challenges  and unexpected phenomena in the other case considered in~\cite{LOS1}, namely when $\NN = \Sp^2$.  The equivariant harmonic maps from $\Hp^2 \to \Sp^2$ and their energies $\E(Q_{\lmb}, 0)$ are given 
 by 
\EQ{ \label{eq:qla} 
Q_{\la}(r):= 2 \arctan( \la \arctanh(r/2)),  \, \la \in [0, \infty), \,  \E(Q_\la, 0) =  4 \pi  \la^2(1+ \la^2)^{-1}.
} 
The stability of $Q_{\la}$ is investigated in~\cite{LOS1}.  We showed that there exists $\la_0>1$ so that each $Q_{\la}$ for $\la < \la_0$ is asymptotically stable under small finite energy equivariant perturbations. However there is more interesting behavior for $\la \gg 1$. In particular,  for $\la$ large enough the  Schr\"odinger operatror $\LL_\la$ obtained by linearizing about $Q_\la$  has an eigenvalue in its spectral gap $(0, 1/4)$. This gap eigenvalue leads to a solution to the linearized equation that does not decay as $t \to \pm \infty$. On the other hand, one can show that any small finite energy perturbation of $Q_\la$ must lead to a globally regular nonlinear evolution. This is a consequence of the fact that any finite time blow up in this energy critical, equivariant setting must occur by energy concentration at $r = 0$ and thus the global geometry of the domain is irrelevant. 
In fact blow up can be characterized by the bubbling  of a \emph{Euclidean} harmonic map exactly as in~\cite{Struwe}, and this requires more energy than a sufficiently small perturbation of $Q_\la$;  see~\cite{LOS3}. These  observations suggest that in fact all of the $Q_\la$ are asymptotically stable in the  energy space, with the stability of the $Q_\la$ for large $\lambda$ occurring via a completely nonlinear mechanism, such as the \emph{radiative damping} described in the work of Soffer, Weinstein~\cite{SW99}. 
\end{rem}



%

\section{Geometric  and analytic preliminaries } \label{s:prelim}

\subsection{Notation} We will try to maintain the following conventions for indices: We reserve greek lowercase indices $\al, \be, \ga,$ \dots $ \in\{ 0, 1, 2, 3, 4\}$ for coordinates on $\MM = \R \times \Hp^4$. We use  Latin indices $a, b, c,$  \dots $ \in \{1, 2, 3, 4\}$ when restricting to coordinates on $\Hp^4$. 
These indices are raised and lowered using the appropriate domain metrics introduced below.
Sometimes we will use the bold letters $\mathbf{a}, \mathbf{b}, \ldots$ for coordinates $(s, t, x^{1}, \ldots, x^{4})$ on $\bbR^{+} \times \MM$.
Next, we will use lowercase Latin indices beginning at $i$, namely $i, j,k, \ell, m$ for components in the tangent space of $\NN$; hence they run from $1, \dots, n$ where $n$ is the dimension of $\NN$. The capital letters $I, J, K, L$ will be used for indices in $\bbR^{N}$, which arises as the ambient space to which $\NN$ is embedded. Finally, throughout the paper we adopt the standard convention of silently summing over repeated upper and lower indices.



 \subsection{Geometry of the domain} Let $\R^{d+1}$ denote the $(d+1)$-dimensional Minkowski space  with rectilinear coordinates  $\{y^1,\dots, , y^d, y^0\}$ and metric $\m$ given in these coordinates by $\m = \textrm{diag}(1, \dots, 1,  -1)$.  
One can then define the $d$-dimensional hyperbolic space $\Hp^d$ as
\begin{align*}
\Hp^d := \{ y \in \R^{d+1} \mid  = 1, (y^{0})^{2} - \sum_{a=1}^{d} (y^{a})^{2} = 1\, \, y^0>0\}
\end{align*}
The Riemannian structure on $\Hp^d$ is obtained by pulling back the metric  $\m$ on $\R^{d+1}$ via the inclusion map $ \iota: \Hp^d \hookrightarrow \R^{d+1}$. In particular the Riemannian metric $\h$ on $\Hp^d$ is given by 
$
\h =  \iota^* \m. 
$

We note that $\Hp^d$ admits global coordinates systems. Given a global coordinate system $\{x^a\}$ on $\Hp^d$ denote by $\h_{ab}$ the components of $\h$ and $\h^{ab}$ the components of  the inverse matrix $\h^{-1}$, i.e., 
\ant{
\h_{a b} :=  \h\left( \frac{\p}{\p x^a}, \frac{\p}{\p x^b} \right), \quad \h^{ab}:= \h^{-1}(dx^a, dx^b)
}
The metric $\etab = (\etab_{\al \be})$ on the Lorentzian manifold $\MM = \R \times \Hp^d$ is then given in coordinates by $\etab_{00} = -1$, $\etab_{0 a} = 0$ and $\etab_{a b} = \h_{ab}$. We denote the Christoffel symbols on $\R \times \Hp^d$ by 
\ant{
\Ga_{ \al \be}^{\ga}  = \frac{1}{2} \etab^{ \ga \mu}( \p_{\al} \etab_{\mu \be} + \p_\be \etab_{\al \mu} - \p_\mu \etab_{\al \be})
} 
Given a vector field $X = X^\al \frac{\p}{\p x^\al}$ in $\Ga(T\MM)$  or a $1$-form $\om = \om_\al dx^\al \in \Ga( T^*\MM)$ the metric covariant derivative $\na$  is defined  in coordinates by 
\ant{
\na_\al X = (\p_{\al} X^\be + \Ga^{\be}_{\al \ga} X^\ga ) \p_\be
, \quad \na_\al \om  = (\p_\al \om_\be  - \Gamma^\ga_{\al \be} \om_\ga ) dx^\be
}
From these definitions one can generalize the definition of   $\na$ to $(p,q)$-tensor fields of arbitrary rank by requiring that: 
$\nb_{\al} f = \rd_{\alp} f$ for $(0, 0)$-tensors (i.e., functions), $\nb_{\al}$ agrees with the preceding definition for $(1, 0)$- or $(0, 1)$-tensors (i.e., vector fields or 1-forms) and $\nb$ obeys the Leibniz rule with respect to tensor products.
We note that $\na_0 = \p_t$ (i.e., $\Gmm_{0 \bt}^{\alp} = 0$) and that $\na_a$ for a Latin index $a$ will denote a covariant derivative over $\Hp^d$. Next, we will denote by $\RR$ the Riemann curvature tensor on $\Hp^d$. Recall that for a vector field $\xi^a\frac{\p}{\p x^a}$ in $\Gmm(T\Hp^d)$ we have the formula
\EQ{
[ \na_a,  \na_b] \xi^c   = \RR^c_{ \, \, e a b} \xi^e
}
as well as 
\EQ{
 [ \na_a,  \na_b] \xi^a   = \RR^a_{ \, \, c a b} \xi^c = \RR_{c b} \xi^c
}
where $\RR_{c b}$ denote the components of the Ricci curvature tensor. Since $\Hp^d$ has constant sectional curvature $ \mathcal{K} = -1$ we note that 
\EQ{\label{Ricci formula}
\RR_{a b} = -(d-1) \h_{a b}
}
Finally, we observe that the Riemann curvature tensor on $\MM = \bbR \times \bbH^{d}$ vanishes when contracted with $\rd_{t}$ in any slot, and it agrees with $\tensor{\calR}{^{a}_{e}_{ab}}$ when contracted with vectors tangent to $\bbH^{d}$. For this reason, we will slightly abuse notation by referring to the Riemann curvature tensor on $\MM$ also as $\calR$.

In Section~\ref{s:caloric} and beyond, we will work exclusively in the case $d = 4$.


\subsection{Geometry of maps  $u: \MM \to \NN$}  \label{s:maps} 

As we saw in the introduction the wave maps equation~\eqref{wmi} is defined using the geometric structure of the pullback vector bundle $u^* T \NN$ for  maps $u:  \MM \to \NN$. In this subsection, we develop the differential-geometric formalism for analysis on the bundle $u^{*} T \NN$.

Let $\MM$ be a smooth manifold equipped with a torsion-free connection $\nb$ on the tensor bundle, and assume furthermore that $\MM$ is contractible. A specific example to have in mind is of course $\MM = \bbR \times \bbH^{4}$ with $\nb$ the Levi-Civita connection. Such a general formulation will be useful later in Section~\ref{s:caloric}, where we consider the harmonic map heat flow and construct the caloric gauge. 

Let $u$ be a smooth map from $\MM$ to $\NN$, where $\NN$ obeys the hypothesis of Theorem~\ref{t:main}. The pullback bundle $u^{\star} T \NN$ is the vector bundle over $\MM$ whose fiber at $p \in \MM$ is the tangent space $T_{u(p)} \NN$. Smooth sections of $u^* T\NN$ are maps $\phi : \MM  \to  T \NN$ so that $\phi(p) \in T_{u(p)} \NN$ for each $p \in \MM$. The Riemannian connection on $T\NN$ induces the pullback covariant derivative $D$ on $u^* T \NN$. In local coordinates $\{x^{\al}\}_{\al  = 0}^d$ on $\MM$ and  $ \{u^j\}_{j = 1}^n$ on $\NN$ we have 
\EQ{
D_{\al} \phi =  \big(\p_{\al}  \phi^j + {}^{(\NN)}\Gamma^{j}_{ik}(u)   \phi^i \p_{\al} u^k \big) \frac{\p}{\p u^j} 
}
where ${}^{(\NN)}\Gamma^{j}_{ik}$ are the Christoffel symbols on $T \NN$. The pullback covariant derivative $D_{\alp}$ is naturally extended to $u^{\ast} T \NN$-valued tensors (i.e., sections of $T^{\otimes p} \MM \otimes (T^{\ast})^{\otimes q} \MM \otimes u^{\ast} T \NN$) using the connection $\nb_{\alp}$ on the tensor bundle of $\MM$.

We will study the wave maps system by expressing the equation satisfied by the differential $du$ in terms of a global moving frame (or in short, simply a \emph{frame}) on $u^{*}T \NN$. This refers to a collection $e = (e_{1}, \ldots, e_{n})$ of global (i.e., defined on all of $\MM$) sections of $u^{\ast} T \NN$ that form an orthonormal basis of $T_{u(p)} \MM$ on each $p \in \MM$. By the assumption that the base manifold $\MM$ is contractible, at least one such a frame exists. Expressing each fiber $T_{u(p)} \NN$ in terms of the orthonormal basis $(e_{1}, \ldots, e_{n})(p)$, we see that $e$ defines a global trivialization of $u^{\ast} T \NN$, i.e.,
\begin{equation*}
	e : E_{n} \to T \NN, \quad v = (p; v^{1}, \ldots, v^{n}) \mapsto e v = e_{k}(p) v^{k}
\end{equation*}
where $E_{n}$ refers to the trivial bundle $E_{n} = \MM \times \bbR^{n}$. Moreover, $e$ induces an isomorphism between the spaces of sections
\begin{equation*}
	e : \Gmm(E_{n}) \to \Gmm(u^{\ast} T \NN), \quad \varphi = (\varphi^{1}, \ldots, \varphi^{n}) \mapsto e \varphi = e_{k} \varphi^{k}
\end{equation*}
Note that $\Gmm(E_{n})$ is simply the space of $\bbR^{n}$-valued functions on $\MM$. This isomorphism extends naturally to $u^{\ast} T \NN$-valued tensors; we make a slight abuse of notation and refer to all these isomorphisms by $e$. 

Given a map $B: \MM \to SO(n)$ and a frame $e$, we may obtain a new frame $e'$ by setting
\begin{equation} \label{eq:cf}
	e \mapsto e B =: e', \quad \hbox{ or } e'_{j} := e_{i} B^{i}_{j}
\end{equation}
Conversely, observe that any two frames $e, e'$ with the same orientation must be related to each other by $e' = e B$ for some $B : \MM \to SO(n)$. In other words, the wave maps system in the global moving frame formulation has an $SO(n)$ gauge structure.

Let $e$ be a frame. Associated to the map $u$ we have the differential $d u$, which is a $u^{\ast} T \NN$-valued 1-form. We denote by $\Psi(\rd_{\alp}) = \psi_{\alp}$ the description of $d u$ with respect to $e$, 
\begin{equation}
	d u = e \Psi, \quad \rd_{\alp} u = e \psi_{\alp} 
\end{equation}

The pullback covariant derivative $D_{\alp}$ admits a matrix description in the global trivialization $E_{n}$. We first introduce a connection $\bfD_{\alp}$ on $E_{n}$ induced by $D_{\alp}$, characterized by the identity
\begin{equation}
	D_{\alp} e \varphi = e \bfD_{\alp} \varphi
\end{equation}
Using the connection $\nb$ on $E_{n}$ (defined component-wisely) as a reference, we may express $\bfD$ in the form
\begin{equation} \label{eq:conn-1-form}
	\bfD = \nb + A
\end{equation}
where $A$ is an $\mathfrak{so}(n)$-valued (i.e., anti-symmetric traceless matrix) 1-form on $\MM$. The latter fact is evident from the following alternative characterization of $A$:
\begin{equation*}
	A = e^{-1} D e, \quad \hbox{ or } A^{i}_{j, \alp} = \brk{D_{\alp} e_{j}, e_{i}}
\end{equation*}
We call $A$ the \emph{connection 1-form} associated to $\bfD$. Going back to $\ud u = e \Psi$, by the torsion-free property inherited from both the domain and target connections,  note that
\begin{equation}
	\bfD_{\alp} \psi_{\bt} = \bfD_{\bt} \psi_{\alp}
\end{equation}

If $u$ is a wave map and $e \psi_{\alp}$ is the representation of $\rd_{\alp} u$ in the frame $e$, the wave maps equation can be rewritten succinctly as
\EQ{ \label{eq:wm-psi-al}
\etab^{\al \be}(\Db_\al \psi)^j_\be = 0 
}
or in coordinate free notation 
\EQ{
\tr_{\etab}\Db \Psi = 0
}

Next, we introduce the \emph{curvature 2-form} associated to $\bfD$, which is defined to be the commutator 
\begin{equation} \label{eq:curv-2-form}
	F_{\alp \bt} \varphi = (\bfD_{\alp} \bfD_{\bt} - \bfD_{\bt} \bfD_{\alp}) \varphi
\end{equation}
Note that $F$ is an $\mathfrak{so}(n)$-valued 2-form on $\MM$. Using \eqref{eq:conn-1-form}, $F$ can be expressed in terms of $A$ as
\begin{equation} 
F = d A + A \wedge A,   \quad  F_{\alp \bt} = \rd_{\alp} A_{\bt} - \rd_{\bt} A_{\alp} + [A_{\alp}, A_{\bt}]
\end{equation}
The curvature 2-form $F$ is related through $u$ to the Riemann curvature $R$ of the target manifold $\NN$ as follows:
\begin{equation} \label{eq:curv-R}
	e F_{\alp \bt} \varphi = R(u) (\rd_{\alp} u, \rd_{\bt} u ) e \varphi = R(u)(e \psi_{\alp}, e \psi_{\bt}) e \varphi
\end{equation}

In the sequel we will have to perform  changes of frame as in~\eqref{eq:cf} and we record here the effect of a change of basis on the relevant objects, $F, A, \psi$.  For $B: \MM \to SO(n) $ we have the \emph{gauge symmetry} relative to the map $u: \MM \to \NN$: 
\EQ{ \label{eq:gauge-symm}
u \mapsto u;  \quad  e \mapsto   eB;  \quad  \psi_\al \mapsto B^{-1} \psi_\al; \\
F_{\al \be} \mapsto B^{-1} F_{\al \be} B; \quad A_\al \mapsto B^{-1} \p_\al B + B^{-1}A_\al B
}


Finally, we make a simple remark on the computation of the commutator $[\bfD_{\alp}, \bfD_{\bt}]$. For a section $\varphi \in \Gmm(E_{n})$, this commutator is precisely the matrix-valued 2-form $F_{\alp \bt}$. For a $\bbR^{n}$-valued tensor $T = \tensor{T}{^{\mu_{1} \cdots \mu_{p}}_{\nu_{1} \cdots \nu_{q}}}$, the domain curvature tensor $\calR$ associated to $\nb$ arises, i.e.,
\begin{equation}  \label{eq:commDD}
\begin{aligned}
	{[\bfD_{\alp}, \bfD_{\bt}]} \tensor{T}{^{\mu_{1} \cdots \mu_{p}}_{\nu_{1} \cdots \nu_{q}}}
	= & F_{\alp \bt} \tensor{T}{^{\mu_{1} \cdots \mu_{p}}_{\nu_{1} \cdots \nu_{q}}} \\
	& + \tensor{\calR}{^{\mu_{1}}_{\gmm \alp \bt}} \tensor{T}{^{\gmm \cdots \mu_{p}}_{\nu_{1} \cdots \nu_{q}}} + \cdots 
	+ \tensor{\calR}{_{\nu_{q}}^{\gmm}_{\alp \bt}} \tensor{T}{^{\mu_{1} \cdots \mu_{p}}_{\nu_{1} \cdots \gmm}} 
\end{aligned}
\end{equation}

\subsubsection{Extrinsic representation}
Using the isometric embedding $\NN \hookrightarrow \bbR^{N}$, the map $u$ admits a representation as an $\bbR^{N}$-valued map $u = (u^{K})_{K=1, \ldots N}$, which will be referred to as the \emph{extrinsic representation}. This point of view is convenient for analysis when the delicate gauge structure described in \eqref{eq:gauge-symm} need not be exploited. See, for instance, our analysis of the harmonic map heat flow in Section~\ref{s:caloric}. 

In this viewpoint, any section $\phi$ of $u^{\ast} T \NN$ admits a representation as an $\bbR^{N}$-valued function on $\MM$. Since $\NN \hookrightarrow \bbR^{N}$ is isometric, we have
\begin{equation} \label{eq:extr-phi}
	\bb( \sum_{K=1}^{N} \abs{\phi^{K}}^{2} \bb)^{1/2} = \abs{\phi}_{u^{\ast} T \NN} = \abs{\varphi}_{\bbR^{n}}
\end{equation}
where $e \varphi = \phi$. We will abuse the notation a bit and write $\abs{\phi}_{\bbR^{N}}$ as a shorthand for the left-hand side.
In particular, the differential $\ud u$ is represented by an $\bbR^{N}$-valued 1-form $\ud u = (\ud u^{K})_{K=1, \ldots, N}$ on $\MM$ and we have
\begin{equation} \label{eq:extr-du}
	\abs{\ud u(X)}_{\bbR^{N}} = \abs{\ud u(X)}_{u^{\ast} T \NN} = \abs{\Psi(X)}_{\bbR^{n}}
\end{equation}
for any vector $X$ tangent to $\MM$. The pullback covariant derivative $D$  on $u^*T\NN$ has the following natural interpretation: Given a section $\phi$ of $ u^* T \NN$, its covariant derivatives  $D_{\al}\phi$ over a point $x \in \MM$ are the standard coordinate derivatives $\p_\al \phi$  of $\phi$ projected orthogonally onto  $T_{u(x)} \NN$. This leads to the formula
\EQ{ \label{eq:extr-D}
D_\al \phi^K =  \p_{\al} \phi^K  + S^K_{I J}(u)  \phi^I \p_\al u^J 
} 
where $S$ is the second fundamental form. 

\subsection{Differentiation of the curvature 2-form} \label{s:curv}

Let $u : \MM \to \NN$ be a map and $e$ be a given frame. Let $F$ be the curvature 2-form of $\bfD$ defined above in \eqref{eq:curv-2-form}. 
We introduce the covariant derivative of $F$ by the formula
\begin{equation}
	\bfD_{\mu} F_{\alp \bt} := \nb_{\mu} F_{\alp \bt} + [A_{\mu}, F_{\alp \bt}],
\end{equation}
where $\nb_{\mu}$ is to be understood in the component-wise sense.
This definition is natural\footnote{Geometrically, $F$ is an $\mathrm{ad} \, P$-valued 2-form, where $\mathrm{ad} \, P$ is the vector bundle with fiber $\mathfrak{so}(n)$, structure group $SO(n)$ acting by the adjoint action $(g, a) \mapsto g a g^{-1}$, and the same structure coefficients as $u^{\ast} T \NN$. Then $\bfD$ defined as above is simply the induced covariant derivative on $\mathrm{ad} \, P$.} in view of the gauge symmetry of $F$ in \eqref{eq:gauge-symm}. Observe also that $\bfD F$ is characterized by the Leibniz rule
\begin{equation} \label{eq:d-curv-leibniz}
	\bfD (F \varphi) = (\bfD F) \varphi + F \bfD \varphi,
\end{equation}
for any $\bbR^{n}$-valued function $\varphi$. 

Our goal now is to give formulae for differentiating $F$. Recall that $F$ is related to the Riemann curvature $R$ on $\NN$ by the formula \eqref{eq:curv-R}. To have a description of $F$ entirely in terms of objects defined relative to the frame $e$, we introduce the pullback curvature $\bfR$ by
\begin{equation*}
	e \bfR(\psi^{(1)}, \psi^{(2)}) \varphi = R(u)(e \psi^{(1)}, e \psi^{(2)}) e \varphi
\end{equation*}  
for $\bbR^{n}$-valued functions $\psi^{(j)}$ $(j=1,2)$ and $\varphi$. Observe that $\bfR(\psi^{(1)}, \psi^{(2)})$ takes values in $\mathfrak{so}(n)$. The identity \eqref{eq:curv-R} may be rewritten as
\begin{equation}  \label{eq:Fdu}
	F_{\alp \bt} = \bfR(\psi_{\alp}, \psi_{\bt}).
\end{equation} 
To facilitate the computation of derivatives of $F$, we introduce the pullback differentiated curvature (for $m = 1, 2, \ldots$)
\begin{equation*}
	\bfR^{(m)} (\psi^{(1)}, \ldots, \psi^{(m)}; \varphi^{(1)}, \varphi^{(2)})
	= (\nb^{(m)} R)(u) (e \psi^{(1)}, \ldots, e \psi^{(m)}; e \varphi^{(1)}, e \varphi^{(2)}) e,
\end{equation*}
under the convention $\nb^{(m)} R (X^{(1)}, \ldots, X^{(m)}; Y, Z) = \nb^{(m)}_{X^{(1)} \cdots X^{(m)}} R(Y, Z)$. In the case $m = 0$, we take $\bfR^{(0)} = \bfR$. From the definition, we may immediately derive the differentiation formula
\begin{equation} \label{eq:DR}
	(\bfD_{\mu} \bfR^{(m)})(\psi^{(1)}, \ldots, \psi^{(m)}; \varphi^{(1)}, \varphi^{(2)})
	= \bfR^{(m+1)} (\psi_{\mu}, \psi^{(1)}, \ldots, \psi^{(m)}; \varphi^{(1)}, \varphi^{(2)})
\end{equation}

Combined with the Leibniz rule, it is now straightforward to compute covariant derivatives of $F$ of any order. For instance, in order to compute $\bfD F$, we first apply the Leibniz rule to obtain
\begin{align*}
	\bfD_{\mu} F_{\alp \bt} 
	=(\bfD_{\mu} \bfR^{(0)}) (\psi_{\alp}, \psi_{\bt}) + \bfR^{(0)} (\bfD_{\mu} \psi_{\alp}, \psi_{\bt}) + \bfR^{(0)} (\psi_{\alp}, \bfD_{\mu} \psi_{\bt})
\end{align*}
Then by \eqref{eq:DR}, we arrive at the formula
\begin{equation} \label{eq:DF}
\bfD_{\mu} F_{\alp \bt} 
= \bfR^{(1)} (\psi_{\mu} ; \psi_{\alp}, \psi_{\bt}) + \bfR^{(0)} (\bfD_{\mu} \psi_{\alp}, \psi_{\bt}) + \bfR^{(0)} (\psi_{\alp}, \bfD_{\mu} \psi_{\bt})
\end{equation}

Finally, by the bounded geometry assumptions, observe that the following pointwise bound holds for each $\bfR^{(m)}$:
\begin{equation} \label{eq:DR-ptwise}
	\abs{\bfR^{(m)}(\psi^{(1)}, \ldots, \psi^{(m)}; \varphi^{(1)}, \varphi^{(2)})}
	\aleq \abs{\psi^{(1)}} \cdots \abs{\psi^{(m)}} \abs{\varphi^{(1)}} \abs{\varphi^{(2)}}
\end{equation}
where we use the Hilbert-Schmidt norm for matrices on the left-hand side.

\subsection{Function Spaces on $\Hp^d$}\label{s:fs} 

With the Riemannian structure we can define the relevant function spaces on $\Hp^d$. Let $f: \Hp^d \to \R$ be a smooth function. The $L^p( \Hp^d)$ spaces are defined for $1 \le p < \infty$ by 
\begin{align*}
\|f \|_{L^p(\bbH^d)} :=  \left( \int_{\bbH^d} | f(x) |^p\,  \dvol_{\h} \right)^{\frac{1}{p}}, \quad \| f \|_{L^{\infty}( \bbH^d)} :=  \sup_{x \in \Hp^d} \abs{f(x)}
\end{align*} 
The Sobolev norm $W^{k, p}( \Hp^d; \R) $ are defined as follows, see for example~\cite{Heb}: 
\EQ{ \label{sobh4} 
\| f\|_{W^{k, p}(\Hp^d; \R)}:=    \sum_{\ell=0}^k  \left( \int_{\Hp^d} \abs{ \na^{(\ell)} f}^p  \, \dvol_\h \right)^{\frac{1}{p}}
}
where here $\na^{(\ell)} f$ is the $\ell$th covariant derivative of $f$ with the convention that $\na^{(0)} f = f$. We note that in coordinates the components of $\na f$ are given by $(\na f)_a =  \p_a f$, and those of $\na^{(2)} f$ are then 
\EQ{
(\na^{2} f)_{ab} =  \p_a \p_b f - \Gamma^c_{a b} \p_c f
}
and so on. We then have 
\EQ{ \label{eq:abs-nb-ell-f}
\abs{\na^{(\ell)} f}^p = \left(\h^{a_1 b_1} \cdots \h^{a_\ell b_\ell} (\na^{(\ell)} f)_{a_1 \cdots a_\ell}  (\na^{(\ell)} f)_{b_1 \cdots b_\ell} \right)^{\frac{p}{2}}
}
One then defines the Sobolev space~$W^{k, p}(\Hp^d; \R)$ to be the completion of all smooth compactly supported functions $f \in C^{\infty}_0( \Hp^d; \R)$ under the norm~\eqref{sobh4}; see~\cite[Theorem $2.8$]{Heb}. We note that for $p=2$ we write $W^{k, 2}(\Hp^d; \R) =: H^k(\Hp^d; \R)$. The Sobolev spaces $W^{s, p}$ for $s \ge 0$ are then defined by interpolation.

Alternatively one can define Sobolev spaces on $\Hp^d$ using the spectral theory of the Laplacian $-\Delta_{\Hp^d}$, which we will now denote simply by  $-\Delta$.  Given $s \in \bbR$, the fractional Laplacian $(-\Delta)^{\frac{s}{2}}$ is a well-defined operator on, say $C^{\infty}_{0}(\Hp^{d})$ via the spectral theory of $-\Delta$.  Given a function $f \in C^{\I}_{0}(\Hp^d)$, we set
\begin{align*}
\|f \|_{\ti W^{s, p}(\Hp^d)} = \|(- \Delta)^{\frac{s}{2}} f\|_{L^p(\Hp^d)}
\end{align*}
and define $\ti W^{s, p}( \Hp^d)$ to be the completion of $C^{\infty}_0( \Hp^d)$ under the  norm above.

The fractional Laplacian $(- \De)^{\frac{s}{2}}$ is bounded on $L^p(\Hp^d)$ for all $s \le 0$ and all $p \in (1, \infty)$. Using this fact one can show that $\ti W^{s, p} = W^{s, p}$ with equivalent norms, where the latter space is defined using the Riemannian structure as above; see for example~\cite{Tat01hyp} for a proof, and also Lemma~\ref{l:riesz} below.
In the sequel we will often use these two definitions interchangeably, with expressions like 
\EQ{
 \| \na (-\De)^{\ell} f\|_{L^2} \simeq  \|  \na^{(2 \ell +1)} f \|_{L^2} \simeq    \| f\|_{H^{2 \ell +1 }} 
}
where the last $\simeq$ is due to the Poincar\'e inequality; see Lemma~\ref{l:pi}. We will often use the notation 
\EQ{
\ang{f \mid g}_{L^2} := \int_{\Hp^d} f(x) g(x) \, \dvol_{\h}
}
to denote the $L^2(\Hp^d)$ inner product of $f, g$. 

We state some basic functional inequalities that hold on $\bbH^{d}$.
\begin{lem}[Poincar\'e inequality] \label{l:pi} 
Let $f \in C^{\infty}_{0}(\bbH^{d})$. Then the following inequalities hold.
\begin{align}
	\nrm{f}_{L^{2}} \leq & \bb(\frac{d-1}{2}\bb)^{-1} \nrm{\nb f}_{L^{2}} \\
	\nrm{f}_{L^{2}} \leq & \bb( \frac{d-1}{2} \bb)^{-2}\nrm{\Dlt f}_{L^{2}}
\end{align}
\end{lem}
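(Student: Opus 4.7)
Both inequalities express the fact that the bottom of the $L^{2}(\bbH^{d})$ spectrum of $-\Dlt$ is $\rho^{2} := \left(\frac{d-1}{2}\right)^{2}$. The cleanest way I would prove the first inequality is by a vector-field/integration-by-parts identity using the distance function to a reference point; the second inequality then drops out by integration by parts plus the Cauchy-Schwarz inequality, combined with the first inequality applied once more.

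\textbf{Proof of the first inequality.} Fix any point $x_{0} \in \bbH^{d}$ and let $r(x) = \mathrm{dist}(x, x_{0})$. Away from $x_{0}$, the function $r$ is smooth, $|\nb r| = 1$, and a direct computation in geodesic polar coordinates gives
\begin{equation*}
	\Dlt r = (d-1) \coth r \geq d-1 \quad \text{on } \bbH^{d} \setminus \{x_{0}\}.
\end{equation*}
For $f \in C^{\I}_{0}(\bbH^{d})$, I would apply the divergence theorem to the vector field $|f|^{2} \nb r$ on the exterior of $B_{\veps}(x_{0})$ to obtain
\begin{equation*}
	\int_{\bbH^{d} \setminus B_{\veps}(x_{0})} |f|^{2} \Dlt r \, \dvol_{\h} = -2 \re \int_{\bbH^{d} \setminus B_{\veps}(x_{0})} \overline{f} \, (\nb r \cdot \nb f) \, \dvol_{\h} + O(\veps^{d-1}),
\end{equation*}
and let $\veps \to 0$ (the boundary term decays since $d \geq 2$ and $f$ is bounded). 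Using $\Dlt r \geq d-1$ on the left and Cauchy--Schwarz with $|\nb r| = 1$ on the right, this yields
\begin{equation*}
	(d-1) \nrm{f}_{L^{2}}^{2} \leq \int_{\bbH^{d}} |f|^{2} (d-1) \coth r \, \dvol_{\h} \leq 2 \nrm{f}_{L^{2}} \nrm{\nb f}_{L^{2}},
\end{equation*}
which is exactly the claimed inequality $\nrm{f}_{L^{2}} \leq \left(\frac{d-1}{2}\right)^{-1} \nrm{\nb f}_{L^{2}}$. The density of $C^{\I}_{0}$ in $H^{1}(\bbH^{d})$ then extends it to all $f$ in the appropriate completion.

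\textbf{Proof of the second inequality.} I would integrate by parts and apply Cauchy--Schwarz to get
\begin{equation*}
	\nrm{\nb f}_{L^{2}}^{2} = \ang{f \mid -\Dlt f}_{L^{2}} \leq \nrm{f}_{L^{2}} \nrm{\Dlt f}_{L^{2}}.
\end{equation*}
Combining this with the first inequality $\nrm{f}_{L^{2}} \leq \left(\frac{d-1}{2}\right)^{-1} \nrm{\nb f}_{L^{2}}$ and squaring, one gets
\begin{equation*}
	\nrm{f}_{L^{2}}^{2} \leq \bb(\frac{d-1}{2}\bb)^{-2} \nrm{\nb f}_{L^{2}}^{2} \leq \bb(\frac{d-1}{2}\bb)^{-2} \nrm{f}_{L^{2}} \nrm{\Dlt f}_{L^{2}},
\end{equation*}
and dividing by $\nrm{f}_{L^{2}}$ gives the second bound.

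\textbf{Main obstacle.} There is no deep obstacle here; the only nuisance is the mild singularity of $r$ at $x_{0}$, which is handled by the $\veps$-cutoff above. The essential input is the positive lower bound $\Dlt r \geq d-1$ on $\bbH^{d}$, which is the manifestation of the exponential volume growth of balls responsible for the spectral gap. If one prefers, the same inequality can also be proved by the ground-state substitution $f = \phi g$ with a positive solution $\phi$ of $-\Dlt \phi = \rho^{2} \phi$ (e.g., a horospherical function), but the vector-field argument above is both shorter and elementary.
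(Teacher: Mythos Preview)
Your argument is correct. The paper itself does not give a proof: it simply remarks that both inequalities are equivalent to the well-known spectral gap $\inf\sigma(-\Dlt_{\bbH^{d}})=\big(\tfrac{d-1}{2}\big)^{2}$ and cites a reference. You instead supply a self-contained proof of that spectral gap via the vector-field identity with $\nb r$ and the bound $\Dlt r=(d-1)\coth r\geq d-1$; your derivation of the second inequality from the first by $\nrm{\nb f}_{L^{2}}^{2}=\langle f,-\Dlt f\rangle\leq\nrm{f}_{L^{2}}\nrm{\Dlt f}_{L^{2}}$ is exactly the standard step. So the difference is only in presentation: the paper treats the spectral gap as a black box, while you prove it directly. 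One very minor stylistic point: in the displayed chain $(d-1)\nrm{f}_{L^{2}}^{2}\leq\int|f|^{2}(d-1)\coth r\leq 2\nrm{f}_{L^{2}}\nrm{\nb f}_{L^{2}}$, the second inequality is really using that the middle term equals $-2\Re\int\bar f\,\nb r\!\cdot\!\nb f$ and then bounding that quantity by its absolute value; as written it reads as if you bounded the integral of a nonnegative quantity directly by $2\nrm{f}_{L^{2}}\nrm{\nb f}_{L^{2}}$, which is of course what you established in the line above.
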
 
Both inequalities are equivalent to the well-known fact that $-\De$ on $\bbH^{d}$ has a spectral gap of $(\frac{d-1}{2})^{2}$ (see~\cite{Bray}). 

\begin{lem}[Gagliardo-Nirenberg inequality]\label{l:gn}  
Let $f \in C^{\infty}_{0}(\bbH^{d})$. Then for any $1 < p < \infty$, $p \leq q \leq \infty$ and $0 < \tht < 1$ such that
\ant{
\frac{1}{q} =  \frac{1}{p} - \frac{\te}{d}
}
we have
\EQ{
\| f\|_{L^q(\bbH^{d})} \lesssim \| f\|_{L^p(\bbH^{d})}^{1-\te} \| \na f\|_{L^p(\bbH^{d})}^{\te}
}

\end{lem}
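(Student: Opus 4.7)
The plan is to reduce the inequality on $\Hp^{d}$ to its Euclidean counterpart via a patching argument that exploits the bounded geometry of $\Hp^{d}$. Since $\Hp^{d}$ is homogeneous, with infinite injectivity radius and bounded sectional curvature, one can fix a locally finite covering $\{B_{k}\}_{k\in\bbN}$ by geodesic balls of a uniform small radius $r_{0}$, with uniformly bounded multiplicity, together with a subordinate partition of unity $\{\chi_{k}\}$ whose covariant derivatives are uniformly bounded. On each $B_{k}$, in normal coordinates, the metric $\h$ is uniformly comparable to the Euclidean one, and all constants obtained by applying Euclidean inequalities on these charts are independent of $k$.

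First, I would establish the endpoint Sobolev embedding
\[
\|f\|_{L^{p^{\ast}}(\Hp^{d})} \lesssim \|\nabla f\|_{L^{p}(\Hp^{d})}, \qquad 1 < p < d, \quad \tfrac{1}{p^{\ast}} = \tfrac{1}{p}-\tfrac{1}{d}.
\]
Applying the Euclidean Sobolev inequality to $\chi_{k} f$ on each chart, summing in $k$ using the bounded overlap and the elementary embedding $\ell^{p}\hookrightarrow\ell^{p^{\ast}}$ (valid since $p\le p^{\ast}$), one first obtains a bound of the form $\|f\|_{L^{p^{\ast}}} \lesssim \|\nabla f\|_{L^{p}} + \|f\|_{L^{p}}$. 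The lower-order term is then removed by the Poincar\'e inequality from Lemma~\ref{l:pi}, which is available precisely because $-\Delta$ on $\Hp^{d}$ has a spectral gap.

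Next, for the full range with $\theta\in(0,1)$ and $\tfrac{1}{q} = \tfrac{1}{p}-\tfrac{\theta}{d}$, the case $1 < p < d$ follows from H\"older interpolation combined with the Sobolev embedding. Writing $\tfrac{1}{q} = \tfrac{1-\theta}{p} + \tfrac{\theta}{p^{\ast}}$, log-convexity of $L^{r}$ norms yields
\[
\|f\|_{L^{q}} \;\le\; \|f\|_{L^{p}}^{1-\theta} \|f\|_{L^{p^{\ast}}}^{\theta} \;\lesssim\; \|f\|_{L^{p}}^{1-\theta} \|\nabla f\|_{L^{p}}^{\theta}.
\]
For $p\ge d$ this interpolation is no longer available ($p^{\ast}$ is not positive); in this regime one instead applies the partition-of-unity argument directly to the Euclidean Gagliardo-Nirenberg inequality on each chart and sums the resulting local estimates in $\ell^{q}$, again using bounded overlap and $p\le q$.

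The only delicate point is the patching step: when a derivative lands on the cutoff $\chi_{k}$ it produces an error controlled by $\|f\|_{L^{p}}$ that must be re-absorbed into the right-hand side. In the pure Sobolev step this is done through Poincar\'e, while in the general case the scaling structure of the Euclidean Gagliardo-Nirenberg inequality ensures an analogous absorption. The uniform bounded geometry of $\Hp^{d}$ makes every constant in the covering argument independent of $k$, so apart from this bookkeeping the argument is essentially a standard localization of the Euclidean inequality.
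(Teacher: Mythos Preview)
Your approach is correct in spirit but differs from the paper's. The paper does not carry out a proof at all; it simply remarks that the inequality can be proved using Littlewood--Paley projections defined via the linear heat equation on $\Hp^{d}$ (citing \cite{IPS, LOS2}), together with the equivalence $\|\nabla f\|_{L^{p}} \simeq \|(-\Delta)^{1/2} f\|_{L^{p}}$ from Lemma~\ref{l:riesz}. Your route --- localization to Euclidean charts via bounded geometry, followed by the Euclidean Gagliardo--Nirenberg inequality and a summation argument --- is a legitimate and more elementary alternative that works on any complete manifold with bounded geometry and an $L^{p}$ Poincar\'e inequality. The heat-flow approach is more intrinsic and fits the paper's overall methodology (which leans heavily on the heat semigroup), while yours is closer to the standard textbook treatment on manifolds with bounded geometry, e.g.\ Hebey.

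There is one point to tighten. The absorption of the lower-order term $\|f\|_{L^{p}}$ --- both in your Sobolev step and in the general patching step for $p\ge d$ --- requires an $L^{p}$ Poincar\'e inequality $\|f\|_{L^{p}(\Hp^{d})}\lesssim \|\nabla f\|_{L^{p}(\Hp^{d})}$, not just the $L^{2}$ version stated in Lemma~\ref{l:pi}. This $L^{p}$ bound is indeed true on $\Hp^{d}$ (it follows, for instance, from the positive Cheeger constant), but you should cite it explicitly rather than invoking Lemma~\ref{l:pi}. Your phrase ``the scaling structure of the Euclidean Gagliardo--Nirenberg inequality ensures an analogous absorption'' is not quite right: on a cover by balls of \emph{fixed} radius there is no scaling freedom, and the absorption genuinely comes from Poincar\'e. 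Finally, note that the Sobolev embedding you prove in your first step is already available as Lemma~\ref{l:se}, so for $1<p<d$ you can simply interpolate between $L^{p}$ and $L^{p^{\ast}}$ without re-deriving it.
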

This lemma can be proved using Littlewood-Paley projections defined (say) by the linear heat equation (see~\cite{IPS, LOS2}). In Lemma~\ref{l:gn}, we note that $\nrm{\nb f}_{L^{p}} \simeq \nrm{(-\Dlt)^{\frac{1}{2}} f}_{L^{p}}$ (see~\cite{Tat01hyp}). We omit the routine details.

\begin{lem}[Sobolev embedding]\label{l:se} 
Let $f \in C^{\infty}_{0}(\bbH^{d})$. Then for any $1 < p < d$ and $p \leq q < \infty$ with 
\begin{equation*}
	\frac{1}{q} = \frac{1}{p} - \frac{1}{d}
\end{equation*}
we have
\begin{equation} \label{eq:se}
	\nrm{f}_{L^{q}(\bbH^{d})} \aleq \nrm{\nb f}_{L^{p}(\bbH^{d})}
\end{equation}
\end{lem}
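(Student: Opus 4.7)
The plan is to first reduce \eqref{eq:se} to the boundedness of $(-\Delta)^{-1/2}$ as a map from $L^p(\bbH^d)$ to $L^q(\bbH^d)$, and then to establish this boundedness via heat-kernel methods. Using the norm equivalence $\nrm{\nabla f}_{L^p} \simeq \nrm{(-\Delta)^{1/2} f}_{L^p}$ for $1 < p < \infty$ noted above (and proved in \cite{Tat01hyp}), the inequality \eqref{eq:se} is equivalent to $\nrm{f}_{L^q} \lesssim \nrm{(-\Delta)^{1/2} f}_{L^p}$, i.e., to the boundedness $(-\Delta)^{-1/2}: L^p(\bbH^d) \to L^q(\bbH^d)$.

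Next I would employ the heat semigroup subordination formula
\begin{equation*}
(-\Delta)^{-1/2} f = \frac{1}{\sqrt{\pi}} \int_0^\infty s^{-1/2} e^{s\Delta} f \, ds,
\end{equation*}
together with the standard Gaussian upper bound on the heat kernel $K_s(x,y)$ of $\bbH^d$: there exist constants $c, C > 0$ such that $K_s(x,y) \lesssim s^{-d/2} e^{-c d(x,y)^2/s} e^{-c_d s}$ for all $s > 0$, where $c_d = \tfrac{(d-1)^2}{4}$ is the spectral gap of $-\Delta$ underlying Lemma~\ref{l:pi}. Integrating the above formula in $s$, the convolution kernel $G(x,y)$ of $(-\Delta)^{-1/2}$ satisfies $G(x,y) \lesssim d(x,y)^{-(d-1)}$ when $d(x,y) \le 1$, and $G(x,y) \lesssim e^{-c d(x,y)}$ when $d(x,y) \ge 1$. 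Thus $G$ agrees with the Euclidean Riesz kernel near the diagonal and enjoys strict exponential improvement at infinity.

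Finally, I would run a Hedberg-type argument: for any $\lambda > 0$ and $x \in \bbH^d$, split the integral defining $(-\Delta)^{-1/2} f(x)$ at $d(x,y) = \lambda$. Using the kernel bounds above, the near-field contribution is controlled by $\lambda \cdot M f(x)$, where $M$ denotes the Hardy-Littlewood maximal function on $\bbH^d$, by exploiting the Euclidean-like scaling of $G$ near the diagonal; the far-field contribution is controlled by $\lambda^{1 - d/p} \nrm{f}_{L^p}$ via H\"older (with the exponential decay of $G$ at infinity in fact making this step significantly easier than in the Euclidean setting). Optimizing in $\lambda$ yields the pointwise estimate
\begin{equation*}
\abs{(-\Delta)^{-1/2} f(x)} \lesssim (M f(x))^{p/q} \nrm{f}_{L^p(\bbH^d)}^{1 - p/q},
\end{equation*}
and the standard $L^p$-boundedness of $M$ on $\bbH^d$ for $1 < p < \infty$ then closes the argument by raising both sides to the $q$-th power and integrating.

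The main obstacle, I expect, is careful bookkeeping of the exponentially growing volume of balls in $\bbH^d$ when verifying the Gaussian kernel bounds and the resulting estimates for $G$; however, the exponential decay of the Riesz kernel at large distances more than compensates for this volume growth, so in fact the far-field contributions are tamer here than in Euclidean space, and all errors from the non-Euclidean geometry appear only at a subleading level.
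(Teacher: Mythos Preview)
The paper does not give a proof here; it simply cites \cite[Theorem~3.2]{Heb}. Your route---reducing to $L^p\to L^q$ boundedness of $(-\Delta)^{-1/2}$ via the Riesz-transform equivalence and then running a Hedberg argument with heat-kernel bounds on the Riesz kernel---is a reasonable direct approach, but there is a genuine gap in the far-field step.

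The Gaussian upper bound you write, $K_s(x,y)\lesssim s^{-d/2}e^{-c\,r^2/s-\rho^2 s}$ with $r=d(x,y)$ and $\rho=(d-1)/2$, integrates (via Laplace's method on $cr^2/s+\rho^2 s$) to at best $G(x,y)\lesssim e^{-2\rho\sqrt{c}\,r}$ for $r\ge 1$; since necessarily $c\le 1/4$, this gives only $G(r)\lesssim e^{-(d-1)r/2}$. Your H\"older step for the far field needs $G\,1_{\{r\ge 1\}}\in L^{p'}$ against the volume element $(\sinh r)^{d-1}\,dr\sim e^{(d-1)r}\,dr$, i.e., decay of $G$ strictly faster than $e^{-(d-1)r/p'}$. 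When $p\ge 2$ (so $p'\le 2$) the rate $(d-1)/2$ is insufficient and the integral diverges, so the claim that the exponential decay of $G$ ``makes this step significantly easier'' is not justified in that range. The fix is to use the sharp Davies--Mandouvalos bound on $\bbH^d$, which carries an additional factor $e^{-\rho r}$:
\[
K_s(x,y)\lesssim s^{-d/2}(1+r+s)^{(d-3)/2}(1+r)\,e^{-\rho^2 s-\rho r-r^2/(4s)}.
\]
Integrating this yields $G(r)\lesssim (1+r)^{C}e^{-(d-1)r}$ for $r\ge 1$, and then $\int_1^\infty G(r)^{p'}(\sinh r)^{d-1}\,dr<\infty$ for every $p'>1$, after which your Hedberg optimization (and the $L^p$-boundedness of the centered maximal function on $\bbH^d$) closes the argument for the full range $1<p<d$.
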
 
For a proof, we refer to \cite[Theorem~3.2]{Heb}.

We conclude with some statements related to the fractional Laplacian $(-\Dlt)^{\alp}$.
\begin{lem}[$L^{2}$ interpolation inequalities] \label{l:L2-int}
Let $f \in C^{\infty}_{0}(\bbH^{d})$. Then for $0 \leq \bt \leq \alp$ we have
\begin{equation*}
	\nrm{(-\Dlt)^{\bt} f}_{L^{2}(\bbH^{d})} \aleq \nrm{f}_{L^{2}(\bbH^{d})}^{1-\frac{\bt}{\alp}} \nrm{(-\Dlt)^{\alp} f}_{L^{2}(\bbH^{d})}^{\frac{\bt}{\alp}}
\end{equation*}
Moreover, for $\frac{d}{2} < \alp$ we have
\begin{equation*}
	\nrm{f}_{L^{\infty}(\bbH^{d})} \aleq \nrm{f}_{L^{2}(\bbH^{d})}^{1 - \frac{d}{2 \alp}} \nrm{(-\Dlt)^{\frac{\alp}{2}} f}_{L^{2}(\bbH^{d})}^{\frac{d}{2 \alp}}
\end{equation*}
\end{lem}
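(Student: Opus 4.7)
The first inequality is an immediate consequence of the spectral theorem for $-\Dlt$ on $L^{2}(\bbH^{d})$. Using the spectral decomposition $-\Dlt = \int_{\lambda_{0}}^{\infty} \lambda \, dE_{\lambda}$, where $\lambda_{0} = ((d-1)/2)^{2}$ is the bottom of the spectrum, one has $\|(-\Dlt)^{\beta} f\|_{L^{2}}^{2} = \int \lambda^{2\beta} \, d\mu$ with $d\mu := d\|E_{\lambda} f\|^{2}$. Writing the integrand as $(\lambda^{2\alpha})^{\beta/\alpha} \cdot 1^{1-\beta/\alpha}$ and applying H\"older's inequality with exponents $\alpha/\beta$ and $\alpha/(\alpha-\beta)$ against the spectral measure yields $\|(-\Dlt)^{\beta} f\|_{L^{2}}^{2} \leq \|(-\Dlt)^{\alpha} f\|_{L^{2}}^{2\beta/\alpha} \|f\|_{L^{2}}^{2(1-\beta/\alpha)}$, which gives the first inequality after taking square roots. (The endpoint cases $\beta = 0$ and $\beta = \alpha$ are trivial.)

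For the second inequality, the plan is to use the Littlewood-Paley decomposition based on the heat semigroup $e^{s\Dlt}$, in the spirit of the discussion in Section~\ref{s:main-idea} and developed in detail in \cite{IPS, LOS2} as already cited for Lemma~\ref{l:gn}. With dyadic heat-flow projections $P_{k}$ adapted to the frequency scale $2^{k}$, the Bernstein-type inequality $\|P_{k} f\|_{L^{\infty}} \lesssim 2^{kd/2} \|P_{k} f\|_{L^{2}}$ follows from the on-diagonal heat kernel bound $p_{s}(x,x) \lesssim s^{-d/2}$ for small $s > 0$ on $\bbH^{d}$. One then writes $f = \sum_{k \in \bbZ} P_{k} f$ and splits at a scale $K$ to be chosen: the low-frequency part $\sum_{k \leq K}$ is controlled by $\|P_{k} f\|_{L^{2}} \lesssim \|f\|_{L^{2}}$ and the geometric sum $\sum_{k \leq K} 2^{kd/2} \lesssim 2^{Kd/2}$, giving a bound of $\lesssim 2^{Kd/2} \|f\|_{L^{2}}$; the high-frequency part $\sum_{k > K}$ is estimated by writing $2^{kd/2} = 2^{k(d/2 - \alpha)} \cdot 2^{k\alpha}$ and applying Cauchy-Schwarz, noting that the geometric sum $\sum_{k > K} 2^{2k(d/2 - \alpha)}$ converges since $\alpha > d/2$, giving a bound of $\lesssim 2^{K(d/2-\alpha)} \|(-\Dlt)^{\alpha/2} f\|_{L^{2}}$. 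Optimizing $K$ by balancing these two contributions, i.e. choosing $2^{K\alpha} \simeq \|(-\Dlt)^{\alpha/2} f\|_{L^{2}} / \|f\|_{L^{2}}$, yields the claimed interpolated bound.

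The main obstacle is the careful verification of Bernstein-type inequalities and summability within the heat-based Littlewood-Paley framework on $\bbH^{d}$. However, this machinery is already implicit in the proofs of Lemma~\ref{l:gn} and of the basic Sobolev embedding in \cite{Heb, Tat01hyp, IPS, LOS2}, so one can largely cite these sources. The spectral gap of $-\Dlt$ on $\bbH^{d}$ ensures that the inhomogeneous and homogeneous Sobolev norms are comparable and provides exponential decay of the heat kernel for large $s$, which if anything simplifies the argument relative to the flat case $\bbR^{d}$.
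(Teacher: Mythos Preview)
Your proposal is correct and matches the paper's approach, which simply says ``Lemma~\ref{l:L2-int} can be proved using Littlewood-Paley projections as in Lemma~\ref{l:gn}'' without further detail. Your spectral-theorem argument for the first inequality is in fact cleaner than a full Littlewood-Paley decomposition (on $L^{2}$ the two coincide), and your heat-based Littlewood-Paley treatment of the second inequality is exactly what the paper has in mind.
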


\begin{lem}[Boundedness of Riesz transform] \label{l:riesz}
Let $f \in C^{\infty}_{0}(\bbH^{d})$. Then for $1 < p < \infty$ we have
\begin{equation*}
	\nrm{\nb f}_{L^{p}} \simeq \nrm{(-\Dlt)^{\frac{1}{2}} f}_{L^{p}}
\end{equation*}
\end{lem}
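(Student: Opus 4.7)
The plan is to reduce the equivalence to the $L^p$-boundedness of the Riesz transform $R := \nb(-\Dlt)^{-1/2}$ on $\bbH^d$. For $p = 2$, the result is in fact an identity: by integration by parts,
\begin{equation*}
\nrm{\nb f}_{L^2}^2 = \ang{f \mid -\Dlt f}_{L^2} = \nrm{(-\Dlt)^{1/2} f}_{L^2}^2
\end{equation*}
for $f \in C^\infty_0(\bbH^d)$. For general $p \in (1, \infty)$, the direction $\nrm{\nb f}_{L^p} \lesssim \nrm{(-\Dlt)^{1/2} f}_{L^p}$ amounts to applying $R$ to $g := (-\Dlt)^{1/2} f$. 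The reverse direction follows from the identity $(-\Dlt)^{1/2} f = -(-\Dlt)^{-1/2} \mathrm{div}_\h(\nb f)$ together with duality, since $(-\Dlt)^{-1/2} \mathrm{div}_\h$ is (up to sign) the formal adjoint of $R$, and is therefore bounded on $L^p$ provided $R$ is bounded on $L^{p'}$.

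The core task is thus to establish $L^p$-boundedness of $R$ on $\bbH^d$ for $1 < p < \infty$, a classical fact on real hyperbolic space (see, e.g., Tataru~\cite{Tat01hyp}; closely related results appear in the work of Lohou\'e, Anker, and Strichartz on Riesz transforms on noncompact rank-one symmetric spaces). My approach would use the subordination formula
\begin{equation*}
(-\Dlt)^{-1/2} = \frac{1}{\sqrt{\pi}} \int_0^\infty e^{t\Dlt} \frac{dt}{\sqrt{t}},
\end{equation*}
so that the integral kernel of $R$ can be controlled via sharp pointwise bounds on $\nb_x K_t(x, y)$, where $K_t$ is the heat kernel on $\bbH^d$ (the Davies--Mandouvalos estimates). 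In the local regime (small geodesic distance and small $t$) the kernel behaves like the Euclidean Riesz kernel; in the global regime the exponential volume growth of $\bbH^d$ is compensated by the exponential $L^2$-decay of $e^{t \Dlt}$ stemming from the spectral gap $-\Dlt \geq ((d-1)/2)^2$ of Lemma~\ref{l:pi}. One then verifies the standard size and H\"ormander-type regularity conditions and applies Calder\'on-Zygmund theory on the space of homogeneous type $\bbH^d$ to conclude.

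The main obstacle is checking these kernel conditions uniformly across both regimes: matching Euclidean-style singular-integral behavior near the diagonal with Calder\'on-Zygmund theory adapted to the exponential volume growth at infinity. Should the direct kernel analysis prove cumbersome, an alternative would be a Littlewood-Paley $g$-function argument, comparing $\nrm{(-\Dlt)^{1/2} f}_{L^p}$ and $\nrm{\nb f}_{L^p}$ to a common heat (or Poisson) square function and invoking vector-valued Calder\'on-Zygmund theory in the spirit of Stein; the difficulty then shifts to establishing $L^p$ bounds for the $g$-function, which again exploits the spectral gap and heat kernel bounds on $\bbH^d$.
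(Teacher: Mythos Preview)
The paper does not give its own proof of this lemma; it simply refers the reader to \cite[Theorem~6.1]{strichartz}. Your proposal goes further by sketching an actual argument, and the overall reduction---to $L^p$-boundedness of $R = \nb(-\Dlt)^{-1/2}$ and its adjoint, via the subordination formula and heat kernel bounds---is the right shape.

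There is, however, a genuine gap in the execution: $\bbH^d$ is \emph{not} a space of homogeneous type. Balls have exponential volume growth, so the doubling condition fails globally, and standard Calder\'on--Zygmund theory for homogeneous-type spaces does not apply as stated. You correctly observe that the spectral gap gives exponential decay of the heat kernel that offsets this growth, but that compensation cannot be packaged into the homogeneous-type framework. The usual fix is to split the kernel: a near-diagonal piece, where local doubling holds and local Calder\'on--Zygmund theory applies, and a far-from-diagonal piece, handled either by the Kunze--Stein phenomenon for the underlying semisimple group or by direct integral estimates exploiting the Davies--Mandouvalos bounds together with the spectral gap (as in the work of Anker and Lohou\'e that you cite). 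Your alternative $g$-function route would run into the same obstruction at the point where vector-valued Calder\'on--Zygmund theory is invoked. The Strichartz reference the paper uses proceeds instead via spherical Fourier analysis and explicit multiplier estimates on the symmetric space, bypassing the Calder\'on--Zygmund machinery altogether.
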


Lemma~\ref{l:L2-int} can be proved using Littlewood-Paley projections as in Lemma~\ref{l:gn}. For a proof of Lemma~\ref{l:riesz}, we refer to \cite[Theorem~6.1]{strichartz}.


\subsubsection{Norms for tensors and sections}
To extend the norms $L^{p}(\bbH^{d}; \bbR)$, $W^{k, p}(\bbH^{d}; \bbR)$ and $H^{k}(\bbH^{d}; \bbR)$ to vector-valued functions, we need to specify the pointwise norm for the vector. In the cases of interest, i.e., when the vector space is $\bbR^{n}$ or $\mathfrak{so}(n)$, we use 
\begin{equation} \label{eq:op-norm}
	\abs{\varphi}_{\bbR^{n}}^{2} = \brk{\varphi, \varphi}_{\bbR^{n}} := \sum_{k=1}^{n} (\varphi^{k})^{2}, \quad
	\abs{M}_{\mathfrak{so}(n)}^{2} = \brk{M, M}_{\mathfrak{so}(n)} := \sum_{i, j = 1}^{n} M^{2}_{ij}
\end{equation}
Observe that each norm arises from an inner product. When it is clear which one is being used from context, we will often omit the subscript $\bbR^{n}$ and $\mathfrak{so}(n)$. 
Using these pointwise norms, we define the function norms $L^{p}(\bbH^{d}; V)$, $W^{k, p}(\bbH^{d}; V)$ and $H^{k}(\bbH^{d}; V)$ ($V = \bbR, \bbR^{n}$ or $\mathfrak{so}(n)$)  in the componentwise fashion. Functional inequalities in Lemma~\ref{l:pi}--Lemma~\ref{l:se} as well as those in Lemma~\ref{l:L2-int} and Lemma~\ref{l:riesz} involving the fractional Laplacian still hold in this setting.

These norms may be further extended to $V$-valued tensors using the metric $\h$, in a similar fashion to \eqref{eq:abs-nb-ell-f}. For instance, for a $\bbR^{n}$-valued covariant tensor $\varphi_{a_{1} \cdots a_{\ell}}$, we define these spaces using the pointwise norm
\begin{equation*}
	\abs{\varphi}
	= \bb( \bfh^{a_{1} b_{1}} \cdots \bfh^{a_{\ell} b_{\ell}} \brk{\varphi_{a_{1} \cdots a_{\ell}}, \varphi_{b_{1} \cdots b_{\ell}}}\bb)^{\frac{1}{2}}
\end{equation*}

The basic functional inequalities stated in Lemma~\ref{l:pi}, Lemma~\ref{l:gn} and Lemma~\ref{l:se} remain valid in the tensorial case, thanks to the following result.

\begin{lem}[Diamagnetic inequality] \label{l:diamag}
Let $T$ be a smooth real-, vector- or matrix-valued tensor on $\bbH^{d}$. Then we have
\begin{equation} \label{eq:diamag}
	\abs{\nb \abs{T}} \leq \abs{\nb T}
\end{equation}
in the distributional sense.
\end{lem}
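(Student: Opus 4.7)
The strategy is to adapt the classical proof of Kato's diamagnetic inequality via regularization, taking care that the same computation works for arbitrary smooth tensors on $\Hp^d$ because the non-differentiability of $|T|$ only occurs on the zero set of $T$. Concretely, for each $\veps>0$ I would introduce the smooth approximation
\[
	g_\veps := \big(|T|^2+\veps^2\big)^{1/2}-\veps.
\]
Since $|T|^2$ is a contraction of $T$ with itself through the metric $\h$ and the fiber inner product in \eqref{eq:op-norm}, it is a smooth function on $\Hp^d$, so $g_\veps\in C^\infty(\Hp^d)$, and $g_\veps\to|T|$ uniformly on compact subsets as $\veps\to 0^+$.

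The central computation is pointwise. By the chain rule,
\[
	\nb_a g_\veps=\frac{\langle T,\nb_a T\rangle}{(|T|^2+\veps^2)^{1/2}},
\]
where $\langle\cdot,\cdot\rangle$ denotes the fiberwise inner product obtained by contracting all tensor indices of $T$ with $\h$ and pairing the fiber values via \eqref{eq:op-norm}. Fix $x\in\Hp^d$ and work in an orthonormal frame for $T_x\Hp^d$ so that $\h^{ab}=\dlt^{ab}$. Then applying Cauchy--Schwarz in the fiber over $x$ and using $|T|^2/(|T|^2+\veps^2)\le 1$ yields
\[
	|\nb g_\veps|^2(x)=\sum_a\frac{\langle T,\nb_a T\rangle^2}{|T|^2+\veps^2}\le \sum_a\frac{|T|^2\,|\nb_a T|^2}{|T|^2+\veps^2}\le |\nb T|^2(x).
\]

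Finally I would pass to the limit $\veps\to 0^+$ in the distributional sense. For any smooth compactly supported $1$-form $\phi$ on $\Hp^d$, integration by parts and uniform convergence of $g_\veps$ on $\supp\phi$ give
\[
	\int_{\Hp^d}\langle\nb g_\veps,\phi\rangle\,\dvol_\h\longrightarrow \int_{\Hp^d}\langle\nb|T|,\phi\rangle\,\dvol_\h,
\]
where the right-hand side is interpreted in $\mathcal{D}'$. Combining with the pointwise bound just proved,
\[
	\bigl|\langle\nb|T|,\phi\rangle\bigr|\le \limsup_{\veps\to 0}\int_{\Hp^d}|\nb g_\veps|\,|\phi|\,\dvol_\h\le \int_{\Hp^d}|\nb T|\,|\phi|\,\dvol_\h,
\]
which is precisely the distributional form of \eqref{eq:diamag}; by Riesz duality it is equivalent to the a.e.\ pointwise bound $|\nb|T||\le |\nb T|$ on the Lipschitz function $|T|$.

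\textbf{Main obstacle.} The only delicate issue is that $|T|$ is merely Lipschitz across the zero set $\{T=0\}$, so $\nb|T|$ need not exist classically there; this is exactly what the regularization by $g_\veps$ circumvents. The algebraic Cauchy--Schwarz step works uniformly in the type of tensor (real, $\bbR^n$- or $\mathfrak{so}(n)$-valued) because the fiber inner product in \eqref{eq:op-norm} is positive definite in each case, so no separate argument is needed for the three cases.
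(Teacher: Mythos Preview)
Your proof is correct and follows essentially the same approach as the paper: the core is the formal computation $\nb|T| = \langle T, \nb T\rangle/|T|$ followed by Cauchy--Schwarz, relying on the fact that the fiber inner product is parallel with respect to $\nb$. The paper only sketches this formal identity and omits the details; your regularization by $g_\veps = (|T|^2+\veps^2)^{1/2}-\veps$ is precisely the standard way to justify it rigorously across the zero set of $T$, which is the ``straightforward details'' the paper leaves out.
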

More precisely, \eqref{eq:diamag} holds when tested against smooth non-negative test functions. The proof proceeds by justifying the following formal computation:
\begin{equation*}
	\abs{\nb \abs{T}} = \abs{\nb \sqrt{\abs{T}^{2}}} = \abs{\frac{\brk{T, \nb T}}{\abs{T}}} \leq \abs{\nb T}
\end{equation*}
It is clear that the key structural property we rely on is the fact that the pointwise norm arises from an inner product, which is parallel with respect to the covariant derivative $\nb$ (i.e., $\nb \brk{X, Y} = \brk{\nb X, Y} + \brk{X, \nb Y}$). We omit the straightforward details for an actual proof.


Next, we consider objects defined on $\MM = \bbR \times \bbH^{d}$, or more generally on its subsets of the form $\MM_{I} = I \times \bbH^{d}$. Here the Lorentzian metric $\etab$ is unsuitable for defining pointwise norms for ($V$-valued) tensors, since it is not positive definite. To rectify this issue, we introduce a Riemannian metric $\tilde{\etab} = (\tilde{\etab}_{\alp \bt})$ defined by $\tilde{\etab}_{00} = 1$, $\tilde{\etab}_{0a} = 0$ and $\tilde{\etab}_{ab} = \bfh_{ab}$. Using this auxiliary metric $\tilde{\etab}$, we define appropriate pointwise norms in the same fashion as above. For example, if $\Psi = (\psi_{\alp})$ is a $\bbR^{n}$-valued 1-form on $\MM_{I}$, then by the pointwise norm $\abs{\Psi}$ of $\Psi$ we mean
\begin{equation} \label{eq:tilde-eta}
	\abs{\Psi} = \bb(  \tilde{\etab}^{\alp \bt} \brk{\psi_{\alp}, \psi_{\bt}} \bb)^{\frac{1}{2}} = \bb( \brk{\psi_{0}, \psi_{0}} + \bfh^{ab} \brk{\psi_{a}, \psi_{b}}\bb)^{\frac{1}{2}}
\end{equation}
Since the metric $\etab$ is independent of $t$ in the $(t, x^{a})$ coordinates on $\MM_{I}$, the auxiliary metric $\tilde{\etab}$ is parallel with respect to the Levi-Civita connection. Hence the diamagnetric inequality (Lemma~\ref{l:diamag}) holds for the above pointwise norms on $\MM_{I}$ as well, and 
Lemma~\ref{l:pi}, Lemma~\ref{l:gn} and Lemma~\ref{l:se} extend to norms of tensors on $\MM_{I}$.

\subsubsection{Norms for maps $ \Hp^d \to \NN \hookrightarrow \bbR^N$ } 
The main objects of study in the paper are smooth maps $u: \Hp^d \to \NN$ that tend to a fixed point $u_{\infty} \in \NN$ on the target at infinity, i.e.,
 \begin{equation} \label{eq:u-infty-intr}
	\bfd_{\calN}(u(x), u_{\infty}) \to 0 \mas \bfd(x, \zero) \to \infty
\end{equation}
where $\bfd$, respectively  $\bfd_{\NN}$, is the geodesic distance function on $\Hp^d$, respectively $\NN$.  For technical purposes, it will be convenient to define a Sobolev norm for maps $u: \Hp^{d} \to \NN$ using the extrinsic representation. Assuming that $\NN \hookrightarrow \bbR^{N}$ isometrically and viewing $u$ as an $\bbR^{N}$-valued function, \eqref{eq:u-infty-intr} is equivalent to
\ant{
 \abs{ u(x) - u_{\infty}}_{\R^N} \to 0 \mas \bfd(x, \zero) \to \infty
 }
 We will say that such a $u$ has finite $W^{s, p}$ norm with respect to $u_{\infty}$ if 
\EQ{
\| u- u_{\infty}\|_{W^{s, p}( \Hp^d; \R^N)} < \infty
}
where the norm $W^{s, p}(\Hp^{d}; \R^{N})$ for $\bbR^{N}$-valued functions is defined as above.
We write 
\EQ{
\| u\|_{W^{s, p}( \Hp^d; (\NN, u_{\infty}))}  := \| u- u_{\infty}\|_{W^{s, p}(\Hp^d; \R^N)}
}
The point of this notation is that one can make sense of the $L^p$ spaces along with various functional inequalities relative to the fixed base point $u_{\infty}$ on the target.  In particular,  appropriate versions of the inequalities in Lemma~\ref{l:pi}--Lemma~\ref{l:riesz} extend to smooth $\NN$-valued maps which tend to $u_{\infty}$ at infinity, as discussed above. 

Given a $u^{\ast} T \NN$-valued tensor $\phi$, which we view extrinsically as an $\bbR^{N}$-valued tensor, we also introduce the notation
\begin{equation} \label{eq:extr-TN-nrm}
	\nrm{\phi}_{W^{s, p}(\bbH^{d}; T \NN)} := \nrm{\phi}_{W^{s, p}(\bbH^{d}; \bbR^{N})}
\end{equation}
For any positive $s \in \bbN$, we have
\begin{equation}
	\nrm{d u}_{W^{s-1, p}(\bbH^{d}; T \NN)} \simeq \nrm{u}_{W^{s, p}(\bbH^{d}; (\NN, u_{\infty}))}
\end{equation}

\subsection{The linear heat flow on $\Hp^d$}

Our analysis will make use of several heat flows on hyperbolic space. The most basic is the linear heat equation, 
\EQ{\label{eq:heat} 
(\p_s - \De) f = 0
}
where, for the moment, $f$ is a real-valued function on $\bbH^{d}$.
Denote by $p(x, y; s)$ the \emph{heat kernel} on $\Hp^d$, that is, the kernel of the heat semi-group operator $e^{s \Delta}$ acting on real valued functions on $\Hp^d$, i.e., 
\EQ{ \label{eq:hk} 
e^{s \De} f(x) = \int_{\Hp^d} p(x, y; s) f(y) \dvol_{\h}
}
Let $J \subset \bbR^{+}$ be an interval extending from $0$, $f_{0} \in L^{2}(\bbH^{d})$ and $G \in L^{1}(J; L^{2}(\bbH^{d}))$. The unique solution in $C([0, \infty); L^{2}(\bbH^{d}))$ to the inhomogeneous equation 
\EQ{ \label{eq:heat-ih}
\p_s f - \De f = G , \quad 
f \!\!\restriction_{s=0} = f_0
}
is given by  Duhamel's formula, 
\EQ{
f(s) = e^{s\De} f_0 + \int_0^s e^{(s-s') \De}G(s') \, ds'
}

\subsubsection{$L^{2}$ regularity theory} 

The starting point for the linear heat equation~\eqref{eq:heat-ih} is the $L^2$-based regularity theory. We first sketch the energy integral approach for \eqref{eq:heat-ih}, which only relies on integration by parts. Based on this approach, we establish a more refined estimate for the homogeneous case \eqref{eq:hk} and present a simple extension to the inhomogeneous case \eqref{eq:heat-ih}, which will be used in Section~\ref{s:caloric} to establish well-posedness theory for the harmonic map heat flow.

Let $f(s, \cdot)$ be a solution to \eqref{eq:heat-ih}, where we assume both $f$ and $G$ to be smooth and decaying sufficiently fast at the spatial infinity; the precise assumptions needed will be clear for each estimate we state below. Multiplying \eqref{eq:heat-ih} by $f$ and integrating by parts over $\bbH^{d}$, we obtain
\begin{align*}
	\brk{G \mid f}
	= \brk{\rd_{s} f \mid f}
	+\brk{-\Dlt f \mid f}
	= \frac{1}{2} \rd_{s} \nrm{f}_{L^{2}}^{2}
	+ s^{-1} \nrm{s^{\frac{1}{2}} \nb f}_{L^{2}}^{2}
\end{align*}
Integrating over the $s$-intervals $(0, s)$ for $s \in J$, using H\"older's inequality and taking the square root, we arrive at
\begin{equation} \label{eq:ih-en}
	\nrm{f}_{L^{\infty}_{\ds}(J; L^{2})}
	+ \nrm{s^{\frac{1}{2}} \nb f}_{L^{2}_{\ds}(J; L^{2})}
	\aleq \nrm{f_{0}}_{L^{2}} + \nrm{G}_{L^{1}_{\ds}(J; L^{2})}
\end{equation}
This bound is useful from the point of view of Cauchy problem, since the right-hand side depends only on the data $f_{0}$ and $G$. 

On the other hand, multiplying \eqref{eq:heat-ih} by $s^{\alp} (-\Dlt)^{\alp} f$ with an appropriate $\alp \neq 0$, we obtain the following sharp $L^{2}$ parabolic regularity bound.
\begin{lem} \label{lem:ih-reg}
Let $J \subset \bbR^{+}$ be an interval extending from $0$. Let $f$ be a solution to \eqref{eq:heat-ih} with $f_{0} \in L^{2}$ and
\begin{equation*}
	s^{1+k} (-\Dlt)^{k} G \in L^{2}_{\ds}(J; L^{2}(\bbH^{d})), \quad
	s^{k} (-\Dlt)^{k} f  \in L^{2}_{\ds}(J; L^{2}(\bbH^{d}))
\end{equation*}
for some $k \geq 0$. Then $f$ obeys the bound
\begin{equation} \label{eq:ih-reg}
\begin{aligned}
& \hskip-2em
	\nrm{s^{k+\frac{1}{2}} (-\Dlt)^{k+\frac{1}{2}} f}_{L^{\infty}_{\ds}(J; L^{2})}
	+ \nrm{s^{k+1} (-\Dlt)^{k+1} f}_{L^{2}_{\ds}(J; L^{2})}  \\
	\aleq & \nrm{s^{k} (-\Dlt)^{k} f}_{L^{2}_{\ds}(J; L^{2})}
	+ \nrm{s^{k+1} (-\Dlt)^{k} G}_{L^{2}_{\ds}(J; L^{2})}
\end{aligned}
\end{equation}
\end{lem}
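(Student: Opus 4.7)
The plan is to generalize the energy identity giving \eqref{eq:ih-en} by pairing \eqref{eq:heat-ih} with the test function $s^{2k+1}(-\Dlt)^{2k+1} f$ in $L^{2}(\bbH^{d})$ and integrating in $s$ over $(0, s_{*})$ for $s_{*} \in J$. Using the self-adjointness of powers of $-\Dlt$ on $L^{2}(\bbH^{d})$,
\begin{align*}
\langle \rd_{s}f \mid s^{2k+1}(-\Dlt)^{2k+1}f \rangle_{L^{2}}
& = \tfrac{1}{2}\rd_{s}\bigl(s^{2k+1}\|(-\Dlt)^{k+\frac{1}{2}}f\|_{L^{2}}^{2}\bigr) - \tfrac{2k+1}{2} s^{2k}\|(-\Dlt)^{k+\frac{1}{2}}f\|_{L^{2}}^{2}, \\
\langle -\Dlt f \mid s^{2k+1}(-\Dlt)^{2k+1}f \rangle_{L^{2}}
& = s^{2k+1}\|(-\Dlt)^{k+1}f\|_{L^{2}}^{2}.
\end{align*}
Modulo justification of the vanishing of the boundary contribution at $s = 0^{+}$, integrating in $ds$ and rewriting the result in the measure $\ds$ produces the identity
\begin{equation*}
\tfrac{1}{2}\|s^{k+\frac{1}{2}}(-\Dlt)^{k+\frac{1}{2}}f(s_{*})\|_{L^{2}}^{2} + \|s^{k+1}(-\Dlt)^{k+1}f\|_{L^{2}_{\ds}((0,s_{*});L^{2})}^{2} = \mathrm{I} + \mathrm{II},
\end{equation*}
where $\mathrm{I} = \tfrac{2k+1}{2}\int_{0}^{s_{*}} s^{2k+1}\|(-\Dlt)^{k+\frac{1}{2}}f\|_{L^{2}}^{2}\,\ds$ and $\mathrm{II} = \int_{0}^{s_{*}} s^{2k+1}\langle G \mid (-\Dlt)^{2k+1}f \rangle_{L^{2}}\, ds$.

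Next I would redistribute derivatives by self-adjointness and apply Cauchy--Schwarz to convert $\mathrm{I}$ and $\mathrm{II}$ into the data-controlled quantities on the right-hand side of \eqref{eq:ih-reg}. For $\mathrm{I}$, the identity $\|(-\Dlt)^{k+\frac{1}{2}}f\|_{L^{2}}^{2} = \langle (-\Dlt)^{k}f \mid (-\Dlt)^{k+1}f \rangle_{L^{2}}$ together with the weight split $s^{2k+1} = s^{k}\cdot s^{k+1}$ gives the bound $\mathrm{I} \aleq \|s^{k}(-\Dlt)^{k}f\|_{L^{2}_{\ds}}\|s^{k+1}(-\Dlt)^{k+1}f\|_{L^{2}_{\ds}}$. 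For $\mathrm{II}$, redistributing the derivatives via $\langle G \mid (-\Dlt)^{2k+1}f \rangle = \langle (-\Dlt)^{k}G \mid (-\Dlt)^{k+1}f \rangle$ and using $s^{2k+1}\, ds = s^{k+1}\cdot s^{k+1}\,\ds$ yields $\mathrm{II} \aleq \|s^{k+1}(-\Dlt)^{k}G\|_{L^{2}_{\ds}}\|s^{k+1}(-\Dlt)^{k+1}f\|_{L^{2}_{\ds}}$.

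Young's inequality with a small parameter $\eps > 0$ then peels off a term $\eps\|s^{k+1}(-\Dlt)^{k+1}f\|_{L^{2}_{\ds}}^{2}$ from each estimate, which is absorbed into the second term on the left-hand side; the remaining contributions match exactly the right-hand side of \eqref{eq:ih-reg}. Taking the supremum in $s_{*} \in J$ controls the $L^{\infty}_{\ds}$ norm and completes the proof. The main technical point (rather than an obstacle) is the justification of the integration-by-parts step at $s = 0^{+}$: this I would handle by a standard density argument, approximating $f_{0}$ and $G$ by smooth, rapidly decaying data for which the parabolic smoothing of $e^{s\Dlt}$ renders the boundary term harmless, and then passing to the limit using the finiteness of $\|s^{k}(-\Dlt)^{k}f\|_{L^{2}_{\ds}(J;L^{2})}$ assumed in the hypotheses.
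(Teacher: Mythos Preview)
Your proposal is correct and essentially identical to the paper's proof: both multiply \eqref{eq:heat-ih} by $s^{2k+1}(-\Dlt)^{2k+1} f$, redistribute derivatives via self-adjointness to produce the pairings $\langle s^{k}(-\Dlt)^{k} f \mid s^{k+1}(-\Dlt)^{k+1} f\rangle$ and $\langle s^{k+1}(-\Dlt)^{k} G \mid s^{k+1}(-\Dlt)^{k+1} f\rangle$, apply Cauchy--Schwarz, and absorb. The only cosmetic difference is that the paper handles the boundary at $s=0^{+}$ by integrating over $(s',s_{0})$ and letting $s'\to 0$, whereas you invoke a density argument; both are fine.
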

\begin{proof} 
Multiplying the equation \eqref{eq:heat-ih} by $s^{2 k+1} (-\Dlt)^{2k+1} f$, we have
\begin{align*}
	\brk{\rd_{s} f \mid s^{2 k+1} (-\Dlt)^{2k+1} f}
	+\brk{-\Dlt f \mid s^{2 k+1} (-\Dlt)^{2k+1} f} = \brk{G \mid s^{2 k+1} (-\Dlt)^{2k+1} f}
\end{align*}
Using functional calculus of the self-adjoint operator $-\Dlt$, the previous identity can be rewritten as
\begin{align*}
& \hskip-2em
	\frac{1}{2} \rd_{s} \nrm{s^{k+\frac{1}{2}} (-\Dlt)^{k+\frac{1}{2}} f}_{L^{2}}^{2}
	+ s^{-1}\nrm{s^{k+1} (-\Dlt)^{k+1} f}_{L^{2}}^{2}  \\
	=& \bb(k+\frac{1}{2} \bb) s^{-1} \brk{s^{k} (-\Dlt)^{k} f \mid s^{k+1} (-\Dlt)^{k+1} f} \\
	& + s^{-1} \brk{s^{k+1} (-\Dlt)^{k} G \mid s^{k+1} (-\Dlt)^{k+1} f}
\end{align*}
We may write $J = (0, s_{0})$ for some $s_{0} > 0$. Integrating over the $s$-interval $(s', s_{0})$ and taking $s' \to 0$, we obtain
\begin{align*}
& \hskip-2em
	\frac{1}{2} \nrm{s^{k+\frac{1}{2}} (-\Dlt)^{k+\frac{1}{2}} f}_{L^{\infty}_{\ds}(J; L^{2})}^{2}
	+ \nrm{s^{k+1} (-\Dlt)^{k+1} f}_{L^{2}_{\ds}(J; L^{2})}^{2} \\
	 \le &  \Big|\int_{J} \brk{( k+\frac{1}{2} ) s^{k} (-\Dlt)^{k} f + s^{k+1} (-\Dlt)^{k} G \mid s^{k+1} (-\Dlt)^{k+1} f} \, \ds\Big|
\end{align*}
Applying Cauchy-Schwarz to the right-hand side and absorbing $s^{k+1} (-\Dlt)^{k+1} f$ on the left-hand side, the desired inequality \eqref{eq:ih-reg} follows.
\end{proof}

We now return to the study of $L^{2}$ theory of \eqref{eq:heat-ih} from the point of view of the Cauchy problem.
The starting point is a thorough understanding of the homogenous case, where $G = 0$ and $f(s) = e^{s \Dlt} f_{0}$. To simplify the notation, we will often drop the subscript $0$ in $f_{0}$. Starting from \eqref{eq:ih-en} and applying \eqref{eq:ih-reg} inductively for $k = \frac{1}{2}, \frac{3}{2}, \ldots$, the following $\Ls^{\infty}$ estimate holds.
 \begin{lem} \label{lem:hh-L2:infty}
Let $f \in L^{2}(\bbH^{d})$ and $\alp \geq 0$. Then
 \begin{equation} \label{eq:hh-L2:infty}
	\nrm{s^{\alp} (-\Dlt)^{\alp} e^{s \Dlt} f}_{L^{2}(\bbH^{d})} \aleq_{\alp} \nrm{f}_{L^{2}(\bbH^{d})}
\end{equation}
\end{lem}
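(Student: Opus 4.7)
The plan is to follow the scheme indicated in the paragraph just preceding the statement: iterate the parabolic regularity estimate \eqref{eq:ih-reg} (with $G = 0$) in half-integer increments $k = 0, \tfrac{1}{2}, 1, \tfrac{3}{2}, \ldots$ to obtain \eqref{eq:hh-L2:infty} for every half-integer $\alp$, then fill in the remaining values of $\alp$ by interpolation. Set $f(s) := e^{s\Dlt} f_{0}$. The estimate is trivial at $\alp = 0$ by the $L^{2}$ contraction property of $e^{s\Dlt}$, and the energy estimate \eqref{eq:ih-en} with $G = 0$ and $J = (0, \infty)$, combined with the Riesz transform bound of Lemma~\ref{l:riesz}, gives the base $\Ls^{2}$ input
\begin{equation*}
	\nrm{s^{\frac{1}{2}} (-\Dlt)^{\frac{1}{2}} f}_{\Ls^{2}((0,\infty); L^{2})} \aleq \nrm{f_{0}}_{L^{2}}.
\end{equation*}

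Inductive step: suppose for some half-integer $k \geq \tfrac{1}{2}$ that
\begin{equation*}
	\nrm{s^{k} (-\Dlt)^{k} f}_{\Ls^{2}((0,\infty); L^{2})} \aleq_{k} \nrm{f_{0}}_{L^{2}}.
\end{equation*}
Apply Lemma~\ref{lem:ih-reg} with $G = 0$ and this $k$ to obtain simultaneously
\begin{equation*}
	\nrm{s^{k+\frac{1}{2}} (-\Dlt)^{k+\frac{1}{2}} f}_{\Ls^{\infty}((0,\infty); L^{2})}
	+ \nrm{s^{k+1} (-\Dlt)^{k+1} f}_{\Ls^{2}((0,\infty); L^{2})} \aleq_{k} \nrm{f_{0}}_{L^{2}}.
\end{equation*}
The first term is exactly \eqref{eq:hh-L2:infty} at $\alp = k + \tfrac{1}{2}$: since $s \mapsto (-\Dlt)^{k+\frac{1}{2}} e^{s \Dlt} f_{0}$ is continuous into $L^{2}$ for $s > 0$ (by the strong continuity and regularizing property of the heat semigroup applied to $f_{0}$), the essential supremum coincides with the pointwise-in-$s$ value. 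The second term is precisely the hypothesis needed to re-run the step at level $k + \tfrac{1}{2}$. Thus \eqref{eq:hh-L2:infty} holds at every half-integer $\alp \geq 0$.

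For an arbitrary $\alp \geq 0$, choose half-integers $0 \leq \alp_{-} \leq \alp \leq \alp_{+}$ and apply the $L^{2}$ interpolation inequality of Lemma~\ref{l:L2-int} to $(-\Dlt)^{\alp_{-}} e^{s\Dlt} f_{0}$ at exponent $\alp_{+} - \alp_{-}$ (with intermediate exponent $\alp - \alp_{-}$), then distribute the appropriate powers of $s$ by writing $s^{\alp} = s^{\alp_{-}(1-\tht)} s^{\alp_{+} \tht}$ with $\tht = (\alp - \alp_{-})/(\alp_{+} - \alp_{-})$. Both the left and right factors are controlled by $\nrm{f_{0}}_{L^{2}}$ by the half-integer case already established, yielding \eqref{eq:hh-L2:infty} for the intermediate $\alp$.

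\textbf{Main obstacle.} There is no substantive difficulty; the only mild technical point is that Lemma~\ref{lem:ih-reg} presupposes the integrability of $s^{k}(-\Dlt)^{k} f$ in $\Ls^{2}((0,\infty); L^{2})$, which is not a priori obvious for rough $f_{0} \in L^{2}$. One handles this by first running the induction for $f_{0} \in C_{0}^{\infty}(\bbH^{d})$, for which all the norms in play are finite (by the smoothing property of $e^{s\Dlt}$ on smooth compactly supported data and decay as $s \to \infty$ coming from the spectral gap of $-\Dlt$ in Lemma~\ref{l:pi}), extracting constants $C_{\alp}$ that depend only on $\alp$, and finally passing to a general $f_{0} \in L^{2}$ by density.
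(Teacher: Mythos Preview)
Your proof is correct and follows exactly the scheme the paper indicates: start from \eqref{eq:ih-en}, iterate \eqref{eq:ih-reg} with $G=0$, and (as you add explicitly) interpolate via Lemma~\ref{l:L2-int} for general $\alp$. One minor indexing slip: each application of \eqref{eq:ih-reg} advances the $\Ls^{2}$ input from level $k$ to level $k+1$ (not $k+\tfrac{1}{2}$), so starting from $k=\tfrac{1}{2}$ the induction directly yields the $\Ls^{\infty}$ bound only at the integers $\alp=1,2,3,\ldots$ rather than at all half-integers as you state---but this is harmless, since your interpolation step then fills in every $\alp\geq 0$.
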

By the same argument, we obtain appropriately $s$-weighted $\Ls^2$ estimates for the $L^2(\Hp^d)$ norm of all derivatives $e^{s\De}f$, but  not for the $L^2$ norm of $e^{s\De}f$ itself. To compensate for this in the sequel we will also require the following $s$-weighted $\Ls^2$ estimate on the $L^2$ norm of $\alp$ derivatives of $e^{s\De}f$ for any $\alp > 0$. 
\begin{lem} \label{lem:hh-L2:2} Let $f \in L^2(\Hp^d)$ and $\alp > 0$. Then, 
\EQ{ \label{eq:hh-L2:2}
\bb( \int_0^\infty\| s^{\alp} (-\De)^{\alp} e^{s \De} f\|_{L^2(\Hp^d)}^2 \, \ds \bb)^{1/2} \lesssim_{\alp} \| f\|_{L^2(\Hp^d)}
}

\end{lem}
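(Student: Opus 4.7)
The plan is to reduce the estimate to a one-dimensional calculation on the spectrum of $-\Dlt$ by invoking the spectral theorem. Since $-\Dlt$ is a positive self-adjoint operator on $L^{2}(\bbH^{d})$, it admits a spectral resolution $-\Dlt = \int \lmb \, dE_{\lmb}$, and for any Borel function $F$ on $[0,\infty)$ and any $f \in L^{2}(\bbH^{d})$ we have the Plancherel-type identity
\begin{equation*}
\|F(-\Dlt) f\|_{L^{2}}^{2} = \int_{0}^{\infty} |F(\lmb)|^{2} \, d\mu_{f}(\lmb),
\end{equation*}
where $d\mu_{f}(\lmb) := d\|E_{\lmb} f\|_{L^{2}}^{2}$ is a positive Borel measure with total mass $\|f\|_{L^{2}}^{2}$.

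Applied with $F(\lmb) = s^{\alp} \lmb^{\alp} e^{-s \lmb}$, this gives
\begin{equation*}
\|s^{\alp}(-\Dlt)^{\alp} e^{s \Dlt} f\|_{L^{2}}^{2} = \int_{0}^{\infty} (s \lmb)^{2\alp} e^{-2 s \lmb} \, d\mu_{f}(\lmb).
\end{equation*}
Both the integrand and the measure $\ds/s$ on $(0,\infty)$ being non-negative, Tonelli's theorem lets us exchange the orders of integration:
\begin{equation*}
\int_{0}^{\infty} \|s^{\alp}(-\Dlt)^{\alp} e^{s \Dlt} f\|_{L^{2}}^{2} \, \frac{ds}{s}
= \int_{0}^{\infty} \left( \int_{0}^{\infty} (s \lmb)^{2 \alp} e^{-2 s \lmb} \, \frac{ds}{s} \right) d\mu_{f}(\lmb).
\end{equation*}

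For each $\lmb > 0$, the change of variables $u = 2 s \lmb$ in the inner integral eliminates $\lmb$ and produces a universal constant
\begin{equation*}
\int_{0}^{\infty}(s\lmb)^{2\alp} e^{-2s\lmb} \, \frac{ds}{s} = 2^{-2\alp} \int_{0}^{\infty} u^{2 \alp} e^{-u} \, \frac{du}{u} = 2^{-2\alp} \Gmm(2 \alp) =: C_{\alp},
\end{equation*}
which is finite exactly because $\alp > 0$ (this is the only step where the hypothesis is used, and it is the sole, essentially trivial, obstacle). Plugging back we obtain
\begin{equation*}
\int_{0}^{\infty} \|s^{\alp}(-\Dlt)^{\alp} e^{s \Dlt} f\|_{L^{2}}^{2} \, \frac{ds}{s} = C_{\alp} \int_{0}^{\infty} d\mu_{f}(\lmb) = C_{\alp} \|f\|_{L^{2}}^{2},
\end{equation*}
which gives \eqref{eq:hh-L2:2} with constant $C_{\alp}^{1/2} = 2^{-\alp} \Gmm(2\alp)^{1/2}$. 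Note that the spectral gap $\lmb \geq \bigl(\tfrac{d-1}{2}\bigr)^{2}$ is not needed here; indeed, the bound would hold for any non-negative self-adjoint operator. As a cross-check, the scheme reproduces Lemma~\ref{lem:hh-L2:infty} if instead one takes $\sup_{s > 0}$ in place of $\int_0^\infty \ds$, using $\sup_{s > 0} (s\lmb)^{2\alp} e^{-2s\lmb} = (\alp/e)^{2\alp}$.
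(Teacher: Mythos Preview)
Your proof is correct and takes a genuinely different route from the paper's. The paper proves \eqref{eq:hh-L2:2} via a $TT^{*}$ argument: it sets $Tf := e^{s\Dlt} f$, reduces to the dual bound $\nrm{T^{*}F}_{L^{2}} \aleq \nrm{s^{-\alp}(-\Dlt)^{-\alp} F}_{L^{2}_{\ds} L^{2}}$, expands $\nrm{T^{*}F}_{L^{2}}^{2}$ as a double integral in $s, s'$, and then applies Cauchy--Schwarz, the already-established pointwise bound \eqref{eq:hh-L2:infty}, and Schur's test with kernel $(s/s')^{\alp}$ on $\set{s \leq s'}$. Your argument instead goes straight to the spectral theorem, reduces the estimate to the Gamma integral $\int_{0}^{\infty} u^{2\alp} e^{-u} \, \tfrac{du}{u} = \Gmm(2\alp)$, and even yields the sharp constant $2^{-\alp}\Gmm(2\alp)^{1/2}$. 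The spectral approach is more elementary and self-contained (it does not rely on Lemma~\ref{lem:hh-L2:infty}), while the paper's $TT^{*}$/Schur approach is more in the spirit of the surrounding parabolic estimates and would transplant more readily to settings where an explicit spectral calculus is unavailable.
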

\begin{proof} Let $Y= Y( \R^+ \times \Hp^d)$, and $Y^* = Y^*(\R \times \Hp^d)$  denote the Banach spaces defined by the norms 
\ant{
\| g\|_{Y}^2 :=  \int_0^\infty \| s^{\alp} (-\De)^{\alp} g(s)\|_{L^2}^2 \, \ds, \quad \| F\|_{Y^*}^2 =  \int_0^\infty \| s^{-\alp} (-\De)^{-\alp} F(s)\|_{L^2}^2 \, \ds
}
Define  $T f := e^{s \De} f$. Our goal is to show that $T: L^{2}(\Hp^d) \to Y(\R^+ \times \Hp^d)$ with bounded norm. By the $TT^*$ method it suffices to prove the dual estimate 
\EQ{ \label{eq:Td} 
\| T^* F\|_{L^2(\Hp^d)} \lesssim \| F\|_{Y^*( \R^+ \times \Hp^d)}
}
where $T^* F := \int_0^\I e^{s \De} F(s) \, \ds$. To prove~\eqref{eq:Td} we write $G = s^{-\alp} (-\De)^{\alp} F$ and square the left-hand side above.  We have 
\ant{
\Big\| \int_0^\infty &e^{s \De} F(s) \, \ds  \Big\|_{L^2}^2    \\
 &= \int_0^\infty \int_0^\infty \ang{ s^{\alp}e^{s\De} (-\De)^{\alp} G(s) \mid (s')^{\alp} e^{s'\De}(-\De)^{\alp} G(s')}_{L^2} \, \ds \dsp \\
 & = \int_0^\infty \int_0^\infty s^{\alp} (s')^{\alp}\ang{  G(s) \mid (-\De)^{2 \alp} e^{(s+s')\De} G(s')}_{L^2} \, \ds \dsp \\
 & = 2\int_0^\infty \int_s^\infty s^{\alp} (s')^{\alp}\ang{  G(s) \mid (-\De)^{2 \alp} e^{(s+s')\De} G(s')}_{L^2} \, \ds \dsp 
}
 where the last line follows by symmetry in $s$, $s'$. Using Cauchy-Schwarz and the estimate~\eqref{eq:hh-L2:infty} we can bound the last line above by 
 \ant{
 2\int_0^\infty \int_s^\infty &s^{\alp} (s')^{\alp}\ang{  G(s) \mid (-\De)^{2 \alp}e^{(s+s')\De} G(s')}_{L^2} \, \ds \dsp  \\
 & \lesssim  \int_0^\infty \int_s^\infty \frac{s^{\alp} (s')^{\alp}}{ (s+s')^{2 \alp}}  \| G(s)\|_{L^2} \| G(s')\|_{L^2} \,  \ds \dsp \\
 & \lesssim  \int_0^\infty \int_s^\infty\left( \frac{s}{s'}\right)^{\alp}   \| G(s)\|_{L^2} \| G(s')\|_{L^2} \,  \ds \dsp \\
 & \lesssim \|G\|_{\Ls^2 L^2}^2 \simeq \| s^{-\alp} (-\De)^{-\alp} F\|_{\Ls^2 L^2}^2
 }
where the last line follows from Schur's test. This completes the proof. 
\end{proof} 

The above homogeneous $L^{2}$ estimates extend to the inhomogeneous case \eqref{eq:heat-ih} via the following estimates for the Duhamel integral.
\begin{lem} \label{lem:ih}
Let $J \subset \bbR^{+}$ be an interval extending from $0$, i.e., $J = (0, s_{0}]$ for some $s_{0} > 0$. Let $G$ satisfy $s G \in L^{1}_{\ds}(J; L^{2}(\bbH^{d}))$.  Then for $0 \leq \alp < 1$, we have
\begin{align*}
	\nrm{s^{\alp} (-\Dlt)^{\alp} \int_{0}^{s} s' e^{(s - s') \De} G(s') \, \frac{d s'}{s'}}_{L^{\infty}_{\ds}(J; L^{2}(\bbH^{d}))}
	\aleq_{\alp} & \nrm{s G}_{L^{1}_{\ds} \cap L^{\infty}_{\ds} (J; L^{2}(\bbH^{d}))} 
\end{align*}
For $0 < \alp < 1$, we have 
\begin{align*}
	\nrm{s^{\alp} (-\Dlt)^{\alp} \int_{0}^{s} s' e^{(s - s') \De} G(s') \, \frac{d s'}{s'}}_{L^{2}_{\ds}(J; L^{2}(\bbH^{d}))}
	\aleq_{\alp} & \nrm{s G}_{L^{1}_{\ds} \cap L^{2}_{\ds} (J; L^{2}(\bbH^{d}))}
\end{align*}
Finally, corresponding to the case $\alp = 1$ we have
\begin{align*} 
	& \nrm{s \rd_{s} \int_{0}^{s} s' e^{(s - s') \De} G(s') \, \dsp}_{L^{2}_{\ds}(J; L^{2}(\bbH^{d}))}	\\
	& + \nrm{s \Dlt \int_{0}^{s} s' e^{(s - s') \De} G(s') \, \dsp}_{L^{2}_{\ds}(J; L^{2}(\bbH^{d}))} 
	\aleq  \nrm{s G}_{L^{1}_{\ds} \cap L^{2}_{\ds} (J; L^{2}(\bbH^{d}))}
\end{align*}
\end{lem}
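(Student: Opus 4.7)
The strategy is to decompose the Duhamel integral $f(s) := \int_0^s e^{(s-s')\Dlt} G(s') \, ds'$ into an \emph{early piece} over $s' \in (0, s/2]$ and a \emph{late piece} over $s' \in [s/2, s)$, and estimate each via the homogeneous bounds from Lemma~\ref{lem:hh-L2:infty} and Lemma~\ref{lem:hh-L2:2}. The main operator-norm tool is the spectral bound $\nrm{s^\alp (-\Dlt)^\alp e^{\sgm \Dlt}}_{L^2 \to L^2} \lec_\alp (s/\sgm)^\alp$ for $0 \leq \alp \leq 1$, which follows directly from $\sup_{\lmb > 0} (s\lmb)^\alp e^{-\sgm \lmb} = C_\alp (s/\sgm)^\alp$.

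On the \emph{late piece}, where $s - s' \leq s/2$, the substitution $w = \log(s/s') \in [0, \log 2]$ converts the $s'$-integral into an additive convolution in the log-time variable $u = \log s$ against the kernel $K_\alp(w) := (1-e^{-w})^{-\alp} \mathbf{1}_{[0, \log 2]}(w)$. Since $K_\alp(w) \sim w^{-\alp}$ as $w \to 0^+$, this kernel lies in $L^1(dw)$ precisely when $\alp < 1$, and Young's inequality then yields $\nrm{\mathrm{late}}_{L^p_{\ds}L^2} \lec_\alp \nrm{sG}_{L^p_{\ds}L^2}$ for every $p \in [1, \infty]$; the choices $p = \infty$ and $p = 2$ provide the late-piece contributions to the first two estimates. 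On the \emph{early piece}, where $s - s' \geq s/2$, the operator-norm factor $(s/(s-s'))^\alp$ is uniformly bounded by $2^\alp$; this directly gives the $L^\infty_{\ds}$ bound via $\int_0^{s/2} \nrm{G(s')}_{L^2} \, ds' \leq \nrm{sG}_{L^1_{\ds}L^2}$. For the $L^2_{\ds}$ bound on the early piece I argue by Minkowski in $s'$: writing $a_{s'}(s) := s^\alp (-\Dlt)^\alp e^{(s-s')\Dlt} G(s') \mathbf{1}_{\set{s \geq 2s'}}$ and substituting $\sgm = s - s'$, the relations $s \leq 2 \sgm$ and $\ds/s \leq d\sgm/\sgm$ together with Lemma~\ref{lem:hh-L2:2} (valid for $\alp > 0$) give $\nrm{a_{s'}}_{L^2_{\ds}L^2} \lec_\alp \nrm{G(s')}_{L^2}$; Minkowski's inequality in $s'$ then yields the bound by $\nrm{sG}_{L^1_{\ds}L^2}$.

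The endpoint $\alp = 1$ is the most delicate case, as the kernel $K_1(w) \sim 1/w$ near $w = 0$ is not integrable and the naive Young's argument fails. The early-piece Minkowski estimate still applies with $\alp = 1$ and controls the early-piece contribution. For the late piece of $s \Dlt f$, I combine the elementary identity $\int_{s/2}^s s \Dlt e^{(s-s') \Dlt} \, ds' = s(I - e^{s\Dlt/2})$ with the splitting $G(s') = G(s) + (G(s') - G(s))$: the constant-in-$s'$ contribution is controlled in $L^2_{\ds}L^2$ by $\nrm{sG}_{L^2_{\ds}L^2}$ using the pointwise spectral bound $\nrm{s(I - e^{s \Dlt/2})}_{L^2 \to L^2} \leq s$, while the mean-zero difference contribution is tamed by exploiting the vanishing $G(s') - G(s) \to 0$ as $s' \to s$ against the singular $1/w$ kernel, via a cancellation-based singular-integral estimate. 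The bound on $\nrm{s \rd_s f}_{L^2_{\ds}L^2}$ then follows from the heat equation identity $s \rd_s f = sG + s \Dlt f$ and the triangle inequality. The main obstacle throughout is precisely this endpoint: the failure of the natural log-kernel to be in $L^1$ at $\alp = 1$ forces a cancellation-based argument and explains the necessity of the joint $L^1_{\ds} \cap L^2_{\ds}$ hypothesis on $sG$ on the right-hand side.
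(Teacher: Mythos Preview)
For $0 \leq \alp < 1$ your argument is correct and takes a different route from the paper. The paper dyadically decomposes $G = \sum_k G_k$ with $G_k$ supported on $(2^k, 2^{k+1}]$ in $s$, then for each $k$ splits the Duhamel integral $H_k$ into a near part $s \leq 2^{k+2}$ (bounded via Schur's test, yielding the $L^r_{\ds}$ norm of $sG$) and a far part $s > 2^{k+2}$ (bounded via the homogeneous Lemmas~\ref{lem:hh-L2:infty}--\ref{lem:hh-L2:2}, yielding the $L^1_{\ds}$ norm). Your early/late split of the $s'$-integral, with the log-time convolution kernel $K_\alp(w) = (1-e^{-w})^{-\alp}\mathbf{1}_{[0,\log 2]}(w)$ on the late piece and Minkowski plus Lemma~\ref{lem:hh-L2:2} on the early piece, is more direct and makes the role of the threshold $\alp = 1$ transparent through the integrability of $K_\alp$.

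At $\alp = 1$, however, your argument has a real gap. The ``cancellation-based singular-integral estimate'' for the difference piece $\int_{s/2}^s s\Dlt e^{(s-s')\Dlt}(G(s') - G(s))\,ds'$ is not justified: the hypothesis $sG \in L^1_{\ds} \cap L^2_{\ds}(J; L^2)$ gives no continuity of $G$ in $s$, so $G(s') - G(s) \to 0$ is unavailable, and even if it held pointwise it would not control an $L^2_{\ds}$ norm against the nonintegrable kernel $w^{-1}$. The paper instead handles $\alp = 1$ by the energy method, pairing $(\rd_s - \Dlt)f = G$ with $f$ and with $s(-\Dlt)f$ and combining \eqref{eq:ih-en} with a variant of \eqref{eq:ih-reg}. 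Incidentally, your early/late framework can be salvaged at $\alp = 1$ without the splitting: at each spectral value $\lambda$ of $-\Dlt$, the late-piece kernel $s\lambda e^{-(s-s')\lambda}\mathbf{1}_{\{s/2 \leq s' \leq s\}}$ (with the appropriate weights) satisfies a Schur bound uniformly in $\lambda$, since $\int_{s/2}^s \lambda e^{-(s-s')\lambda}\,ds' \leq 1$. The mistake was replacing the kernel by its operator-norm envelope $s/(s-s')$ too early, discarding precisely the exponential decay that makes the endpoint work.
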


\begin{proof} 
The case $\alp = 1$ follows by the energy integral method, or more precisely, by the combination of \eqref{eq:ih-en} and the following variant of \eqref{eq:ih-reg} in the case $k = 0$: 
\begin{equation*}
	\nrm{s^{\frac{1}{2}} (-\Dlt)^{\frac{1}{2}} f}_{L^{\infty}_{\ds}(J; L^{2})}
	+ \nrm{s \Dlt f}_{L^{2}_{\ds}(J; L^{2})}  
	\aleq \nrm{s^{\frac{1}{2}} (-\Dlt)^{\frac{1}{2}} f}_{L^{2}_{\ds}(J; L^{2})}
	+ \nrm{s G}_{L^{2}_{\ds}(J; L^{2})}
\end{equation*}
We skip the proof, which is similar to that of \eqref{eq:ih-reg}. Henceforth, we fix either $r = \infty$ and $0 \leq \alp < 1$, or $r = 2$ and $0 < \alp < 1$. In the former case, the summation $(\sum (\cdot)^{r})^{1/r}$ is to be interpreted as a supremum as usual.

Extending $G$ by zero outside $J$, we may assume that $J =\bbR^{+}$. For any $k \in \bbZ$,
we split
\begin{equation*}
	G_{k}(s, x) = 1_{\set{2^{k} < s \leq 2^{k+1}}} G(s, x)
\end{equation*}
and accordingly, we define
\begin{align*}
H_{k} (s, x) 
=&   \int_{0}^{s}  s' e^{(s-s') \Dlt} G_{k}(s', x) \, \dsp
\end{align*} 
Then by the triangle inequality, we have
\begin{align*}
	\nrm{s^{\alp} (-\Dlt)^{\alp} \int_{0}^{s} s' e^{(s-s') \Dlt} G(s') \dsp}_{L^{r}_{\ds}(J; L^{2})} 
	= &\nrm{\sum_{k} s^{\alp} (-\Dlt)^{\alp} H_{k}}_{L^{r}_{\ds} L^{2}}  \\
	\leq & \nrm{\sum_{k} 1_{0 < s \leq 2^{k+2}}  \, s^{\alp} (-\Dlt)^{\alp} H_{k}}_{L^{r}_{\ds}L^{2}} \\
	& + \sum_{k} \nrm{s^{\alp} (-\Dlt)^{\alp} H_{k}}_{L^{r}_{\ds}((2^{k+2}, \infty) ; L^{2})} \\
	=: & I + II
\end{align*}

We first treat the term $I$. Note that $H_{k} (s) = 0$ for $s \in (0, 2^{k}]$ by the support property of $G_{k}$; therefore
\begin{equation*}
I = \nrm{\sum_{k} 1_{2^{k} < s \leq 2^{k+2}} s^{\alp} (-\Dlt)^{\alp} H_{k}}_{L^{r}_{\ds} L^{2}}
\end{equation*}
Since the intervals $(2^{k}, 2^{k+2}]$ have finite overlap, we may estimate 
\begin{align*}
& \hskip-2em
	\nrm{\sum_{k} 1_{2^{k} < s \leq 2^{k+2}}  \, s^{\alp} (-\Dlt)^{\alp} H_{k}}_{L^{r}_{\ds}L^{2}} \\
	\aleq & \bb( \sum_{k} \nrm{s^{\alp} (-\Dlt)^{\alp} H_{k}}_{L^{r}_{\ds}((2^{k}, 2^{k+2}]; L^{2})}^{r} \bb)^{1/r} \\
	\aleq & \bb( \sum_{k} \nrm{\int_{2^{k}}^{s} s^{\alp} (s-s')^{-\alp} s' \nrm{G_{k}(s')}_{L^{2}} \frac{\ud s'}{s'}}_{L^{r}_{\ds}(2^{k}, 2^{k+2}]}^{r} \bb)^{1/r}
\end{align*}
By Schur's test, it follows that
\begin{align*}
\bb( \sum_{k} \nrm{\int_{2^{k}}^{s} s^{\alp} (s-s')^{-\alp} s' \nrm{G_{k}(s')}_{L^{2}} \frac{\ud s'}{s'}}_{L^{r}_{\ds}(2^{k}, 2^{k+2}]}^{r} \bb)^{1/r}
\aleq \bb( \sum_{k} \nrm{s' G_{k}}_{L^{r}_{\dsp} L^{2}}^{r} \bb)^{1/r}
\end{align*}
Since the supports of $G_{k}$ in $s$ are disjoint, the last term is bounded by $\nrm{s G}_{L^{r}_{\ds} (\bbR^{+}; L^{2})}$.
Hence we have proved
\begin{equation*}
I = \nrm{\sum_{k} 1_{2^{k} < s \leq 2^{k+2}}  \, s^{\alp} (-\Dlt)^{\alp} H_{k}}_{L^{r}_{\ds}L^{2}} \aleq \nrm{s G}_{L^{r}_{\ds}(\bbR^{+}; L^{2})}
\end{equation*}

Next, we pass to the term $II$. For $s > 2^{k+2}$, we may write
\begin{equation*}
	H_{k}(s) = e^{(s - 2^{k+1}) \Dlt} \int_{2^{k}}^{2^{k+1}} s' e^{(2^{k+1}-s') \Dlt} G(s', x) \, \frac{d s'}{s'} 
\end{equation*}
Observe also that $(s - 2^{k+1}) \aeq s$. Thanks to the assumption that either $\alp \geq 0$ and $r = \infty$ or $\alp > 0$ and $r = 2$, the homogeneous estimates \eqref{eq:hh-L2:infty} and \eqref{eq:hh-L2:2} imply that
\begin{align*}
	\nrm{s^{\alp} (-\Dlt)^{\alp} H_{k}(s, x)}_{L^{r}_{\ds}((2^{k+2}, \infty); L^{2})}
	\aleq & \nrm{(s-2^{k+1})^{\alp} (-\Dlt)^{\alp} H_{k}(s, x)}_{L^{r}_{\ds}((2^{k+2}, \infty); L^{2})} \\
	\aleq & \nrm{\int_{2^{k}}^{2^{k+1}} s' e^{(2^{k+1} - s') \Dlt} G_{k}(s') \, \frac{d s'}{s'} }_{L^{2}} \\
	\aleq & \nrm{s' G_{k}}_{L^{1}_{\dsp} L^{2}}
\end{align*}
Summing over $k$, we obtain
\begin{equation*}
	II = \sum_{k} \nrm{s^{\alp} (-\Dlt)^{\alp} H_{k}(s, x)}_{L^{r}_{\ds}((2^{k+2}, \infty); L^{2})}
	\aleq \sum_{k} \nrm{s' G_{k}}_{L^{1}_{\dsp} L^{2}} \leq \nrm{s G}_{L^{1}_{\ds}(\bbR^{+}; L^{2})}
\end{equation*}
as desired. 
\end{proof}

\subsubsection{Extension to vector-valued functions} 
The $L^{2}$ theory for the real-valued solutions to \eqref{eq:heat-ih} extends naturally to vector-valued functions, which possibly tend to a nonzero constant $u_{\infty}$ at the spatial infinity. More precisely, on an interval $J \subset \bbR^{+}$ extending from $0$, consider the Cauchy problem
\begin{equation} \label{eq:heat-ih:vec}
\left\{
\begin{aligned}
	(\rd_{s} - \Dlt) u  =& G \\
	u \!\! \restriction_{s=0} =& u_{0}
\end{aligned}	
\right.
\end{equation}
where $u_{0} - u_{\infty} \in L^{2}(\bbH^{d}; \bbR^{N})$ for some $u_{\infty} \in \bbR^{N}$ and $G \in L^{1}(J; L^{2}(\bbH^{d}; \bbR^{N}))$. Then there exists a unique solution $u$ to \eqref{eq:heat-ih:vec} on $J$ in the class
\begin{equation*}
u(s, x) - u_{\infty} \in C(J \cup \set{0}; L^{2}(\bbH^{d}; \bbR^{N})).
\end{equation*}
To see this, simply observe that $(\rd_{s} - \Dlt) u_{\infty} = 0$ and apply the usual uniqueness argument for each component of $u - u_{\infty}$. The unique solution is given by Duhamel's formula
\begin{equation*}
	u - u_{\infty} = e^{s \Dlt} (u_{0} - u_{\infty}) + \int_{0}^{s} e^{(s - s') \Dlt} G(s') \, d s'
\end{equation*}
Note that the estimates in Lemma~\ref{lem:hh-L2:infty}, Lemma~\ref{lem:hh-L2:2} and Lemma~\ref{lem:ih} extend to the right-hand side of this formula in the natural componentwise manner.


\subsubsection{$L^{p}$ regularity theory}
We would also need $L^{p}$ boundedness and regularity theory of the heat semi-group $e^{s \Dlt}$.
We present a general argument which relies only on integration by parts.
 
\begin{lem} \label{l:hk}
Let $1 < p < \infty$. For any real-valued function $f \in C^{\infty}_{0}(\bbH^{d})$ and $s > 0$, the following bounds hold.
\begin{align}
	\nrm{e^{s \Dlt} f}_{L^{p}} \aleq & \nrm{f}_{L^{p}} \label{eq:pest} \\
	\nrm{s^{\frac{1}{2}} \nb e^{s \Dlt} f}_{L^{p}} \aleq & \nrm{f}_{L^{p}} \label{eq:napest} \\
	\nrm{s (-\Dlt) e^{s \Dlt} f}_{L^{p}} \aleq & \nrm{f}_{L^{p}} \label{eq:depest}
\end{align}
\end{lem}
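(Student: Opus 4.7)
Plan: Let $u(s, x) := e^{s\Delta} f(x)$, which solves $\partial_s u = \Delta u$ with $u(0) = f \in C^\infty_0(\bbH^d)$; every manipulation below is justified by parabolic regularity and the decay of $f$ at spatial infinity. My strategy is to prove \eqref{eq:pest} first, then \eqref{eq:depest}, and finally \eqref{eq:napest} by interpolation.

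For \eqref{eq:pest} with $p \geq 2$, pair the heat equation with the multiplier $|u|^{p-2}u$ and integrate by parts over $\bbH^d$ to obtain the standard dissipation identity
\begin{equation*}
\frac{1}{p}\frac{d}{ds}\|u(s)\|_{L^p}^p + (p-1)\int_{\bbH^d}|u|^{p-2}|\nabla u|^2\, \dvol = 0,
\end{equation*}
so $\|u(s)\|_{L^p}$ is nonincreasing. For $1 < p < 2$, regularize the multiplier as $(u^2 + \varepsilon)^{(p-2)/2}u$ with $\varepsilon \to 0$, or alternatively use duality together with the $L^2$-self-adjointness of $e^{s\Delta}$ (which gives equality of the $L^p \to L^p$ and $L^{p'} \to L^{p'}$ operator norms).

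For \eqref{eq:depest} I would combine two observations. First, $\Delta u$ itself solves the heat equation (since $\Delta$ commutes with $e^{s\Delta}$), so by \eqref{eq:pest} the map $\tau \mapsto \|\Delta u(\tau)\|_{L^p}$ is nonincreasing. Second, iteration of the dissipation identity provides integrated-in-$\tau$ control of higher derivatives. In the $L^2$ case the chain is transparent: pairing $\partial_s u = \Delta u$ with $-\Delta u$ yields $\frac{d}{ds}\|\nabla u\|_{L^2}^2 = -2\|\Delta u\|_{L^2}^2$, and integrating this over the halved interval $(s/2, s)$ together with the monotonicity above gives, in sequence, $\|\nabla u(s)\|_{L^2}^2 \lesssim s^{-1}\|f\|_{L^2}^2$ and $\|\Delta u(s)\|_{L^2}^2 \lesssim s^{-2}\|f\|_{L^2}^2$. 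For general $p$ the corresponding $L^p$ dissipation only controls the weighted integral $\int|u|^{p-2}|\nabla u|^2 \dvol = \frac{4}{p^2}\int|\nabla|u|^{p/2}|^2 \dvol$; one promotes this to a pointwise-in-$s$ bound on $\|\Delta u(s)\|_{L^p}$ either via a Moser-type bootstrap (using Sobolev embedding Lemma~\ref{l:se} to trade integrated gradient control for improved integrability of $u$) or, more efficiently, by invoking analyticity of the heat semigroup on $L^p$, which follows from \eqref{eq:pest} combined with Stein's theorem for symmetric Markov semigroups and immediately yields the desired operator bound via Cauchy's integral formula.

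For \eqref{eq:napest}, apply Lemma~\ref{l:riesz} to reduce to estimating $\|s^{1/2}(-\Delta)^{1/2} e^{s\Delta}f\|_{L^p}$, and then use the interpolation inequality
\begin{equation*}
\|(-\Delta)^{1/2}g\|_{L^p} \lesssim \|g\|_{L^p}^{1/2}\|(-\Delta)g\|_{L^p}^{1/2}
\end{equation*}
applied to $g = e^{s\Delta}f$. This is a standard Kato-type inequality for generators of bounded analytic semigroups, provable elementarily from the representation $(-\Delta)^{1/2}g = c\int_0^\infty(g - e^{\tau\Delta}g)\tau^{-3/2}\,d\tau$ after splitting the integral at the scale $\tau_0 = \|g\|_{L^p}/\|(-\Delta)g\|_{L^p}$ and using \eqref{eq:pest}. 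Combined with \eqref{eq:pest} and \eqref{eq:depest}, the bound $\|s^{1/2}\nabla e^{s\Delta}f\|_{L^p} \lesssim \|f\|_{L^p}$ drops out. The principal obstacle is the $L^p$ case of \eqref{eq:depest} when $p \neq 2$: the $L^2$ chain is self-contained because dissipation, monotonicity, and the target estimate all involve the same quadratic $\|\Delta u\|_{L^2}^2$, whereas for general $p$ these three quantities are genuinely different and reconciling them requires either the Sobolev/Moser bootstrap or the analyticity route indicated above.
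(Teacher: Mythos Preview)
Your proposal is correct (via the Stein route) but takes a genuinely different path from the paper, and the comparison is instructive. Both arguments reduce \eqref{eq:napest} to \eqref{eq:pest} and \eqref{eq:depest} via the Riesz transform and interpolation, and both use duality/self-adjointness to reduce to $p \geq 2$. The divergence is entirely in \eqref{eq:depest}. You outsource it to Stein's theorem on symmetric diffusion semigroups, which is legitimate here since the heat kernel on $\bbH^{d}$ is positive and the semigroup is conservative and self-adjoint; analyticity on $L^{p}$ then gives $\nrm{s\Dlt e^{s\Dlt}f}_{L^{p}} \aleq \nrm{f}_{L^{p}}$ immediately. The paper instead gives a self-contained argument relying only on integration by parts: after reducing further by interpolation to even integers $p = 2n$, it introduces the coupled quantity $\psi^{2} = \abs{f}^{2} + \dlt^{2} s^{2} \abs{\Dlt f}^{2}$ with $\dlt$ small, derives a parabolic-type equation for $\psi^{2}$, multiplies by $\psi^{2n-2}$, and integrates by parts over $\bbH^{d}$; the cross terms produced are absorbed using the smallness of $\dlt$, yielding $\sup_{s} \nrm{\psi(s)}_{L^{p}} \aleq \nrm{\psi(0)}_{L^{p}} = \nrm{f}_{L^{p}}$ and hence \eqref{eq:pest} and \eqref{eq:depest} simultaneously. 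Your approach is more modular and conceptually transparent, at the cost of a heavy black box; the paper's is elementary but requires the nonobvious coupling of $f$ and $s\Dlt f$ in a single multiplier. One caveat: your ``Moser-type bootstrap'' alternative is left too vague to be a real proof---the dissipation identity controls weighted gradients of $u$, not of $\Dlt u$, and turning that into a pointwise-in-$s$ bound on $\nrm{\Dlt u}_{L^{p}}$ without either Stein's theorem or something like the paper's coupled estimate would need substantial additional work.
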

\begin{proof} 
To ease the notation, we henceforth denote by $f(s)$ the solution to the homogeneous heat equation with $f(0) \in C^{\infty}_{0}(\bbH^{d})$. In what follows, we will present a formal argument, which can be easily made precise using the $L^{2}$ theory that we have already established.

We begin with some simple reductions. First, observe that once \eqref{eq:pest} and \eqref{eq:depest} are established, \eqref{eq:napest} follows easily from interpolation and Lemma~\ref{l:riesz}. Second, by self-adjointness of $e^{s \Dlt}$ and $s(-\Dlt) e^{s \Dlt}$ and duality, it suffices to prove \eqref{eq:pest} and \eqref{eq:depest} in the case $2 \leq p < \infty$. Finally, by interpolation and the $L^{2}$ theory that we have already established, it is enough to consider the case when $p$ is an even integer greater than $2$.

Let $p = 2n$ for some $n \geq 2$ and let $0 < \dlt \ll 1$ be a small parameter to be chosen below. 
As $f$ is a solution to the homogeneous heat equation, observe that $\abs{f}^{2}$ obeys the equation
\begin{equation*}
	\rd_{s} \abs{f}^{2} - \Dlt \abs{f}^{2} + 2 \abs{\nb f}^{2} = 0
\end{equation*}
Since $\Dlt f$ solves the homogeneous heat equation as well, $\abs{\Dlt f}^{2}$ obeys the same equation as above. Multiplying by $s^{2}$, we obtain
\begin{align*}
	\rd_{s} (s^{2} \abs{\Dlt f}^{2}) - \Dlt (s^{2} \abs{\Dlt f}^{2}) + 2 s^{2} \abs{\nb \Dlt f}^{2} - 2 s \abs{\Dlt f}^{2} = 0
\end{align*}
Rewriting the last term on the left-hand side, we have
\begin{equation*}
	\rd_{s} (s^{2} \abs{\Dlt f}^{2}) - \Dlt (s^{2} \abs{\Dlt f}^{2}) + 2 s^{2} \abs{\nb \Dlt f}^{2} + 2 s (\nb f \cdot \nb \Dlt f) - 2 s \nb \cdot (\nb f \Dlt f) = 0
\end{equation*} 
We introduce an auxiliary function
\begin{equation*}
	\psi^{2} = \abs{f}^{2} + \dlt^{2} s^{2} \abs{\Dlt f}^{2}
\end{equation*}
We claim that if $\dlt > 0$ is sufficiently small, then 
\begin{equation} \label{eq:pest-key}
	\sup_{s \in \bbR^{+}} \nrm{\psi(s)}_{L^{p}}^{p} \aleq \nrm{\psi(0)}_{L^{p}}^{p}
\end{equation}
Since $\psi(0) = \abs{f(0)}$, the desired estimates \eqref{eq:pest} and \eqref{eq:depest} would then follow.

It now only remains to verify the claim \eqref{eq:pest-key}. Adding up the equations for $\abs{f}^{2}$ and $s^{2} \abs{\Dlt f}^{2}$, we see that $\psi^{2}$ solves the equation
\begin{equation*}
	(\rd_{s} - \Dlt) \psi^{2} + F = 2 \dlt^{2} s \nb \cdot  (\nb f \Dlt f) 
\end{equation*}
where
\begin{equation*}
F = 2 \bb( \abs{\nb f}^{2} + \dlt^{2} s^{2} \abs{\nb \Dlt f}^{2} + \dlt^{2} (\nb f \cdot s \nb \Dlt f) \bb)
\end{equation*}
Multiplying the equation by $\psi^{2n-2}$
and integrating by parts over $\bbH^{d}$, we obtain
\begin{align*}
	\frac{1}{n} \rd_{s} \int_{\bbH^{d}} \psi^{2n} & + (n-1) \int_{\bbH^{d}} \abs{\nb \psi^{2}}^{2} \psi^{2n-4} + \int_{\bbH^{d}} F \psi^{2n-2} \\
	= & - 2 (n-1) \dlt \int_{\bbH^{d}} \nb f \cdot \nb \psi^{2} (\dlt s \Dlt f) \psi^{2n-4}
\end{align*}
Since $0 < \dlt \leq 1$, observe that $F$ obeys a $\dlt$-independent lower bound $F \geq \abs{\nb f}^{2}$.
Integrating over $s$-intervals of the form $(0, s)$, we arrive at
\begin{align*}
	\sup_{s \in \bbR^{+}} \nrm{\psi(s)}_{L^{p}}^{p}
	& + \int_{0}^{\infty} \int_{\bbH^{d}} \abs{\nb \psi^{2}}^{2} \psi^{2n-4} \, d s
	+ \int_{0}^{\infty} \int_{\bbH^{d}} \abs{\nb f}^{2} \psi^{2n-2} \, d s \\
	\aleq & \nrm{\psi(0)}_{L^{p}}^{p}
		+ \dlt \int_{0}^{\infty} \int_{\bbH^{d}} \abs{\nb f \cdot \nb \psi^{2} (\dlt s \Dlt f) \psi^{2n-4} } \, d s
\end{align*}
 By Cauchy-Schwarz and the inequality $\abs{\dlt s \Dlt f} \leq \abs{\psi}$, the second term on the right-hand side can be bounded from above by
\begin{align*}
	& \dlt \int_{0}^{\infty} \int_{\bbH^{d}} \abs{\nb f} \abs{\psi} \abs{\nb \psi^{2}} \psi^{2n-4} \, ds \\
	& \aleq   \dlt \bb(\int_{0}^{\infty} \int_{\bbH^{d}} \abs{\nb \psi^{2}}^{2} \psi^{2n-4} \, ds 
				+ \int_{0}^{\infty} \int_{\bbH^{d}} \abs{\nb f}^2 \psi^{2n-2} \, ds  \bb)
\end{align*}
Hence choosing $\dlt > 0$ sufficiently small, this term can be absorbed into the left-hand side. The desired claim \eqref{eq:pest-key} now follows. \qedhere
\end{proof}

\section{An overview of the proof of  Theorem~\ref{t:main}: Wave maps in the caloric gauge} \label{s:caloric}

In this section we set the stage for the proof of Theorem~\ref{t:main} and construct the caloric gauge. 

Our main objective over the next several sections will be to establish a priori estimates of the following form. 

\begin{prop}[A priori estimates]  \label{p:ap} 
Let $0 \in I \subset \R$ be a time interval and suppose that $u: I \times \Hp^4 \to  \NN$ is a smooth wave map on $I$.  Then there exists $\eps_{1}>0$ and a constant $C_0$ independent of $I$, $u$, so that if $\eps \leq \eps_{1}$ and
\EQ{ \label{eq:sd} 
  \| (du, \p_t u) \!\!\restriction_{t = 0}\|_{H^1 \times H^1( \Hp^4; T\NN)}  <  \eps
  }
  then 
  \EQ{ \label{eq:ap} 
  \sup_{t \in I} \| (du, \p_t u)(t)\|_{H^1 \times H^1( \Hp^4; T\NN)}  \le C_0 \eps.
  }
\end{prop}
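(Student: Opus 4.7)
The plan is to run a continuity/bootstrap argument in the caloric gauge. Fix $C_{0}$ large and let $I' \subset I$ be the maximal subinterval containing $0$ on which $\sup_{t \in I'} \nrm{(du, \rd_{t} u)(t)}_{H^{1} \times H^{1}} \leq 2 C_{0} \eps$. By continuity of the left-hand side for classical wave maps, it suffices to improve this to $C_{0} \eps$ on $I'$; the full interval $I$ is then recovered by a standard connectedness argument. All bounds below will be carried out in the caloric gauge variables.

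For each fixed $t \in I'$, I first construct the caloric frame as outlined in Section~\ref{s:main-idea}. Solving the harmonic map heat flow in the extrinsic formulation \eqref{wme-2} with initial data $u(t, \cdot)$ at $s = 0$, the $L^{p}$ heat bounds of Lemma~\ref{l:hk} together with the parabolic regularity estimates of Lemma~\ref{lem:ih-reg}--Lemma~\ref{lem:ih}, combined with the bootstrap smallness, yield a smooth global-in-$s$ flow converging to a constant $u_{\infty} \in \NN$. Imposing $D_{s} e = 0$ starting from a constant frame $e_{\infty}$ at $s = \infty$ defines the caloric frame $e(s, t, x)$, relative to which $\psi_{s}, \psi_{\alpha}$, and the connection $1$-form $A_{\alpha}$ are $\bbR^{n}$- or $\mathfrak{so}(n)$-valued functions. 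From \eqref{eq:cal-int} one then has the reconstruction identities
\[
\psi_{\alpha}(0) = -\int_{0}^{\infty} \bfD_{\alpha} \psi_{s}(s') \, d s', \qquad A_{\alpha}(0) = -\int_{0}^{\infty} \bfR(u(s'))(\psi_{s}, \psi_{\alpha})(s') \, d s',
\]
which reduce all gauge-dependent auxiliary quantities at $s = 0$ to integrals in $s$ of products of $\psi_{s}$'s.

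Next I would derive a nonlinear scalar wave equation for $\psi_{s}$. Commuting $\bfD_{s}$ past the frame wave maps equation $\etab^{\alpha\beta} \bfD_{\alpha} \psi_{\beta} = 0$ using \eqref{eq:commDD} and the curvature identity \eqref{eq:Fdu}, one obtains an equation of the schematic form
\[
\Box_{\Hp^{4}} \psi_{s} = 2 \etab^{\alpha\beta} A_{\alpha} \rd_{\beta} \psi_{s} + \bb( \nabla^{\alpha} A_{\alpha} + \etab^{\alpha\beta} A_{\alpha} A_{\beta} \bb) \psi_{s} + \bfR(u)(\psi_{\cdot}, \psi_{\cdot}) \psi_{s} + (\text{Ricci terms}),
\]
with the scalar $\Box_{\Hp^{4}}$ as the principal part, for which global Strichartz estimates on $\R \times \Hp^{4}$ are available (cf.\ Theorem~\ref{t:str} referenced later). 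After inserting the reconstruction formulas above into the right-hand side, every nonlinearity becomes at least \emph{cubic} in $\psi_{s}$ -- the structural payoff of the caloric gauge highlighted in Section~\ref{s:main-idea}.

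The bootstrap is then closed in three coupled stages. First, apply Strichartz estimates to the wave equation above at each fixed $s > 0$ to bound $\psi_{s}$ in a Strichartz-type space on $I' \times \Hp^{4}$. Second, use Lemma~\ref{lem:ih} together with the pointwise curvature bound \eqref{eq:DR-ptwise} to control $A_{\alpha}$ and $\psi_{\alpha}$ in mixed $\ds$-weighted norms in terms of analogous norms on $\psi_{s}$. Third, via the heat-flow Littlewood-Paley analogy sketched in Section~\ref{s:main-idea}, establish the equivalence
\[
\nrm{(du, \rd_{t} u)(t)}_{H^{1} \times H^{1}(\Hp^{4}; T \NN)} \simeq \nrm{\psi_{s}(\cdot, t, \cdot)}_{\cS}
\]
for a suitable $s$-weighted Sobolev space $\cS$ adapted to the critical scaling. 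Combining the three pieces yields an estimate of the shape $\nrm{\psi_{s}}_{\cS} \lesssim \eps + \nrm{\psi_{s}}_{\cS}^{3}$, which bootstraps for $\eps$ small. The main obstacle is the coherent choice of the master space $\cS$ and the accompanying $s$-weighted Strichartz and parabolic spaces so that every right-hand side term, once $A_{\alpha}$ and $\psi_{\alpha}$ are unfolded, is genuinely \emph{cubic} (not quadratic) in $\nrm{\psi_{s}}_{\cS}$; this rests on the caloric-gauge cancellation of the high-high-to-low interaction absent from the Coulomb gauge, and on keeping every dispersive estimate at the scalar level so that the tensoriality obstruction described in Section~\ref{s:main-idea} never enters.
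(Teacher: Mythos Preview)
Your outline captures the overall architecture (bootstrap in the extrinsic norm, caloric gauge, scalar wave equation for $\psi_{s}$, Strichartz on $\bbH^{4}$), but there are two genuine gaps.

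\textbf{The wave equation for $\psi_{s}$ is not what you wrote.} The constraint $\etab^{\alpha\beta}\bfD_{\alpha}\psi_{\beta}=0$ holds only at $s=0$; for $s>0$ the \emph{wave tension field} $w(s,t,x):=\bfD^{\alpha}\psi_{\alpha}$ is nonzero. Commuting $\bfD_{s}$ correctly gives
\[
\bfD^{\alpha}\bfD_{\alpha}\psi_{s}=\partial_{s}w-\tensor{F}{_{s}^{\alpha}}\psi_{\alpha},
\]
so after expanding the covariant d'Alembertian the right-hand side contains an explicit $\partial_{s}w$ term that is \emph{not} cubic in $\psi_{s}$ on its face. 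The paper handles this by deriving a separate covariant heat equation for $w$ (Lemma~\ref{l:wmp}) with zero data at $s=0$, solving it forward in $s$, and proving $\|s^{1/2}\partial_{s}w\|_{L^{\infty}_{\ds}\cap L^{2}_{\ds}L^{1}_{t}L^{2}_{x}}\lesssim\eps\|\psi_{s}\|_{\calS(I)}^{2}$ (Lemma~\ref{l:w}). A comparable amount of work goes into the term $\nabla^{\alpha}A_{\alpha}\,\psi_{s}$ (Corollary~\ref{c:A83}), which also requires $w$. Your schematic equation omits $w$ entirely and therefore cannot close. (Incidentally, there are no ``Ricci terms'' in the $\psi_{s}$ equation: $\psi_{s}$ is a scalar, so $[\nabla_{\alpha},\nabla_{\beta}]\psi_{s}=0$.)

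\textbf{Returning from $\psi_{s}$ to $\|(du,\partial_{t}u)\|_{H^{1}\times H^{1}}$ is not a direct reconstruction.} Your step~(3) proposes to read off $\|(du,\partial_{t}u)(t)\|_{H^{1}\times H^{1}}$ from $\|\psi_{s}\|_{\calS}$ via the formula $\psi_{\alpha}(0)=-\int_{0}^{\infty}\bfD_{\alpha}\psi_{s}\,ds'$. At the critical regularity this integral is logarithmically divergent in $s$ under the naive bound, so the Littlewood--Paley heuristic alone does not suffice. The paper instead proves $\|\bfD\Psi\!\rest_{s=0}\|_{L^{\infty}_{t}L^{2}_{x}}\lesssim\|\psi_{s}\|_{\calS(I)}$ (Proposition~\ref{p:s=0}) by an energy argument: one pairs the covariant heat equation for $\psi_{a}$ against $\bfD_{c}\bfD^{c}\psi^{a}$, integrates in $s$, and uses the parabolic smoothing bounds of Section~\ref{s:pr} together with Lemma~\ref{l:s=0 temp 2}. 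Only after this is established does the paper close the outer bootstrap \eqref{eq:bs}$\Rightarrow$\eqref{eq:bsc} via \eqref{eq:eqnorm}. Your proposal needs an analogous mechanism.
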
 

\begin{rem}
Since the wave maps system is critical with respect to the norm \eqref{eq:sd}, such an a priori estimate is not sufficient to continue the solution $u$ past $I$. In fact, we will prove that a certain controlling norm, stronger than the norm on the left-hand side of \eqref{eq:ap}, is bounded from above by $C \eps$; see Theorem~\ref{t:main1} below. For the sake of exposition, we defer the precise definition of the controlling norm until Section~\ref{s:outline}.
\end{rem}

We will prove Proposition~\ref{p:ap} by establishing the  following bootstrap hypothesis: There exists an $\eps_{1}>0$ small enough and a constant $C_0>0$ with the following property. Given  any time interval $I \ni 0,$ $\eps\leq \eps_{1}$, and smooth wave map $u$ on $I$ with initial data satisfying~\eqref{eq:sd} then  
\begin{align} \label{eq:bs} 
 \sup_{t \in I} \|  (du, \p_t u)(t)\|_{H^1 \times H^1( \Hp^4; T\NN)} &\le 2 C_0 \eps   \\
 &  \Longrightarrow \sup_{t \in I} \|  (du, \p_t u)(t)\|_{H^1 \times H^1( \Hp^4; T\NN)} \le C_0 \eps \label{eq:bsc}.
 \end{align}

To prove that~\eqref{eq:bs} implies~\eqref{eq:bsc}, we employ the derivative formulation of the wave maps equation and use the caloric gauge introduced by Tao. As discussed in Section~\ref{s:main-idea}, the construction of this gauge relies on a detailed understanding of the harmonic map heat flow. Therefore, our first order of business will be a small data global well-posedness theory for the harmonic map heat flow from $\Hp^4  \to \NN \subset \R^N$, established in Section~\ref{s:hmhf}. Relying on this theory, we will show how to construct the caloric gauge in Section~\ref{s:cg}. After a brief summary of the dynamic variables and equations of the caloric gauge in Section~\ref{s:caloric-summary}, a more detailed outline of the strategy for proving \eqref{eq:bsc} from \eqref{eq:bs} will be given in Section~\ref{s:outline}.
\subsection{The harmonic map heat flow}   \label{s:hmhf}
In what follows, we will often use the shorthand $L^{p}$ for $L^{p}(\bbH^{4}; \bbR^{N})$.

A map $u: \R^+ \times  \Hp^4 \to \NN$  satisfies the \emph{harmonic map heat flow} if  
\EQ{ \label{eq:hmhf} 
\p_s u  =  \h^{ab} D_{a} \rd_{b} u
}
In local coordinates on the target $\NN$ this becomes 
\EQ{\label{eq:hf} 
\p_s u^k =  \Delta_{\Hp^4} u^k + \h^{a b}  \Gamma^{k}_{ij}(u) \p_a u^i\p_b u^j
}
When the target is embedded  the harmonic map heat flow can be expressed in coordinates as
\EQ{ \label{eq:hfe} 
 \p_s u^K  - \De_{\Hp^4} u^K  =  \h^{a b} S^K_{IJ}(u) \p_a u^I \p_b u^J
}
where $S = S^K_{IJ}$ is the second fundamental form of the embedding $\NN \hookrightarrow  \R^N$. This is equivalent to the intrinsic formulation. 

The foundation for our construction of the caloric gauge is the  following small data global regularity result for~\eqref{eq:hfe}. 

\begin{lem}[Existence/uniqueness of the harmonic map heat flow] \label{l:hmhf}  Let $u_0: \Hp^4 \to  \NN \subset \R^N$ be smooth initial data for~\eqref{eq:hfe} such that 
\EQ{ \label{eq:cs} 
u_0(x)   \to u_{\infty} \in  \NN \mas \bfd(x; \zero) \to  \infty.
}
Then, there exists $ \de_1>0$ with the following property:  If 
\EQ{ \label{eq:hfdata} 
 \| du_0 \|_{H^1( \Hp^4; T \NN)}  = \de < \de_1
 }
 then there exists a smooth global solution $u(s, x)$ to the harmonic map heat flow equation ~\eqref{eq:hfe} with initial data $u_0$, and satisfying the uniform estimate
\begin{multline} \label{eq:hfap}
	\nrm{\Dlt u}_{L^{\infty}_{\ds}(\bbR^{+}; L^{2})} 
	+ \sum_{\alp \in \set{\frac{1}{4}, \frac{1}{2}, \frac{3}{4}}} \nrm{s^{\alp} (-\Dlt)^{\alp+1} u}_{L^{\infty}_{\ds} \cap L^{2}_{\ds} (\bbR^{+}; L^{2})}  \\
	+ \nrm{s (-\Dlt)^{2} u}_{L^{2}_{\ds}(\bbR^{+}; L^{2})} 
	\aleq \nrm{d u \!\! \restriction_{s=0}}_{H^{1}(\bbH^{4}, T \NN)}
\end{multline}
The solution $u$ is unique in the class of maps $u \in C( [0, \infty); H^2( \Hp^4; (\NN, u_\infty)))$ such that the left-hand side of \eqref{eq:hfap} is finite.
Moreover, we have the following uniform estimates on higher $s$-weighted derivatives of $u$, 
\begin{align} 
\nrm{s^{\alp} (-\Dlt)^{\alp+1}  u}_{L^{\infty}_{\ds}(\bbR^{+}; L^{2})}
\aleq_{\alp} & \nrm{d u \!\!\restriction_{s=0}}_{H^{1}(\bbH^{4}; T \NN)}, \quad \forall \alp \geq 0 \label{eq:hfap1} \\
\nrm{s^{\alp} (-\Dlt)^{\alp+1}  u}_{L^{2}_{\ds}(\bbR^{+}; L^{2})}
\aleq_{\alp} & \nrm{d u \!\!\restriction_{s=0}}_{H^{1}(\bbH^{4}; T \NN)}, \quad \forall \alp > 0  \label{eq:hfap2} \\
\nrm{s^{\alp} (-\Dlt)^{\alp} \rd_{s} u}_{L^{\infty}_{\ds} (\bbR^{+}; L^{2})}
\aleq_{\alp} & \nrm{d u \!\!\restriction_{s=0}}_{H^{1}(\bbH^{4}; T \NN)}, \quad \forall \alp \geq 0 \label{eq:hfap3}  \\
\nrm{s^{\alp} (-\Dlt)^{\alp} \rd_{s} u}_{L^{2}_{\ds}(\bbR^{+}; L^{2})}
\aleq_{\alp} & \nrm{d u \!\!\restriction_{s=0}}_{H^{1}(\bbH^{4}; T \NN)}, \quad \forall \alp \geq \frac{1}{2} \label{eq:hfap4} 
\end{align}
\end{lem}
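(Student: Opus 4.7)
The plan is to view \eqref{eq:hfe} extrinsically as a semilinear heat equation on $\bbR^{+}\times\bbH^{4}$,
\begin{equation*}
	\rd_{s} u - \Dlt u = G(u), \qquad G(u):=\bfh^{ab}S(u)(\rd_{a}u,\rd_{b}u),
\end{equation*}
and to construct the solution by Duhamel's formula $v(s)=e^{s\Dlt}v_{0}+\int_{0}^{s}e^{(s-s')\Dlt}G(u)(s')\, ds'$, where $v:=u-u_{\infty}$ and $v_{0}=u_{0}-u_{\infty}$. Using Lemma~\ref{l:pi} (Poincar\'e) and the assumption that $u_{0}-u_{\infty}\to 0$ at infinity, the smallness hypothesis \eqref{eq:hfdata} implies $\|v_{0}\|_{H^{2}}\aleq \|du_{0}\|_{H^{1}}=\dlt$. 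Compactness of $\NN$ and smoothness of the embedding $\NN\hookrightarrow\bbR^{N}$ give uniform bounds $|S|+|\nb^{(j)}S|\aleq 1$ on all derivatives of the second fundamental form, so the nonlinearity satisfies the pointwise bound $|G(u)|\aleq |du|^{2}$ and, componentwise, $G(u)-G(u')$ obeys the analogous Lipschitz bound.

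The main argument is a fixed-point iteration in the Banach space $X$ obtained by completing smooth maps $u$ with $u-u_{\infty}\in C(\bbR^{+};H^{2})$ under the norm $\|u\|_{X}$ equal to the left-hand side of \eqref{eq:hfap}. For the homogeneous part, Lemma~\ref{lem:hh-L2:infty} and Lemma~\ref{lem:hh-L2:2} applied to $(-\Dlt)v_{0}$, $(-\Dlt)^{5/4}v_{0}$, $(-\Dlt)^{3/2}v_{0}$, $(-\Dlt)^{7/4}v_{0}$ (each bounded in $L^{2}$ by $\|v_{0}\|_{H^{2}}$ after interpolation with Lemma~\ref{l:L2-int}) yield $\|e^{s\Dlt}v_{0}\|_{X}\aleq \dlt$. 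For the Duhamel term, Lemma~\ref{lem:ih} with $\alp\in\{0,\tfrac{1}{4},\tfrac{1}{2},\tfrac{3}{4}\}$ (and its $\alp=1$ variant for the $L^{2}_{\ds}L^{2}$ bound on $s\Dlt\int$) reduces the problem to the single nonlinear estimate
\begin{equation} \label{eq:key-nonlin}
	\|sG(u)\|_{L^{1}_{\ds}\cap L^{2}_{\ds}(\bbR^{+}; L^{2})} \aleq \|u\|_{X}^{2}.
\end{equation}

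The key ingredient for \eqref{eq:key-nonlin} is Sobolev embedding in four dimensions: by Lemma~\ref{l:se} with $d=4$, $p=2$, $q=4$ combined with Lemma~\ref{l:riesz}, one has $\|du\|_{L^{4}}\aleq\|\nb du\|_{L^{2}}\simeq \|(-\Dlt)u\|_{L^{2}}$, hence
\begin{equation*}
	\|G(u)(s)\|_{L^{2}}\aleq \|du(s)\|_{L^{4}}^{2}\aleq \|\Dlt u(s)\|_{L^{2}}^{2}\aleq \|u\|_{X}^{2}
\end{equation*}
uniformly in $s$. For integrability near $s=0$ one interpolates this uniform bound with the higher regularity bound $\|(-\Dlt)^{3/2}u(s)\|_{L^{2}}\aleq s^{-1/2}\|u\|_{X}$ coming from $\|s^{1/2}(-\Dlt)^{3/2}u\|_{L^{\infty}_{\ds}L^{2}}$, producing an $s$-gain that yields convergent $ds$-integrals on any bounded interval. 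For integrability at $s\to\infty$, one exploits the spectral gap $(\tfrac{d-1}{2})^{2}=\tfrac{9}{4}$ of $-\Dlt$ on $\bbH^{4}$ (Lemma~\ref{l:pi}): the homogeneous contribution $e^{s\Dlt}v_{0}$ decays exponentially in all $H^{k}$ norms, and the full solution inherits exponential decay by a standard bootstrap on finite intervals $J=(0,s_{0})$ followed by $s_{0}\to\infty$. An identical analysis yields the Lipschitz version $\|s(G(u)-G(u'))\|_{L^{1}_{\ds}\cap L^{2}_{\ds}L^{2}}\aleq (\|u\|_{X}+\|u'\|_{X})\|u-u'\|_{X}$, so for $\dlt<\dlt_{1}$ sufficiently small the Duhamel map is a contraction on the ball of radius $C\dlt$ in $X$, producing the unique solution with $\|u\|_{X}\aleq\dlt$.

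The uniqueness in the stated larger class follows by subtracting two solutions and applying \eqref{eq:ih-en} together with the same nonlinear estimate on a small initial interval $(0,s_{\ast}]$, then continuing by standard parabolic persistence. The higher regularity estimates \eqref{eq:hfap1}--\eqref{eq:hfap4} are obtained by a bootstrap: once $u\in X$, the function $(-\Dlt)^{k}u$ satisfies an inhomogeneous heat equation whose forcing involves $(-\Dlt)^{k}G(u)$, and Leibniz together with Sobolev embedding controls these forcings by norms already contained in $X$ (up to the smoothing gains captured by Lemma~\ref{lem:ih-reg}); induction on $k$ yields the full scale of bounds, and smoothness in $s$ follows from $\rd_{s}u=\Dlt u+G(u)$. \textbf{The main obstacle} is the global-in-$s$ nonlinear estimate \eqref{eq:key-nonlin}: near $s=0$ one must harvest a half-derivative's worth of parabolic smoothing to beat the logarithmic divergence of $\int ds$, while at $s=\infty$ one must convert the spectral gap into integrable exponential decay uniformly in the bootstrap; balancing these two regimes against the multiple $s$-weighted norms comprising $\|u\|_{X}$ is the delicate point.
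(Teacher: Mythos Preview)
Your overall plan (extrinsic formulation, Duhamel in an $s$-weighted space, Lemmas~\ref{lem:hh-L2:infty}--\ref{lem:ih} for the linear part, Lemma~\ref{lem:ih-reg} for higher bounds) is the same as the paper's, but there is a genuine derivative-counting gap that prevents the contraction from closing as written.

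With $v=u-u_{\infty}$ and forcing $G(u)=S(u)(du,du)$, Lemma~\ref{lem:ih} applied with $\alp\in\{0,\tfrac14,\tfrac12,\tfrac34,1\}$ only controls $s^{\alp}(-\Dlt)^{\alp}$ of the Duhamel integral. The $X$-norm you fixed, however, is the left-hand side of \eqref{eq:hfap}, i.e.\ $s^{\alp}(-\Dlt)^{\alp+1}u$ for $\alp\in[0,1]$; you are one full Laplacian short. To recover that Laplacian you must commute $(-\Dlt)$ through Duhamel and work with the forcing $(-\Dlt)G$, not $G$. This is exactly what the paper does: it sets $v:=\Dlt u$, so that $v$ solves $\rd_{s}v-\Dlt v=G_{\mathrm{paper}}:=\Dlt\bigl(S(u)(du,du)\bigr)$, and the contraction norm becomes $X_{0}$ in \eqref{eq:X0J}.

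Once you do this, your ``main obstacle'' disappears and the spectral-gap argument is unnecessary. After expanding $G_{\mathrm{paper}}$ by Leibniz one finds the fixed-$s$ bound
\[
\|G_{\mathrm{paper}}\|_{L^{2}}\aleq \bigl(1+\|v\|_{L^{2}}+\|v\|_{L^{2}}^{2}\bigr)\|(-\Dlt)^{1/2}v\|_{L^{2}}^{2}+\|(-\Dlt)^{1/4}v\|_{L^{2}}\|(-\Dlt)^{3/4}v\|_{L^{2}},
\]
so that $s\|G_{\mathrm{paper}}\|_{L^{2}}$ is a sum of products of the factors $s^{\alp}\|(-\Dlt)^{\alp}v\|_{L^{2}}$ with $\alp\in\{\tfrac14,\tfrac12,\tfrac34\}$. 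Each such factor lies in $L^{\infty}_{\ds}\cap L^{2}_{\ds}$ by the definition of $X_{0}$, hence $\|sG_{\mathrm{paper}}\|_{L^{1}_{\ds}\cap L^{\infty}_{\ds}(\bbR^{+};L^{2})}\aleq(1+\|v\|_{X_{0}}^{2})\|v\|_{X_{0}}^{2}$ with no large-$s$ or small-$s$ issue to balance. By contrast, your undifferentiated estimate $\|G\|_{L^{2}}\aleq\|\Dlt u\|_{L^{2}}^{2}$ only uses the $\alp=0$ piece of $X$, which carries no $L^{2}_{\ds}$ control, and this is precisely why you were forced toward an ad hoc exponential-decay bootstrap that does not fit inside a contraction in $X$.
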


\begin{proof}[Proof of Lemma~\ref{l:hmhf}] 
We work in the extrinsic setting with~\eqref{eq:hfe}. To ease the notation, we will use lower case alphabets (such as $i, j, k$) instead of capital letters to denote the indices $1, \ldots, N$.

\vskip.5em
\noindent{\bf Step 0.}
The proof follows by the usual Picard iteration scheme which is conducted in a bounded subset of the space 
\EQ{
\mathcal{X}_0(\R_+) :=  \big\{ u: u - u_{\infty} \in C^0(\R^+; H^2(\Hp^4;  \R^N ) ): \,  \| u\|_{\mathcal{X}_0( \R^+)} < \infty ,   \\ 
\,  u(0,  \cdot) = u_0 \in H^2( \Hp^4; (\NN, u_{\infty}))  \big\}  
}
where the norm~$\|  \cdot \|_{\mathcal{X}_0(J)}$ for any $s$-interval $J \subset \bbR^{+}$ is defined as 
\EQ{ 
	\nrm{u}_{\calX_{0}(J)} 
	:= &
	\nrm{\Dlt u}_{L^{\infty}_{\ds}(J; L^{2})}+  \nrm{s(-\Dlt)^{2} u}_{L^{2}_{\ds}(J; L^{2})} \label{eq:calX0J}   \\ 
	  & + \sum_{\alp \in \set{\frac{1}{4}, \frac{1}{2}, \frac{3}{4}} } \nrm{s^{\alp} (-\Dlt)^{\alp+1} u}_{L^{\infty}_{\ds} \cap L^{2}_{\ds}(J; L^{2})} 
}

The techniques in the proof are standard and we give a rough sketch introducing the main ideas by establishing the a priori estimates~\eqref{eq:hfap} in the next step. 
Before proceeding with the a priori estimates we clear up a minor point, namely that a solution $u \in \mathcal{X}_0(\R^+)$ to~\eqref{eq:hfe} that we  construct actually lands in $\NN \subset \R^N$ for all $s \in \R^+$, that is, 
\EQ{
 u(s, x) \in \NN  \quad  \forall \,  (s, x) \in [0, \infty) \times \Hp^4
 }
 We argue as in~\cite[Section $5.2$]{Lin-Wang}. Let $u: \R^+ \times \Hp^4 \to  \R^N$ be a solution to~\eqref{eq:hfe} with $u \in \mathcal{X}_0(\R^+)$. Let $\NN_\eps$ be an $\eps$-neighborhood of $\NN$ and let  
 \EQ{
  \pi_{\NN}:  \NN_\eps \to \NN
  }
  be the smooth, closest point projection map, i.e., for $w \in \NN_\eps$ we have 
  \EQ{
   \abs{\pi_\NN (w) - w}  =  \inf_{u \in \NN} \abs{ u - w}
  }
  Note that  for $w \in \NN$, $\na \pi_{\NN}(w) :  \R^N \to T_w\NN$ is an orthogonal projection, and moreover,  $\na^2 \pi_\NN (w): T_w\NN \otimes T_w \NN \to (T_w \NN)^{\perp}$ is precisely the second fundamental form, i.e., $\na^2 \pi_{\NN}(w)(\cdot, \cdot)  = - S(w)(\cdot, \cdot)$. 
  
  Now, set $ \rho( u) := \abs{ \pi_{\NN}(u) - u}^2$, and note that $\rho(u(0, \cdot)) = 0$. We have 
\ant{
 \p_s \rho(u) - \De \rho(u) = - 2  \abs{ \na ( \pi_\NN(u) - u)}^2  - 2 \sum_{j=1}^N  ( \pi_\NN^j(u) - u^j) ( \p_s - \De)( \pi_\NN^j(u) - u^j)
}  
We claim that the last term on the right above is $\equiv  0$. Assuming this last fact, we see that $\p_s \rho(u) - \De \rho(u) \le 0$ and an application of the maximum principle concludes the proof.  To prove the last term is $ \equiv 0$ we compute  
\ant{
( \p_s - \De)( \pi_\NN^j(u) - u^j) &=  \p_\ell  \pi_\NN^j(u)( \p_s u^\ell - \De u^\ell)  - \p_k \p_\ell \pi_\NN^j(u) \p^a u^k \p_a u^\ell    \\
&\quad - S_{\ell k}^j(u) \p^a u^k \p_a u^\ell  \\
& = \p_\ell  \pi_\NN^j(u) S^\ell (u) (du, du) + S_{\ell k}^j(u) \p^a u^k \p_a u^\ell - S_{\ell k}^j(u) \p^a u^k \p_a u^\ell \\
& = 0
}
where above we have used the fact that  $S(u)(du, du) \in (T_u \NN)^{\perp}$ to show that the first term in the third line above vanishes. 

\vskip.5em
\noindent{\bf Step 1: A priori estimates. }
We now prove the a priori estimates~\eqref{eq:hfap}. Let $u(s, x)$ be a smooth solution to~\eqref{eq:hfe} on an $s$-interval $I \subset \R^+$. Define $v(s, x) $ by 
\EQ{
v:=  \De (u - u_{\infty}) = \De u
}
where we view $v(s, x)$ as a vector in $\R^N$. 
Then $v$ satisfies the heat equation 
\EQ{ \label{eq:vh}
 \p_s v - \De v  = G,
 }
 where $G(s, x) \in \bbR^N$ is the vector with components 
 \EQ{ \label{eq:Gdef} 
  G^k :=  \De (S^k_{j \ell}(u)  \na^a u^j \na_a u^\ell)
}
Now, given an $s$-interval $J \subset \R^+$, we define the norms $X(J)$ and $X_{0}(J)$ by 
\EQ{
\nrm{v}_{X_{0}(J)} := & 
\nrm{v}_{L^{\infty}_{\ds}(J; L^{2})} + \nrm{s \Dlt v}_{L^{2}_{\ds}(J; L^{2})}  \\
&+  \sum_{\alp \in \set{\frac{1}{4}, \frac{1}{2}, \frac{3}{4}}} \nrm{s^{\alp} (-\Dlt)^{\alp} v}_{L^{\infty}_{\ds} \cap L^{2}_{\ds} (J; L^{2})} 
\label{eq:X0J} 
}
Note that 
 $\nrm{v}_{X_{0}(J)} = \nrm{u}_{\calX_{0}(J)}$.

By Duhamel's formula, the solution $v$ to \eqref{eq:vh} admits the (componentwise) representation
\begin{equation} \label{eq:vh-duhamel}
	v(s) = e^{s \Dlt} v_{0} + \int_{0}^{s} s' e^{(s - s') \Dlt} G(s') \, \dsp
\end{equation}
where $v_{0} = \Dlt u_{0} = \Dlt (u_{0} - u_{\infty})$.
Let $J \subset \bbR^{+}$ be any $s$-interval extending from $0$. By Lemma~\ref{lem:hh-L2:infty} and Lemma~\ref{lem:hh-L2:2}, we have
\begin{equation} \label{eq:vh-h}
	\nrm{e^{s \Dlt} v_{0}}_{X_{0}(\bbR^{+})} \aleq \nrm{v_{0}}_{L^{2}} \simeq \nrm{d u_{0}}_{H^{1}(\bbH^{4}; T \NN)}
\end{equation}
On the other hand, by Lemma~\ref{lem:ih} it follows that
\begin{equation} \label{eq:vh-ih}
	\nrm{\int_{0}^{s} s' e^{(s - s') \Dlt} G(s') \, \dsp}_{X_{0}(J)}
	\aleq \nrm{s G}_{L^{1}_{\ds} \cap L^{\infty}_{\ds} (J; L^{2})}
\end{equation}
For each fixed $s \in \bbR^{+}$, we claim that the following bound holds for $G$:
\begin{equation} \label{eq:vh-apriori-key}
	\nrm{s G}_{L^{1}_{\ds} \cap L^{\infty}_{\ds} (J; L^{2})} \aleq (1 + \nrm{v}_{X_{0}(J)} + \nrm{v}_{X_{0}(J)}^{2}) \nrm{v}_{X_0(J)}^{2}
\end{equation}
Assuming this bound holds, a usual continuity argument with $\de_1$ sufficiently small leads to the \emph{a priori} estimate
\EQ{
\| v\|_{X_{0}(\bbR^{+})}  \lesssim  \|v(0)\|_{L^2} 
}
which implies~\eqref{eq:hfap}. 

We now turn to the proof of \eqref{eq:vh-apriori-key}. From the definition of the norm $X_0(J)$, it is clear that \eqref{eq:vh-apriori-key} would follow once we establish
\begin{equation} \label{eq:vh-apriori-fix-s}
	\nrm{G}_{L^{2}}
	\aleq (1 + \nrm{v}_{L^{2}} + \nrm{v}_{L^{2}}^{2} ) \nrm{(-\Dlt)^{\frac{1}{2}} v}_{L^{2}}^{2} + \nrm{(-\Dlt)^{\frac{1}{4}} v}_{L^{2}} \nrm{(-\Dlt)^{\frac{3}{4}} v}_{L^{2}}
\end{equation}
for every fixed $s \in \bbR^{+}$.
We begin by expanding $G$ as defined in~\eqref{eq:Gdef}.  
\begin{align*}
	G^{k}
	= & \bfh^{c d} \nb_{c} \nb_{d} S^{k}_{ij}(u) \bfh^{ab} \nb_{a} u^{i} \nb_{b} u^{j} \\
	= & \rd^{2}_{m \ell} S^{k}_{i j} \bfh^{cd} \bfh^{ab} \nb_{c} u^{m} \nb_{d} u^{\ell} \nb_{a} u^{i} \nb_{b} u^{j} \\
	& + \rd_{\ell} S^{k}_{ij}(u) \Dlt u^{\ell} \bfh^{ab} \nb_{a} u^{i} \nb_{b} u^{j}
	+ 4\rd_{\ell} S^{k}_{ij}(u) \bfh^{cd} \bfh^{ab} \nb_{d} u^{\ell} \nb_{c} \nb_{a} u^{i} \nb_{b} u^{j} \\
	& + 2 S^{k}_{ij}(u) \bfh^{cd} \bfh^{ab} \nb_{c} \nb_{a} u^{i} \nb_{d} \nb_{b} u^{j}
	+ 2 S^{k}_{ij}(u) \bfh^{cd} \bfh^{ab} \nb_{c} \nb_{d} \nb_{a} u^{i} \nb_{b} u^{j}
\end{align*}
We express the above schematically as 
\ant{
G &=  \p^{(2)} S(u)(  \na u, \na u, \na u, \na u) +  \p S(u)( \De u, \na u, \na u) +  \p S(u)( \na u, \na^{2} u, \na u) \\
&\quad +  S(u) ( \na^{2} u, \na^{2} u) + S(u)( \na^{3} u, \na u)
}
We now take the $L^{2}$ norm and estimate each term as follows. Below we will often use the assumption that the second fundamental form $S$ and its derivatives are bounded, i.e., $ \abs{S(u)} + \abs{ \p S(u)} + \abs{ \p^{(2)} S(u)} \le C$.
Using the Gagliardo-Nirenberg and Sobolev inequalities, we estimate
\begin{align*}
	\nrm{\rd^{(2)} S(u) (\nb u, \nb u, \nb u, \nb u)}_{L^{2}}
	\aleq \nrm{\nb u}_{L^{8}}^{4}
	\aleq \nrm{\nb u}_{L^{4}}^{2} \nrm{\Dlt u}_{L^{4}}^{2}
	\aleq \nrm{v}_{L^{2}}^{2} \nrm{\nb v}_{L^{2}}^{2} 
\end{align*}
Similarly, 
\begin{align*}
	\nrm{\rd S(u) (\Dlt u, \nb u, \nb u)}_{L^{2}}
	\aleq \nrm{\Dlt u}_{L^{4}} \nrm{\nb u}_{L^{8}}^{2}
	\aleq \nrm{\Dlt u}_{L^{4}}^{2} \nrm{\nb u}_{L^{4}} 
	\aleq \nrm{v}_{L^{2}} \nrm{\nb v}_{L^{2}}^{2}
\end{align*}
Exactly the same argument applies to $\rd S(u) (\nb u, \nb^{2} u, \nb u)$. By the Sobolev inequality, it also follows that
\begin{align*}
	\nrm{S(u)(\nb^{2} u, \nb^{2} u)}_{L^2}
	\aleq & \nrm{\Dlt u}_{L^{4}}^{2} \aleq \nrm{\nb v}_{L^{2}}^{2}  \\
	\nrm{S(u) (\nb^{3} u, \nb u)}_{L^2}
	\aleq & \nrm{\nb^{3} u}_{L^{\frac{8}{3}}} \nrm{\nb u}_{L^{8}}
	\aleq \nrm{(-\Dlt)^{\frac{3}{4}} v}_{L^{2}} \nrm{(-\Dlt)^{\frac{1}{4}} v}_{L^{2}}
\end{align*}
These estimates imply \eqref{eq:vh-apriori-fix-s} as desired.


\vskip.5em
\noindent {\bf Step 2: Existence and uniqueness. }
To establish existence of solutions one can now set up the usual iteration argument in the space 
\EQ{
\mathcal{X}_{0, \de} := \{ u   \in \mathcal{X}_0(\R^+) \,  :  \, \|  u\|_{\mathcal{X}_0 (\R^+)}  \le C \de\}
}
Define a sequence of maps 
by setting $u_{-1}(s, x):= u_{\infty}$ and  $u_{j}$ for $j \ge 0$ to be the solution to the linear heat equation 
\EQ{
 (\p_s - \De) u_j &= S(u_{j-1})( d u_{j-1},  d u_{j-1}) \\
 u_j(0, x)&= u_0
 }
 One now shows using the same ideas used to prove~\eqref{eq:hfap} that the sequence is well defined in $\mathcal{X}_{0, \de}$ and Cauchy in $\mathcal{X}_{0, \de}$ for $\de$ small enough.  The uniqueness of solutions to~\eqref{eq:hfe} in the space $\calX_{0}(\bbR^{+})$, as well as smoothness of $u$ on $\bbR^{+} \times \bbH^{4}$ (given that $u_{0}$ is smooth) follow from a standard argument which we do not carry out here. 

\vskip.5em
\noindent {\bf Step 3: Smoothing bounds. }
Next, we turn to the bounds~\eqref{eq:hfap1} and \eqref{eq:hfap2}. The case $\alp < 1$ follows from Lemma~\ref{lem:hh-L2:infty}, Lemma~\ref{lem:hh-L2:2}, Lemma~\ref{lem:ih} and \eqref{eq:vh-apriori-key}. To treat higher derivatives, we apply Lemma~\ref{lem:ih-reg} with $k \geq 1$. We sketch the case $k = 1$ and leave the general case $k \geq 1$ to the reader.

Let $J \subset \bbR^{+}$ be any interval extending from $0$. Applying Lemma~\ref{lem:ih-reg} with $k = 1$ to \eqref{eq:vh}, it follows that
\begin{equation*}
	\nrm{s^{\frac{3}{2}} (-\Dlt)^{\frac{3}{2}} v}_{L^{\infty}_{\ds}(J; L^{2})}
	+ \nrm{s^{2} (-\Dlt)^{2} v}_{L^{2}_{\ds}(J; L^{2})} 
	\aleq \nrm{s \Dlt v}_{L^{2}_{\ds}(J; L^{2})} + \nrm{s \Dlt G}_{L^{2}_{\ds}(J; L^{2})}
\end{equation*}
The first term on the right-hand side is bounded by $\nrm{d u_{0}}_{H^{1}(\bbH^{4}; T \NN)}$ thanks to Step 1.
Proceeding as in the proof of \eqref{eq:vh-apriori-key} and using interpolation, it can be checked that
\begin{align*}
	\nrm{s \Dlt G}_{L^{2}_{\ds}(J; L^{2})}
	\aleq \bb( \nrm{v}_{X_{0}(J)} + \nrm{v}_{X_{0}(J)}^{5} \bb) \sum_{\alp \in \set{\frac{1}{4}, \frac{1}{2}, \frac{3}{4}} } \nrm{s^{1+\alp}(-\Dlt)^{1+\alp} v}_{L^{2}_{\ds}(J; L^{2})}
\end{align*}
By interpolation and Young's inequality, for every $\ti\de > 0$ there exists $C_{\ti\de} > 0$ such that
\begin{equation*}
	\sum_{\alp \in \set{\frac{1}{4}, \frac{1}{2}, \frac{3}{4}} } \nrm{s^{1+\alp}(-\Dlt)^{1+\alp} v}_{L^{2}_{\ds}(J; L^{2})}
	\leq \ti\de \nrm{s^{2} (-\Dlt)^{2} v}_{L^{2}_{\ds}(J; L^{2})} + C_{\ti\de} \nrm{v}_{X_{0}(J)}
\end{equation*}
Recall that $\nrm{v}_{X_{0}(J)} \aleq \nrm{d u_{0}}_{H^{1}(\bbH^{4}; T \NN)}$ by Step 1.  Choosing $\ti\de > 0$ sufficiently small depending on $\nrm{d u_{0}}_{H^{1}(\bbH^{4}; T \NN)}$, we may absorb the first term to the left-hand side, provided it is finite. By a standard continuity argument in $s$ (which ensures the required finiteness), we conclude that
\begin{equation*}
	\nrm{s^{\frac{3}{2}} (-\Dlt)^{\frac{3}{2}} v}_{L^{\infty}_{\ds}(\bbR^{+}; L^{2})}
	+ \nrm{s^{2} (-\Dlt)^{2} v}_{L^{2}_{\ds}(\bbR^{+}; L^{2})} 
	\aleq \nrm{d u_{0}}_{H^{1}(\bbH^{4}; T \NN)}
\end{equation*} 

\vskip.5em
\noindent {\bf Step 4: Regularity in $s$. }
Finally, we establish \eqref{eq:hfap3} and \eqref{eq:hfap4}. The idea is simply to use the harmonic map heat flow equation
\begin{equation*}
	\rd_{s} u - \Dlt u = \tilde{G}
\end{equation*}
where $\tilde{G}$ is defined as
\begin{equation*}
\tilde{G}^{k} = S^{k}_{ij} (u) \nb^{a} u^{i} \nb_{a} u^{j}
\end{equation*}
Let $r = 2$ or $\infty$. Applying $s^{\alp} (-\Dlt)^{\alp}$ to the equation and using $v = \Dlt v$, we derive
\begin{align*}
	\nrm{s^{\alp} (-\Dlt)^{\alp} \rd_{s} u}_{L^{r}_{\ds}(\bbR^{+}; L^{2})}
	\leq \nrm{s^{\alp} (-\Dlt)^{\alp} v}_{L^{r}_{\ds}(\bbR^{+}; L^{2})}
	+ \nrm{s^{\alp} (-\Dlt)^{\alp} \tilde{G}}_{L^{r}_{\ds}(\bbR^{+}; L^{2})}
\end{align*}
The first term on the right-hand side is acceptable; indeed, by the previous step,
\begin{equation*}
\nrm{s^{\alp} (-\Dlt)^{\alp} v}_{L^{r}_{\ds}(\bbR^{+}; L^{2})} \aleq_{\alp} \nrm{d u_{0}}_{H^{1}(\bbH^{4}, T \NN)}
\end{equation*}
for every $\alp \geq 0$ and $r = \infty$, or $\alp > 0$ and $r = 2$. On the other hand, the second term can be bounded by $C_{\alp} \nrm{d u_{0}}_{H^{1}(\bbH^{4}, T \NN)}^{2}$ for $\alp = 2, 3, \ldots$, by using the Leibniz rule to expand $s^{\alp} (-\Dlt)^{\alp} \tilde{G} = s^{\alp} (-\Dlt)^{\alp-1} G$ and applying the bounds \eqref{eq:hfap1} and \eqref{eq:hfap2}. By interpolation, it remains to handle the cases with the least number of derivatives. Using Gagliardo-Nirenberg and Sobolev, we have
\begin{align*}
\nrm{\tilde{G}}_{L^{\infty}_{\ds}(\bbR^{+}; L^{2})}
\aleq & \nrm{\nb u}_{L^{\infty}_{\ds}(\bbR^{+}; L^{4})}^{2}
\aleq \nrm{v}_{L^{\infty}_{\ds}(\bbR^{+}; L^{2})}^{2} \\
\nrm{s^{\frac{1}{2}} (-\Dlt)^{\frac{1}{2}} \tilde{G}}_{L^{2}_{\ds}(\bbR^{+}; L^{2})} 
\aleq & \nrm{s^{\frac{1}{2}} \nb \tilde{G}}_{L^{2}_{\ds}(\bbR^{+}; L^{2})} \\
\aleq & \nrm{\nrm{\nb u}_{L^{6}}^{3}}_{L^{2}_{\ds}(\bbR^{+})} 
	+ \nrm{\nrm{\nb u}_{L^{4}} \nrm{\nb^{2} u}_{L^{4}}}_{L^{2}_{\ds}} \\
\aleq &\bb( \nrm{v}_{L^{\infty}_{\ds}(\bbR^{+}; L^{2})} + \nrm{v}_{L^{\infty}_{\ds}(\bbR^{+}; L^{2})}^{2} \bb) \nrm{(-\Dlt)^{\frac{1}{2}} v}_{L^{2}_{\ds}(\bbR^{+}; L^{2})} ,
\end{align*}
both of which are acceptable. \qedhere
\end{proof}

As is usual for proofs using Picard iteration, the solution map $u_{0} \mapsto u$ is smooth with respect to the norms in the estimates \eqref{eq:hfap1}--\eqref{eq:hfap4}. We record precise bounds in the following statement.

\begin{lem}[Linearized harmonic map heat flow] \label{lem:lin-hm}
Let $I \subset \bbR$ be an interval, and let $u_{0} = (u_{0, t})_{t \in I}$ be a smooth 1-parameter family of maps $\bbH^{4} \to \NN \subset \bbR^{N}$. There exists $\de_2>0$ independent of $ t \in I$ and with $\de_2< \de_1$  where $\dlt_{1}$ is the small constant in Lemma~\ref{l:hmhf}, so that the following holds: Suppose that 
\begin{equation*}
 	u_{0, t}(x) \to u_{\infty} \in \NN \hbox{ as } \bfd(x; \zero) \to \infty, \quad
	\nrm{d u_{0, t}}_{H^{1}(\bbH^{4}; T \NN)}  = \de < \dlt_{2}
\end{equation*}
for every $t \in I$, and 
let $u = u_{t}(s, x)$ be the global solution to \eqref{eq:hmhf} with initial data $u_{0, t}$. Then $u$ is smooth in $(t, s, x) \in I \times \bbR^{+} \times \bbH^{4}$ and obeys the following bounds for each $t \in I$:
\begin{align}
	\nrm{s^{\alp} (-\Dlt)^{\alp+\frac{1}{2}} \rd_{t} u}_{L^{\infty}_{\ds}(\bbR^{+}; L^{2})}
	\aleq_{\dlt, \alp} & \nrm{\rd_{t} u \!\! \restriction_{s=0}}_{H^{1}(\bbH^{4}; T \NN)} , \quad \forall \alp \geq 0 \label{eq:lin-hm1} \\
	\nrm{s^{\alp} (-\Dlt)^{\alp+\frac{1}{2}} \rd_{t} u}_{L^{2}_{\ds}(\bbR^{+}; L^{2})}
	\aleq_{\dlt, \alp} & \nrm{\rd_{t} u \!\! \restriction_{s=0}}_{H^{1}(\bbH^{4}; T \NN)} , \quad \forall \alp > 0 \label{eq:lin-hm2} \\
	\nrm{s^{\alp} (-\Dlt)^{\alp-\frac{1}{2}} \rd_{t} \rd_{s} u}_{L^{\infty}_{\ds} (\bbR^{+}; L^{2})}
	\aleq_{\dlt, \alp} & \nrm{\rd_{t} u \!\! \restriction_{s=0}}_{H^{1}(\bbH^{4}; T \NN)} , \quad \forall \alp \geq 0  \label{eq:lin-hm3} \\	
	\nrm{s^{\alp} (-\Dlt)^{\alp-\frac{1}{2}} \rd_{t} \rd_{s} u}_{L^{2}_{\ds}(\bbR^{+}; L^{2})}
	\aleq_{\dlt, \alp} & \nrm{\rd_{t} u \!\! \restriction_{s=0}}_{H^{1}(\bbH^{4}; T \NN)} , \quad \forall \alp \geq \frac{1}{2}  \label{eq:lin-hm4}
\end{align}

\end{lem}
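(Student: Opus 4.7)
The proof should parallel Steps 1, 3, and 4 of the proof of Lemma~\ref{l:hmhf} closely, the point being that the equation for $w := \rd_{t} u$ is a \emph{linear} heat equation whose coefficients are controlled by the a priori bounds \eqref{eq:hfap}--\eqref{eq:hfap4} on $u$.  To set this up, I would differentiate the extrinsic harmonic map heat flow \eqref{eq:hfe} in $t$ (using smoothness in $t$ of the Picard iterates $u_j$, which is inherited by the limit $u$) to obtain
\begin{equation*}
(\rd_{s} - \Dlt) w^{k} = H_{w}^{k}, \qquad w \!\!\restriction_{s=0} = \rd_{t} u_{0},
\end{equation*}
with
\begin{equation*}
H_{w}^{k} = \rd_{\ell} S^{k}_{ij}(u)\, w^{\ell}\, \bfh^{ab}\nb_{a} u^{i} \nb_{b} u^{j}
+ 2 S^{k}_{ij}(u)\, \bfh^{ab}\nb_{a} w^{i} \nb_{b} u^{j}.
\end{equation*}
Since the data is measured at one derivative lower than in Lemma~\ref{l:hmhf}, the natural analogue of the norm $X_{0}(J)$ is obtained by replacing $\Dlt u$ by $\nb w = (-\Dlt)^{1/2} w$ throughout; concretely, I would work with
\begin{equation*}
\|w\|_{Y_0(J)} := \|(-\Dlt)^{\frac{1}{2}} w\|_{L^{\infty}_{\ds}(J; L^{2})}
+ \|s (-\Dlt)^{\frac{3}{2}} w\|_{L^{2}_{\ds}(J; L^{2})}
+ \sum_{\alp \in \{\frac{1}{4}, \frac{1}{2}, \frac{3}{4}\}}
\|s^{\alp}(-\Dlt)^{\alp+\frac{1}{2}} w\|_{L^{\infty}_{\ds}\cap L^{2}_{\ds}(J; L^{2})}.
\end{equation*}

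The core step is to apply the homogeneous estimates Lemma~\ref{lem:hh-L2:infty}, Lemma~\ref{lem:hh-L2:2} and the Duhamel estimate Lemma~\ref{lem:ih} to the equation $(\rd_s - \Dlt)(-\Dlt)^{1/2} w = (-\Dlt)^{1/2} H_w$, which yields
\begin{equation*}
\|w\|_{Y_{0}(J)} \aleq \|\nb w \!\!\restriction_{s=0}\|_{L^{2}} + \|s (-\Dlt)^{\frac{1}{2}} H_{w}\|_{L^{1}_{\ds}\cap L^{\infty}_{\ds}(J;L^{2})} + \|s(-\Dlt)^{\frac{1}{2}} H_{w}\|_{L^{2}_{\ds}(J;L^{2})}.
\end{equation*}
Then I would estimate the right-hand side by pointwise-in-$s$ bilinear estimates of the schematic form
\begin{equation*}
\|(-\Dlt)^{\frac{1}{2}} H_{w}\|_{L^{2}} \aleq (1+\|u-u_\infty\|_{L^{2}})^{C}\,\bigl(\|\nb u\|_{L^{4}}^{2}+\|\Dlt u\|_{L^{2}}\bigr)\,\|(-\Dlt)^{\frac{3}{4}} w\|_{L^{2}} + \cdots,
\end{equation*}
using Sobolev (Lemma~\ref{l:se}), Gagliardo-Nirenberg (Lemma~\ref{l:gn}), the boundedness of $S$ and its derivatives, and Lemma~\ref{l:riesz} to convert $(-\Dlt)^{1/2}$ to $\nb$.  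Together with \eqref{eq:hfap} (which supplies all needed $s$-weighted bounds on $u$ at the level of $X_{0}(\bbR^{+})$, in particular $\nrm{u-u_{\infty}}$ small in the associated norms when $\dlt_{2}$ is small), this gives
\begin{equation*}
\|s(-\Dlt)^{\frac{1}{2}} H_{w}\|_{L^{1}_{\ds}\cap L^{\infty}_{\ds}\cap L^{2}_{\ds}(J;L^{2})} \aleq \dlt\,\|w\|_{Y_{0}(J)},
\end{equation*}
which we absorb for $\dlt_{2}$ small to obtain $\|w\|_{Y_{0}(\bbR^{+})} \aleq \|\nb w\!\!\restriction_{s=0}\|_{L^{2}}\simeq \|\rd_{t} u\!\!\restriction_{s=0}\|_{H^{1}}$.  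This yields \eqref{eq:lin-hm1} and \eqref{eq:lin-hm2} for $\alp \in \{0,\frac{1}{4},\frac{1}{2},\frac{3}{4}\}$.

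Having the base estimate in hand, the higher-regularity bounds in \eqref{eq:lin-hm1}--\eqref{eq:lin-hm2} for $\alp \geq 1$ would follow by induction on $k=1,2,\ldots$ using the parabolic regularity bound Lemma~\ref{lem:ih-reg} applied to $(\rd_{s}-\Dlt) w = H_{w}$, exactly as in Step 3 of the proof of Lemma~\ref{l:hmhf}; the needed nonlinear estimates for $s^{k+1}(-\Dlt)^{k+\frac{1}{2}} H_{w}$ come by applying the Leibniz rule and using \eqref{eq:hfap1}--\eqref{eq:hfap2} to absorb all factors of $u$.  Finally, for \eqref{eq:lin-hm3}--\eqref{eq:lin-hm4}, I would use the equation itself: $\rd_{s} w = \Dlt w + H_{w}$, and estimate the two pieces on the right directly via the bounds just obtained for $w$ together with the bilinear bounds on $H_{w}$, mirroring Step 4 of the Lemma~\ref{l:hmhf} proof.

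The main obstacle is purely bookkeeping, namely locating the right interpolation exponents so that every product $\nb^{i} u\cdot \nb^{j} w$ arising in $(-\Dlt)^{\alp+1/2} H_{w}$ can be closed by a combination of Gagliardo-Nirenberg/Sobolev on the $u$-factors (which we control by Lemma~\ref{l:hmhf}) and a factor in the $Y_{0}$-norm of $w$.  Once one verifies that the endpoint cases $\alp\in\{\frac{1}{4},\frac{1}{2},\frac{3}{4}\}$ close — which is the direct analogue of the computation after \eqref{eq:vh-apriori-fix-s} — the rest is routine interpolation and induction.
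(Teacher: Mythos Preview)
Your proposal is correct and follows essentially the same approach as the paper: set $v'=\rd_t u$, derive the linearized heat equation $(\rd_s-\Dlt)v'=G'$ with the same nonlinearity you call $H_w$, control $\nrm{(-\Dlt)^{1/2}v'}_{X_0(J)}$ (which is exactly your $\nrm{w}_{Y_0(J)}$) via Duhamel together with Lemmas~\ref{lem:hh-L2:infty}, \ref{lem:hh-L2:2}, \ref{lem:ih}, prove the key bound $\nrm{\nb G'}_{L^1_{\ds}\cap L^\infty_{\ds}(J;L^2)}\aleq (1+\nrm{u}_{\calX_0(J)}^2)\nrm{u}_{\calX_0(J)}\nrm{(-\Dlt)^{1/2}v'}_{X_0(J)}$ and absorb for $\dlt_2$ small, then handle higher $\alp$ via Lemma~\ref{lem:ih-reg} and \eqref{eq:lin-hm3}--\eqref{eq:lin-hm4} via the equation, precisely as you outline.
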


\begin{rem} \label{r:extra-dlt}
For technical convenience we have introduced a new small constant $\de_2>0$ in the statement of Lemma~\ref{lem:lin-hm}, which is acceptable thanks to the small data assumption. We remark that this  can be avoided by more carefully choosing a function space for the harmonic maps heat flow $u$ which is divisible in $s \in \R^+$. Here divisibility of a space $\calY( J)$ refers to the fact that for any function $u \in \calY( J)$ there exists a partition  of $J$ into a controlled number of subintervals $\{J_k\}$ 
on each of which the $\calY(J_k)$ norm of $u$ is small. 
\end{rem} 

Let $v' = \rd_{t} u$. Then $v'$ obeys the equation
\begin{equation} \label{eq:vph}
 	(\rd_{s} - \Dlt) v' = G'
\end{equation} 
where $G'$ is given by
\begin{align*}
	(G')^{k} 
	=& \rd_{t} (S^{k}_{ij}(u) \nb^{a} u^{i} \nb_{a} u^{j}) \\
	=& \rd_{\ell} S^{k}_{ij}(u) (v')^\ell \nb^{a} u^{i} \nb_{a} u^{j} 
		+ 2 S^{k}_{ij}(u) \nb^{a} (v')^{i} \nb_{a} u^{j}
\end{align*}
Hence $v'$ solves a \emph{linearized harmonic map heat flow} about $u$. 
Lemma~\ref{lem:lin-hm} is proved by analyzing this flow.

\begin{proof} [{Proof of Lemma~\ref{lem:lin-hm}}]
Fix $t \in I$. We begin by showing that for $\delta<\delta_2$ where $\delta_2$ is independent of $t$ and $I$ the following analogue of \eqref{eq:hfap} holds:
\begin{equation} \label{eq:lin-hm-apriori}
	\nrm{(-\Dlt)^{\frac{1}{2}} v' }_{X_{0}(\bbR^{+})} \aleq_{\dlt} \nrm{\rd_{t} u_{0}}_{H^{1}(\bbH^{4}; T \NN)}
\end{equation}
Indeed, applying Duhamel's principle to \eqref{eq:vph},
\begin{equation*}
	v'(s) = e^{s \Dlt} v'(0) +  \int_{0}^{s} s' e^{(s - s') \Dlt} G'(s') \, \dsp
\end{equation*}
Let $J\subseteq \R^+$ be any fixed $s$-interval. By Lemma~\ref{lem:hh-L2:infty} and Lemma~\ref{lem:hh-L2:2}, we have
\begin{equation*}
\nrm{(-\Dlt)^{\frac{1}{2}} e^{s \Dlt} v'(0)}_{X_{0}(J)} \aleq \nrm{(-\Dlt)^{\frac{1}{2}} v'(0)}_{L^{2}} \simeq \nrm{\rd_{t} u_{0}}_{H^{1}(\bbH^{4}; T \NN)}
\end{equation*}
which is acceptable. On the other hand, by Lemma~\ref{lem:ih}, we have
\begin{equation*}
	\nrm{(-\Dlt)^{\frac{1}{2}} \int_{0}^{s} s' e^{(s - s') \Dlt} G'(s') \, \dsp}_{X_{0}(J)}
	\aleq \nrm{\nb G'}_{L^{1}_{\ds} \cap L^{2}_{\ds}(J; L^{2})}
\end{equation*}
We claim that
\begin{equation} \label{eq:lin-hm-apriori-key}
	\nrm{\nb G'}_{L^{1}_{\ds} \cap L^{\infty}_{\ds}(J; L^{2})}
	\aleq (1 + \nrm{u}_{\calX_{0}(J)}^{2}) \nrm{u}_{\calX_0(J)} \nrm{(-\Dlt)^{\frac{1}{2}} v'}_{X_{0}(J)}
\end{equation}
Note the linear factor $\nrm{(-\Dlt)^{\frac{1}{2}} v'}_{X_{0}(J)}$.  Choosing $\delta_2<\delta_1$ we have $\nrm{u}_{\calX_{0}(J)} \aleq \dlt$ by Lemma~\ref{l:hmhf}. Choosing $\dlt_{2}$ smaller if necessary, we may prove
\begin{align*}
	\nrm{(-\Dlt)^{\frac{1}{2}} v'}_{X_{0}(J)}
	\leq &C  \nrm{\rd_{t} u_{0}}_{H^{1}(\bbH^{4}; T \NN)} 
	+ \frac{1}{2} \nrm{(-\Dlt)^{\frac{1}{2}} v'}_{X_{0}(J)}
\end{align*}
Note that the last term can be absorbed into the left-hand side which proves the desired a priori estimate \eqref{eq:lin-hm-apriori}. 

The estimate \eqref{eq:lin-hm-apriori-key} follows by expanding (schematically)
\begin{align*}
	\nb G' = & \rd^{(2)} S(u) (v', \nb u, \nb u, \nb u) + \rd S(u) (\nb v', \nb u, \nb u) 
	+ \rd S(u) (v', \nb^{2} u, \nb u)  \\
	& + S(u) (\nb^{2} v', \nb u) + S(u) (\nb v', \nb^{2} u)
\end{align*}
and treating each term using Gagliardo-Nirenberg and Sobolev, in the fashion of Step 1 in the proof of Lemma~\ref{l:hmhf}. We leave the routine details to the reader.

We now sketch the rest of the proof. Once the key a priori estimate \eqref{eq:lin-hm-apriori} is proved, the higher regularity bounds \eqref{eq:lin-hm1} and \eqref{eq:lin-hm2} can be established using Lemma~\ref{lem:ih-reg} as in the proof of \eqref{eq:hfap1} and \eqref{eq:hfap2}. Moreover, the bounds \eqref{eq:lin-hm3} and \eqref{eq:lin-hm4} for $\rd_{s} \rd_{t} u = \rd_{s} v'$ follow from the equation \eqref{eq:vph} as in the proof of \eqref{eq:hfap3} and \eqref{eq:hfap4}.  \qedhere
\end{proof}

Proceeding as in Lemma~\ref{lem:lin-hm} but commuting the harmonic map heat flow equation with $-\Dlt$, we obtain a priori estimates for $u$ with one less power of $s$ in terms of the $H^{3}$ norm of $d u \rest_{s = 0}$. Considering a smooth one parameter family of maps $u_{t}$ and taking $\rd_{t}$, we also get similar estimates for $\rd_{t} u$ in terms of the $H^{3}$ norm of $\rd_{t} u \rest_{s=0}$. We record these bounds in the following lemma. Such bounds will be useful later in the context of upgrading a priori estimates to well-posedness of the wave map equation.

\begin{lem}[A priori estimate for higher derivatives] \label{l:hrhf} 
Let $u_{0}$ be a smooth map $\bbH^{4} \to \NN$ satisfying the hypothesis of Lemma~\ref{l:hmhf}. Assume furthermore that
\begin{equation*}
	\nrm{d u_{0}}_{H^{3}(\bbH^{4}; T \NN)} < \infty
\end{equation*}
Then the solution $u$ to the harmonic map heat flow with initial data $u_{0}$ given by Lemma~\ref{l:hmhf} obeys the following estimates.
\begin{align*}
	\nrm{s^{\alp -1} (-\Dlt)^{\alp+1} u}_{L^{\infty}_{\ds}(\bbR^{+}; L^{2})}
	\aleq_{\dlt, \alp} & \nrm{d u \!\! \restriction_{s=0}}_{H^{3}(\bbH^{4}; T \NN)}, \quad \forall \alp \geq 1 \\
	\nrm{s^{\alp -1} (-\Dlt)^{\alp+1} u}_{L^{2}_{\ds}(\bbR^{+}; L^{2})}
	\aleq_{\dlt, \alp} & \nrm{d u \!\! \restriction_{s=0}}_{H^{3}(\bbH^{4}; T \NN)}, \quad \forall \alp > 1  \\
	\nrm{s^{\alp -1} (-\Dlt)^{\alp} \rd_{s} u}_{L^{\infty}_{\ds}(\bbR^{+}; L^{2})}
	\aleq_{\dlt, \alp} & \nrm{d u \!\! \restriction_{s=0}}_{H^{3}(\bbH^{4}; T \NN)}, \quad \forall \alp \geq 1 \\
	\nrm{s^{\alp -1} (-\Dlt)^{\alp} \rd_{s} u}_{L^{2}_{\ds}(\bbR^{+}; L^{2})}
	\aleq_{\dlt, \alp} & \nrm{d u \!\! \restriction_{s=0}}_{H^{3}(\bbH^{4}; T \NN)}, \quad \forall \alp \geq \frac{3}{2} 
\end{align*}

Let $I \subset \bbR$ be an interval and consider now a smooth 1-parameter family $(u_{0, t})_{t \in I}$ of maps $\bbH^{4} \to \NN$ satisfying the hypothesis of Lemma~\ref{lem:lin-hm}. Assume furthermore that 
\begin{equation*}
	\nrm{\rd_{t} u_{0, t}}_{H^{3}(\bbH^{4}; T \NN)} < \infty
\end{equation*}
for each $t \in I$. Then for each $t \in I$, the solution $u = u_{t}$ to the harmonic map heat flow with initial data $u_{0, t}$ given by Lemma~\ref{l:hmhf} obeys the following estimates.
\begin{align*}
	\nrm{s^{\alp - 1} (-\Dlt)^{\alp+\frac{1}{2}} \rd_{t} u}_{L^{\infty}_{\ds}(\bbR^{+}; L^{2})}
	\aleq_{\dlt, \alp} & \nrm{\rd_{t} u \!\! \restriction_{s=0}}_{H^{3}(\bbH^{4}; T \NN)} , \quad \forall \alp \geq 1 \\
	\nrm{s^{\alp - 1} (-\Dlt)^{\alp+\frac{1}{2}} \rd_{t} u}_{L^{2}_{\ds}(\bbR^{+}; L^{2})}
	\aleq_{\dlt, \alp} & \nrm{\rd_{t} u \!\! \restriction_{s=0}}_{H^{3}(\bbH^{4}; T \NN)} , \quad \forall \alp > 1 \\
	\nrm{s^{\alp - 1} (-\Dlt)^{\alp-\frac{1}{2}} \rd_{t} \rd_{s} u}_{L^{\infty}_{\ds}(\bbR^{+}; L^{2})}
	\aleq_{\dlt, \alp} & \nrm{\rd_{t} u \!\! \restriction_{s=0}}_{H^{3}(\bbH^{4}; T \NN)} , \quad \forall \alp \geq 1 \\
	\nrm{s^{\alp - 1} (-\Dlt)^{\alp-\frac{1}{2}} \rd_{t} \rd_{s} u}_{L^{2}_{\ds}(\bbR^{+}; L^{2})}
	\aleq_{\dlt, \alp} & \nrm{\rd_{t} u \!\! \restriction_{s=0}}_{H^{3}(\bbH^{4}; T \NN)} , \quad \forall \alp \geq \frac{3}{2} \\
\end{align*}
\end{lem}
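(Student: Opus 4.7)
The strategy is to mimic the proofs of Lemmas~\ref{l:hmhf} and~\ref{lem:lin-hm} one order of regularity higher. The extra $H^{2}$ regularity of the initial data beyond $H^{1}$ supplies an additional Laplacian's worth of ``free'' parabolic smoothing, which is precisely what removes one power of $s$ from each estimate.

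For the bounds on $u$, commute a further Laplacian through the equation for $v = \Dlt u$: set $w := \Dlt v = \Dlt^{2} u$, so that $w$ satisfies the inhomogeneous heat equation $(\rd_{s} - \Dlt) w = \Dlt G$ with $w(0) = \Dlt^{2} u_{0} \in L^{2}$ and $\nrm{w(0)}_{L^{2}} \aleq \nrm{d u_{0}}_{H^{3}(\bbH^{4}; T\NN)}$. Duhamel's formula combined with Lemmas~\ref{lem:hh-L2:infty}, \ref{lem:hh-L2:2} and~\ref{lem:ih} yields, on any $s$-interval $J \subset \bbR^{+}$ extending from $0$,
\begin{equation*}
\nrm{s^{\gmm}(-\Dlt)^{\gmm} w}_{L^{\infty}_{\ds}(J; L^{2})} \aleq_{\gmm} \nrm{w(0)}_{L^{2}} + \nrm{s \Dlt G}_{L^{1}_{\ds} \cap L^{\infty}_{\ds}(J; L^{2})} \quad (\gmm \geq 0),
\end{equation*}
and the analogous bound in $L^{2}_{\ds}$ for $\gmm > 0$. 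Setting $\alp = \gmm + 1$ and using $(-\Dlt)^{\gmm} w = (-\Dlt)^{\alp+1} u$ recovers the first two asserted estimates. The bounds on $\rd_{s} u$ follow from $\rd_{s} u = \Dlt u + \tld{G}$ with $\tld{G}^{k} = S^{k}_{ij}(u) \nb^{a} u^{i} \nb_{a} u^{j}$, exactly as in Step~4 of Lemma~\ref{l:hmhf}.

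The main new work is to control $\nrm{s \Dlt G}$. Expanding $\Dlt G$ by Leibniz produces schematic monomials of the form $\rd^{j} S(u) \cdot (\text{products of derivatives of } u)$ carrying a total of six derivatives on $u$; the top-order terms are $S(u)(\nb^{5} u, \nb u)$, $S(u)(\nb^{4} u, \nb^{2} u)$, and $S(u)(\nb^{3} u, \nb^{3} u)$, while the lowest-order term is $\rd^{3} S(u)((\nb u)^{6})$. Bound each monomial using Gagliardo--Nirenberg (Lemma~\ref{l:gn}), Sobolev (Lemma~\ref{l:se}), and pointwise boundedness of $S$ and its derivatives, distributing factors so that all sub-leading ones are absorbed by the $\calX_{0}$-bound $\aeq \dlt$ from Lemma~\ref{l:hmhf}, and only the top-order factor depends on the new norms being estimated. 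Interpolation (Lemma~\ref{l:L2-int}) together with Young's inequality then permits the top-order contribution to be absorbed into the LHS of the Duhamel estimate when $\dlt$ is sufficiently small, via a standard continuity argument in $s$, mirroring Step~3 of Lemma~\ref{l:hmhf}.

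For the linearized statement, set $V := -\Dlt v' = -\Dlt \rd_{t} u$; then $(\rd_{s} - \Dlt) V = -\Dlt G'$, and $\nrm{V(0)}_{L^{2}} + \nrm{(-\Dlt)^{1/2} V(0)}_{L^{2}} \aleq \nrm{\rd_{t} u_{0}}_{H^{3}}$. Since $(-\Dlt)^{1/2}$ commutes with the heat semigroup, applying Lemmas~\ref{lem:hh-L2:infty}, \ref{lem:hh-L2:2}, \ref{lem:ih} to $(-\Dlt)^{1/2} V$ and rewriting $(-\Dlt)^{\gmm + 1/2} V = (-\Dlt)^{\gmm + 3/2} \rd_{t} u$ with $\alp = \gmm + 1$ yields the four asserted bounds on $\rd_{t} u$; the source $s (-\Dlt)^{1/2} \Dlt G'$ is handled as in the previous paragraph, exploiting the linear dependence of $G'$ on $v'$ to isolate and absorb the unique top-order $v'$ factor, with all other factors controlled by Lemma~\ref{lem:lin-hm} (those involving $v'$) and Lemma~\ref{l:hmhf} (those involving $u$). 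The bounds on $\rd_{s} \rd_{t} u$ then come from $\rd_{s} v' = \Dlt v' + G'$ as in Step~4 of Lemma~\ref{l:hmhf}. The main obstacle throughout is the bookkeeping for the nonlinearity: verifying that every term produced by $\Dlt G$ or $\Dlt G'$ can be placed in one of the mixed $L^{p}_{\ds} L^{q}$ norms already available, and that interpolation leaves enough margin for the top-order term to be absorbed with a factor of $\dlt$.
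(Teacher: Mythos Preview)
Your proposal is correct and matches the paper's own approach: the paper states that the proof proceeds ``as in Lemma~\ref{lem:lin-hm} but commuting the harmonic map heat flow equation with $-\Dlt$'' and is omitted, noting only that $\dlt_{1}$ and $\dlt_{2}$ may need to be adjusted slightly smaller. Your commutation $w = \Dlt v = \Dlt^{2} u$ (respectively $V = -\Dlt v'$), followed by the Duhamel/heat-semigroup machinery of Lemmas~\ref{lem:hh-L2:infty}--\ref{lem:ih} for the base estimate and Lemma~\ref{lem:ih-reg} with interpolation for the higher-$\alp$ smoothing bounds, is exactly this scheme.
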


The proof of Lemma~\ref{l:hrhf} is very similar to that of Lemma~\ref{lem:lin-hm} and is omitted. We note that $\de_1$ and $\de_2$ may need to be adjusted to be slightly smaller. 

\subsection{Construction of the Caloric Gauge} \label{s:cg}
Fix a smooth wave map $u$ defined on a time interval $I$ and satisfying the bootstrap assumption~\eqref{eq:bs} with initial data satisfying~\eqref{eq:sd}. Denote by $$\MM_I :=  I \times \Hp^4.$$ 
By~\eqref{eq:data1} and finite speed of propagation 
we can assume that for each $t  \in I$, $u(t, x)$ tends to the same fixed point $u(t, \infty) \in \NN$, i.e., 
\EQ{ \label{eq:bdt} 
\lim_{\bfd(x, \zero) \to \infty} u(t, x) =: u(t, \infty)  = u_{\infty} \in \NN
}
In the remainder of the paper, we use the convention that $d$, $\nb$, $D$ and $\bfD$ only refer to spatial derivatives (more precisely, restriction to the tangent space of constant $t, s$-surfaces) and use $d_{t,x}$, $\nb_{t,x}$, $D_{t,x}$ and $\bfD_{t,x}$ to denote the corresponding full spacetime derivatives.

The caloric gauge for the map $u(t, x)$ will be constructed in a sequence of steps.

\vskip.5em
\noindent {\bf Step 1.} In the first step we establish parabolic regularity estimates for the harmonic map heat flow resolution of $u(t, x)$.  For each fixed $t \in I$ we use Lemma~\ref{l:hmhf} to construct a solution $u(s, t,  \cdot)$ to the harmonic map heat flow with initial data $u(t, x)$,  
\EQ{ \label{eq:hfet} 
 \p_s u^K(t)  - \De_{\Hp^4} u^K(t)  &=  \h^{a b} S^{K}_{IJ}(u(t)) \p_a u^I(t) \p_b u^J(t),  \\
 u(0, t, x) &= u(t, x).
}
Indeed, we can choose $\eps>0$ small enough in the bootstrap hypothesis ~\eqref{eq:bs} so that 
\EQ{
\sup_{t \in I} \| du(t)\|_{H^1( \Hp^4; T\NN)}  \le 2C_0 \eps < \de_0
}
where $\de_0 = \min\{ \de_1, \de_2\}$ and  $\de_1, \de_2$ are the  small constants from Lemma~\ref{l:hmhf} and Lemma~\ref{lem:lin-hm}. It follows that for each $t \in I$ we have a unique solution $u(s, t, x)$ satisfying the bounds~\eqref{eq:hfap}--\eqref{eq:hfap4}. 
Moreover, by Lemma~\ref{lem:lin-hm}, it follows that the constants in~\eqref{eq:hfap}--\eqref{eq:hfap4} can be taken to be uniform in $t \in I$ and that the map 
\begin{equation*}
	u : \bbR^{+} \times \MM_{I} \to \NN \subset \bbR^{N}
\end{equation*}
is smooth. Moreover, $\rd_{t} u$ obeys the estimates \eqref{eq:lin-hm1}--\eqref{eq:lin-hm4}.

For simplicity of notation we suppress the dependence of the constants on $C_0.$ The reader may verify that any appearance of $C_0$ is accompanied by $\eps$. 

We summarize the $L^2$ estimates for $u(s, t, x)$ that we have established above in the following proposition. 


\begin{prop} \label{p:preg} 
Let $u: \MM_I \to \NN \subset \R^N$ be a smooth wave map with initial data satisfying~\eqref{eq:sd}. Assume the bootstrap assumption~\eqref{eq:bs} and let $u(s, t, x)$ be the harmonic map heat flow resolution of $u(t, x)$ constructed in Step~$1$ above.  For each fixed $t \in I$, $ du$, $ \p_t u$, and $\p_s u$  satisfy the following estimates:
\begin{equation} \label{eq:preg}
\begin{aligned}
\nrm{(\nb d u, \nb \rd_{t} u, \rd_{s} u, (-\Dlt)^{-\frac{1}{2}} \rd_{t} \rd_{s} u)(t)}_{L^{\infty}_{\ds}(\bbR^{+}; L^{2})} \\
\aleq \nrm{(d u, \rd_{t} u) (t)\rest_{s=0}}_{H^{1}\times H^1(\bbH^{4}; T \NN)} \aleq \eps
\end{aligned}
\end{equation}
Moreover, for any integer $k \geq 1$,
\begin{equation} \label{eq:kpreg}
\begin{aligned}
\nrm{s^{\frac{k}{2}} (\nb^{(k+1)} \ud u, \nb^{(k+1)} \rd_{t} u, \nb^{(k)} \rd_{s} u, \nb^{(k-1)} \rd_{t} \rd_{s} u)(t)}_{L^{\infty}_{\ds} \cap L^{2}_{\ds} (\bbR^{+}; L^{2})} \\
\aleq \nrm{(d u, \rd_{t} u) (t)\rest_{s=0}}_{H^{1}\times H^1(\bbH^{4}; T \NN)} \aleq \eps
\end{aligned}
\end{equation}
Lastly, for any real number $\alp > 0$, 
\begin{equation} \label{eq:12preg}
\begin{aligned}
\nrm{s^{\alp} ((-\Dlt)^{\alp+1} u, (-\Dlt)^{\alp+\frac{1}{2}} \rd_{t} u)}_{L^{\infty}_{\ds} \cap L^{2}_{\ds}(\bbR^{+}; L^{2})} \\
+ \nrm{s^{\alp} ((-\Dlt)^{\alp} \rd_{s} u, (-\Dlt)^{\alp-\frac{1}{2}} \rd_{t} \rd_{s} u)}_{L^{\infty}_{\ds}(\bbR^{+}; L^{2})}  \\
\aleq \nrm{(d u, \rd_{t} u) (t)\rest_{s=0}}_{H^{1}\times H^1(\bbH^{4}; T \NN)} \aleq \eps
\end{aligned}
\end{equation}

  \begin{rem}
    Note that the constants in Proposition~\ref{p:preg} are independent of $t$ and $I$. Observe also that several  of the Sobolev norms in Proposition~\ref{p:preg} are expressed entirely in terms of the covariant derivative $\na$ on $\Hp^4$ in contrast to the estimates in Lemma~\ref{l:hmhf} and Lemma~\ref{lem:lin-hm}, which involve powers of the Laplacian $(-\De)$. However, this distinction makes no difference due to the remarks in Section~\ref{s:fs}.  
    \end{rem}  
\end{prop}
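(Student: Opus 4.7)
The proposition is essentially a bookkeeping exercise that collects the pointwise-in-$t$ application of Lemma~\ref{l:hmhf} and Lemma~\ref{lem:lin-hm} into a single statement, with the powers of $(-\Dlt)$ then translated into iterated covariant derivatives. The plan is therefore to verify the hypotheses, quote the relevant estimates, and observe that the constants are uniform in $t \in I$.

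First, I would verify that the hypotheses of Lemma~\ref{l:hmhf} and Lemma~\ref{lem:lin-hm} are met uniformly in $t \in I$. By the bootstrap assumption \eqref{eq:bs} and smallness of $\eps$,
\begin{equation*}
	\nrm{du(t)}_{H^{1}(\bbH^{4};T\NN)} + \nrm{\rd_{t} u(t)}_{H^{1}(\bbH^{4};T\NN)} \leq 2 C_{0} \eps < \dlt_{0} := \min\{\dlt_{1}, \dlt_{2}\}
\end{equation*}
for every $t \in I$. Moreover, by \eqref{eq:data1}, finite speed of propagation, and \eqref{eq:bdt}, the condition \eqref{eq:cs} is satisfied with the same $u_{\infty}$ for every $t$. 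Hence Lemma~\ref{l:hmhf} supplies the global harmonic map heat flow $u(s,t,\cdot)$ solving \eqref{eq:hfet}, and Lemma~\ref{lem:lin-hm} guarantees that the constants appearing in \eqref{eq:hfap1}--\eqref{eq:hfap4} and \eqref{eq:lin-hm1}--\eqref{eq:lin-hm4} may be taken independent of $t$, and that $u: \bbR^{+} \times \MM_{I} \to \NN$ is jointly smooth.

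Next, the three inequalities in the statement are read off by specializing the $\alp$-parameter in the lemmas. The zero-weight bound \eqref{eq:preg} is obtained by taking $\alp = 0$: the bound on $\Dlt u$ comes from \eqref{eq:hfap}, the bound on $(-\Dlt)^{1/2}\rd_{t} u$ from \eqref{eq:lin-hm1}, the bound on $\rd_{s} u$ from \eqref{eq:hfap3}, and the bound on $(-\Dlt)^{-1/2}\rd_{t}\rd_{s} u$ from \eqref{eq:lin-hm3}. The fractional estimate \eqref{eq:12preg} is precisely the combination of the $L^{\infty}_{\ds}$ and $L^{2}_{\ds}$ bounds in \eqref{eq:hfap1}--\eqref{eq:hfap4} and \eqref{eq:lin-hm1}--\eqref{eq:lin-hm4} at the specified $\alp > 0$; the asymmetry between the first and second brackets in \eqref{eq:12preg} (only $L^{\infty}_{\ds}$ for the $\rd_{s} u$ and $\rd_{t}\rd_{s} u$ terms) matches exactly the ranges of $\alp$ allowed in \eqref{eq:hfap3}--\eqref{eq:hfap4} and \eqref{eq:lin-hm3}--\eqref{eq:lin-hm4}.

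Finally, the integer-derivative bound \eqref{eq:kpreg} corresponds to choosing $\alp = k/2$ in these same inequalities: in each case the power of $(-\Dlt)$ appearing in the lemma equals a half-integer that matches the desired covariant derivative count (e.g.\ $(-\Dlt)^{k/2+1}u \leftrightarrow \nb^{(k+1)} du$, $(-\Dlt)^{k/2+1/2} \rd_{t} u \leftrightarrow \nb^{(k+1)} \rd_{t} u$, etc.). For $k \geq 1$ we have $\alp = k/2 \geq 1/2 > 0$, so both the $L^{\infty}_{\ds}$ and $L^{2}_{\ds}$ bounds are available. The passage from $\nrm{(-\Dlt)^{m/2} f}_{L^{2}}$ to $\nrm{\nb^{(m)} f}_{L^{2}}$ for integer $m$ uses Lemma~\ref{l:riesz} together with Bochner-type commutator identities on $\bbH^{4}$, whose error terms are absorbed by the spectral gap (Lemma~\ref{l:pi}); as noted in the remark, these equivalences are established in Section~\ref{s:fs}. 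There is no serious obstacle: the only point deserving attention is keeping track of the uniformity of constants in $t$, which has already been furnished by Lemma~\ref{lem:lin-hm}.
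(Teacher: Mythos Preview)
Your proposal is correct and matches the paper's approach exactly: the paper explicitly introduces Proposition~\ref{p:preg} with the sentence ``We summarize the $L^2$ estimates for $u(s, t, x)$ that we have established above in the following proposition,'' and the accompanying remark points to precisely the norm equivalences in Section~\ref{s:fs} that you invoke. No separate proof is given in the paper beyond this summary, so your verification that the hypotheses of Lemmas~\ref{l:hmhf} and~\ref{lem:lin-hm} hold uniformly in $t$ and your matching of each term to the appropriate $\alp$-value is exactly what is needed.
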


The following  pointwise-in-$s$ decay properties of $u(s, t, x)$ and its derivatives as $s \to \infty$ are an easy consequence of Proposition~\ref{p:preg}.  

\begin{lem} \label{l:udec} 
Let $u(s, t, x)$ be defined as above.  Then for any integer $L \in \N$ we have 
\begin{align}  \label{eq:usinf} 
\|u(s, t, x)  - u_{\infty} \|_{L^{\infty}_{t, x}( \MM_I)}  \lesssim_L \eps s^{-L} 
 \mas s \to \infty
\end{align}
Moreover, we have the following decay estimates for higher derivatives.  
For each $j  = 1,   \dots N$   and for any integers $ \ell, k,  L  \ge 0 $ we have 
\begin{align} \label{eq:dusdec}
&\| du^j(s, \cdot, \cdot)\|_{L^{\infty}_{t, x}(\MM_I)} \lesssim_L  \eps s^{-L},  
\mas s \to \infty
 \\ 
 & \| \p_t u^j(s, \cdot, \cdot)\|_{L^{\infty}_{t, x}(\MM_I)} \lesssim_L  \eps s^{-L},   \label{eq:utdec}
\mas s \to \infty
 \\
& \label{eq:dusdec1} \|  \p_s^k \De^{\ell} u^j(s, \cdot,\cdot)\|_{L^{\infty}_{t, x}(\MM_I)} + \|  \p_s^k \De^{\ell} \p_t u^j(s, \cdot,\cdot)\|_{L^{\infty}_{t, x}(\MM_I)}
\lesssim_L   \eps s^{- L }  
\end{align} 
as $s \to \infty$. 
Furthermore for any $\ell, L \in \N$
\EQ{ \label{eq:Ddus} 
\| D^{(\ell)}  (d u)(s)  \|_{L^{\infty}_{t, x}} \lesssim_L \eps s^{-L} \mas s \to \infty
}
where $D^{(\ell)}$ is the $\ell$-th covariant derivative on $T^* \MM_I  \otimes u^*T\NN $. 
\end{lem}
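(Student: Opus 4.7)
The plan is to combine the parabolic smoothing estimates from Proposition~\ref{p:preg} with the spectral gap of $-\Dlt$ on $\bbH^{4}$ (Lemma~\ref{l:pi}) and the $L^{2}\!\to\!L^{\infty}$ interpolation of Lemma~\ref{l:L2-int} to upgrade the $s$-weighted $L^{2}$ bounds into arbitrary polynomial pointwise decay in $s$, uniformly in $t\in I$. Observe that the constants in Proposition~\ref{p:preg} are independent of $t$, so once $L^{\infty}_{x}$ decay is obtained at each fixed $t$ with universal constants, the supremum in $t$ needed to control $L^{\infty}_{t,x}$ comes for free.

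The main input is that \eqref{eq:12preg} gives, for every $\alp>0$,
\begin{equation*}
  \nrm{(-\Dlt)^{\alp+1} u(s,t)}_{L^{2}_{x}} \lesssim \eps\, s^{-\alp},
\end{equation*}
together with analogous bounds for $(-\Dlt)^{\alp+1/2}\p_{t}u$, $(-\Dlt)^{\alp}\p_{s}u$ and $(-\Dlt)^{\alp-1/2}\p_t\p_{s}u$. Since $-\Dlt \geq \bigl(\tfrac{d-1}{2}\bigr)^{2} = 9/4$ on $\bbH^{4}$ in the spectral sense, the operator $(-\Dlt)^{-\bt}$ is bounded on $L^{2}$ for any $\bt \geq 0$; using the fact that $u_{\infty}$ is constant (so $\Dlt u_{\infty}=0$), I may slide the powers of $(-\Dlt)$ freely to obtain, for every $0 \leq \bt \leq \alp+1$,
\begin{equation*}
  \nrm{(-\Dlt)^{\bt}(u(s,t)-u_{\infty})}_{L^{2}_{x}} \lesssim_{\bt} \eps\, s^{-\alp}.
\end{equation*}
Taking $\alp$ large and applying the $L^{\infty}$ interpolation inequality of Lemma~\ref{l:L2-int} with exponent greater than $d/2 = 2$ then yields $\nrm{u(s,t)-u_{\infty}}_{L^{\infty}_{x}} \lesssim_{L} \eps\, s^{-L}$ for any $L \in \bbN$, and taking the supremum in $t$ produces \eqref{eq:usinf}.

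The derivative bounds \eqref{eq:dusdec}--\eqref{eq:dusdec1} follow by running exactly the same three-step procedure (parabolic smoothing, then spectral gap, then Sobolev interpolation) on each object in turn. For $du$ and $\p_{t} u$, \eqref{eq:12preg} already provides the needed $L^{2}$ decay for arbitrarily high fractional powers of $(-\Dlt)$. For higher mixed derivatives $\p_{s}^{k}\Dlt^{\ell}u$ and $\p_{s}^{k}\Dlt^{\ell}\p_{t}u$, I would repeatedly invoke the heat equation $\p_{s}u = \Dlt u + S(u)(du,du)$ (and its differentiated versions) to trade $\p_{s}$-derivatives for $\Dlt$-derivatives and polynomial nonlinearities in lower derivatives of $u$, each of which has already been shown to decay rapidly; this reduces matters to \eqref{eq:kpreg}--\eqref{eq:12preg}, to which the above argument applies verbatim.

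Finally, \eqref{eq:Ddus} reduces to the previous bounds. Expanding the pullback covariant derivative $D$ on $T^{\ast}\MM_{I}\otimes u^{\ast}T\NN$ via the extrinsic formula \eqref{eq:extr-D} and the (bounded) Christoffel symbols of $\MM_{I} = \bbR\times\bbH^{4}$, the quantity $D^{(\ell)}(du)$ becomes a finite sum of products of coordinate derivatives $\na^{(\leq \ell+1)}_{t,x} u$, weighted by bounded factors $(\na^{(\leq \ell)}S)(u)$ and Christoffel symbols. Each factor has already been shown to satisfy a bound of the form $\eps\, s^{-L}$, so the product inherits the same decay. The only real obstacle in the whole argument is the combinatorial bookkeeping of the many derivative-counting cases in Proposition~\ref{p:preg}; analytically, nothing beyond the spectral gap, Sobolev interpolation, and the heat equation itself is used.
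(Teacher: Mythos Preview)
Your proposal is correct and follows essentially the same strategy as the paper: combine the $s$-weighted $L^{2}$ parabolic regularity bounds of Proposition~\ref{p:preg} with the Poincar\'e inequality (spectral gap) and a Sobolev/Gagliardo--Nirenberg embedding to pass to $L^{\infty}_{x}$ with arbitrary polynomial $s$-decay, then use the heat equation to trade $\rd_{s}$ for spatial derivatives and the extrinsic formula \eqref{eq:extr-D} to reduce $D^{(\ell)}(du)$ to products of already-controlled quantities. The paper's proof is slightly more compressed---writing the key step as $\|u(s)-u_{\infty}\|_{L^{\infty}_{t,x}} \lesssim_{L} s^{-L}\|s^{L}\Dlt^{L}\Dlt u(s)\|_{L^{2}} \lesssim_{L} \eps\,s^{-L}$---but the ingredients are identical.
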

\begin{proof}
The estimates~\eqref{eq:usinf}, ~\eqref{eq:dusdec}, and~\eqref{eq:utdec} are consequences of the Gagliardo-Nirenberg inequality, Sobolev embedding, and the Poincar\'e inequality on hyperbolic space followed by the parabolic regularity estimates~\eqref{eq:kpreg}. For example,  for any positive integer $L \ge 1$ we have 
 \ant{
\| u(s, \cdot, \cdot)- u_{\infty}\|_{L^{\infty}_{t, x}} &\lesssim_L s^{-L}\| s^L\De^{L} \De u(s)\|_{L^2}  \lesssim_L \eps s^{-L}
}
The estimate~\eqref{eq:dusdec1} 
is proved in the same way as above 
 together with the fact that $u$ is a smooth solution to 
\ant{
 \p_s u =  \De u + S(u)(du, du)
 }
 to replace derivatives in $s$ with spatial derivatives. Finally, we prove~\eqref{eq:Ddus} by repeatedly using the formula 
 \ant{
D_\be V = \na_\be V + S(u)( \p_\be u, V), \quad V \in \Gamma(u^*T \NN)
 }
 together with the estimates~\eqref{eq:dusdec} or~\eqref{eq:utdec}. 
\end{proof}

\vskip.5em
 \noindent{\bf Step 2.}  The  next step is to choose a dynamic frame $e(s, t, x)$ and associated connection form $A(s, t, x)$ on the pullback bundle $u^* T\NN$ over $ \R^+ \times \MM_I$. 
 Let $e$ be a frame on $u^{\ast} T \NN$. Using the isometric embedding $\NN \subset \bbR^{N}$, we may view $e$ as a collection of $\bbR^{N}$-valued functions $(e_{j})_{j=1}^{n}$ which are orthogonal, of length $1$, and belong to $T_{u} \NN$. As in Section~\ref{s:maps}, we write $(\rd_{s} u, \rd_{\alp} u) = (e_{j} \psi^{j}_{s}, e_{j} \psi^{j}_{\alp})$ and the associated connection 1-form $A = A^{i}_{j}$ on $\bbR^{+} \times \MM_{I}$ is a traceless anti-symmetric $n \times n$ matrix-valued 1-form characterized by
\EQ{
D_X e_j = A(X)^i_j e_i
}
where  $X$  is a vector field on $T (\R^+ \times \MM_I)$. In  coordinates $((s, x^{\al}), ( \frac{\p}{\p s}, \frac{\p}{\p x^{\al}}))$ on $T( \R^+ \times \MM_I)$ we will have $A^{i}_j = A^i_{j, s} ds + A^i_{j, \al} dx^{\al}$, where 
\ant{
A_s := A\Big( \frac{\p}{\p s}\Big), \quad A_\al := A\Big( \frac{\p}{\p x^{\al}}\Big), \quad  \al  = 0, 1, \dots, 4
}
We denote the curvature in the frame $e$ by $F_{\mathbf{a} \mathbf{b}} =   \p_\mathbf{a} A_\mathbf{b} - \p_\mathbf{b} A_\mathbf{a}  + [A_\mathbf{a}, A_\mathbf{b}]$, where $\mathbf{a}, \mathbf{b} = s, t, x^{1}, \ldots, x^{4}$. 
Recall from Section~\ref{s:curv} that $F$ is related to the pullback curvature $\bfR$ by
\begin{equation*}
	F_{\mathbf{a} \mathbf{b}} = \bfR(\psi_{\mathbf{a}}, \psi_{\mathbf{b}})
\end{equation*}
Given such a frame, we can recast~\ref{eq:hmhf} in terms of $e$ as 
\begin{equation*}
	\psi_{s} = \bfh^{a b} \bfD_{a} \psi_{b} = \tr_{\bfh} \bfD \psi
\end{equation*}


Now we define the caloric gauge following Tao~\cite{Tao04, Tao37}.  
\begin{defn}[Caloric gauge with limiting frame $e_{\infty}$] 
 Let $u(s, t, x)$ be the heat flow resolution of $u(t, x)$ constructed in Step~$1$  and as in Lemma~\ref{l:udec}. Let $e_{\infty}$ be any  frame for $T_{u_\infty}\NN$.  A smooth frame $e(s, t, x)$ is a  \emph{caloric gauge} for $u(s, t, x)$ with \emph{limiting frame} $e_{\infty}$ if 
\begin{enumerate}

\item The frame $e(s, t, x)$ satisfies what is referred to as the \emph{heat-temporal} condition, $A_s = 0$ for all $(t, x) \in \MM_I$, or equivalently 
\EQ{ \label{eq:ht} 
D_s e = 0.
}
\item For all $(t, x) \in \MM_I$ we have  
\EQ{ \label{eq:bf} 
e(s, t, x)  \to e_{\infty}  \mas s \to \infty.
}

\end{enumerate} 
\end{defn} 


\begin{prop}[Existence and Uniqueness of the Caloric Gauge relative to $e_{\infty}$] \label{p:cg}  Let $u(s, t, x)$ be the heat flow resolution of $u(t, x)$ as in Step~$1$. Let $e_{\infty}$ be any fixed frame for $T_{u_{\infty}} \NN$. Then there exists a unique caloric gauge $e(s, t, x)$ with limiting frame $e_{\infty}$. Moreover, if we denote by $A_\alp(s, t, x)$ the associated connection form  and $\psi_s, \Psi=\psi_\al dx^\alpha$ the representations of $\p_s u$ and $\p_\al u$ in the frame $e$ we have  
\begin{align}
&\| \psi_s(s)\|_{L^{\infty}_{t, x}} \lesssim_L s^{-L} \mas s \to \infty,  \label{eq:psdec}\\
& \| \Psi(s)\|_{L^{\infty}_{t, x}} \lesssim_L s^{-L} \mas s \to \infty,  \label{eq:padec}\\
& \|A (s) \|_{L^{\infty}_{t, x}} \lesssim_L s^{-L} \mas s \to \infty,  \label{eq:Adec} 
\end{align}
for any $L \in \N$. 
\end{prop}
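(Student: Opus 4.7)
The plan is to construct the caloric frame by solving the parallel transport ODE $D_{s} e = 0$ backwards from $s = \infty$ with boundary condition $e \to e_{\infty}$. In the extrinsic formulation with $\NN \hookrightarrow \bbR^{N}$, the identity \eqref{eq:extr-D} transforms $D_{s} e_{j} = 0$ into the $\bbR^{N}$-valued ODE system $\rd_{s} e_{j} = -S(u)(\rd_{s} u, e_{j})$, parameterized by $(t, x) \in \MM_{I}$. The key input making this backward problem tractable is the rapid decay $\nrm{\rd_{s} u(s)}_{L^{\infty}_{t, x}} \aleq_{L} \eps s^{-L}$ from Lemma~\ref{l:udec}, which renders the coefficient of this ODE integrable on any $[S_{0}, \infty)$.

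For existence, I would first construct $e$ on $[S_{0}, \infty) \times \MM_{I}$ (for some large $S_{0}$) via Picard iteration against the integral formulation
\begin{equation*}
	e_{j}(s) = (e_{\infty})_{j} + \int_{s}^{\infty} S(u)(\rd_{s'} u, e_{j}(s')) \, d s',
\end{equation*}
whose convergence in $L^{\infty}_{s \geq S_{0}, t, x}$ follows from the rapid decay of $\rd_{s'} u$ and uniform boundedness of $S$. The frame is then extended uniquely to $(0, \infty) \times \MM_{I}$ by forward-in-$(-s)$ ODE theory, with smoothness in $(t, x)$ inherited from $u$. Tangency $e_{j} \in T_{u} \NN$ and orthonormality are propagated from $s = \infty$, using the intrinsic interpretation of the ODE as parallel transport along the curve $s \mapsto u(s, t, x)$ in $\NN$. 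Uniqueness is immediate from the gauge transformation law \eqref{eq:gauge-symm}: a second caloric frame $\ti{e} = e B$ with the same limit satisfies $0 = \ti{A}_{s} = B^{-1} \rd_{s} B + B^{-1} A_{s} B = B^{-1} \rd_{s} B$, so $B$ is $s$-independent, and $B \to I$ at $s = \infty$ forces $B \equiv I$.

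The decay bounds \eqref{eq:psdec} and \eqref{eq:padec} follow immediately from the fact that $e$ is an isometry on the fibers: $\abs{\psi_{s}} = \abs{\rd_{s} u}$ and $\abs{\Psi}$ equals the pointwise norm of $d_{t,x} u$ in the sense of \eqref{eq:tilde-eta}, so the desired decay comes directly from \eqref{eq:utdec} and \eqref{eq:dusdec1} in Lemma~\ref{l:udec}. For $A$, the condition $A_{s} = 0$ reduces $F_{s \alp}$ to $\rd_{s} A_{\alp}$, so \eqref{eq:Fdu} yields the ODE $\rd_{s} A_{\alp} = \bfR(\psi_{s}, \psi_{\alp})$; the boundary condition $A_{\alp}(s) \to 0$ as $s \to \infty$ follows from the extrinsic expression $A^{i}_{j, \alp} = \brk{\rd_{\alp} e_{j}, e_{i}}$ and the constancy of $e_{\infty}$ in $(t, x)$. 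Integrating from $\infty$ and applying the pointwise bound \eqref{eq:DR-ptwise} together with the just-established decay of $\psi_{s}, \psi_{\alp}$ yields \eqref{eq:Adec}. The main obstacle is the rigorous justification of the backward Picard iteration and the verification that the resulting $\bbR^{N}$-valued solution is genuinely a smooth orthonormal section of $u^{\ast} T \NN$; everything else is then a formal consequence of the ODE structure and Lemma~\ref{l:udec}.
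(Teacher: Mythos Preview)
Your approach is correct but differs from the paper's in the existence step. The paper starts at $s=0$ with an \emph{arbitrary} smooth frame $\tilde e(0,t,x)$, solves $D_s\tilde e=0$ forward in $s$ (a standard ODE on a compact $s$-interval, so tangency and orthonormality are immediate), shows $\tilde e(s)$ converges to some limit $\tilde e_\infty(t,x)$ using the integrability of $\|\partial_s u\|_{L^\infty_{t,x}}$, and then applies an $s$-independent rotation $B(t,x)\in SO(n)$ to send $\tilde e_\infty$ to the prescribed $e_\infty$. You instead run Picard \emph{backward} from $s=\infty$ with the target frame directly. Your route is more direct and avoids the auxiliary rotation, at the cost of having to set up a contraction on the noncompact interval $[S_0,\infty)$ and then argue separately that the $\bbR^N$-valued fixed point is a genuine tangent orthonormal frame; the paper's forward approach gets the latter for free from standard parallel-transport theory.

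There is one genuine gap in your sketch. You claim $A_\alpha(s)\to 0$ follows from $A^i_{j,\alpha}=\langle\partial_\alpha e_j,e_i\rangle$ and the constancy of $e_\infty$ in $(t,x)$. But $L^\infty$ convergence $e\to e_\infty$ does not by itself give $\partial_\alpha e_j\to 0$; you need $C^1$ convergence. The paper handles this by first showing $|\partial_s D_\alpha e|=|F_{s\alpha}|\aleq_L s^{-L}$, so $D_\alpha e$ (and hence $A_\alpha$) has a limit, and then identifying that limit as the connection form of the constant frame, namely $0$. In your framework the fix is to differentiate the integral equation in $x^\alpha$ and bootstrap the decay of $\partial_\alpha e_j$ using the bounds on $\partial_\alpha u$, $\partial_\alpha\partial_s u$ from Lemma~\ref{l:udec}; this is routine but should be stated, since without it the integration-from-infinity formula for $A_\alpha$ is not justified.
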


\begin{proof} 
The boundary condition at $s= \infty$ ensures uniqueness. Indeed, suppose that $e$ and $e'$ are both caloric gauges with boundary frame $e_{\infty}$. Then by~\eqref{eq:ht} we see that $\abs{e_j - e_j'}^2$ is constant in $s$, vanishes at $s = \infty$ and is therefore identically $0$.

Next we prove existence. We will start with an arbitrary frame at $s = 0$, show that it admits a unique extension to $s \in \R^+$ satisfying the heat-temporal condition, and then perform an $s$-independent gauge transform on the extension to ensure that~\eqref{eq:bf} holds. 

Given any $(t, x) \in \MM_I$ and any choice of smooth frame $ \ti e(0, t,x)$ for $u(0)^* T \NN$, the heat-temporal gauge condition is imposed by solving the linear ODE~\eqref{eq:ht} with initial data $\ti e(0, t, x)$. Indeed, as we are viewing $\NN  \hookrightarrow \R^N$ as embedded, we solve 
\EQ{ \label{eq:eode} 
D_s \ti e_j  =  \p_s  \ti e_j + S(u)( \ti e_j, \p_s u) &= 0 \\
\ti e_j \rest_{s=0} &= \ti e_j(0,\cdot, \cdot)
}
for each $j = 1, \dots, n$. We find using Picard iteration a global, smooth, unique solution $\ti e(s,t, x)$ to~\eqref{eq:eode}. Moreover $ \ti e(s, t, x)$ is a frame for $u(s)^* T \NN$ for each $s \in \R^+$ since we have 
\EQ{
\p_s \ang{ \ti e_j, \ti e_k}_{ u^*T \NN} = 2\ang{D_s \ti e_j, \ti e_k}_{u^*T\NN} = 0 
}
and thus $  \ang{  \ti e_j,  \ti e_k}_{ u^*T \NN} = \de_{jk}$ for all $j, k   \in \{1, \dots, n\}$.   Next, using the equation~\eqref{eq:eode}, for any $L \in \N$, 
\ant{
 \| \p_s  \ti e_j(s)\|_{L^{\infty}_{t, x}}   =  \| S(u)( \ti e_j, \p_su)\|_{L^{\infty}_{t, x}} \lesssim  \| \p_s u(s)\|_{L^{\infty}_{t, x}} \lesssim_L s^{-L}.
 }
This implies that $ \ti e(s, t, x)$ converges to a limiting frame $ \ti e_{\infty}( t, x)$ as $ s \to \infty$. Now, let $ \ti A_\al$, denote the connection form associated to $ \ti e$ and let  $ \ti \psi_s$ and $\ti \Psi=\ti \psi_\al dx^\alpha$ denote the representations of $\p_s s$ and $\p_\al u$ in the frame $\ti e$. Note that 
\EQ{ \label{eq:tipdec} 
| \ti \psi_s| = |  \ti e \ti \psi_s|  =  \abs{ \p_s u} \lesssim_L s^{-L} , \quad 
|  \ti \Psi | = | \ti e \ti \Psi|  =  \abs{ ( du,  \p_t u)} \lesssim_L s^{-L}
}
Next we claim that 
\EQ{ \label{eq:FsaB} 
| \ti F_{s \alp}| \lesssim \abs{ \p_s u}  \abs{ \rd_{\alp} u} \lesssim_L s^{-L}
 }
where we recall that $\abs{F_{s \alp}(s, t, x)}$ is the Hilbert-Schmidt norm for $n \times n$ matrices.
Indeed~\eqref{eq:FsaB} is a direct consequence of the bounded geometry of $\NN$ and the identity 
\EQ{
\ti e \ti F_{s \al}  = R(u)( \p_s u, \p_\al u)  \ti e 
}

Now note that by~\eqref{eq:eode} we have 
\EQ{
 0 = D_\al D_s  \ti e = D_s D_\al \ti e +  \ti e  \ti F_{\al s} 
 }
Since $ \ti A_s = 0$ we have $D_s =  \p_s$ and thus 
\EQ{
\abs{ \p_s D_{\alp}  \ti e} =  | \ti F_{s \alp}| \lesssim \abs{ \p_s u}  \abs{ \rd_{\alp} u} \lesssim_L s^{-L}.
}
It follows that 
\EQ{ \label{eq:Dae} 
D_{\alp} \ti e \to D_{\alp} \ti e_{\infty} \mas s \to \infty, \quad \alp = 0, 1, \ldots, 4
}
Again since $ \ti A_s = 0$ we then have $ \p_s  \ti A_\al =  \ti F_{s\al} $ and thus 
\EQ{
|\p_s  \ti A_\al  |  = | \ti F_{s\al}|  \lesssim \abs{ \p_s u} \abs{ \p_\al u} \lesssim_{L} s^{-L} \mas s \to \infty, \quad \alp =0, 1, \ldots, 4
}
This implies that each $ \ti A_{\alp}(s, t, x)$ converges to a limit $\ti A_{\alp}(t, x)$; more succinctly
\EQ{
\ti A(s, t, x)  \to  \ti A_{\infty}(t, x)  \in  \mathfrak{so}(n) \mas s \to \infty.
}
Combining the above with~\eqref{eq:Dae} we can pass to limits in both sides of 
\EQ{
 \ti A_{j, \al}^k = \ang{ D_\al \ti e_j,  \ti e_k}
 }
to deduce that $ \ti A_{\infty}$ is the connection form for $ \ti e_{\infty}$. 

Now find a gauge transformation $B(t, x) \in SO(n)$ so that 
\EQ{
 \ti e_{\infty}(t, x) B(t, x) = e_\infty
 }
 The caloric gauge $e$ is then defined by applying $B(t, x)$ as above at each time $s$, i.e., 
 \EQ{
 e(s, t, x) :=  \ti e(s, t, x) B(t, x) , \quad e_j(s) := \ti e_\ell(s) B^\ell_j.
 }
 This ensures that for any fixed $(t, x) \in \MM_I$, we have the convergence, 
 \EQ{ \label{eq:ec} 
 e(s, t, x) \to e_\infty \mas s \to \infty
 }
 Let $A(s, t, x)$ denote the connection form associated to $e(s, t, x)$. 
Arguing as above and using~\eqref{eq:ec} we must have 
 \EQ{
A(s, t, x) \to 0  \mas s \to \infty 
 }
since the associated connection 1-form to the constant frame $e_{\infty}$ on $\MM_{I}$ is $A_{\infty} = 0$.
To deduce~\eqref{eq:Adec} regarding the rate of this convergence, we again argue as above to see that $\p_s A_\alp  = F_{s \alp}$ and that 
 \EQ{ \label{eq:Fsadec} 
| \ti F_{s \alp}| \lesssim \abs{ \p_s u}  \abs{  \rd_{\alp} u} \lesssim_L s^{-L}
 }
 Hence 
 \EQ{
 \abs{A_{\alp}(s)}   \le\int_{s}^\I \abs{F_{s \alp}(s') } \, ds'  \lesssim_{L+1}  s^{-L} \mas s \to \infty, \quad \alp = 0, 1, \ldots, 4
 }
 
 Finally we note that~\eqref{eq:psdec} and~\eqref{eq:padec} follow from the exact same argument as in~\eqref{eq:tipdec}. 
  \end{proof}

\subsection{Summary of the dynamic variables in the caloric gauge} \label{s:caloric-summary}

Putting the contents of the previous subsection together we record a list of the dynamic variables that have been introduced starting from a wave map $(u(t, x), \p_t u(t, x))$ on a time interval $I$, finding its harmonic heat flow resolution $u(s, t, x)$, and passing to the caloric gauge $e$ with boundary frame $e_{\infty}$. Below $\al  \in \{ 0, 1, \dots, 4\}$, $a \in\{ 1, \dots, 4\}$,  $j  \in \{1, \dots n\}$.
\begin{align}
&\p_s u := e\psi_s = \psi_s^j e_j, \quad 
 \p_\al u  = e \psi_\al , \\
& \psi_s^j(s, t, x)  =  \Db^a \psi_a^j(s, t, x) := \ang{e_k \h^{ab} ( \Db \psi)^k_{a b}, e_j}_{u^* \g}, \\
&  \Db^\al \psi_\al^j (0, t, x) :=\ang{e_k \etab^{\al \be}(\Db \psi)^k_{\al \be}(0, t, x), e_j}_{u^* \g} = 0, \\
& D_\al e =: eA_\al  , \\
&D_s e  = :e A_s = 0, \\
& \Db_\al \psi_\be  = \Db_\beta \psi_\al  \label{eq:abba},  \\
& \p_s \psi_\al  = \Db_s \psi_\al = \Db_\al \psi_s, \label{eq:Dsps} \\
& \p_s A_\al = F_{s \al}, \quad 
	F_{\al \be} = \bfR^{(0)}( \psi_{\alp}, \psi_{\bt}), \quad
	e F_{\al \be} = R(u)(\rd_{\alp} u, \rd_{\bt} u) e  \label{eq:psAF}.
\end{align}

In the remainder of the paper, it will be convenient to have separate notation for the spatial and temporal components of tensors. For the space-time $1$-forms $\psi_\alp$, $A_\alp$, and $F_{s \alpha }$ we introduce the following notation:
\ali{
\Psi =& \psi_\alp \, d x^\alp = \psi_t \, d t + \underline{\Psi}, \quad
\underline{\Psi} = \psi_a  \, d x^a \\
A =& A_\alp \, d x^\alp = A_t \, d t + \underline{A} , \quad
\underline{A} = A_a \, d x^a\\
F_s =& F_{s\alp} \, d x^\alp = F_{st} \, d t + \underline{F}_s , \quad
\underline{F}_s = F_{sa} \, d x^a
}
We remind the reader that the pointwise norm of space-time one forms is defined with the auxiliary metric $\tilde{\etab}$; see \eqref{eq:tilde-eta}. Moreover, for differential operators, we use the convention that $d$, $\nb$, and $\Db$ refer only to spatial components, and $d_{t,x}$, $\nb_{t,x}$, and $\Db_{t,x}$ refer to the whole spatial and temporal components.

We also make note of  the boundary conditions
\EQ{
&e(s, t, x) \to e_\infty  \mas s\to + \infty  \\
& \psi_s(s, t, x), \, \,  \psi_\al(s, t, x), \, \, A_\al(s, t, x)  \longrightarrow 0 \mas s \to + \infty\\
}
Crucially, the above allow us to express $A_\al$ and $\psi_\al$ in terms of $\psi_s$ and $F_{s \al}$ by the fundamental theorem of calculus, 
\EQ{ \label{eq:AF} 
A_\al(s_0, t, x)  = -\int_{s_0}^\infty \p_s A_\al(s, t, x) \, ds   = - \int_{s_0}^\infty F_{s \al}(s) \, ds
}
and 
\EQ{ \label{eq:paps} 
\psi_\al(s_0, t, x) &= -\int_{s_0}^\infty  \p_s \psi_\al \, ds  =  -  \int_{s_0}^\infty \bfD_{\al} \psi_s \, ds \\
& =  - \int_{s_0}^\infty \p_\al \psi_s  \, ds - \int_{s_0}^\infty A_\al \psi_s \, ds \\
& = - \int_{s_0}^\infty \p_\al \psi_s  \, ds + \int_{s_0}^\infty \int_{s}^\infty F_{s \al}(s') \psi_s(s) \, ds' \, ds.
}
To estimate the  above we will often use the pointwise bound 
\EQ{ \label{eq:Fsapw} 
\abs{F_{s \alp}(s)} \lesssim  \abs{\p_s u(s)}  \abs{ (du, \p_t u)(s)}
}
which follows from the identity 
\ant{
e F_{s \al} = R(u)( \p_s u, \p_\al u) e 
}  
and  the bounded geometry of $\NN$. 

\subsection{Equivalence of norms in the caloric gauge} \label{s:eqnorm} 
We now transfer the $L^2$-based parabolic regularity estimates for $\p_\al u$ and $\p_s u$ from Section~\ref{s:cg} to $L^2$ based estimates for $\psi_s$ and $\psi_\al$.

To begin, let $\phi$ be any smooth section of $u^* T\NN$, and let $e$ be any frame for $u^*T\NN$. Set $ \phi =: e  \varphi$. Then,  
\ant{
\| \varphi \|_{L^2( \Hp^4; \R^n)}  = \| \phi \|_{L^2( \Hp^4; \R^N)}.
}
 It follows then, for example, that 
\ant{
\| \psi_s( s, t)\|_{L^2 (\Hp^4; \R^n)} = \| \p_s u( s, t)\|_{L^2(\Hp^4; \R^N)}.
}
To prove similar equivalence of norms statements in the caloric gauge $e$, for higher derivatives of $\p_su $, and $\p_\al u$ we will use the formula
\EQ{ \label{eq:Ddd}
D_\be \phi = \na_\be\phi + S(u)( \p_\be u, \phi)  = e( \na_\be \varphi + A_\be \varphi).
}	
We start with the following proposition. 
 
 \begin{prop}[Equivalence of norms] \label{p:en} 
Let $u(s,t, x)$ be the heat flow resolution of $u(t, x)$ as in Proposition~\ref{p:preg}.  Let $\psi_\al$ and $\psi_s$ be the representations of $\p_\al u$ and $\p_s u$ in the caloric gauge $e$ with limiting frame $e_\infty$ as in Proposition~\ref{p:cg}. Then for each $t \in I$, 
\begin{equation} \label{eq:eqna}
\begin{aligned} 
	\nrm{\nb \Psi(t)}_{L^{\infty}_{\ds}(\bbR^{+}; L^{2}(\mathbb{H}^{4}; \bbR^{N}))}
	\simeq & \nrm{(\nb du, \nb \rd_{t} u)(t)}_{L^{\infty}_{\ds}(\bbR^{+}; L^{2}(\mathbb{H}^{4}; \bbR^{N}))}  \\
	\simeq & \nrm{D d_{t,x} u(t)}_{L^{\infty}_{\ds}(\bbR^{+}; L^{2}(\mathbb{H}^{4}; \bbR^{N}))} 
\end{aligned}
\end{equation}
and
\begin{equation} \label{eq:eqns}
\begin{aligned}
	\nrm{s^{\frac{1}{2}} \nb_{t,x} \psi_{s}(t)}_{L^{\infty}_{\ds} \cap L^{2}_{\ds}(\bbR^{+}; L^{2}(\bbH^{4}; \bbR^{n}))}
	\simeq & \nrm{s^{\frac{1}{2}} \nb_{t,x} \rd_{s} u(t)}_{L^{\infty}_{\ds} \cap L^{2}_{\ds}(\bbR^{+}; L^{2}(\bbH^{4}; \bbR^{N}))} \\
	\simeq & \nrm{s^{\frac{1}{2}} D_{t,x} \rd_{s} u(t)}_{L^{\infty}_{\ds} \cap L^{2}_{\ds}(\bbR^{+}; L^{2}(\bbH^{4}; \bbR^{N}))}
\end{aligned}
\end{equation}
where the implicit constants are independent of $t$ and $I$. 
\end{prop}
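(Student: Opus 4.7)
The proof will rest on the two pointwise identities implicit in \eqref{eq:Ddd}, namely
\[
D_{\beta} \rd_{\alpha} u = \nb_{\beta} \rd_{\alpha} u + S(u)(\rd_{\beta} u, \rd_{\alpha} u), \qquad
D_{\beta} \rd_{\alpha} u = e\bigl( \nb_{\beta} \psi_{\alpha} + A_{\beta} \psi_{\alpha} \bigr),
\]
together with their analogues obtained by replacing $\rd_{\alpha} u$ by $\rd_{s} u$. Since $e$ is fiberwise an isometry, the second identity produces the pointwise equality $|D d_{t,x} u|_{\bbR^{N}} = |\nb \Psi + A \Psi|_{\bbR^{n}}$, and the first yields $|\nb d_{t,x}u - D d_{t,x}u| \lec |d_{t,x} u|^{2}$ via bounded geometry of $\NN$. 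The entire proposition will therefore follow from the triangle inequality once the ``error'' terms $|A||\Psi|$ and $|d_{t,x} u|^{2}$ are shown to be $O(\varepsilon)$ perturbations of $\nb \Psi$ in the appropriate norms.

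\textbf{First equivalence.} For the chain comparing $\nb \Psi$, $\nb d_{t,x} u$ and $D d_{t,x} u$, fix $(s,t)$ and take the $L^{2}_{x}$ norm. The intrinsic/extrinsic error is bounded by
$
\|d_{t,x} u\|_{L^{4}_{x}}^{2} \lec \|\nb d_{t,x} u\|_{L^{2}_{x}}^{2} \lec \varepsilon \|\nb d_{t,x} u\|_{L^{2}_{x}},
$
using Sobolev $W^{1,2}(\bbH^{4}) \hookrightarrow L^{4}(\bbH^{4})$ and the parabolic regularity bound \eqref{eq:preg}. The gauge error $\|A \Psi\|_{L^{2}_{x}} \lec \|A\|_{L^{4}_{x}} \|\Psi\|_{L^{4}_{x}}$ is handled by estimating $\|A\|_{L^{4}_{x}}$ uniformly in $s$ (see below). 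After absorbing both errors on the left, the three norms are shown to be comparable.

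\textbf{Second equivalence.} The analogous pointwise identities for $\rd_{s} u$ express $D_{\beta} \rd_{s} u$ as $\nb_{\beta} \rd_{s} u + S(u)(\rd_{\beta} u, \rd_{s} u) = e(\nb_{\beta} \psi_{s} + A_{\beta} \psi_{s})$. Multiplying by the $s^{1/2}$ weight and taking $L^{\infty}_{\ds} \cap L^{2}_{\ds}(L^{2}_{x})$ norms, the intrinsic/extrinsic discrepancy $\|s^{1/2}|d_{t,x} u||\rd_{s} u|\|$ is controlled by $\|d_{t,x} u\|_{L^{\infty}_{s} L^{4}_{x}} \cdot \|s^{1/2} \rd_{s} u\|_{L^{\infty}_{\ds} \cap L^{2}_{\ds} L^{4}_{x}} \lec \varepsilon \cdot (\text{RHS})$ after invoking Sobolev and \eqref{eq:kpreg}. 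The gauge commutator is treated in the same manner.

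\textbf{Main obstacle.} The crux is the uniform-in-$s$ quadratic bound $\|A(s)\|_{L^{4}_{x}} \lec \varepsilon^{2}$, which does not fall out of a single scaling. Starting from the reconstruction formula \eqref{eq:AF} and the pointwise bound \eqref{eq:Fsapw}, Minkowski in $s'$ reduces matters to an integral of $\|F_{s\alpha}(s')\|_{L^{4}_{x}} \lec \|\rd_{s} u(s')\|_{L^{8}_{x}} \|d_{t,x} u(s')\|_{L^{8}_{x}}$ (or an $L^{2}_{x}$ version via $\nb A$ and Sobolev). For $s' \leq 1$ one uses \eqref{eq:12preg} and Gagliardo--Nirenberg to obtain a bound of the form $\varepsilon^{2} (s')^{-\alpha}$ with $\alpha < 1$, while for $s' \geq 1$ the rapid pointwise decay from Lemma~\ref{l:udec} supplies any negative power of $s'$. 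Matching these two regimes produces an integrable dominant for $s' \in (s, \infty)$ whose total integral is $O(\varepsilon^{2})$, giving the required smallness of $A$; once this is in hand, the equivalences close and the implicit constants are manifestly independent of $t$ and $I$.
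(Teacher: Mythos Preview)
Your overall strategy---use the two identities in \eqref{eq:Ddd} and absorb the errors $|S(u)(d_{t,x}u,\cdot)|$ and $|A||\Psi|$ via the triangle inequality---is exactly the paper's. The paper isolates the bound $\|A\|_{L^\infty_{\frac{ds}{s}}L^4_x}\lesssim\eps^2$ as Lemma~\ref{l:A}, proved just before this proposition, after which the proof of Proposition~\ref{p:en} is three lines.

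The gap is in your argument for that $L^4$ bound. Your claim that $\|F_{s\alpha}(s')\|_{L^4_x}\lesssim\eps^2(s')^{-\alpha}$ with $\alpha<1$ for small $s'$ cannot hold at critical regularity. Any H\"older split $\|\rd_su\|_{L^p}\|d_{t,x}u\|_{L^q}$ with $\tfrac1p+\tfrac1q=\tfrac14$ forces $p\ge2$ and $q\ge4$, and the $L^\infty_{\frac{ds}{s}}$ bounds in \eqref{eq:preg}--\eqref{eq:12preg} then give exactly $(s')^{-(1-2/p)}\cdot(s')^{-(1/2-2/q)}=(s')^{-1}$, which is logarithmically divergent at $s'=0$. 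The same scaling obstructs the $\nb A$ route you mention in parentheses. Splitting at $s'=1$ and invoking Lemma~\ref{l:udec} handles the large-$s'$ tail but does nothing for the short-time singularity.

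The correct mechanism, and this is the content of Lemma~\ref{l:A}, is to exploit the $L^2_{\frac{ds}{s}}$ half of the parabolic regularity instead of only $L^\infty_{\frac{ds}{s}}$. Writing $ds=s\,\tfrac{ds}{s}$ and applying Cauchy--Schwarz in $s$,
\[
\int_{s_0}^\infty \|F_{s\alpha}\|_{L^4_x}\,ds
\ \le\ \bigl\|\,s^{1/2}\|\rd_su\|_{L^4_x}\bigr\|_{L^2_{\frac{ds}{s}}}
      \bigl\|\,s^{1/2}\|d_{t,x}u\|_{L^\infty_x}\bigr\|_{L^2_{\frac{ds}{s}}}
\ \lesssim\ \eps^2,
\]
with both factors finite by \eqref{eq:kpreg} and \eqref{eq:12preg} (the second via Gagliardo--Nirenberg). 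Once this $L^4$ bound on $A$ is available, the rest of your outline goes through verbatim and coincides with the paper's proof.
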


From~\eqref{eq:Ddd} it is clear that to compare, say $\na \p_\al  u$ with $\na \psi_\al$, we will need control over the the connection form $A$. We require the following lemma. 
\begin{lem} \label{l:A}
Let $u(s,t, x)$ be the heat flow resolution of $u(t, x)$ as in Proposition~\ref{p:preg}.  Let $e$ be the caloric gauge with limiting frame $e_\infty$ as in Proposition~\ref{p:cg},  and let $A$ denote the corresponding connection form.  Then for each $t \in I$ we have 
\begin{align} \label{eq:AL4}
\| A(t) \|_{\Ls^\I( \R^+; L^4(\bbH^{4}; \mathfrak{so}(n) ))} \lesssim  \| (du, \p_t u)(t) \rest_{s=0} \|_{H^1(\bbH^{4}; T \NN)}^2 \lesssim  \eps^2
\end{align}
We also have the following estimates on the derivatives of $A$ and $\underline{A}$. 
\begin{equation} \label{eq:AL2}
\begin{aligned}
 \| (\nb A, \nb_{t,x} \underline{A})(t)\|_{\Ls^\I( \R^+; L^2(\bbH^{4}; \mathfrak{so}(n)))}
\lesssim  \| (du, \p_t u)(t) \rest_{s=0} \|_{H^1\times H^1(\bbH^{4}; T \NN)}^2 \aleq \eps^{2}
\end{aligned}
\end{equation} 
Moreover, for all $k \ge 1$, we have, 
\begin{equation} \label{eq:dkAL2}
\begin{aligned}
 \| s^{\frac{k}{2}} (\na^{(k+1)}A, \nb^{(k)} \nb_{t,x} \underline{A})(t) \|_{\Ls^\I \cap \Ls^2(\bbR^{+}; L^2(\bbH^{4}; \mathfrak{so}(n)))} \\
 \lesssim_{k}  \| (du, \p_t u)(t) \rest_{s=0} \|_{H^1\times H^1(\bbH^{4}; T \NN)}^2 \aleq \eps^{2}
\end{aligned}
\end{equation}
All constants are independent of $t$ and $I$.
\end{lem}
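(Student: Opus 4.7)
The starting point is the integral representation \eqref{eq:AF}, which writes $A_\alpha(s_0) = -\int_{s_0}^\infty F_{s\alpha}(s) \, ds$, together with the pointwise identity $F_{s\alpha} = \bfR^{(0)}(\psi_s, \psi_\alpha)$ from Section~\ref{s:caloric-summary} and the bound \eqref{eq:Fsapw}. All estimates on $A$ and its derivatives thereby reduce to integrated $L^p_x$ bounds on $F_{s\alpha}$ and its derivatives, which in turn are controlled through Proposition~\ref{p:preg}. In particular, the $L^4$ bound \eqref{eq:AL4} will follow immediately from the $\dot{H}^1$-bound in \eqref{eq:AL2} via the Sobolev embedding $\dot{H}^1(\bbH^4) \hookrightarrow L^4(\bbH^4)$ (Lemma~\ref{l:se}), so the central task is to prove \eqref{eq:AL2}.

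To prove \eqref{eq:AL2} I differentiate \eqref{eq:AF} under the integral sign and expand using the Leibniz formula \eqref{eq:DF}. This produces, schematically,
\[
|\nabla F_{s\alpha}(s)| \aleq |du(s)|\,|\partial_s u(s)|\,|d_{t,x}u(s)| + |\nabla \partial_s u(s)|\,|d_{t,x} u(s)| + |\partial_s u(s)|\,|\nabla d_{t,x} u(s)|.
\]
Each term is bounded in $L^2_x$ by H\"older combined with the $\dot{H}^1 \hookrightarrow L^4$ embedding in dimension $4$, reducing the $L^2_x$-norm to a product of two factors measured in $L^2_x$. I then apply Cauchy--Schwarz in $s$ and pair against the critical $L^2_{ds/s}$ smoothing bounds in \eqref{eq:kpreg}. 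For instance, the third term closes cleanly as
\[
\int_{s_0}^\infty \|\partial_s u(s)\|_{L^4} \|\nabla d_{t,x}u(s)\|_{L^4} \, ds \le \|\nabla \partial_s u\|_{L^2_{ds}L^2_x} \|\nabla^2 d_{t,x}u\|_{L^2_{ds}L^2_x} \aleq \eps^2,
\]
where Sobolev is used in $x$ and \eqref{eq:kpreg} at $k=1$ is rewritten as unweighted $L^2_{ds}$ bounds. The $\nabla_{t,x}\underline{A}$ part of \eqref{eq:AL2} is obtained by the same analysis after commuting $\partial_t$ past the spatial integral defining $\underline{A}$ and invoking the smoothing estimates for $\partial_t u$ and $\partial_t \partial_s u$ that are also collected in Proposition~\ref{p:preg}.

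The higher-order bounds \eqref{eq:dkAL2} follow by iterating this strategy: I take up to $k$ additional covariant derivatives (spatial for $A$, space-time for $\underline{A}$) under the integral and expand via \eqref{eq:DR}--\eqref{eq:DF} into a polynomial in derivatives of $\psi_s$ and $\psi_\alpha$, equivalently of $\partial_s u$ and $d_{t,x}u$. Derivatives are distributed between the two factors using H\"older and Sobolev so that each resulting bilinear-in-$s$ expression pairs against two $L^2_{ds/s}$ smoothing bounds from \eqref{eq:kpreg} and \eqref{eq:12preg} via Cauchy--Schwarz, yielding a product bounded by $\eps^2$.

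The main obstacle is the borderline scaling of $\nabla F_s$: a na\"ive pointwise H\"older bound already yields only $\|\nabla F_s(s)\|_{L^2} \aleq \eps^2 s^{-1}$, which is logarithmically non-integrable on any $s$-interval extending to $0$, so a direct Minkowski integration of the $L^2_x$-norms loses a logarithm. Avoiding this loss forces one to abandon the pointwise-in-$s$ product estimate in favor of the genuine bilinear Cauchy--Schwarz argument above, fully exploiting the $L^2_{ds/s}$ smoothing content of Proposition~\ref{p:preg}. For terms in which only one factor enjoys a useful $L^2_{ds/s}$ bound (for instance the middle schematic term, where $d_{t,x} u$ itself carries only $L^\infty_{ds/s}$ control), a further redistribution of derivatives is required: either an integration by parts in $x$ or the substitution $\partial_s u = \Delta u + S(u)(du,du)$ from the heat equation is used to transfer a derivative from $\partial_s u$ onto $d_{t,x} u$ before the Cauchy--Schwarz step in $s$ is applied.
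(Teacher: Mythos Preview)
Your overall strategy—use the integral representation \eqref{eq:AF}, differentiate under the integral, expand $F_{s\alpha}$ via the curvature identity, and close by Cauchy--Schwarz in $s$ against the $L^2_{ds/s}$ smoothing bounds of Proposition~\ref{p:preg}—is the same as the paper's. But there is a genuine gap created by your chosen order of argument.

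You derive \eqref{eq:AL4} from \eqref{eq:AL2} via Sobolev embedding, so \eqref{eq:AL2} must be proved first without any $L^4$ control on $A$. However, your schematic for $\nabla F_{s\alpha}$ is incomplete: the Leibniz formula \eqref{eq:DF} you cite computes the \emph{covariant} derivative $\bfD_\mu F_{s\alpha}$, and since $\nabla_\mu F_{s\alpha} = \bfD_\mu F_{s\alpha} - [A_\mu, F_{s\alpha}]$ there is an additional commutator contribution of size $|A|\,|\partial_s u|\,|d_{t,x}u|$ that you have omitted. Controlling this term in $L^2_x$ requires precisely the bound $\|A\|_{L^4}$ you are deferring, so the argument is circular as written. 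The paper avoids this by proving \eqref{eq:AL4} \emph{directly} first—estimating $\|A(s_0)\|_{L^4} \le \int_{s_0}^\infty \|\partial_s u\|_{L^4}\|d_{t,x}u\|_{L^\infty}\,ds$ and closing by Cauchy--Schwarz in $s$—after which \eqref{eq:AL4} is available as an input to the derivative estimate \eqref{eq:AL2}. Your reversal could in principle be rescued by a bootstrap in $s$ (using the decay of $A$ as $s\to\infty$ from Proposition~\ref{p:cg} to start the continuity argument), but this is not what you wrote.

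A secondary point: your suggested fix for the middle term $|\nabla\partial_s u|\,|d_{t,x}u|$ (integration by parts in $x$, or the heat-equation substitution) does not do what you claim. The substitution $\partial_s u = \Delta u + S(u)(du,du)$ rewrites $\nabla\partial_s u$ as $\nabla\Delta u$ plus lower order, which carries exactly the same $s$-scaling and does not transfer any derivative to $d_{t,x}u$. The actual remedy is simply to abandon the $L^4\times L^4$ H\"older split for this particular term in favor of, say, $L^2_x\times L^\infty_x$ (or the paper's $L^{8/3}_x\times L^8_x$), so that \emph{both} factors pick up an $s^{1/2}$ weight and land in $L^2_{ds/s}$ via Proposition~\ref{p:preg}.
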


Given Proposition~\ref{p:en} and Lemma~\ref{l:A} we can also deduce the following bounds on higher derivatives of $\Psi$ and $\psi_s$. 
\begin{cor}\label{c:en}
Let $u(s,t, x)$ be the heat flow resolution of $u(t, x)$ as in Proposition~\ref{p:preg}. Let $\psi_\al$ and $\psi_s$ be the representations of $\p_\al u$ and $\p_s u$ in the caloric gauge $e$ with limiting frame $e_\infty$ as in Proposition~\ref{p:cg}. Then for each $t \in I$ and all $k \ge 2$ 
\EQ{ \label{eq:eqnsk}
 \| s^{\frac{k}{2}} D^{(k-1)} D_{t, x} \p_s u (t) \|_{\Ls^\I \cap \Ls^2(\bbR^{+}; L^{2})} 
   \lesssim_k    \nrm{(d u, \rd_{t} u)(t) \rest_{s=0}}_{H^{1}\times H^1(\bbH^{4}; T \NN)}   \\
 \| s^{\frac{k}{2}} \na^{(k-1)} \na_{t, x} \psi_s (t)\|_{\Ls^\I \cap \Ls^2 (\bbR^{+}; L^{2})}   
    \lesssim_k    \nrm{(d u, \rd_{t} u)(t) \rest_{s=0}}_{H^{1}\times H^1(\bbH^{4}; T \NN)}  
 }
 and for each $t \in I$ and  $k \ge 1$, 
 \EQ{ \label{eq:eqnak}
 \| s^{\frac{k}{2}} D^{(k+1)}  d_{t,x} u(t) \|_{\Ls^\I \cap \Ls^2 (\bbR^{+}; L^{2})}   
     \lesssim_k    \nrm{(d u, \rd_{t} u)(t) \rest_{s=0}}_{H^{1}\times H^1(\bbH^{4}; T \NN)}   \\
 \| s^{\frac{k}{2}} \na^{(k+1)} \Psi(t) \|_{\Ls^\I \cap \Ls^2(\bbR^{+}; L^{2})} 
     \lesssim_k    \nrm{(d u, \rd_{t} u)(t) \rest_{s=0}}_{H^{1}\times H^1(\bbH^{4}; T \NN)}   
 }
\end{cor}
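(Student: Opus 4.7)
}

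The plan is to deduce Corollary~\ref{c:en} from the parabolic regularity estimates of Proposition~\ref{p:preg} combined with the connection $1$-form bounds of Lemma~\ref{l:A}, by comparing intrinsic derivatives $D^{(\ell)}$ and $\nabla^{(\ell)}$ of sections of $u^{\ast}T\NN$ (viewed either extrinsically or through the caloric frame $e$) with the ordinary $\nabla^{(\ell)}$ derivatives of $\bbR^{N}$-valued maps that are directly controlled by Proposition~\ref{p:preg}.

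First, for the bounds involving $D^{(k+1)} d_{t,x} u$ and $D^{(k-1)} D_{t,x}\partial_s u$, I would iterate the Gauss identity
\begin{equation*}
D_\beta \phi^K = \nabla_\beta \phi^K + S^K_{IJ}(u)\,\phi^I\,\partial_\beta u^J.
\end{equation*}
Applied $\ell$ times, this represents $D^{(\ell)}\phi$ as $\nabla^{(\ell)}\phi$ plus a sum of schematic multilinear terms of the form
\begin{equation*}
\nabla^{(a_0)} S(u)\cdot \nabla^{(a_1)} d_{t,x} u \cdots \nabla^{(a_m)} d_{t,x} u\cdot \nabla^{(b)}\phi,
\end{equation*}
with $a_0+a_1+\cdots+a_m+b+m=\ell$; the factors involving $S$ and its covariant derivatives on $\NN$ are pointwise uniformly bounded by the bounded-geometry hypothesis. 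Taking $\phi=d_{t,x}u$ or $\phi=\partial_s u$ and distributing the weight $s^{k/2}$ appropriately, each such term is estimated in $L^{2}_x$ via H\"older, Gagliardo-Nirenberg (Lemma~\ref{l:gn}), and Sobolev embedding (Lemma~\ref{l:se}), after which each factor is controlled by one of \eqref{eq:preg}--\eqref{eq:12preg}.

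For the bounds on $\nabla^{(k+1)}\Psi$ and $\nabla^{(k-1)}\nabla_{t,x}\psi_s$, I would use the identity $\bfD=\nabla+A$ on $\bbR^{n}$-valued fields together with the isometric identity $e\,\bfD_\beta \varphi = D_\beta(e\varphi)$, which gives the pointwise equalities
\begin{equation*}
|\bfD^{(\ell)}\psi_\alpha| = |D^{(\ell)}\partial_\alpha u|,\qquad |\bfD^{(\ell)}\psi_s| = |D^{(\ell)}\partial_s u|.
\end{equation*}
Iterating $\bfD=\nabla+A$ then expresses $\nabla^{(\ell)}\psi$ as $\bfD^{(\ell)}\psi$ plus a sum of schematic terms $\nabla^{(a_1)}A\cdots \nabla^{(a_m)}A\cdot\nabla^{(b)}\psi$ with $a_1+\cdots+a_m+b+m=\ell$. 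These are bounded by multilinear H\"older together with the $s$-weighted estimates of Lemma~\ref{l:A} for $\nabla^{(j)}A$ and the already-obtained lower-order bounds on $\nabla^{(b)}\psi$. The proof is inductive in $k$: the base cases $k=1$ and $k=2$ are exactly Proposition~\ref{p:en}, and each inductive step uses only strictly lower-order bounds in the error terms.

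The main technical obstacle is the careful bookkeeping of the $s$-weights in the multilinear H\"older estimates so that every factor receives a weight compatible with some Proposition~\ref{p:preg} or Lemma~\ref{l:A} norm; the fractional interpolation bounds \eqref{eq:12preg} are essential when non-integer powers of $-\Dlt$ arise naturally during redistribution. The smallness $\eps \ll 1$ and the uniformity of constants in Lemma~\ref{l:A} ensure that no large constants accumulate through the induction, so in the final write-up only one or two representative multilinear cases need be detailed, with the remainder left as routine.
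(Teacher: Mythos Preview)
Your proposal is correct and takes essentially the same approach as the paper: the paper's proof also iterates the identity $D_\beta\phi = \nabla_\beta\phi + S(u)(\partial_\beta u,\phi) = e(\nabla_\beta\varphi + A_\beta\varphi)$ (equation~\eqref{eq:Ddd}) to compare $D^{(\ell)}$, $\nabla^{(\ell)}$ (extrinsic), and $\nabla^{(\ell)}$ (in the frame), and then closes the multilinear error terms with Proposition~\ref{p:preg}, Lemma~\ref{l:A}, and the lower-order Proposition~\ref{p:en}. Your inductive bookkeeping and use of Gagliardo--Nirenberg/Sobolev is exactly what the paper's sketch leaves to the reader.
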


%

We first prove Lemma~\ref{l:A}. 

\begin{proof}[Proof of Lemma~\ref{l:A}]
Fix $t \in I$; in what follows, we suppress writing $t$ with the understanding that every computation is performed at the fixed time $t$. By~\eqref{eq:AF}   we have 
$
A_\al(s_0)  = -\int_{s_0}^\I F_{ s\al }(s)ds
$
for any $s_0 >0$.  It follows from~\eqref{eq:Fsapw} that 
\ant{
\| A(s_0)\|_{L^4} &\lesssim \int_{s_0}^\I \|F_{s}(s)\|_{L^4} ds   \lesssim  \int_{s_0}^\I \|  \p_s u\|_{L^4}\|  d_{t,x} u\|_{L^\infty} \, ds
 \\
& \lesssim  \left(\int_{s_0}^\I s  \| \p_s u\|_{L^4}^2 \, \ds \right)^{\frac{1}{2}} \left( \int_{s_0}^\I  s  \|  d_{t,x} u\|_{L^\infty}^2 \, \ds \right)^{\frac{1}{2}}.
}  
By Sobolev embedding the first integral on the right above is bounded using~\eqref{eq:kpreg} with $k =1$, by  
\ant{
 \left(\int_{s_0}^\I s  \| \p_s u\|_{L^4}^2 \, \ds \right)^{\frac{1}{2}}  &\lesssim  \left(\int_{s_0}^\I   \|s^{\frac{1}{2}} \na \p_s u\|_{L^2}^2 \, \ds \right)^{\frac{1}{2}}   \\
 &\lesssim \| (du, \p_t u)(t) \rest_{s=0} \|_{H^1\times H^1(\bbH^{4}; T \NN)} \lesssim  \eps
 }
 To bound the second integral we use Gagliardo-Nirenberg, Sobolev and $L^{2}$ interpolation, 
 \ant{
 \Bigg( \int_{s_0}^\I   s \|  d u\|_{L^\infty}^2 \,& \ds \Bigg)^{\frac{1}{2}} \lesssim  \left( \int_{s_0}^\I s   \|   d u \|_{L^8} \|  \na  d u \|_{L^8} \, \ds \right)^{\frac{1}{2}} \\
 &\lesssim \left( \int_{s_0}^\I    \|  s^{\frac{1}{4}} (-\De)^{\frac{5}{4}} u \|_{L^2} \| s^{\frac{3}{4}} (-\Delta)^{\frac{7}{4}} u\|_{L^2} \, \ds \right)^{\frac{1}{2}}  \\
&\lesssim \| (du, \p_t u)(t) \rest_{s=0} \|_{H^1\times H^1(\bbH^{4}; T \NN)} \lesssim  \eps
 }
 where the last line follows from~\eqref{eq:kpreg} and~\eqref{eq:12preg}.  
 The contribution of $\p_t u$ to the second integral can be estimated in a similar way.

To prove the second bound we begin with the formula 
\begin{align*}
	\nb_{\bt} A_{\alp}(s_{0})
	= & - \int_{s_{0}}^{\infty} \nb_{\bt} F_{s \alp}(s) \, ds \\
	= & - \int_{s_{0}}^{\infty} \bfD_{\bt} F_{s \alp}(s) \, ds + \int_{s_{0}}^{\infty} [A_{\bt}, F_{s \alp}](s) \, ds 
\end{align*}
where at most one of $\alp$ and $\bt$ is $0$. By \eqref{eq:DF} and \eqref{eq:DR-ptwise} in Section~\ref{s:curv}, as well as $\rd_{s} u = e \psi_{s}$ and $\rd_{\alp} u = e \psi_{\alp}$, the term $\bfD_{\bt} F_{s \alp}$ is pointwisely bounded by 
\begin{align*}
\abs{\bfD_{\bt} F_{s \alp}}
\aleq & \abs{D_{\bt} \rd_{s} u} \abs{\rd_{\alp} u}
	+ \abs{\rd_{s} u}\abs{D_{\bt} \rd_{\alp} u} 
	+ \abs{\rd_{\bt} u} \abs{\rd_{s} u} \abs{\rd_{\alp} u}
\end{align*}
Using $D_{\bt} \rd_{\alp} u = D_{\alp} \rd_{\bt} u$ if necessary, it follows that
\begin{align*}
\abs{\bfD_{t,x} F_{s }} + \abs{[A, F_{s }]}
\aleq & \abs{\nb_{t,x} \rd_{s} u} \abs{d_{t,x} u} + \abs{\rd_{s} u} \abs{\nb d_{t,x} u} \\
& + \abs{\rd_{s} u} \abs{d_{t,x} u}^{2}
	 + \abs{A} \abs{\rd_{s} u} \abs{d_{t,x} u}
\end{align*}
Therefore
\begin{align*}
& \hskip-2em
\nrm{\nb_{t,x} A}_{L^{\infty}_{\ds}(\bbR^{+}; L^{2})} \\
\aleq & \int_{0}^{\infty} s \nrm{\nb_{t,x} \rd_{s} u}_{L^{\frac{8}{3}}} \nrm{d_{t,x} u}_{L^{8}} \, \ds
	+ \int_{0}^{\infty} s \nrm{\rd_{s} u}_{L^{4}} \nrm{\nb d_{t,x} u}_{L^{4}}^{2} \, \ds \\
	& + \int_{0}^{\infty} s \nrm{\rd_{s} u}_{L^{4}} \nrm{d_{t,x} u}_{L^{8}}^{2} \, \ds 
	 + \int_{0}^{\infty} s \nrm{A}_{L^{4}} \nrm{\rd_{s} u}_{L^{8}} \nrm{d_{t,x} u}_{L^{8}} \, \ds \\
\aleq &  \nrm{(du, \rd_{t} u)(t) \rest_{s=0}}_{H^{1}\times H^1(\bbH^{4}; T \NN)} \aleq \eps,
\end{align*}
 where the last line is a consequence of Sobolev embedding, Gagliardo-Nirenberg, the estimates in Proposition~\ref{p:preg} and \eqref{eq:AL4}.  This completes the proof of~\eqref{eq:AL2}. One can argue similarly to establish~\eqref{eq:dkAL2}. \qedhere

 \end{proof}

Next, we establish Proposition~\ref{p:en}. 

\begin{proof}[Proof of Proposition~\ref{p:en}] 
As before, we fix and suppress $t \in I$.
We first prove~\eqref{eq:eqna}. To prove the first $\simeq$ in~\eqref{eq:eqna} we deduce from ~\eqref{eq:Ddd} with $\phi = \p_\al u$ and $\varphi  = \psi_\al$ that 
\ant{ 
& \hskip-1em
\Big| \| (\na d u,\na \p_t u) \|_{\Ls^\infty (\bbR^{+}; L^2)}  - \| \na \Psi\|_{\Ls^\infty (\bbR^{+}; L^2)} \Big|  \\
& \le \|S(u)( du, e \Psi)\|_{\Ls^\infty (\bbR^{+}; L^2)} + \|S(u)( \p_t u, e \Psi)\|_{\Ls^\infty (\bbR^{+}; L^2)} + \| A e \Psi\|_{\Ls^\infty (\bbR^{+}; L^2)} \\
& \lesssim  \Big( \| (du,\p_t u)\|_{\Ls^\infty (\bbR^{+}; L^4)} +  \|A\|_{\Ls^\infty (\bbR^{+}; L^4)} \Big)  \| \Psi\|_{\Ls^\infty (\bbR^{+}; L^4)} \\
& \lesssim  \eps \|  \na \Psi\|_{\Ls^\infty (\bbR^{+}; L^2)}.
}
A similar argument gives the second $\simeq$ in~\eqref{eq:eqna}. The proof of~\eqref{eq:eqns} is identical after including the appropriate factor of $s^{\frac{1}{2}}$. 
\end{proof} 

Finally, we give a brief sketch of the proof of Corollary~\ref{c:en}. 
\begin{proof}[Proof of Corollary~\ref{c:en}]
The proof is essentially the same as the proof of Proposition~\ref{p:en} and is based on equation~\eqref{eq:Ddd}. For example, to prove~\eqref{eq:eqnsk} with $k = 2$ we apply~\eqref{eq:Ddd} with $\phi  = D_{\alp} \p_su$. We have, 
\ant{
D_b(D_\al \p_s u) =&  \na_b  \na_\al  \p_su + \na_{b} ( S(u)( \p_s u, \p_\al u)) + S(u)( D_\al  \p_su, \p_b u) \\
& = e\na_b \na_\al \psi_s + e \na_b(A_\al \psi_s)  + e A_b \na_\al  \psi_s + e A_b A_\al \psi_s.
}
One can then argue as in the proof of Proposition~\ref{p:en} using the bounds from Proposition~\ref{p:preg}, Lemma~\ref{l:A}, and~\eqref{eq:eqns} to conclude.
\end{proof}

\subsection{The wave map system in the caloric gauge: Dynamic equations for $\psi_s$ and $\psi_\al$} \label{s:cd} 

Given the estimates in the previous subsection, where we transferred the $L^2$-based parabolic regularity estimates for the harmonic map heat flow resolution $u(s, t, x)$  to the representations of $\p_\al u $ and $\p_s u$ in the caloric gauge, namely $\psi_\al$ and $\psi_s$, we can now work with the \emph{dynamic} equations for $\psi_\al$ and $\psi_s$ to close the a priori estimates~\eqref{eq:ap}.  We place particular importance on the \emph{scalar} nonlinear wave equation~\eqref{eq:wmp} satisfied by $\psi_s$ since one can \emph{close} a priori estimates for solutions to~\eqref{eq:wmp} using standard analysis based on Strichartz estimates. 

Following~\cite{Tao04}, we refer to $\psi_s$  as the \emph{heat tension field},  
\begin{equation} \label{eq:htf}
\psi_{s}(s,t,x) := \Db^{a} \psi_{a}
\end{equation}
We also define the \emph{wave tension field} $w$ by 
\EQ{\label{eq:wtf} 
w(s, t, x):= \Db^{\alp} \psi_{\alp}
}
We note that since $u(0, t, x)$ is a wave map we have $w(0, t, x) = 0$. 

We  derive a covariant wave equation for the heat tension field $\psi_s$ and a covariant heat equation for the wave tension field $w$.  
A key observation is that both equations are \emph{scalar}, in the sense that the linear parts coincide with the scalar wave and heat equations on $\bbH^{4}$, respectively.
\begin{lem}[Dynamic equations in the caloric gauge: wave equation for $\psi_s$] \label{l:wmp}
Let $\psi_s$ and $w$ be defined as in~\eqref{eq:wtf} and~\eqref{eq:htf}. 
Then the heat tension field $\psi_s$ satisfies 
\EQ{ \label{eq:wmp}
\bfD^{\alp} \bfD_{\alp} \psi_{s} = \rd_{s} w - \tensor{F}{_{s}^{\alp}} \psi_{\alp}
  } 
  where the wave tension field $w$ satisfies 
    \EQ{ \label{eq:hw} 
(\rd_{s} - \bfD^{b} \bfD_{b}) w
&= \tensor{F}{_{s}^{ \alp}} \psi_{\alp} + 3 F^{\alp b} \bfD_{\alp} \psi_{b}
+ \bfD^{\alp} \tensor{F}{_{\alp}^{b}} \psi_{b} + \bfD^{b} \tensor{F}{^{\alpha}_{b}} \psi_{\alp} \\
w(0, t, x) &= 0
}

  
 \end{lem}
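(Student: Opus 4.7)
The proof proceeds in three parts: the initial condition, the wave equation for $\psi_s$, and the heat equation for $w$. The key algebraic inputs are collected in Section~\ref{s:caloric-summary}: the caloric gauge condition $A_s=0$ (so $\bfD_s=\rd_s$), the heat-temporal identity $\rd_s\psi_\alp=\bfD_\alp\psi_s$ in \eqref{eq:Dsps}, the symmetry $\bfD_\alp\psi_\bt=\bfD_\bt\psi_\alp$ in \eqref{eq:abba}, and the fact that the Riemann curvature $\calR$ of $\bbR^+\times\MM$ vanishes whenever contracted with $\rd_s$ or $\rd_t$. The identity $F_{\alp\bt}=\bfR(\psi_\alp,\psi_\bt)$ will be relevant only as motivation; for the bookkeeping itself, I treat $F$ as an abstract $\mathfrak{so}(n)$-valued 2-form satisfying the commutation rule \eqref{eq:commDD}.

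\textbf{Initial condition.} At $s=0$, the map $u(0,t,x)=u(t,x)$ is a wave map, and the extrinsic wave map equation \eqref{eq:wm-psi-al} is exactly $\etab^{\alp\bt}\bfD_\alp\psi_\bt=0$, i.e., $w(0,t,x)=0$.

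\textbf{Wave equation for $\psi_s$.} The plan is to compute $\rd_s w$ directly. Writing $w=\etab^{\alp\bt}\bfD_\bt\psi_\alp$ and applying $\rd_s=\bfD_s$, I commute $\bfD_s$ past $\bfD_\bt$ and use \eqref{eq:Dsps}:
\[
\rd_s w=\etab^{\alp\bt}\bfD_\bt\bfD_s\psi_\alp+\etab^{\alp\bt}[\bfD_s,\bfD_\bt]\psi_\alp
=\bfD^\alp\bfD_\alp\psi_s+\tensor{F}{_s^\alp}\psi_\alp,
\]
where in $[\bfD_s,\bfD_\bt]$ only the bundle curvature $F_{s\bt}$ survives (the domain curvature contribution vanishes because it involves an $\rd_s$ slot). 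Rearranging gives \eqref{eq:wmp}.

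\textbf{Heat equation for $w$.} Using the relation from the previous step, it suffices to prove
\[
\bfD^\alp\bfD_\alp\psi_s-\bfD^b\bfD_b w
=3F^{\alp b}\bfD_\alp\psi_b+\bfD^\alp\tensor{F}{_\alp^b}\psi_b+\bfD^b\tensor{F}{^\alp_b}\psi_\alp.
\]
I expand $\psi_s=\bfD^b\psi_b$ and $w=\bfD^\alp\psi_\alp$ and bring $\bfD^\alp\bfD_\alp\bfD^b\psi_b$ and $\bfD^b\bfD_b\bfD^\alp\psi_\alp$ into the same normal form $\bfD^\alp\bfD^b\bfD_c\psi_d$ (with appropriate contractions) by repeatedly using \eqref{eq:abba} to swap $\bfD_\alp\psi_b\leftrightarrow\bfD_b\psi_\alp$ and the commutation identity \eqref{eq:commDD} to swap pairs of covariant derivatives. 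Each commutation produces either an $F$-term or a domain curvature $\calR$-term, but the $\calR$-contributions cancel between the two sides: when contracted through $\etab^{\alp\bt}\bfh^{bc}$ together with \eqref{eq:abba}, the symmetries of $\calR$ (antisymmetry in each pair, plus the pair-exchange symmetry) and the antisymmetry of the commutator produce opposite contributions on the two sides. The surviving $F$-contributions come from three separate commutations (yielding the factor $3$ in $3F^{\alp b}\bfD_\alp\psi_b$), plus one contribution where the derivative lands on $F$ itself, giving $\bfD^\alp\tensor{F}{_\alp^b}\psi_b$, and one where it lands on $\psi$ with the other index pattern, giving $\bfD^b\tensor{F}{^\alp_b}\psi_\alp$.

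\textbf{Main obstacle.} The non-trivial part is Step 3: even though the final statement is clean, the intermediate expansion produces a large number of terms (both $F$-type and $\calR$-type), and one must be careful to account for every commutation exactly once. The essential structural input that makes the proof work is the combination of the heat-temporal identity \eqref{eq:Dsps}, the torsion-free symmetry \eqref{eq:abba}, and the trivial $s$-extension of the spacetime connection; these together ensure that the $\calR$-contributions produced in converting $\bfD^\alp\bfD_\alp\psi_s$ to $\bfD^b\bfD_b w$ either vanish or cancel in pairs, leaving only the four $F$-terms displayed on the right-hand side of \eqref{eq:hw}.
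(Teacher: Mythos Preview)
Your approach is essentially the same as the paper's: compute $\rd_s w$ using $\bfD_s=\rd_s$ and \eqref{eq:Dsps} to get \eqref{eq:wmp}, then expand $\rd_s w$ further by writing $\psi_s=\bfD^b\psi_b$ and commuting derivatives until $\bfD^b\bfD_b w$ appears, with the $\calR$-terms cancelling by antisymmetry. The paper carries this out explicitly by writing
\[
\rd_s w = \bfD^b\bfD_b w + \tensor{F}{_s^\alp}\psi_\alp + [\bfD^\alp,\bfD^b]\bfD_b\psi_\alp + \bfD^b[\bfD^\alp,\bfD_b]\psi_\alp + \bfD^\alp[\bfD_\alp,\bfD^b]\psi_b,
\]
expanding each commutator via \eqref{eq:commDD}, and observing that the four $\calR$-terms cancel pairwise.

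One small inaccuracy in your bookkeeping sketch: in the last sentence of your third step you describe the two residual terms as ``one where the derivative lands on $F$'' and ``one where it lands on $\psi$ with the other index pattern.'' In fact both $\bfD^\alp\tensor{F}{_\alp^b}\,\psi_b$ and $\bfD^b\tensor{F}{^\alp_b}\,\psi_\alp$ have the derivative on $F$; they arise from the Leibniz expansion of $\bfD^\alp(\tensor{F}{_\alp^b}\psi_b)$ and $\bfD^b(\tensor{F}{^\alp_b}\psi_\alp)$ respectively, while the $F\cdot\bfD\psi$ parts of those same Leibniz expansions are what combine (together with the direct $F^{\alp b}\bfD_b\psi_\alp$ from the first commutator and the torsion-free identity \eqref{eq:abba}) to produce the coefficient $3$. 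This is only a wording issue; the argument is correct.
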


\begin{rem} 
In order to estimate $w(s)$, we will solve the inhomogeneous heat equation \eqref{eq:hw} from $0$ to $s$, exploiting the fact that the initial data for $w$ is zero. Observe that all other terms in \eqref{eq:wmp} only involve variables at $s$ or larger. In the Littlewood-Paley analogy discussed in Section~\ref{s:main-idea}, $\rd_{s} w$ can be thought of as the contribution of Littlewood-Paley projections at higher frequencies.
\end{rem}

\begin{proof}
First we prove~\eqref{eq:wmp}. Using the identity~\eqref{eq:Dsps}, the fact that $\Db_s = \p_s$ and that the domain curvature $\RR^{\mu_1}_{ \,  \mu_2 \mu_3\mu_4}$ vanishes if any of the  indices $ \mu_j = s$   we have 
\begin{align*}
	\bfD^{\alp} \bfD_{\alp} \psi_{s}
	= \bfD^{\alp} \bfD_{s} \psi_{\alp}
	= \tensor{F}{^{\alp}_{s}} \psi_{\alp} + \rd_{s} w
\end{align*}

Next, we derive the covariant heat equation \eqref{eq:hw} for the heat tension field $w$ making  use of the identities~\eqref{eq:abba} and~\eqref{eq:Dsps}.  
We have
\begin{align*}
	\rd_{s} w 
	= & \rd_{s} \bfD^{\alp} \psi_{\alp} = \bfD^{\alp} \bfD_{s} \psi_{\alp} + [\bfD_{s}, \bfD^{\alp}] \psi_{\alp} \\
	= & \bfD^{\alp} \bfD_{\alp} \bfD^{b} \psi_{b} + \tensor{F}{_{s}^{\alp}} \psi_{\alp} \\
	= & \bfD^{\alp} \bfD^{b} \bfD_{b} \psi_{\alp} + \bfD^{\alp} [\bfD_{\alp}, \bfD^{b}] \psi_{b}  + \tensor{F}{_{s}^{\alp}} \psi_{\alp} \\
	= & \bfD^{b} \bfD_{b} w +\tensor{F}{_{s}^{\alp}} \psi_{\alp} 
	+ [\bfD^{\alp}, \bfD^{b}] \bfD_{b} \psi_{\alp} + \bfD^{b} [\bfD^{\alp}, \bfD_{b}] \psi_{\alp} + \bfD^{\alp} [\bfD_{\alp}, \bfD^{b}] \psi_{b}  
\end{align*}
By the commutation formula \eqref{eq:commDD}, the last three terms take the form
\begin{align*}
& \hskip-1em
[\bfD^{\alp}, \bfD^{b}] \bfD_{b} \psi_{\alp} + \bfD^{b} [\bfD^{\alp}, \bfD_{b}] \psi_{\alp} + \bfD^{\alp} [\bfD_{\alp}, \bfD^{b}] \psi_{b} \\
= &F^{\alp b} \bfD_{b} \psi_{\alp} + \tensor{\calR}{^{a b}_{b}^{c}} \bfD_{c} \psi_{a} + \tensor{\calR}{^{a b}_{a}^{c}} \bfD_{b} \psi_{c}
	+ \bfD^{b}(\tensor{F}{^{\alp}_{b}} \psi_{\alp} + \tensor{\calR}{^{a}_{b a}^{c}} \psi_{c}) \\
	& + \bfD^{\alp} (\tensor{F}{_{\alp}^{b}} \psi_{b})
	+ \bfD^{a} (\tensor{\calR}{_{a}^{b}_{b}^{c}} \psi_{c})
\end{align*}
where we used the fact that the domain curvature vanishes if any of the indices equal to $0$ to replace $\alp$ by $a$. By the anti-symmetry properties of the curvature tensor $\calR$, note that the terms involving $\calR$ all cancel. Plugging this into the equation for $\rd_{s} w$, we obtain \eqref{eq:hw}. \qedhere
\end{proof}

\begin{lem}[Heat equation for $\psi_s$]  \label{l:psh}The heat tension field $\psi_s$ satisfies the following linearized covariant heat equation in $s$: 
\EQ{ \label{eq:psh} 
\rd_{s} \psi_{s} - \bfD^{b} \bfD_{b} \psi_{s} = \tensor{F}{_{s}^{b}} \psi_{b} = \bfR^{(0)}(\psi_{s}, \psi^{b}) \psi_{b}
}	
\end{lem}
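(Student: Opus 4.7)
\medskip

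\noindent\textbf{Proof proposal for Lemma~\ref{l:psh}.} The plan is to carry out a direct calculation starting from the definition $\psi_s = \bfD^b \psi_b$, differentiating in $s$ and commuting covariant derivatives, in exactly the same spirit as the derivation of \eqref{eq:hw} in Lemma~\ref{l:wmp}. The two structural inputs that make everything go through cleanly are (i) the caloric gauge condition $A_s = 0$, which implies $\bfD_s = \rd_s$ on $\bbR^n$-valued tensors, and (ii) the fact that the domain curvature $\calR$ on $\MM = \bbR \times \bbH^4$ vanishes whenever any index equals $s$, so that the commutator formula \eqref{eq:commDD} reduces to $[\bfD_s, \bfD_a] = F_{sa}$ on $\bbR^n$-valued tensors.

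First I would compute
\[
\rd_s \psi_s = \rd_s(\bfh^{ab}\bfD_a \psi_b) = \bfh^{ab}\,\bfD_s \bfD_a \psi_b,
\]
using that $\bfh^{ab}$ is independent of $s$ and that $A_s=0$. Next I commute $\bfD_s$ past $\bfD_a$: by \eqref{eq:commDD} applied to the $\bbR^n$-valued covector $\psi_b$, and using the curvature vanishing noted above,
\[
\bfD_s \bfD_a \psi_b = \bfD_a \bfD_s \psi_b + F_{sa}\psi_b.
\]
Then I apply the identity \eqref{eq:Dsps}, namely $\bfD_s \psi_b = \bfD_b \psi_s$, to rewrite the first term as $\bfD_a \bfD_b \psi_s$. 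Contracting with $\bfh^{ab}$ yields
\[
\rd_s \psi_s = \bfh^{ab}\bfD_a \bfD_b \psi_s + \bfh^{ab}F_{sa}\psi_b = \bfD^b \bfD_b \psi_s + \tensor{F}{_s^b}\psi_b,
\]
which is the desired equation \eqref{eq:psh}. The last equality $\tensor{F}{_s^b}\psi_b = \bfR^{(0)}(\psi_s, \psi^b)\psi_b$ is then immediate from \eqref{eq:Fdu}.

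There is no serious obstacle here; the only points requiring care are the bookkeeping in \eqref{eq:commDD}, specifically verifying that the potentially dangerous domain-curvature terms $\tensor{\calR}{_b^c_{sa}}\psi_c$ drop out because one index is $s$, and ensuring that the identity $\bfD_s \psi_b = \bfD_b \psi_s$ is applied in the correct slot. Both are exactly analogous to steps already executed in the proof of Lemma~\ref{l:wmp}, so the argument reduces to two short commutations.
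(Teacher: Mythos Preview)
Your proposal is correct and follows essentially the same approach as the paper: both use the caloric gauge condition $A_s=0$ to identify $\bfD_s=\rd_s$, commute $\bfD_s$ past the spatial covariant derivative (using that the domain curvature with an $s$ index vanishes so only the $F_{sa}$ term survives), and then apply $\bfD_s\psi_b=\bfD_b\psi_s$. The paper's proof is written in one line with the commutation done on $\bfD^b\psi_b$ directly rather than on $\bfD_a\psi_b$ before contracting, but this is only a notational difference.
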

\begin{proof}
Since any component of the domain curvature  tensor $\RR$ on $\R^+ \times \MM_I$ with an $s$ index is $ = 0$, we have 
\ant{
\p_s \psi_s &= \p_s \Db^b \psi_b  = \Db_s \Db^b \psi_b =  \Db^b \Db_s \psi_b + F^{ \, \, b}_s \psi_b \\
& = \Db^b \Db_b \psi_s + F^{ \, \, b}_s \psi_b
}
which establishes~\eqref{eq:psh}.
\end{proof}
\begin{lem}[Covariant heat equations for $\psi_\al$]  \label{l:pah} 
 Let $\psi_t, \psi_a$ denote the representations of $\p_t u$ and $\p_a u$ in the caloric gauge $e$, where $a \in \{1, \dots, 4\}$ is a spatial index. Then $\psi_t$ satisfies the following linearized covariant heat equation 
\EQ{ \label{eq:pth}
(\rd_{s} - \bfD^{b} \bfD_{b}) \psi_{t} = \tensor{F}{_{t}^{b}} \psi_{b} = \bfR^{(0)}(\psi_{t}, \psi^{b}) \psi_{b}
}
and $\psi_a$ satisfies the tensorial covariant heat equation 
\EQ{ \label{eq:pxh}
(\rd_{s} - \bfD^{b} \bfD_{b} - 3) \psi_{a} = \tensor{F}{_{a}^{b}} \psi_{b} = \bfR^{(0)}(\psi_{a}, \psi^{b}) \psi_{b}
}
\end{lem}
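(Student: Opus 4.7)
\medskip

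\noindent\textbf{Proof proposal for Lemma~\ref{l:pah}.} Both equations will follow from the same basic computation, which proceeds in parallel with the derivation of \eqref{eq:wmp}--\eqref{eq:hw} in Lemma~\ref{l:wmp}. The plan is to exploit (i) the heat-temporal condition $A_{s} = 0$, so that $\bfD_{s} = \rd_{s}$, (ii) the torsion-free identity \eqref{eq:abba}, $\bfD_{\alp} \psi_{\bt} = \bfD_{\bt} \psi_{\alp}$, which is also what underlies \eqref{eq:Dsps}, and (iii) the fact that $\psi_{s} = \bfD^{b} \psi_{b}$ is the caloric-gauge expression of the harmonic map heat flow for $u(s,t,x)$. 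Combining these, for any space-time index $\alp$,
\begin{equation*}
\rd_{s} \psi_{\alp} = \bfD_{s} \psi_{\alp} = \bfD_{\alp} \psi_{s} = \bfD_{\alp} \bfD^{b} \psi_{b}.
\end{equation*}
The goal is then to commute $\bfD_{\alp}$ past $\bfD^{b}$ so that the principal part is $\bfD^{b} \bfD_{b} \psi_{\alp}$, using once more \eqref{eq:abba}:
\begin{equation*}
\bfD_{\alp} \bfD^{b} \psi_{b} = \bfD^{b} \bfD_{\alp} \psi_{b} + [\bfD_{\alp}, \bfD^{b}] \psi_{b} = \bfD^{b} \bfD_{b} \psi_{\alp} + [\bfD_{\alp}, \bfD^{b}] \psi_{b}.
\end{equation*}

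The task reduces to evaluating the commutator $[\bfD_{\alp}, \bfD^{b}] \psi_{b}$ via the general formula \eqref{eq:commDD}:
\begin{equation*}
[\bfD_{\alp}, \bfD_{c}] \psi_{b} = F_{\alp c} \psi_{b} + \tensor{\calR}{_{b}^{\gmm}_{\alp c}} \psi_{\gmm},
\end{equation*}
after which we contract with $\bfh^{bc}$. The curvature 2-form contribution gives
\begin{equation*}
\bfh^{bc} F_{\alp c} \psi_{b} = \tensor{F}{_{\alp}^{b}} \psi_{b} = \bfR^{(0)}(\psi_{\alp}, \psi^{b}) \psi_{b}
\end{equation*}
by \eqref{eq:Fdu}, which is exactly the right-hand side in both \eqref{eq:pth} and \eqref{eq:pxh}. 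The distinction between the two equations then lies entirely in the Ricci-type contraction $\bfh^{bc} \tensor{\calR}{_{b}^{\gmm}_{\alp c}} \psi_{\gmm}$.

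For $\alp = t$, the Riemann tensor $\calR$ on $\MM = \bbR \times \bbH^{4}$ vanishes whenever contracted with $\rd_{t}$ in any slot (as noted in Section~\ref{s:prelim}); hence the Ricci-type term drops out entirely and we recover \eqref{eq:pth}. For $\alp = a$ spatial, we compute on $\bbH^{4}$ using the constant sectional curvature formula $\calR_{abcd} = -(\bfh_{ac} \bfh_{bd} - \bfh_{ad} \bfh_{bc})$ (consistent with $\calR_{ab} = -(d-1)\bfh_{ab}$ from \eqref{Ricci formula}). A direct calculation then yields
\begin{equation*}
\bfh^{bc} \tensor{\calR}{_{b}^{\gmm}_{ac}} = (d-1) \de^{\gmm}_{a} = 3 \de^{\gmm}_{a},
\end{equation*}
so the Ricci contribution equals $+3 \psi_{a}$, producing the characteristic $-3$ shift on the left-hand side of \eqref{eq:pxh}. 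The main subtlety I anticipate is simply bookkeeping the signs and index conventions in this curvature contraction, since a small sign error would spoil the $d-1$ factor that produces the spectral shift; one must either invoke the pair-exchange and antisymmetry relations of $\calR_{abcd}$ carefully to reduce to the Ricci tensor, or just substitute the constant-curvature expression directly. Everything else is a direct quotation of \eqref{eq:commDD}, \eqref{eq:Fdu}, \eqref{eq:abba}, and \eqref{eq:Dsps}.
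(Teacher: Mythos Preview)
Your proposal is correct and follows essentially the same route as the paper's proof: compute $\rd_{s}\psi_{\alp} = \bfD_{\alp}\bfD^{b}\psi_{b}$, commute $\bfD_{\alp}$ past $\bfD^{b}$ via the torsion-free identity, and evaluate the resulting commutator using \eqref{eq:commDD} together with the vanishing of the domain curvature in any $t$-slot and the constant-curvature Ricci formula on $\bbH^{4}$. The only cosmetic difference is that the paper reduces the curvature contraction directly to the Ricci tensor $\calR_{ea} = -3\bfh_{ea}$ (after one antisymmetry flip), whereas you substitute the full sectional-curvature expression $\calR_{abcd} = -(\bfh_{ac}\bfh_{bd} - \bfh_{ad}\bfh_{bc})$ and contract; both yield the same $+3\psi_{a}$ term.
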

We remark that the linear part of \eqref{eq:pth} coincides with the scalar linear heat equation where $\psi_{t}$ is viewed as a scalar function on $\bbH^{4}$, whereas the linear part of \eqref{eq:pxh} is tensorial.
\begin{proof} 
With $\al  \in \{  0, 1,  \dots, 4\}$ we have 
\ant{
 \p_s \psi_\al &= \Db_s \psi_\al  = \Db_\al \psi_s =  \Db_\al  \Db^b \psi_b \\
 & = \Db^b  \Db_\al \psi_b + [\Db_\al, \Db_b] \h^{bc} \psi_c  \\
 & = \Db^b  \Db_b \psi_\al + [\Db_\al, \Db_b] \h^{bc} \psi_c  \\
 }
As in Lemma~\ref{l:psh}, the components of the domain curvature  tensor $\RR$ on $\R^+ \times \MM_I$   all vanish when $\al =0$. Hence, in that case we have
\ant{
[\Db_t, \Db_b] \h^{bc} \psi_c = F_t^{\, \, b} \psi_b
}
When $\al = a  \in \{1, \dots, 4\}$ we have 
\ant{
[\Db_a, \Db_b] \h^{bc} \psi_c &= F_a^{\, \, b} \psi_b - \RR^b_{ \, \, e ba} \h^{ec} \psi_c 
  = F_a^{\, \, b} \psi_b - \RR_{  e a} \h^{ec} \psi_c  \\
& = F_a^{\, \, b} \psi_b + 3 \h_{ae} \h^{ec} \psi_c  = F_a^{\, \, b} \psi_b + 3 \psi_a
}
which completes the proof. \qedhere
\end{proof} 

We conclude with identities regarding the spacetime and space divergences of $F$. These identities are useful for handling the last two terms on the right-hand side of \eqref{eq:hw}, as well as dealing with the term $\nb^{\alp} A_{\alp}$ that arise from expanding the covariant d'Alembertian $\bfD^{\alp} \bfD_{\alp}$.
\begin{lem}[Spacetime and space divergences of $F$]  \label{l:divFs}
Let $F_{\mathbf{a} \bt}$ denote a component of the curvature 2-form of a wave map in the caloric gauge $e$, where $\mathbf{a}$ denotes any one of the coordinates $(s, x^{\alp})$ and $\bt = 0, 1, \ldots, 4$. Then $F_{\mathbf{a} \bt}$ satisfies the following covariant divergence identities:
\begin{equation} \label{eq:divFs}
	\bfD^{\bt} F_{\mathbf{a} \bt} = \bfR^{(0)}(\bfD^{\bt} \psi_{\mathbf{a}}, \psi_{\bt}) + \bfR^{(0)}(\psi_{\mathbf{a}}, w) + \bfR^{(1)}(\psi^{\bt}; \psi_{\mathbf{a}}, \psi_{\bt})
\end{equation}
\begin{equation} \label{eq:sdivFs}
	\bfD^{b} F_{\mathbf{a} b} = \bfR^{(0)}(\bfD^{b} \psi_{\mathbf{a}}, \psi_{b}) + \bfR^{(0)}(\psi_{\mathbf{a}}, \psi_{s}) + \bfR^{(1)}(\psi^{b}; \psi_{\mathbf{a}}, \psi_{b})
\end{equation}
\end{lem}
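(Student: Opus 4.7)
\medskip

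The strategy is to apply the Leibniz-type differentiation formula \eqref{eq:DF} for the curvature 2-form directly, recognize the contracted derivatives of $\Psi$ as the tension fields, and collect terms. The only mild point is that \eqref{eq:Fdu} was stated for spacetime indices, while we need to cover the case $\mathbf{a}=s$ as well; this is harmless because the identity $F_{\mathbf{a}\mathbf{b}} = \bfR^{(0)}(\psi_{\mathbf{a}}, \psi_{\mathbf{b}})$ holds for all $\mathbf{a},\mathbf{b} \in \{s, 0, 1, \ldots, 4\}$ by the general relation \eqref{eq:curv-R} (and in the caloric case $\mathbf{a}=s$ is consistent with $\rd_s A_\beta = F_{s\beta}$ in \eqref{eq:psAF}, since $A_s=0$).

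To prove \eqref{eq:divFs}, I would start with $F_{\mathbf{a}\bt} = \bfR^{(0)}(\psi_{\mathbf{a}}, \psi_{\bt})$ and apply \eqref{eq:DF} with the differentiation direction $\mu$, obtaining
\[
	\bfD_{\mu} F_{\mathbf{a}\bt} = \bfR^{(1)}(\psi_{\mu}; \psi_{\mathbf{a}}, \psi_{\bt}) + \bfR^{(0)}(\bfD_{\mu} \psi_{\mathbf{a}}, \psi_{\bt}) + \bfR^{(0)}(\psi_{\mathbf{a}}, \bfD_{\mu} \psi_{\bt}).
\]
Contracting $\mu$ with $\bt$ using $\etab^{\mu\bt}$ gives
\[
	\bfD^{\bt} F_{\mathbf{a}\bt} = \bfR^{(1)}(\psi^{\bt}; \psi_{\mathbf{a}}, \psi_{\bt}) + \bfR^{(0)}(\bfD^{\bt} \psi_{\mathbf{a}}, \psi_{\bt}) + \bfR^{(0)}(\psi_{\mathbf{a}}, \bfD^{\bt} \psi_{\bt}).
\]
By the definition \eqref{eq:wtf} of the wave tension field, the trace in the last slot is $\bfD^{\bt}\psi_{\bt} = w$, and this yields precisely \eqref{eq:divFs}.

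The proof of \eqref{eq:sdivFs} is identical except that the contraction is restricted to the spatial indices using $\bfh^{\mu b}$ instead of $\etab^{\mu\bt}$. This time the last term on the right-hand side becomes $\bfR^{(0)}(\psi_{\mathbf{a}}, \bfD^{b} \psi_{b})$, and by the definition \eqref{eq:htf} of the heat tension field we have $\bfD^{b}\psi_{b} = \psi_{s}$, which closes the identity.

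There is no genuine obstacle here; the argument is purely algebraic and relies only on the differentiation rule \eqref{eq:DF} (itself a consequence of the Leibniz rule \eqref{eq:d-curv-leibniz} and the transport formula \eqref{eq:DR}) together with the definitions of $w$ and $\psi_{s}$. The only care needed is to verify that \eqref{eq:DF} applies uniformly whether $\mathbf{a}$ is a spacetime index or $\mathbf{a} = s$, which follows by regarding $(s, x^{\alp})$ as coordinates on $\bbR^{+}\times\MM_{I}$ and noting that the Leibniz rule and curvature computation in Section~\ref{s:curv} proceed in exactly the same way.
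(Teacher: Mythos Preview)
Your proof is correct and follows exactly the approach the paper takes: the paper's proof is the single line ``These identities follow from \eqref{eq:DF} and the definition of $w$ and $\psi_{s}$,'' and you have simply written out those details explicitly, including the harmless extension to the index $\mathbf{a}=s$.
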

\begin{proof} 
These identities follow from \eqref{eq:DF} and the definition of $w$ and $\psi_{s}$. \qedhere
\end{proof}

\section{Re-statement of the main theorem and outline of the strategy}\label{s:outline} 

Now that we have constructed the caloric gauge we can restate Theorem~\ref{t:main}, this time including  a priori estimates on dispersive norms of $\psi_s$. 

Let  $I \subset \R$ be a time interval and let $v = v(s, t, x)$ be a smooth function $v: \R^+ \times I \times \Hp^4  \to \R^n$.  For each $s>0$ define  
\EQ{
\| v(s) \|_{\mathcal{S}_s(I)} &:=   \| s^{\frac{1}{2}}v(s) \|_{ L^2_t(I;   L^8_x( \Hp^4; \R^n))} +  \|s^{\frac{1}{2}}(-\De)^{-\frac{1}{2}} \p_t v \|_{ L^2_t(I;   L^8_x( \Hp^4; \R^n))} \\
& \quad +   \| s^{\frac{1}{2}} \na_{t, x} v(s) \|_{ L^\I_t(I;   L^2_x( \Hp^4; \R^n))}. 
}
We then define the norm $\mathcal{S}(I)$ by 
\EQ{
\| v \|_{\mathcal{S}(I)} :=  \| v(s)\|_{\Ls^\I \cap \Ls^2 \mathcal{S}_s(I)}
}
where we have used the slight abuse of notation $L^{\infty}_{\ds} \cap L^{2}_{\ds} X := L^{\infty}_{\ds} \cap L^{2}_{\ds} (\bbR^{+}; X)$ for any space(-time) norm $X$, which will be in effect for the rest of this paper.

\begin{thm} \label{t:main1}  
Let $\NN$ be a complete $n$-dimensional manifold without boundary and with bounded geometry.  
There exists an $\eps_{0}>0$ small enough so that for all smooth  $(u_0, u_1)$ as in~\eqref{eq:data}, \eqref{eq:data1} with  
\EQ{ \label{eq:smalldata} 
\| (d u_0, u_1)\|_{H^1 \times H^1( \Hp^4; T\NN)} = \eps < \eps_{0}
}
there exists a unique, global smooth solution $u: \R \times \Hp^4 \to \NN \subset \R^N$ to~\eqref{wme} with initial data $(u, \p_t u) \!\! \restriction_{t=0}  = (u_0, u_1)$ and  satisfying 
\EQ{
\sup_{t \in \R} \| (du(t), \p_t u(t)) \|_{H^1 \times H^1( \Hp^4; T\NN)}  \lesssim \eps. 
}
Moreover we have the following dispersive properties of the global wave map $u$:  Let $u(s, t, x)$ denote the harmonic map heat flow resolution of $u(t,x)$,  let $e$ be the caloric gauge with limiting frame $e_\infty$, and define $\psi_s$  by $e\psi_s =  \p_s u$. Then, 
\begin{enumerate}

\item { \emph{A priori control of dispersive norms:}} $\psi_s$ satisfies the a priori estimates,  
\EQ{ \label{eq:psap} 
\| \psi_s \|_{\mathcal{S}(\R)} &\lesssim  \| s^{\frac{1}{2}} \na_{t, x}\psi_s \rest_{t = 0} \|_{\Ls^\I \cap \Ls^2 (L^2( \Hp^4; \R^n))} \\
& \lesssim  \| (d u_0, u_1)\|_{H^1 \times H^1( \Hp^4; T\NN)}  < \eps_{0}.
}
\item {\emph{Scattering:}}  For each fixed $s>0$  $\vec \psi_s:= ( \psi_s, \partial_t \psi_s)$ scatters to free waves as $t \to \pm \infty$, that is, for each fixed $s>0$ there exist $\R^n$-valued free waves $\phi_{  \pm}(s)$ such that  
\EQ{ \label{eq:pscat} 
 \| \na_{t, x} \psi_s(s, t) -   \na_{t, x}\phi_{ \pm}(s, t) \|_{ L^2(  \Hp^4; \R^n)} \to 0 \mas t \to \pm \infty.
 }
 \item{ \emph{Uniform control of higher derivatives:}} Higher regularity of the intial data  $\vec \psi_s(s, 0)$ is preserved by the flow. Moreover we have the following uniform estimates: 
 \EQ{ \label{eq:Depap} 
 \|   \De \psi_s \|_{ \cS( \R)} \lesssim \| (d u_0, u_1) \|_{ H^3 \times H^3( \Hp^4; T\NN)}
}
\item{\emph{Pointwise decay:}} We have the following qualitative pointwise decay: 
\EQ{ \label{eq:upwdec} 
\| u(t,  \cdot) - u_{\I} \|_{L^\infty_x} \to 0 \mas t \to \pm \infty.
}
\end{enumerate} 
\end{thm}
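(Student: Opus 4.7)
The plan is to prove Theorem~\ref{t:main1} by combining the caloric gauge framework developed in Section~\ref{s:caloric} with Strichartz estimates for the scalar d'Alembertian on $\bbH^{4}$ (Theorem~\ref{t:str}) in a continuity/bootstrap argument. The central object is the heat tension field $\psi_{s}$, which by Lemma~\ref{l:wmp} satisfies
\begin{equation*}
\bfD^{\alp}\bfD_{\alp}\psi_{s} = \rd_{s}w - \tensor{F}{_{s}^{\alp}}\psi_{\alp}.
\end{equation*}
Writing $\bfD_{\alp}=\rd_{\alp}+A_{\alp}$ and using Lemma~\ref{l:divFs} to handle $\nb^{\alp}A_{\alp}$, this becomes a \emph{scalar} wave equation $\Box\psi_{s}=\mathcal{N}(\psi_{s})$ on $\bbR\times\bbH^{4}$, with nonlinearity $\mathcal{N}$ schematically of the form $A\cdot\nb_{t,x}\psi_{s}+(\nb_{t,x}A+A\cdot A)\psi_{s}+\rd_{s}w-\bfR(\psi_{s},\psi^{\alp})\psi_{\alp}$. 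First I would set up the bootstrap: given a smooth wave map $u$ on a time interval $I\ni 0$ with data satisfying \eqref{eq:smalldata}, assume $\|\psi_{s}\|_{\mathcal{S}(I)}\leq 2C_{0}\eps$ and aim to improve this to $C_{0}\eps$. The hypotheses of Proposition~\ref{p:ap} then hold uniformly on $I$, so all $L^{2}$-parabolic bounds of Proposition~\ref{p:preg}, Lemma~\ref{l:A}, and Corollary~\ref{c:en} are at our disposal.

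Next I would run the Strichartz estimate for $\Box$ on $\bbH^{4}$ to obtain, for each fixed $s>0$,
\begin{equation*}
\|\psi_{s}(s)\|_{\mathcal{S}_{s}(I)}\lesssim\|s^{\frac{1}{2}}\nb_{t,x}\psi_{s}(s)\rest_{t=0}\|_{L^{2}}+\|s^{\frac{1}{2}}\mathcal{N}(s)\|_{L^{1}_{t}L^{2}_{x}(I\times\bbH^{4})}.
\end{equation*}
The crux is then to bound $\mathcal{N}$ term by term after taking $L^{\infty}_{\frac{ds}{s}}\cap L^{2}_{\frac{ds}{s}}$ in $s$. Here the key structural ingredients provided by the caloric gauge are the representation formulas \eqref{eq:AF} and \eqref{eq:paps}, which recover $A_{\alp}(s)$ and $\psi_{\alp}(s)$ by integration in $s'\in[s,\infty)$ of $F_{s\alp}$ and $\bfD_{\alp}\psi_{s}$ respectively. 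Combined with the pointwise bound $|F_{s\alp}|\lesssim|\psi_{s}||\psi_{\alp}|$ and the Schur-type arguments already used in the proof of Lemma~\ref{lem:ih} and Lemma~\ref{l:A}, every factor of $A$ or $\psi_{\alp}$ in $\mathcal{N}$ is converted into an $L^{2}_{\frac{ds}{s}}$ gain of $s\psi_{s}$. Using H\"older (one slot in $L^{2}_{t}L^{8}_{x}$, one slot in $L^{\infty}_{t}L^{8/3}_{x}$ or $L^{\infty}_{t}L^{2}_{x}$ with Sobolev embedding on $\bbH^{4}$), I would show
\begin{equation*}
\|s^{\frac{1}{2}}\mathcal{N}(s)\|_{\Ls^{\infty}\cap\Ls^{2}L^{1}_{t}L^{2}_{x}}\lesssim\|\psi_{s}\|_{\mathcal{S}(I)}^{2}+\|\psi_{s}\|_{\mathcal{S}(I)}^{3}+\|\psi_{s}\|_{\mathcal{S}(I)}^{5}.
\end{equation*}
The contribution of $\rd_{s}w$ is handled separately: $w$ solves the scalar inhomogeneous heat equation \eqref{eq:hw} with $w(0,t,x)=0$, and the right-hand side of \eqref{eq:hw} consists of quadratic-or-higher expressions in $\psi_{\alp}$ and $F$ that, by the same recovery formulas, are controlled by the $\mathcal{S}(I)$-norm of $\psi_{s}$. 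Applying Lemma~\ref{lem:ih} fiberwise in $t$ and then integrating in $t$ will bound $\|s^{3/2}\rd_{s}w\|_{\Ls^{\infty}\cap\Ls^{2}L^{1}_{t}L^{2}_{x}}$ by the same multilinear expression in $\|\psi_{s}\|_{\mathcal{S}(I)}$.

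Putting these bounds together yields $\|\psi_{s}\|_{\mathcal{S}(I)}\leq C\eps+C\|\psi_{s}\|_{\mathcal{S}(I)}^{2}$, which closes the bootstrap for $\eps_{0}$ small. Once this a priori estimate \eqref{eq:psap} is in hand, global existence follows by a standard continuation argument: the higher regularity bounds from Lemma~\ref{l:hrhf} combined with the analogous Strichartz bootstrap for $\Dlt\psi_{s}$ propagate $H^{3}\times H^{3}$ regularity of the data and furnish \eqref{eq:Depap}, preventing blow-up of any smooth norm in finite time. Scattering \eqref{eq:pscat} follows from \eqref{eq:psap} by the usual Cauchy-sequence argument: for each fixed $s>0$, the finiteness of $\|\psi_{s}(s)\|_{L^{2}_{t}L^{8}_{x}(\bbR)}$ together with the Duhamel representation and the Strichartz bound on $\mathcal{N}$ imply convergence of $\vec{\psi}_{s}(s,t)-\vec{\phi}(s,t)$ as $t\to\pm\infty$ for some free wave $\phi_{\pm}(s)$. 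Finally, the pointwise decay \eqref{eq:upwdec} is obtained by integrating $\rd_{t}u=e\psi_{t}$ from a large time, using the recovery \eqref{eq:paps} at $s=0$ together with \eqref{eq:pscat} and Sobolev embedding $H^{2}(\bbH^{4})\hookrightarrow L^{\infty}$; the free-wave part decays qualitatively in $L^{\infty}$ by dispersion on $\bbH^{4}$. The main technical obstacle I anticipate is the careful bookkeeping required in the multilinear estimate for $\mathcal{N}$ at threshold regularity — in particular, exploiting the fact that in the caloric gauge $A_{\alp}(s)$ depends only on $\psi_{s}(s')$ for $s'\geq s$, so that the worst high-high to low interaction is absent and a clean Schur-test argument suffices in place of the refined Strichartz estimates needed in the Coulomb gauge approach of \cite{SS02,NSU}.
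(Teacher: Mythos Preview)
Your overall strategy---caloric gauge, scalar wave equation for $\psi_{s}$, Strichartz bootstrap---matches the paper's, and your treatment of the nonlinearity and of $\partial_{s}w$ is essentially the paper's Sections~\ref{s:pr}--\ref{s:wave}. Two steps, however, are either missing or diverge in a way that does not obviously work.

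First, the paper runs two nested bootstraps: one on $\sup_{t}\|(du,\partial_{t}u)\|_{H^{1}\times H^{1}}$ (hypothesis~\eqref{eq:bs}, which is what makes the caloric gauge and all parabolic estimates of Section~\ref{s:cg} available) and one on $\|\psi_{s}\|_{\mathcal{S}(I)}$ (hypothesis~\eqref{eq:bsp}). Closing the latter is Proposition~\ref{p:pSI}, which your outline covers. But closing the former requires the additional Proposition~\ref{p:s=0}: one must show $\|\bfD\Psi\rest_{s=0}\|_{L^{\infty}_{t}L^{2}_{x}}\lesssim\|\psi_{s}\|_{\mathcal{S}(I)}$, which the paper obtains via an energy identity for the \emph{tensorial} covariant heat equation~\eqref{eq:covheat} satisfied by $\psi_{\alpha}$, integrated from $s=\infty$. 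You assert that ``the hypotheses of Proposition~\ref{p:ap} then hold uniformly on $I$'' from the $\mathcal{S}$-bootstrap alone; this is circular, since Proposition~\ref{p:ap} is a conclusion, and it does not follow without the argument of Proposition~\ref{p:s=0}.

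Second, your pointwise decay argument differs from the paper's and is not clearly workable as stated. You propose integrating $\partial_{t}u=e\psi_{t}$ in $t$, which would require $L^{\infty}_{x}$-decay of $\psi_{t}\rest_{s=0}$; but the scattering statement~\eqref{eq:pscat} concerns $\psi_{s}$ at \emph{fixed} $s>0$, and $\psi_{t}\rest_{s=0}$ is only recovered via~\eqref{eq:paps} as an integral over all $s\in(0,\infty)$ that does not obviously inherit that decay. The paper instead writes $u(t,x)-u_{\infty}=-\int_{0}^{\infty}\partial_{s}u(s,t,x)\,ds$ and applies dominated convergence in $s$: for each fixed $s$, $\|\psi_{s}(s,t)\|_{L^{\infty}_{x}}\to 0$ as $t\to\infty$ by~\eqref{eq:pscat} and dispersion (Proposition~\ref{p:dis}); the integrable dominator near $s=0$ is $\|\psi_{s}(s)\|_{L^{\infty}_{t,x}}\lesssim s^{-3/4}$, obtained from Gagliardo--Nirenberg and the higher-regularity bound~\eqref{eq:Depap}. (Minor: the weight on $\partial_{s}w$ should be $s^{1/2}$, not $s^{3/2}$; see~\eqref{eq:w12}.)
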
 


We will now continue the proof of Theorem~\ref{t:main1} that we began in Section~\ref{s:cg}. Indeed, our first main goal is still to establish the a priori estimates in Proposition~\ref{p:ap}. However, in order to prove Proposition~\ref{p:ap} we will first deduce~\eqref{eq:psap} under the bootstrap assumption~\eqref{eq:bs}. 

To prove~\eqref{eq:psap} we will study the Cauchy problem for the wave equation for $\psi_s$ from Lemma~\ref{l:wmp}. 
By expanding out the left-hand side of \eqref{eq:wmp}, $\psi_s$ satisfies 
\EQ{ \label{eq:psis} 
 \Box_{\Hp^4} \psi_s  &= \p_s w -  \etab^{\al \be} F_{s \al} \psi_\be   - 2 \etab^{\al \be} A_\al \nb_\be \psi_s  \\
 & \quad  -  \etab^{\al \be} A_\al A_\be \psi_s - \etab^{\al \be} (\na_\al A)_\be \psi_s \\
   \vec \psi(s,0) &= ( \psi_s(s, 0), \p_t \psi_s(s, 0))
} 
where $w(s, t, x)$ solves ~\eqref{eq:hw}. We remark that  by Proposition~\ref{p:en} (in particular~\eqref{eq:eqns} restricted to $ \{ t= 0\}$) and Proposition~\ref{p:preg}, the small data assumption~\eqref{eq:smalldata} implies that 
\EQ{
 \| s^{\frac{1}{2}} \na_{t, x}\psi_s \rest_{t = 0} \|_{\Ls^\I \cap \Ls^2 (L^2( \Hp^4; \R^n))}  \lesssim  \| (d u_0, u_1)\|_{H^1 \times H^1( \Hp^4; T\NN)}  \lesssim \eps_{1}.
 }
Hence the proper setting for studying~\eqref{eq:psis} is that of the Cauchy problem for initial data with \emph{small critical Sobolev norm}. One of the key tools here is of course Strichartz estimates for the 
wave equation on $\Hp^4$ established by Anker and Pierfelice~\cite{AP14}; see also~\cite{MT11, MTay12}. In what follows, we will refer to a triple $(p, q, \gamma)$ as admissible  if 
\ant{
 \frac{1}{p} +  \frac{d}{q} = \frac{d}{2} - \gamma, \mand  \, \, \frac{1}{p}  + \frac{d-1}{2q} \le \frac{d-1}{4},  \mand \, \, p \ge 2, q \geq 2,
}
and if $d=3$ then $(p, q, \gamma)  \neq (2, \infty, 1)$. Note that we also allow the energy estimates $(p, q, \gamma) = ( \infty, 2, 0)$. 
\begin{thm}\emph{\cite{AP14, MT11, MTay12}} \label{t:str}
Let $J \subset \R$ be a time interval and let $v: I \times \Hp^d \to \R$,  be a solution to the inhomogenous wave equation, \EQ{
&\Box_{\Hp^d} v = G, \quad  \vec v(0) = (v_0, v_1)
}
and let $(p, q, \ga)$ be an admissible triple. Then, 
\EQ{
 \|   \na_{t, x} v \|_{L^{p}_t(J; W^{-\gamma, q}(\Hp^d))} \lesssim \|(v_0, v_1) \|_{H^1 \times L^2( \Hp^4)} + \|G \|_{L^1_t(J;  L^2_x( \Hp^d))}
}
\end{thm}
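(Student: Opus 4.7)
The plan is to follow the classical Keel--Tao framework, appropriately adapted to the geometry of $\Hp^d$. First, using Duhamel's formula
\[
v(t) = \cos(t\sqrt{-\Delta}) v_0 + \frac{\sin(t\sqrt{-\Delta})}{\sqrt{-\Delta}} v_1 + \int_0^t \frac{\sin((t-s)\sqrt{-\Delta})}{\sqrt{-\Delta}} G(s) \, ds,
\]
together with the Christ--Kiselev lemma for non-endpoint admissible triples and Minkowski in the remaining endpoint cases, I would reduce the inhomogeneous estimate to the homogeneous Strichartz bound for the half-wave propagators $e^{\pm i t \sqrt{-\Delta}}$ applied to data in $\dot H^1 \times L^2$. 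By the Littlewood--Paley theory on $\Hp^d$ (e.g.\ via the heat semigroup as in Section~\ref{s:fs}), it then suffices to prove frequency-localized estimates at dyadic frequency $2^k$ and sum.

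Second, the central step would be to establish the frequency-localized dispersive estimate
\[
\| \chi(2^{-k}\sqrt{-\Delta}) e^{it\sqrt{-\Delta}} f \|_{L^\infty(\Hp^d)}
\lesssim 2^{k \tfrac{d+1}{2}} (1+|t|)^{-\tfrac{d-1}{2}} e^{-\rho(q)|t|} \| f \|_{L^1(\Hp^d)},
\]
with an analogous bound for low frequencies, via a careful stationary phase analysis of the wave kernel expressed through the Helgason--Fourier transform. The important point, and the source of the improved admissibility range compared to Euclidean space, is the exponential decay factor coming from the spectral gap of $-\Delta$ on $\Hp^d$, equal to $(d-1)^2/4$ by Lemma~\ref{l:pi}. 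This exponential decay is what removes the usual scaling constraint and allows a larger set of admissible exponents.

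Third, I would feed the resulting family of dispersive estimates, paired with the trivial $L^2$ energy bound, into the Keel--Tao $TT^*$ machinery to obtain the frequency-localized Strichartz estimate. Square function estimates in $L^q(\Hp^d)$ (which hold in the same range $1 < q < \infty$ as in the Euclidean case, via Littlewood--Paley theory based on the heat flow) allow one to reassemble the dyadic pieces, and the derivative loss $\gamma$ arises naturally from keeping track of scaling factors across frequencies.

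The main obstacle will be the stationary phase analysis for the wave kernel on $\Hp^d$: one has to handle the spherical-function representation of the propagator uniformly in the frequency parameter, tracking the transition between oscillatory decay at high frequencies and exponential decay at low frequencies. The excluded case $(p,q,\gamma) = (2,\infty,1)$ in $d = 3$ mirrors the known Euclidean obstruction to the double endpoint and is unavoidable. A more efficient route, given the citations \cite{AP14,MT11,MTay12}, is simply to quote the relevant theorems there, which implement exactly this scheme; Anker--Pierfelice provide the global-in-time Strichartz estimates and Metcalfe--Tataru, Metcalfe--Taylor give the variable-coefficient refinements needed in the more general geometric settings.
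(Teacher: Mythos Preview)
The paper does not prove this theorem at all: it is stated as a citation to \cite{AP14, MT11, MTay12} and used as a black box. Your sketch of the Keel--Tao/dispersive-estimate mechanism is a reasonable outline of what those references actually do, and your closing remark that one can simply quote them is exactly what the paper does; so your proposal is correct, just more elaborate than required.
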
 

\begin{rem} We note that the Strichartz estimates  above are a consequence of the dispersive estimates proved in~\cite{AP14, MTay12}, which we will also use in the proof of the pointwise decay estimate~\eqref{eq:upwdec} in Theorem~\ref{t:main1}. 
\begin{prop} \label{p:dis}{ \emph{ \cite{AP14, MTay12}}}
Fix  $d \ge 3$,  $q \in (2, \infty)$, and set $\s = (d+1)( \frac{1}{2} - \frac{1}{q} )$. 
\begin{itemize}
\item For $0 <  |t| \le 2$, the following short-time dispersive estimate holds:
\EQ{ \label{eq:shortd}
	\|e^{\pm i t \sqrt{-\De_{\Hp^{d}}}}f \| _{L^{q}(\Hp^{d})} \lesssim_q \abs{t}^{-(d-1) (\frac{1}{2} - \frac{1}{q})} \|f\|_{W^{\s, q'}(\Hp^{d})}
}
\item For $\abs{t} \geq 2$, the following long-time dispersive estimate holds: 
\EQ{ \label{eq:longd}
	\|e^{\pm i t \sqrt{-\De_{\Hp^{d}}}}f\|_{L^q(\Hp^{d})} \lesssim_q \abs{t}^{-\frac{3}{2}} \|f\|_{W^{\s, q'}(\Hp^{d})}
}
\end{itemize}
\end{prop}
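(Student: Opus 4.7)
The plan is to obtain the dispersive estimates by a direct analysis of the half-wave kernel on $\Hp^d$ using spherical (Helgason) Fourier analysis, following the strategy of Anker--Pierfelice and Metcalfe--Taylor. Since the operators $e^{\pm i t \sqrt{-\Delta_{\Hp^d}}}$ commute with isometries, they are radial convolution operators, and the point is to establish pointwise bounds on their convolution kernels $K_{t}^{\pm}(r)$. Using spherical Fourier inversion, one writes
\begin{equation*}
K_{t}^{\pm}(r) = c_{d} \int_{0}^{\infty} e^{\pm i t \sqrt{\lambda^{2} + \rho^{2}}}\, \Phi_{\lambda}(r) \, \abs{c(\lambda)}^{-2} \, d\lambda,
\end{equation*}
where $\rho = \frac{d-1}{2}$, $\Phi_{\lambda}$ is the Harish-Chandra spherical function, $c(\lambda)$ is the Harish-Chandra $c$-function, and the shift by $\rho^{2}$ in the phase encodes the spectral gap of $-\Delta_{\Hp^d}$. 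The estimates then follow by $TT^{\ast}$ / interpolation from sharp $L^{1} \to L^{\infty}$ bounds on $K_t^\pm$ after a suitable Sobolev correction.

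\textbf{Short-time regime $0<\abs{t}\le 2$.} First I would reduce to this regime by finite propagation speed, which localizes to balls of bounded geodesic radius where the geometry is quantitatively comparable to the Euclidean one. One then splits the $\lambda$-integral dyadically. For high frequencies $\lambda \gtrsim 1$, both $\Phi_\lambda(r)$ and $\abs{c(\lambda)}^{-2}$ have Euclidean-type behavior, and a classical stationary phase / oscillatory integral analysis (in the spirit of Peral and Miyachi for the Euclidean wave equation) yields the decay $\abs{t}^{-(d-1)(\frac12 - \frac1q)}$ together with the Sobolev loss $\sigma = (d+1)(\frac12 - \frac1q)$ corresponding to the order of the half-wave operator. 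For low frequencies the kernel is smooth and contributes a bounded piece which is absorbed into the main bound.

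\textbf{Long-time regime $\abs{t}\ge 2$.} This is where the hyperbolic geometry gives something strictly stronger than in the Euclidean case, and where the heart of the argument lies. Key is that because of the spectral gap, the phase $\Phi(\lambda) = \sqrt{\lambda^{2} + \rho^{2}}$ is real-analytic and uniformly nondegenerate \emph{including near $\lambda = 0$}, with $\Phi''(\lambda) = \rho^{2}/(\lambda^{2}+\rho^{2})^{3/2}$ never vanishing. Combining this with the precise asymptotic expansions of $\Phi_{\lambda}(r)$ and $\abs{c(\lambda)}^{-2}$ (both governed by the Gindikin--Karpelevich formula), one performs a stationary phase analysis in $\lambda$ uniformly in the radial variable. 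The universal factor $\abs{t}^{-3/2}$ arises from a van der Corput / second-derivative bound for the phase $\Phi$, and is dimension-independent because the extra radial decay needed to reach dispersion in higher dimensions is already built into the exponential decay $\Phi_{\lambda}(r) \sim e^{-\rho r}$ and the factor $e^{-\rho r}$ in the Plancherel measure's radial weight.

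\textbf{Main obstacle.} The delicate point is the transition region where $\lambda r \sim 1$ and the integrand switches from Bessel-type oscillation to exponentially decaying behavior; here one must patch the two asymptotic regimes without losing the uniform stationary phase bound. Concretely, I would follow the Anker--Pierfelice decomposition of the spectral variable into regions $\lambda \le 1$ and $\lambda \ge 1$, use Harish-Chandra's integral representation of $\Phi_{\lambda}$ in the low-frequency region to gain uniform control, and apply a Barcelo-type stationary phase argument in the high-frequency region, together with the sharp asymptotics of $\abs{c(\lambda)}^{-2}$. Once the $L^{1}\to L^{\infty}$ kernel bound with the claimed $\abs{t}^{-(d-1)(\frac12 - \frac1q)}$ (short time) and $\abs{t}^{-3/2}$ (long time) decay is in hand, Stein's complex interpolation between $q=2$ (trivial from spectral calculus) and $q=\infty$ produces the stated estimates for all $2 < q < \infty$ and for the $W^{\sigma, q'}$-regularity required.
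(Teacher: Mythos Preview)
The paper does not prove this proposition at all: it is stated as a quotation of results from \cite{AP14, MTay12} and no argument is given beyond the citation. Your sketch is a reasonable outline of the Anker--Pierfelice / Metcalfe--Taylor approach that those references carry out, so in that sense it is consistent with what the paper defers to; but there is nothing in the paper's own text to compare against.
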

\end{rem}

We apply Theorem~\ref{t:str} to~\eqref{eq:psis}.  Multiplying~\eqref{eq:psis} by $s^{\frac{1}{2}}$ and  using~\eqref{t:str} with $d =4$ and $(p_1, q_1, \gamma_1) = (2, 8, 1)$ and $(p_2, q_2, \gamma_2) = ( \infty, 2, 0)$,  we deduce that for all $s >0$,  
\EQ{
\| \psi_s(s) \|_{\mathcal{S}_s(I)} &\lesssim  \| s^{\frac{1}{2}} \na_{t, x}\psi_s \rest_{t = 0} \|_{ L^2_x} +  \| s^{\frac{1}{2}} \p_s w\|_{L^1_t L^2_x}+ \|  s^{\frac{1}{2}} \tensor{F}{_{s }^{\alp}} \psi_\al\|_{L^1_t L^2_x}  \\
& \,  + \|  s^{\frac{1}{2}}\na^\al A_\al \psi_s \|_{L^1_t L^2_x} + \|s^{\frac{1}{2}}A^\al  \nb_\al \psi_s\|_{L^1_t L^2_x} + \|s^{\frac{1}{2}}A^\al A_\al  \psi_s\|_{L^1_t L^2_x} 
}
Taking the $\Ls^\infty \cap \Ls^2$ norm of both sides yields, 
\EQ{ \label{eq:psstr} 
\| \psi_s \|_{\mathcal{S}(I)} &\lesssim  \| s^{\frac{1}{2}} \na_{t, x}\psi_s \rest_{t = 0} \|_{\Ls^\infty \cap \Ls^2 (L^2_x)} +  \| s^{\frac{1}{2}} \p_s w\|_{\Ls^\infty \cap \Ls^2 (L^1_t L^2_x)} \\
& \quad + \|  s^{\frac{1}{2}} \tensor{F}{_{s}^{\alp}} \psi_\al\|_{\Ls^\infty \cap \Ls^2 (L^1_t L^2_x)}    + \|  s^{\frac{1}{2}}\na^\al A_\al \psi_s \|_{\Ls^\infty \cap \Ls^2(L^1_t L^2_x)} \\
&\quad  + \|s^{\frac{1}{2}}A^\al  \nb_\al \psi_s\|_{\Ls^\infty \cap \Ls^2 (L^1_t L^2_x)} + \|s^{\frac{1}{2}}A^\al A_\al  \psi_s\|_{\Ls^\infty \cap \Ls^2 (L^1_t L^2_x)} 
}
The next order of business would be to control the terms on the  right-hand side above, under an additional bootstrap assumption that 
\EQ{ \label{eq:bsp}
\| \psi_s \|_{\mathcal{S}(I)} \leq 2 C_{1} \eps
}
for a large constant $C_{1}$ to be determined later. As in the case of the first bootstrap constant $C_{0}$, in what follows we suppress the dependence of constants on $C_{1}$. This convention is justified by the fact that $C_{1}$ is always accompanied by a factor of $\eps$.

Before directly addressing the terms on the right-hand side of~\eqref{eq:psstr} we first use~\eqref{eq:bsp} to establish a wider class of estimates. In particular, we use the fact that for each fixed $t \in I$, $\psi_s$ satisfies the~\emph{parabolic} equation~\eqref{eq:psh} to deduce  appropriately $s$-weighted estimates on the $L^2_t L^8_x$ and $L^\I_t L^2_x$ norms of higher derivatives of $\psi_s$.  This is accomplished in the next section. The rough outline of the remainder of the paper is as follows: 
\begin{itemize}
\item In Section~\ref{s:pr} we use the parabolic equation satisfied by $\psi_s$ to control higher derivatives of $\psi_s$ and $(-\De)^{-\frac{1}{2}} \p_t \psi_s$  in $\Ls^\I \cap \Ls^2 L^2_t L^8_x$ and of $\na_{t, x} \psi_s$ in $\Ls^\I \cap \Ls^2 L^\I_t L^2_x$  (with appropriate $s$-weights) by $\|\psi_s\|_{\cS(I)}$.  Here we directly invoke the $L^p_x$ estimates for heat semi-group on $\Hp^4$ from Lemma~\ref{l:hk}. We then  establish a wider class of estimates for $\psi_s$, $\psi_\al$, and  $A_\al$ that will be needed to close the a priori estimates~\eqref{eq:psap} in the next section.  
\item In Section~\ref{s:wave} we close the a priori estimates~\eqref{eq:psap} under the bootstrap assumption~\eqref{eq:bs}. A significant part  of the section is devoted to controlling the terms  $\|s^{\frac{1}{2}} \p_s w\|_{\Ls^\I \cap \Ls^2 L^1_t L^2_x}$  and $\|  s^{\frac{1}{2}}\na^\al A_\al \psi_s \|_{\Ls^\infty \cap \Ls^2 L^1_t L^2_x}$ from~\eqref{eq:psstr}. Here we will make use of the parabolic equation~\eqref{eq:hw} satisfied by $w$, again directly invoking properties of the heat semi-group from  Lemma~\ref{l:hk}. 
\item Finally, in Section~\ref{s:proof} we  complete the proofs of Proposition~\ref{p:ap} and Theorem~\ref{t:main1}. 
\end{itemize}

\section{Parabolic regularity for $\psi_s$}\label{s:pr} 
In this section we establish $L^p$-type parabolic regularity estimates for $\psi_s$ under the bootstrap assumption~\eqref{eq:bsp} by directly invoking properties of the heat semigroup $e^{s \De}$ from Lemma~\ref{l:hk}. In Section~\ref{s:wide} we then establish a wider collection of estimates for $\psi_s$, $\Psi$, and $A$ that will be needed in the sequel.


\begin{prop} \label{p:preg8}
Let $\psi_s$ be the heat tension field as in~\eqref{eq:htf}. Suppose that the bootstrap assumptions \eqref{eq:bs} and \eqref{eq:bsp} hold.
Then, 
\EQ{ \label{eq:preg8}
& \| s \na \na_{t, x} \psi_s \|_{\Ls^\I \cap \Ls^2 L^\infty_t  L^2_x}+ \|s \na_{t, x} \psi_s\|_{\Ls^\I \cap \Ls^2 L^2_t  L^8_x} \lesssim \| \psi_s \|_{\mathcal{S}(I)}, \\
&\| s^{\frac{3}{2}} \na^2 \na_{t, x} \psi_s \|_{\Ls^\I \cap \Ls^2 L^\infty_t  L^2_x} + \| s^{\frac{3}{2}} \na  \na_{t, x}  \psi_s \|_{\Ls^\I \cap \Ls^2 L^2_t  L^8_x} \lesssim \| \psi_s \|_{\mathcal{S}(I)},
}
where the implicit constants do not depend on $C_0$ in \eqref{eq:bs} and $C_{1}$ in \eqref{eq:bsp}.
\end{prop}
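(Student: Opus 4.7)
The plan is to treat $\psi_{s}$ as a solution to the scalar heat equation with nonlinear forcing, and then apply parabolic smoothing estimates based on Lemma~\ref{l:hk} combined with the existing control on $A$ from Lemma~\ref{l:A}. Concretely, from Lemma~\ref{l:psh}, the expansion $\bfD^{b}\bfD_{b} = \Delta + 2A^{b}\nabla_{b} + (\nabla^{b}A_{b}) + A^{b}A_{b}$ gives
\[
\rd_{s}\psi_{s} - \Delta\psi_{s} = G, \qquad G := 2A^{b}\nabla_{b}\psi_{s} + (\nabla^{b}A_{b})\psi_{s} + A^{b}A_{b}\psi_{s} + F_{s}{}^{b}\psi_{b}.
\]
For each $s>0$, I would apply Duhamel's formula on the dyadic interval $[s/2,s]$:
\[
\psi_{s}(s) = e^{(s/2)\Delta}\psi_{s}(s/2) + \int_{s/2}^{s} e^{(s-s')\Delta} G(s')\,ds'.
\]

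For the homogeneous piece, I would use the $L^{p}$ heat semigroup bounds of Lemma~\ref{l:hk} to convert spatial derivatives into powers of $s^{-1/2}$: for instance, $\|s\nabla\nabla e^{(s/2)\Delta}\psi_{s}(s/2)\|_{L^{\infty}_{t}L^{2}_{x}} \lesssim s^{1/2}\|\nabla_{t,x}\psi_{s}(s/2)\|_{L^{\infty}_{t}L^{2}_{x}}$, and $\|s\nabla e^{(s/2)\Delta}\psi_{s}(s/2)\|_{L^{2}_{t}L^{8}_{x}} \lesssim s^{1/2}\|\psi_{s}(s/2)\|_{L^{2}_{t}L^{8}_{x}}$. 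Since $\p_{t}$ commutes with the spatial semigroup, the $\p_{t}$ components of $\nabla_{t,x}$ are handled by rewriting $\p_{t} = (-\Delta)^{1/2}\cdot(-\Delta)^{-1/2}\p_{t}$ and using Riesz bounds together with $\|e^{s\Delta}(-\Delta)^{1/2}\|_{L^{p}\to L^{p}}\lesssim s^{-1/2}$. After a trivial change of variables $s\mapsto s/2$ in the $L^{\infty}_{ds/s}\cap L^{2}_{ds/s}$ norm, each of these contributions is bounded by $\|\psi_{s}\|_{\mathcal{S}(I)}$.

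For the inhomogeneous piece, the key is to control $G$ in suitable mixed norms using the bootstrap hypothesis \eqref{eq:bsp} together with Lemma~\ref{l:A}. Schematically, using $\|A\|_{L^{\infty}_{t}L^{4}_{x}}\lesssim\e^{2}$, $\|\nabla_{t,x}A\|_{L^{\infty}_{t}L^{2}_{x}}\lesssim\e^{2}$, and the pointwise identity $|F_{s}|\lesssim|\psi_{s}||\Psi|$, I would estimate $G$ in $L^{1}_{t}L^{2}_{x}$ or $L^{2}_{t}L^{8/3}_{x}$-type norms with appropriate $s$-weights; the Duhamel integral is then handled by Minkowski in time, the $L^{p}$ bounds of Lemma~\ref{l:hk} in space (noting $s-s'\aeq s$ on $[s/2,s]$), and a Schur-test argument in $s$ analogous to that in the proof of Lemma~\ref{lem:ih} to close the $L^{2}_{ds/s}$ component. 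The second pair of estimates (with $s^{3/2}$ weights) is obtained by the same Duhamel strategy but using the first pair of estimates as additional input to control $G$ at one extra order of regularity.

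The main obstacle will be controlling the forcing term $A^{b}\nabla_{b}\psi_{s}$, since it involves a full spatial derivative of $\psi_{s}$ at the same heat-time; care is needed so that this derivative is paid for by the parabolic smoothing of the semigroup rather than by additional regularity of $\psi_{s}$, and so that the $s$-weights balance after integration. A secondary technical point is the bookkeeping required to make sure that the factor $\e^{2}$ coming from $A$ allows one to absorb potential borderline contributions without invoking the bootstrap constant $C_{1}$ (which is why the proposition is stated with constants independent of $C_{1}$). Once $G$ is estimated and the Duhamel integral is performed, both the first-order $(s\nabla\nabla_{t,x}\psi_{s},\,s\nabla_{t,x}\psi_{s})$ estimates and, by iteration, the second-order $(s^{3/2}\nabla^{2}\nabla_{t,x}\psi_{s},\,s^{3/2}\nabla\nabla_{t,x}\psi_{s})$ estimates follow.
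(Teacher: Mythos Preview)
Your overall strategy---Duhamel on $[s/2,s]$, heat semigroup smoothing from Lemma~\ref{l:hk}, a Schur-type argument in $s$, absorption of the $A\nabla\psi_{s}$ contribution via $\eps$-smallness, and iteration for the $s^{3/2}$ line---is exactly the paper's approach. Two technical points in your sketch would not close as written and should be adjusted.

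First, the norms you propose for the forcing ($L^{1}_{t}L^{2}_{x}$ or $L^{2}_{t}L^{8/3}_{x}$) are not what works here. Lemma~\ref{l:hk} gives only $L^{p}\to L^{p}$ bounds for the heat semigroup, not $L^{p}\to L^{q}$ smoothing, so to land in $L^{2}_{t}L^{8}_{x}$ you must estimate $G$ (and $\nabla_{t}G$) directly in $L^{2}_{t}L^{8}_{x}$ with weight $s^{3/2}$ (resp.\ $s^{2}$); similarly the $L^{\infty}_{t}L^{2}_{x}$ target requires $\nabla_{t,x}G$ in $L^{\infty}_{t}L^{2}_{x}$ with weight $s^{3/2}$. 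This is what the paper does in \eqref{eq:s32G1}.

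Second, the $L^{4}_{x}$ bound on $A$ from Lemma~\ref{l:A} alone is not enough to place $A\nabla\psi_{s}$ or $(\nabla^{b}A_{b})\psi_{s}$ in $L^{8}_{x}$; you need the $L^{\infty}_{x}$ bounds $\|s^{1/2}A\|_{L^{\infty}_{s,t,x}}$, $\|s\,\nabla A\|_{L^{\infty}_{s,t,x}}\lesssim\eps^{2}$ (and the analogous bound $\|s^{1/2}\Psi\|_{L^{\infty}_{s,t,x}}\lesssim\eps$) recorded in Lemma~\ref{l:PA}. With these, the term $A^{b}\nabla_{b}\psi_{s}$ is handled not by semigroup smoothing but by direct absorption:
\[
\|s^{3/2}A\nabla\psi_{s}\|_{L^{2}_{t}L^{8}_{x}}\lesssim \|s^{1/2}A\|_{L^{\infty}_{t,x}}\|s\,\nabla\psi_{s}\|_{L^{2}_{t}L^{8}_{x}}\lesssim \eps^{2}\,\|s\,\nabla\psi_{s}\|_{L^{2}_{t}L^{8}_{x}},
\]
and the right-hand side is exactly one of the quantities being bounded on the left. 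Once you make these two corrections, your outline matches the paper's proof.
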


The proof of Proposition~\ref{p:preg8}  and the arguments that follow require the following lemma, which is a simple consequence of Proposition~\ref{p:preg} and Lemma~\ref{l:A}. 
\begin{lem} \label{l:PA}
Assume that the bootstrap assumptions~\eqref{eq:bs} and \eqref{eq:bsp} hold. Then,  
\EQ{ \label{eq:painf} 
&\| s^{\frac{1}{2}} \Psi  \|_{\Ls^\I   L^{\infty}_t L^{\infty}_x} +    \| s\na  \Psi  \|_{\Ls^\I L^{\infty}_t L^{\infty}_x} +   \| s\Db  \Psi  \|_{\Ls^\I L^{\infty}_t L^{\infty}_x} \lesssim  \eps,  \\ 
&  \| s^{\frac{1}{2}} \Psi  \|_{L^\I_t \Ls^2   L^{\infty}_x}+ \| s\na  \Psi  \|_{L^{\infty}_t\Ls^2  L^{\infty}_x}+  \| s\Db  \Psi  \|_{ L^{\infty}_t \Ls^2 L^{\infty}_x} \lesssim  \eps,  
}
and
\EQ{
& \| s^{\frac{3}{8}}  \Db  \Psi \|_{ \Ls^\I L^\I_t L_{x}^{\frac{16}{5}}} \simeq \| s^{\frac{3}{8}}  \na  \Psi \|_{ \Ls^\I L^\I_t L_{x}^{\frac{16}{5}}} \lesssim \eps,  \label{eq:DP165}
}
 and 
 \EQ{ \label{eq:Ainf} 
 \| s^{\frac{1}{2}} A\|_{ \Ls^\I L^\I_t L^\infty_x}  + \| s \na  A\|_{ \Ls^\I L^\I_t L^\infty_x} +  \| s^{\frac{3}{2}} \na^{2} A\|_{ \Ls^\I L^\I_t L^\infty_x} &\lesssim \eps^2  \\
    \| s \na_{t, x} \underline{ A}\|_{ \Ls^\I L^\I_t L^\infty_x} +  \| s^{\frac{3}{2}} \na \na_{t, x} \underline{A}\|_{ \Ls^\I L^\I_t L^\infty_x}& \lesssim \eps^2  
}
\end{lem}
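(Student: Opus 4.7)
The plan is to establish the estimates of Lemma~\ref{l:PA} by combining three ingredients: the $L^{2}$-based parabolic regularity bounds for $u$ (Proposition~\ref{p:preg}, Corollary~\ref{c:en}), the improved $\psi_{s}$ bounds triggered by the bootstrap \eqref{eq:bsp} via Proposition~\ref{p:preg8}, and the $L^{2}$ estimates for $A$ (Lemma~\ref{l:A}); the integral representations \eqref{eq:paps}, \eqref{eq:AF} of $\Psi$ and $A$ in terms of $\psi_{s}$ and $F_{s}$; and Sobolev embedding and interpolation on $\bbH^{4}$ (Lemmas~\ref{l:gn}, \ref{l:se}, \ref{l:L2-int}, \ref{l:riesz}).

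For the $L^{\infty}_{x}$ bounds on $\Psi$, $\nb\Psi$, and $\bfD\Psi$ I would use the interpolation inequality $\|f\|_{L^{\infty}(\bbH^{4})} \lesssim \|f\|_{L^{2}}^{1-2/\alpha}\|(-\Dlt)^{\alpha/2}f\|_{L^{2}}^{2/\alpha}$ for $\alpha>2$ (Lemma~\ref{l:L2-int}), reducing matters to $L^{2}_{x}$ bounds on higher derivatives. Corollary~\ref{c:en} supplies $\|s^{k/2}\nb^{(k+1)}\Psi\|_{\Ls^{\infty}\cap\Ls^{2}L^{\infty}_{t}L^{2}_{x}} \lesssim \eps$ for all $k\ge 1$, and Poincar\'e (Lemma~\ref{l:pi}) replaces the prefactor $\|\Psi\|_{L^{2}}$ by $\|\nb\Psi\|_{L^{2}}\lesssim\eps$. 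Matching $s$-weights in the interpolation yields the pointwise-in-$s$ ($\Ls^{\infty}L^{\infty}_{t}L^{\infty}_{x}$) and integrated-in-$s$ ($L^{\infty}_{t}\Ls^{2}L^{\infty}_{x}$) bounds simultaneously. The $\bfD\Psi$ bounds reduce to the $\nb\Psi$ bounds via $\bfD_{\alp}\Psi = \nb_{\alp}\Psi + A_{\alp}\Psi$ combined with H\"older against the $L^{\infty}_{x}$ estimates on $A$ from the next step.

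For \eqref{eq:DP165} I would use Gagliardo--Nirenberg (Lemma~\ref{l:gn}): since $1/(16/5) = 1/2-(3/4)/4$, we have $\|\nb\Psi\|_{L^{16/5}_{x}} \lesssim \|\nb\Psi\|_{L^{2}_{x}}^{1/4}\|\nb^{2}\Psi\|_{L^{2}_{x}}^{3/4}$, and writing $s^{3/8} = (s^{1/2})^{3/4}$ distributes the $s$-weight correctly between $\|\nb\Psi\|_{L^{2}}\lesssim\eps$ and $\|s^{1/2}\nb^{2}\Psi\|_{\Ls^{\infty}L^{\infty}_{t}L^{2}_{x}}\lesssim\eps$. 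For the $L^{\infty}_{x}$ bounds on $A$, $\nb A$, $\nb^{2}A$, $\nb_{t,x}\underline{A}$, $\nb\nb_{t,x}\underline{A}$ I would first extend Lemma~\ref{l:A} to higher covariant derivatives by differentiating $A(s) = -\int_{s}^{\infty}F_{s}(s')\,ds'$ and invoking the covariant differentiation formula \eqref{eq:DF} together with the pointwise bound \eqref{eq:DR-ptwise} (of which \eqref{eq:Fsapw} is a special case), and then pass from $L^{2}_{x}$ to $L^{\infty}_{x}$ via Sobolev/interpolation exactly as for $\Psi$. Alternatively one may bound $\|A(s)\|_{L^{\infty}_{x}} \le \int_{s}^{\infty}\|F_{s}(s')\|_{L^{\infty}_{x}}\,ds'$ using \eqref{eq:Fsapw} and the $L^{\infty}_{x}$ bounds on $\psi_{s}$ (from Proposition~\ref{p:preg8} via Sobolev) and $\Psi$ (from the previous step).

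The main obstacle I expect is the careful bookkeeping of $s$-weights in the Sobolev--interpolation arguments at the endpoint $s\to 0$: Sobolev embedding $H^{2+\dlt}(\bbH^{4})\hookrightarrow L^{\infty}$ requires strictly more than two derivatives, while the parabolic smoothing estimates are scaling-consistent with exactly two. The extra factor of $s^{\dlt}$ lost in a naive interpolation must be recovered in one of two ways: either (a) interpolate against the $\Ls^{2}$ bounds on yet higher derivatives from Corollary~\ref{c:en} and Proposition~\ref{p:preg8} (trading the logarithmic gap for square-integrability in $s$), or (b) use the integral representation $\Psi(s) = -\int_{s}^{\infty}\bfD\psi_{s}(s')\,ds'$ combined with the sharp $L^{\infty}_{x}$ control on $\nb\psi_{s}$ obtained from Proposition~\ref{p:preg8} via Sobolev embedding, which effectively converts one spatial derivative to the $s'\ge s$ integration and is exactly compatible with the claimed $s$-weights.
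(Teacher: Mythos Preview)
Your plan for \eqref{eq:DP165} matches the paper exactly, and the overall strategy of reducing to weighted $L^{2}_{x}$ bounds from Proposition~\ref{p:preg} / Corollary~\ref{c:en} / Lemma~\ref{l:A} is right. But there is a genuine gap in how you propose to reach $L^{\infty}_{x}$ for \eqref{eq:painf} and \eqref{eq:Ainf}, and neither of your two suggested fixes resolves it.

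\textbf{The weight mismatch with Lemma~\ref{l:L2-int}.} Applying Lemma~\ref{l:L2-int} and then Poincar\'e gives only
\[
\|\Psi\|_{L^{\infty}_{x}} \aleq \|\nb\Psi\|_{L^{2}_{x}}^{1-2/\alp}\,\|\nb^{(\alp)}\Psi\|_{L^{2}_{x}}^{2/\alp}, \qquad \alp>2.
\]
With the available bounds $\|\nb\Psi\|_{L^{2}_{x}}\aleq\eps$ (no weight) and $\|s^{(\alp-1)/2}\nb^{(\alp)}\Psi\|_{L^{2}_{x}}\aleq\eps$, this yields $s^{1/2}\|\Psi\|_{L^{\infty}_{x}}\aleq \eps\,s^{-1/2+1/\alp}$, which blows up as $s\to 0$ for every admissible $\alp$. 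Poincar\'e only bounds the endpoint norm; it does not shift the interpolation exponent.

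\textbf{Fix (b) is circular.} Proposition~\ref{p:preg8} is proved \emph{using} Lemma~\ref{l:PA} (its proof invokes \eqref{eq:painf} and \eqref{eq:Ainf} repeatedly to control the nonlinearity $G_{1}$). You therefore cannot appeal to the $L^{\infty}_{x}$ bounds on $\psi_{s}$ coming from Proposition~\ref{p:preg8}, nor to the bootstrap \eqref{eq:bsp}, to establish Lemma~\ref{l:PA}. Fix (a) does not help the $\Ls^{\infty}$ part either: the $\Ls^{2}$ and $\Ls^{\infty}$ bounds in Corollary~\ref{c:en} carry the same $s$-weights for $k\ge 1$, so going to higher $k$ does not change the balance.

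\textbf{The actual resolution.} The paper bypasses the obstacle entirely by using Gagliardo--Nirenberg (Lemma~\ref{l:gn}) through $L^{8}$ rather than the two-endpoint $L^{2}$ interpolation. The chain
\[
\|\Psi\|_{L^{\infty}_{x}} \aleq \|\Psi\|_{L^{8}_{x}}^{1/2}\|\nb\Psi\|_{L^{8}_{x}}^{1/2}, \qquad
\|g\|_{L^{8}_{x}} \aleq \|\nb g\|_{L^{2}_{x}}^{1/2}\|\nb^{2}g\|_{L^{2}_{x}}^{1/2}
\]
gives the split $\|\Psi\|_{L^{\infty}_{x}} \aleq \|\nb\Psi\|_{L^{2}_{x}}^{1/4}\|\nb^{2}\Psi\|_{L^{2}_{x}}^{1/2}\|\nb^{3}\Psi\|_{L^{2}_{x}}^{1/4}$, which is scaling-critical: the weights $s^{0},\,s^{-1/2},\,s^{-1}$ contribute exactly $s^{-1/2}$, matching $s^{1/2}\Psi$. (Equivalently, the paper bounds $\|s^{1/4}\Psi\|_{L^{8}_{x}}$ using the fractional estimate \eqref{eq:12preg} with $\alp=1/4$.) The same $L^{8}$-chain handles $A$, with $\|A\|_{L^{4}_{x}}$ from \eqref{eq:AL4} serving as the zero-weight anchor:
\[
s^{1/2}\|A\|_{L^{\infty}_{x}} \aleq \|A\|_{L^{4}_{x}}^{1/4}\,\|s^{1/2}\nb^{2}A\|_{L^{2}_{x}}^{1/2}\,\|s\nb^{3}A\|_{L^{2}_{x}}^{1/4} \aleq \eps^{2}.
\]
No input from \eqref{eq:bsp} or Proposition~\ref{p:preg8} is needed; only Proposition~\ref{p:preg} and Lemma~\ref{l:A}.
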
 

\begin{rem}
By the torsion-free property $\bfD_{\alp} \Psi_{\bt} = \bfD_{\bt} \Psi_{\alp}$, the following analogue of \eqref{eq:DP165} holds for $\bfD_{t,x} \underline{\Psi}$:
\begin{equation*} \tag{\ref{eq:DP165}$'$}
\| s^{\frac{3}{8}}  \Db_{t,x}  \underline{\Psi} \|_{ \Ls^\I L^\I_t L_{x}^{\frac{16}{5}}}  \lesssim \eps
\end{equation*}
A similar remark applies to \eqref{eq:painf}.
\end{rem}

\begin{proof}
The bounds on the first terms in each line on the left in~\eqref{eq:painf} are consequences of the Gagliardo-Nirenberg inequality,  
\ant{
\|  s^{\frac{1}{2}} \Psi \|_{ L^\infty_x}^2   &=   \| s^{\frac{1}{2}} d_{t,x} u \|_{ L^\infty_x}^2   \lesssim  \| s^{\frac{1}{4}} (-\De)^{\frac{1}{2}} u \|_{L_{x}^8} \| s^{\frac{3}{4}}  \na d_{t,x}u\|_{L_{x}^8} \\
& \lesssim  \| s^{\frac{1}{4}} (- \De)^{\frac{5}{4}} u \|_{L_{x}^2}  \| s^{\frac{1}{2}} \na^2 d_{t,x} u \|_{L_{x}^2}^{\frac{1}{2}} \|s \na^3 d_{t,x} u \|_{L_{x}^2}^{\frac{1}{2}}
}
along with Proposition~\ref{p:preg}. The second terms on the left  in~\eqref{eq:painf} are controlled similarly. The third terms are then easy consequences of the previous bounds along with~\eqref{eq:Ainf}. To prove~\eqref{eq:Ainf} we again use Gagliardo-Nirenberg to obtain  
\ant{ 
\| s^{\frac{1}{2}} A\|_{ L^\infty_x}  & \lesssim  s^{\frac{1}{2}} \|  A\|_{L_{x}^8}^{\frac{1}{2}} \| \na  A\|_{L_{x}^8}^{\frac{1}{2}}    \lesssim \|A \|_{L_{x}^4}^{\frac{1}{4}} \| s^{\frac{1}{2}}\na^2 A\|_{L_{x}^2}^{\frac{1}{2}} \| s \na^3 A \|_{L_{x}^2}^{\frac{1}{4}} ,
}
and then conclude using Lemma~\ref{l:A}. The terms involving $s \nb A$, $s^{\frac{3}{2}} \na^{2} A$, $s \na_{t, x}  \underline{A}$  and $s^{\frac{3}{2}} \na \na_{t, x}  \underline{A}$ in~\eqref{eq:Ainf} are controlled in an identical fashion. 

Finally, we turn to the proof of~\eqref{eq:DP165}. The first equivalence follows from \eqref{eq:AL4} (and taking $\eps$ small enough), so it suffices to prove
\EQ{ \label{eq:nadu165} 
 \| s^{\frac{3}{8}}   \na \Psi  \|_{ \Ls^\I L^\I_t L_{x}^{\frac{16}{5}}} \lesssim \eps .
}
This estimate is an easy consequence of Proposition~\ref{p:preg} and Gagliardo-Nirenberg, 
\ant{
 \| s^{\frac{3}{8}}   \na \Psi \|_{ \Ls^\I L^\I_t L_{x}^{\frac{16}{5}}} 
 \lesssim  &\|    \na \Psi \|_{ \Ls^\I L^\I_t L_{x}^2}^{\frac{1}{4}} \| \na^2 \Psi \|_{ \Ls^\I L^\I_t L_{x}^2}^{\frac{3}{4}} \\
 \lesssim  & \|    \na d_{t, x} u\|_{ \Ls^\I L^\I_t L_{x}^2}^{\frac{1}{4}} \| \na^2 d_{t, x} u \|_{ \Ls^\I L^\I_t L_{x}^2}^{\frac{3}{4}} \lesssim \eps,
 }
 which completes the proof.  \qedhere
\end{proof} 

\begin{proof}[Proof of Proposition~\ref{p:preg8}]
Recall from Lemma~\ref{l:psh} that  that $\psi_s$ satisfies the heat equation 
\EQ{ \label{eq:G1def} 
\p_s \psi_s  - \De \psi_s = F_{s}^{\, \, b} \psi_b + 2 A^b \nabla_b \psi_s + A^b A_b \psi_s + \na^b A_b \psi_s = : G_1(s).
}
Therefore by Duhamel's formula we have 
\EQ{ \label{eq:dph}
\psi_s(s) = e^{\frac{s}{2} \De} \psi_s (s/2) + \int_{\frac{s}{2}}^s e^{(s-s') \De} G_1( s') \, ds'
}
for each $s>0$.  
Therefore, 
\ali{\label{preg temp 1}
s (-\De)^{\frac{1}{2}} \psi_s(s) = s^{\frac{1}{2}} (-\De)^{\frac{1}{2}}e^{\frac{s}{2} \De} s^{\frac{1}{2}}\psi_s (s/2) + s \int_{\frac{s}{2}}^s (-\De)^{\frac{1}{2}}e^{(s-s') \De} G_1( s') \, ds',
}
and
\begin{multline} \label{preg temp 2} 
s (-\De) \psi_s(s) = s^{\frac{1}{2}} (-\De)^{\frac{1}{2}}e^{\frac{s}{2} \De} s^{\frac{1}{2}}(-\De)^{\frac{1}{2}}\psi_s (s/2)  \\ + s \int_{\frac{s}{2}}^s (-\De)^{\frac{1}{2}}e^{(s-s') \De} (-\De)^{\frac{1}{2}}G_1( s') \, ds'.
\end{multline}
Similarly, commuting $\na_t$ through~\eqref{eq:dph} and
~\eqref{preg temp 1} we have 
\begin{multline} \label{preg temp 1t} 
s \na_t \psi_s(s) = s^{\frac{1}{2}}(-\De)^{\frac{1}{2}}e^{\frac{s}{2} \De}s^{\frac{1}{2}} (-\De)^{-\frac{1}{2}} \na_t \psi_s (s/2)  \\+ s \int_{\frac{s}{2}}^s  e^{(s-s') \De} \na_t G_1( s') \, ds'
\end{multline}
and
\begin{multline}\label{preg temp 2t}
s (-\De)^{\frac{1}{2}} \nabla_t \psi_s(s) = s^{\frac{1}{2}} (-\De)^{\frac{1}{2}}e^{\frac{s}{2} \De} s^{\frac{1}{2}}\nabla_t\psi_s (s/2) \\+ s \int_{\frac{s}{2}}^s (-\De)^{\frac{1}{2}}e^{(s-s') \De}\nabla_t G_1( s') \, ds'
\end{multline}
Next, from the heat semigroup bound~\eqref{eq:napest} with $p=8$ applied to the right-hand side of~\eqref{preg temp 1}  and the bound~\eqref{eq:pest} applied to the right-hand side of~\eqref{preg temp 1t}  
we see that 
\ant{
 \|s \na_{t, x} \psi_s(s)\|_{L^2_t L^8_x} &\lesssim \| (s/2)^{\frac{1}{2}} \psi_s(s/2) \|_{L^2_t L^8_x}  +  \| (s/2)^{\frac{1}{2}} (-\De)^{-\frac{1}{2}} \na_t \psi_s(s/2) \|_{L^2_t L^8_x} \\
 & \quad +  \int_{s/2}^s \frac{s}{(s-s')^{\frac{1}{2}}(s')^{\frac{1}{2}}} \| (s')^{\frac{3}{2}} G_1(s')\|_{L^2_t L^8_x} \, \dsp \\
 & \quad +  \int_{s/2}^s \Big(\frac{s}{s'}\Big) \| (s')^2 \na_t  G_1(s')\|_{L^2_t L^8_x} \, \dsp 
 }
By Schur's test, it follows that 
\EQ{ \label{eq:snaps}
 \|s \na_{t,x} \psi_s\|_{\Ls^\I \cap \Ls^2 L^2_t L^8_x}  &\lesssim  \| \psi_s\|_{\mathcal{S}(I)} + \| s^{\frac{3}{2}} G_1(s)\|_{\Ls^\I \cap \Ls^2 L^2_t L^8_x}  \\
 & \quad + \| s^2 \na_t G_1(s) \|_{\Ls^\I \cap \Ls^2 L^2_t L^8_x}
}
Similarly applying~\eqref{eq:napest} with $p=2$ to~\eqref{preg temp 2} and~\eqref{preg temp 2t} and arguing as above we obtain
\EQ{ \label{eq:snaps2}
 \|s \na \nabla_{t,x} \psi_s\|_{\Ls^\I \cap \Ls^2 L^\infty_t L^2_x}  \lesssim  \| \psi_s\|_{\mathcal{S}(I)} + \| s^{\frac{3}{2}} \nabla_{t,x} G_1(s)\|_{\Ls^\I \cap \Ls^2 L^\infty_t L^2_x}
}
Next, we show that 
\begin{align}   \notag
\| &s^{\frac{3}{2}} G_1\|_{\Ls^\I \cap \Ls^2 L^2_t L^8_x} + \| s^2 \na_t G_1(s) \|_{\Ls^\I \cap \Ls^2 L^2_t L^8_x} +  \| s^{\frac{3}{2}} \nabla_{t,x} G_1\|_{\Ls^\I \cap \Ls^2 L^\infty_t L^2_x}  \\& \lesssim \, \eps^2  \Big(\|s \na_{t, x} \psi_s\|_{\Ls^\I \cap \Ls^2 L^2_t L^8_x}    \label{eq:s32G1} 
 +   \|s \na\nabla_{t,x} \psi_s\|_{\Ls^\I \cap \Ls^2 L^\infty_t L^2_x} \Big)  \\
 & \quad + \eps^2 \| \psi_s\|_{\calS(I)} \notag
\end{align}
Note that~\eqref{eq:s32G1} is sufficient to establish the first line in~\eqref{eq:preg8} since the terms inside the parenthesis  on the right-hand side above can be absorbed into the left-hand side of either~\eqref{eq:snaps} or~\eqref{eq:snaps2} as long as $\eps$ is small enough. 

We begin with the first $L_t^2L_x^8$ estimate of $s^{\frac{3}{2}}G_{1}(s)$ in~\eqref{eq:s32G1}, bounding each of the terms in $s^{\frac{3}{2}}G_1(s)$ as follows. Using~\eqref{eq:Fdu} and~\eqref{eq:painf} we have 
\ant{
\| s^{\frac{3}{2}}F_{s}^{\, \, b} \psi_b \|_{\Ls^\I \cap \Ls^2 L^2_t L^8_x} &\lesssim \| s^{\frac{1}{2}} \psi_s \|_{\Ls^\I \cap \Ls^2 L^2_t L^8_x} \| s^{\frac{1}{2}} \Psi \|_{\Ls^\I  L^\I_t L^\I_x}^2 \\
& \lesssim  \eps^2 \|  \psi_s \|_{\cS(I)}
}
Next, from~\eqref{eq:Ainf} we see that 
\ant{
\| s^{\frac{3}{2}}A^b \nabla_b \psi_s \|_{\Ls^\I \cap \Ls^2 L^2_t L^8_x} &\lesssim  \| s^{\frac{1}{2}} A \|_{\Ls^\infty L^\I_t L^\I_x} \| s \na \psi_s \|_{\Ls^\I \cap \Ls^2 L^2_t L^8_x} \\
& \lesssim \eps^2  \| s \na  \psi_s \|_{\Ls^\I \cap \Ls^2 L^2_t L^8_x}
}
which can be absorbed onto the left-hand side of~\eqref{eq:snaps}. Then note that 
\ali{\label{preg temp 4}
\| s^{\frac{3}{2}}A^bA_b \psi_s \|_{\Ls^\I \cap \Ls^2 L^2_t L^8_x} &\lesssim  \| s^{\frac{1}{2}}A \|_{\Ls^\infty L^\I_t L^\I_x}^2  \| s^{\frac{1}{2}} \psi_s \|_{\Ls^\I \cap \Ls^2 L^2_t L^8_x} \\
& \lesssim  \eps^4 \|  \psi_s \|_{\cS(I)}
}
Finally, using again~\eqref{eq:Ainf} gives 
\ali{\label{preg temp 5}
\| s^{\frac{3}{2}}\na^b A_b  \psi_s \|_{\Ls^\I \cap \Ls^2 L^2_t L^8_x} &\lesssim  \| s \na   \underline{A} \|_{\Ls^\infty L^\I_t L^\I_x}  \| s^{\frac{1}{2}} \psi_s \|_{\Ls^\I \cap \Ls^2 L^2_t L^8_x} \\
& \lesssim  \eps^2 \|  \psi_s \|_{\cS(I)}
}
Putting the previous four displayed equations together we have established that 
\ant{
\| s^{\frac{3}{2}} G_1\|_{\Ls^\I \cap \Ls^2 L^2_t L^8_x} \lesssim \eps^2  \|s \na  \psi_s\|_{\Ls^\I \cap \Ls^2 L^2_t L^8_x} +  \eps^2 \| \psi_s \|_{ \cS(I)}
}
as desired.

Now we turn to the $L_t^\infty L_x^2$ estimates of $s^{\frac{3}{2}}\na_{t, x} G_1$  and the $L^2_t L^8_x$ estimate of $s^2 \na_t G_1$ in~\eqref{eq:s32G1}. We claim that 
 \EQ{ \label{eq:s32naG}
   \| s^{\frac{3}{2}} \nabla_{t,x} G_1\|_{\Ls^\I \cap \Ls^2 L^\infty_t L^2_x}   \lesssim  \eps^2 \|s \na \nabla_{t,x} \psi_s\|_{\Ls^\I \cap \Ls^2 L^\infty_t L^2_x}
 + \eps^2 \| \psi_s\|_{\calS(I)}
   }
and 
\EQ{ \label{eq:s2natG}
\| s^2 \na_t G_1\|_{\Ls^\I \cap \Ls^2 L^2_t L^8_x} \lesssim \eps^2  \|s \na_t \psi_s\|_{\Ls^\I \cap \Ls^2 L^2_t L^8_x} +  \eps^2 \| \psi_s \|_{ \cS(I)}
}
We perform the details of the proof only for~\eqref{eq:s32naG} since~\eqref{eq:s2natG} can be proved similarly. We begin by computing  $\na_{t, x} G_1(s)$. For any index $\mu \in \{0, \dots, 4\}$ we have 
\EQ{ \label{eq:naG1}
 \na_\mu G_1 =  \na_\mu(F_{s}^{\, \, b} \psi_b) + 2  \na_\mu(A^b \nabla_b \psi_s) + \na_\mu(A^b A_b \psi_s) +  \na_\mu(\na^b A_b \psi_s )
 }
and we can expand and estimate each of the terms on the right above as follows. The first term is given by  
\EQ{ \label{eq:naFsb}
 \na_\mu(F_{s}^{\, \, b} \psi_b) &=  \Db_\mu(F_{s}^{\, \, b} \psi_b) - A_\mu F_{s}^{\, \, b} \psi_b =  \Db_\mu F_{s}^{ \, \, b} \psi_b + F_{s}^{ \, \, b} \Db_b\psi_\mu  - A_\mu F_{s}^{\, \, b} \psi_b \\
 & = \bfR^{(1)}( \psi_\mu;  \psi_s, \psi^b) \psi_b +  \bfR^{(0)}( \Db_\mu \psi_s, \psi^b) \psi_b +  \bfR^{(0)}( \psi_s, \Db_\mu \psi^b) \psi_b  \\
 &\quad + \bfR^{(0)}(  \psi_s , \psi^b) \Db_b \psi_\mu  - A_\mu  \bfR^{(0)}( \psi_s, \psi^b) \psi_b
 }
 We then have 
 \begin{multline*}
  \| s^{\frac{3}{2}}\bfR^{(1)}( \Psi;  \psi_s, \psi^b) \psi_b  \|_{ \Ls^\I \cap \Ls^2 L^\I_t L^2_x}  \\
  \lesssim  \| s^{\frac{1}{2}} \psi_s \|_{\Ls^\I \cap \Ls^2 L^\I_t L^4_x}  \| s^{\frac{1}{2}} \Psi \|_{ \Ls^\I L^\I_t L^\I_x}^2 \| \Psi \|_{\Ls^\I L^\I_t L^4_x}  \lesssim \eps^3 \| \psi_s \|_{ \cS(I)}
   \end{multline*}
 where txnd inequality follows from Sobolev embedding,~\eqref{eq:painf}, and~\eqref{eq:eqna}. For the next term we use~\eqref{eq:eqns} and~\eqref{eq:painf} to deduce that 
 \ant{
 \| s^{\frac{3}{2}}\bfR^{(0)}( \Db_{t, x} \psi_s, \psi^b) \psi_b  \|_{ \Ls^\I \cap \Ls^2 L^\I_t L^2_x}  & \lesssim  \|s^{\frac{1}{2}} \Db_{t, x} \psi_s \|_{\Ls^\I \cap \Ls^2 L^\I_t L^2_x} \| s^{\frac{1}{2}} \Psi \|_{ \Ls^\I L^\I_t L^\I_x}^{2} \\
 & \lesssim  \eps^2 \| \psi_s \|_{\cS(I)}
 }
 For the third term in~\eqref{eq:naFsb} we use Sobolev embedding, ~\eqref{eq:eqna} and~\eqref{eq:painf} to conclude that 
\begin{multline*}
   \| s^{\frac{3}{2}} \bfR^{(0)}( \psi_s, \Db_{t, x} \psi^b) \psi_b   \|_{ \Ls^\I \cap \Ls^2 L^\I_t L^2_x}  \\
    \lesssim   \|s^{\frac{1}{2}} \psi_s \|_{\Ls^\I \cap \Ls^2 L^\I_t L^4_x} \|  \Psi \|_{ \Ls^\I L^\I_t L^4_x } \| s  \Db_{t,x} \underline{\Psi} \|_{\Ls^\I L^\I_t L^\I_x} \lesssim  \eps^2  \| \psi_s \|_{ \cS(I)}
  \end{multline*} 
The fourth term in~\eqref{eq:naFsb} is controlled in an identical fashion. Finally for the fifth term we have
\begin{multline*}
   \| s^{\frac{3}{2}} A \,  \bfR^{(0)}( \psi_s, \psi^b) \psi_b   \|_{ \Ls^\I \cap \Ls^2 L^\I_t L^2_x}  \\
    \lesssim   \|s^{\frac{1}{2}} \psi_s \|_{\Ls^\I \cap \Ls^2 L^\I_t L^4_x} \|  A \|_{ \Ls^\I L^\I_t L^4_x } \| s^{\frac{1}{2}}   \Psi \|_{\Ls^\I L^\I_t L^\I_x}^{2} \lesssim  \eps^3  \| \psi_s \|_{ \cS(I)}
   \end{multline*} 
    using~\eqref{eq:AL4} and~\eqref{eq:painf} in the last line. 
    
    The next term in~\eqref{eq:naG1} can be expanded as follows, 
 \ant{
  \na_\mu(A^b  \na_b \psi_s) &=  \na_\mu A^b \na_b \psi_s + A^b \na_\mu \na_b \psi_s \\ 
  & =  \na_\mu A^b \na_b \psi_s + A^b \na_b \na_\mu \psi_s
 }
 where the commutation $\na_\mu \na_b \psi_s   = \na_b \na_\mu \psi_s$ does not pick up any domain curvature terms since the components $\psi_s^j$  of $\psi_s$ are treated in this context as scalar functions on $\R^+ \times \MM_I$. We can then use~\eqref{eq:Ainf} to show that 
\begin{align*}
 \| s^{\frac{3}{2}}  \na_{t, x}(A^b  \na_b \psi_s) \|_{ \Ls^\I \cap \Ls^2  L^\I_t L^2_x}   &\lesssim  \|s  \na_{t,x} \underline{A} \|_{\Ls^\I L^\I_{t, x}} \|s^{\frac{1}{2}} \na \psi_s \|_{ \Ls^\I \cap \Ls^2  L^\I_t L^2_x}  \\
 &\quad +  \|s^{\frac{1}{2}}A \|_{\Ls^\I L^\I_{t, x}} \| s \na \na_{t, x} \psi_s \|_{ \Ls^\I \cap \Ls^2  L^\I_t L^2_x} \\
 & \lesssim  \eps^2 \| \psi_s \|_{ \cS(I)} +  \eps^2 \| s \na \na_{t, x} \psi_s \|_{ \Ls^\I \cap \Ls^2  L^\I_t L^2_x}
 \end{align*} 
Similarly, 
  \begin{multline*}
  \| s^{\frac{3}{2}}\na_{t, x} (A^b A_b \psi_s) \|_{ \Ls^\I \cap \Ls^2  L^\I_t L^2_x}   \lesssim  \| s^{\frac{1}{2}} \na_{t, x} \psi_s \|_{ \Ls^\I \cap \Ls^2 L^\I_t L^2_x}  \| s^{\frac{1}{2}} A \|_{ \Ls^\I L^\I_{t, x}}^2 \\
   + \| s^{\frac{1}{2}} \psi_s \|_{ \Ls^\I \cap \Ls^2 L^\I_t L^4_x} \|A \|_{ \Ls^\I L^\I_t L_{x}^4} \| s  \na_{t, x}  \underline{A} \|_{ \Ls^\I L^\I_{t, x}}   \lesssim  \eps^2 \| \psi_s \|_{ \cS(I)}
  \end{multline*} 
  For the last term we have 
  \ant{
  \na_\mu(\na^b A_b \psi_s ) =  \na_\mu \na^b A_b \psi_s +  \na^b A_b \na_\mu \psi_s
  }
  If $\mu = t$ note that we can commute $\na_t \na^b A_b \psi_s= \na^b \na_t  A_b \psi_s$. Hence, using~\eqref{eq:dkAL2} and~\eqref{eq:Ainf}
  \ant{
   \| s^{\frac{3}{2}} \na_{t, x}(\na^b A_b \psi_s ) \||_{ \Ls^\I \cap \Ls^2  L^\I_t L^2_x} &  \lesssim  \| s\na \na_{t, x} \underline{A} \|_{ \Ls^\I L^\I_t L^4_x}   \|s^{\frac{1}{2}} \psi_s \|_{ \Ls^\I \cap \Ls^2 L^\I_t L^4_x}  \\
   &  \quad +  \|s \na \underline{A} \|_{ \Ls^\I L^\I_{t, x}}  \|s^{\frac{1}{2}} \na_{t, x}  \psi_s \|_{ \Ls^\I \cap \Ls^2 L^\I_t L^2_x} \\
   & \lesssim \eps^2 \| \psi_s \|_{ \cS(I)}
   }
   This completes the proof of~\eqref{eq:s32naG}. The inequality~\eqref{eq:s2natG} can be proved in a similar fashion, completing the proof of~\eqref{eq:s32G1}.   

Finally, note that the second line in~\eqref{eq:preg8} can be proved in a nearly identical manner after applying $(-\De)^{\frac{1}{2}}$ to~\eqref{preg temp 1},~\eqref{preg temp 1t} and~\eqref{preg temp 2},~\eqref{preg temp 2t}. 
Indeed, arguing exactly as before, and this time noting that we have already established the first line in~\eqref{eq:preg8}, we see that it suffices to prove that 
\begin{multline*}
 \| s^2 \na_{t, x}  G_1\|_{\Ls^\I \cap \Ls^2 L^2_t L^8_x} + \| s^2 \na\nabla_{t,x} G_1\|_{\Ls^\I \cap \Ls^2 L^\infty_t L^2_x} \\ \lesssim  \eps  \|s^{\frac{3}{2}} \na \na_{t, x} \psi_s\|_{\Ls^\I \cap \Ls^2 L^2_t L^8_x} 
+ \eps  \|s^{\frac{3}{2}} \na^2\nabla_{t,x} \psi_s\|_{\Ls^\I \cap \Ls^2 L^\infty_t L^2_x}
+  \| \psi_s \|_{\cS(I)},
\end{multline*}
which in turn follows from a nearly identical argument to the one used to prove~\eqref{eq:s32G1}, this time requiring the full set of bounds in~\eqref{eq:painf},~\eqref{eq:Ainf}. 
\end{proof}

\subsection{A collection of estimates}  \label{s:wide}
In the next section we will require a wider class of estimates for $\psi_s, \Psi$, and  $A$ than those that have already been explicitly stated. For convenience, we collect some of these estimates here. 

The first simple observation is that we can use Proposition~\ref{p:preg} and Proposition~\ref{p:preg8} together with the Gagliardo--Nirenberg inequality 
to establish the following estimates for $\psi_s$. 
\begin{lem} \label{l:16}
Let $\psi_s$ be the heat tension field defined as in~\eqref{eq:htf}. Then,  
\begin{multline} \label{eq:psinfinf}
\| s  \psi_s\|_{\Ls^\I \cap\Ls^2 L^\infty_t L^{\I}_x} \\ 
\lesssim  \| s^{\frac{1}{2}} \na \psi_s \|_{\Ls^\I \cap\Ls^2 L^\infty_t L^{2}_x}^{\frac{1}{4}} \| s \na^2 \psi_s \|_{\Ls^\I \cap\Ls^2 L^\infty_t L^{2}_x}^{\frac{1}{2}} \| s^{\frac{3}{2}} \na^3 \psi_s \|_{\Ls^\I \cap\Ls^2 L^\infty_t L^{2}_x}^{\frac{1}{4}} \\
 \lesssim   \| \psi_s \|_{\cS(I)}, 
\end{multline} 
\begin{multline} \label{eq:ps8}
\| s^{\frac{5}{4}}  \na_{t, x} \psi_s\|_{\Ls^\I \cap\Ls^2 L^\infty_t L^{8}_x}  
 \\ \lesssim  \| s \na \na_{t, x} \psi_s \|_{\Ls^{\infty}\cap \Ls^2 L^\infty_t L^{2}_x}^{ \frac{1}{2}} \|  s^{\frac{3}{2}} \na^2 \na_{t, x} \psi_s \|_{\Ls^{\infty}\cap \Ls^2 L^\infty_t L^{2}_x}^{\frac{1}{2}} \lesssim  \| \psi_s \|_{\cS(I)} 
\end{multline} 
and
\begin{multline} \label{eq:ps165}
\| s^{\frac{3}{8}} s^{\frac{1}{2}}  \na_{t, x} \psi_s\|_{\Ls^\I \cap\Ls^2 L^\infty_t L^{\frac{16}{5}}_x} 
\\ \lesssim  \| s^{\frac{1}{2}} \na_{t, x} \psi_s \|_{\Ls^{\infty}\cap \Ls^2 L^\infty_t L^{2}_x}^{ \frac{1}{4}} \|  s \na \na_{t, x} \psi_s \|_{\Ls^{\infty}\cap \Ls^2 L^\infty_t L^{2}_x}^{\frac{3}{4}} \lesssim    \| \psi_s \|_{\cS(I)} 
\end{multline} 
Moreover, we have 
\begin{align} \label{eq:ps16} \notag
 \| s^{\frac{5}{8}}&   \psi_s \|_{\Ls^\I \cap\Ls^2 L^2_t L^{16}_x}  +   \| s^{\frac{9}{8}} \na_{t, x}  \psi_s \|_{\Ls^\I \cap\Ls^2 L^2_t L^{16}_x} 
  \\& \lesssim      \|  s^{\frac{1}{2}}   \psi_s\|_{\Ls^\I \cap\Ls^2 L^2_t L^{8}_x}^{\frac{3}{4}}    \|  s \na  \psi_s\|_{\Ls^\I \cap\Ls^2 L^2_t L^{8}_x}^{\frac{1}{4}}    \\     
 &\quad +  \|  s \na_{t, x}  \psi_s\|_{\Ls^\I \cap\Ls^2 L^2_t L^{8}_x}^{\frac{3}{4}}    \|  s^{\frac{3}{2}} \na \na_{t, x}  \psi_s\|_{\Ls^\I \cap\Ls^2 L^2_t L^{8}_x}^{\frac{1}{4}}    
\lesssim  \| \psi_s \|_{ \mathcal{S}(I)},  \notag
\end{align}

\end{lem}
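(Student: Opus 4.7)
The plan is to derive each of the four displayed estimates by combining a pointwise-in-$(s,t)$ Gagliardo--Nirenberg/Sobolev interpolation with the $s$-weighted parabolic regularity bounds from Proposition~\ref{p:preg} and Proposition~\ref{p:preg8}. The key point is that each chain of inequalities is homogeneous under the parabolic scaling $(s, t, x) \mapsto (\lmb^{2} s, \lmb t, \lmb x)$, which both dictates the Gagliardo--Nirenberg exponents and fixes the attached $s$-weights. So the main task, once an appropriate interpolation inequality is identified, is a routine H\"older/scaling bookkeeping.

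For \eqref{eq:psinfinf} I would first establish the pointwise interpolation $\|\psi\|_{L^\infty_x} \lesssim \|\nabla \psi\|_{L^2}^{1/4} \|\nabla^2 \psi\|_{L^2}^{1/2} \|\nabla^3 \psi\|_{L^2}^{1/4}$ on $\bbH^4$ (by combining $L^2$-interpolation as in Lemma~\ref{l:L2-int} with Sobolev embedding, Lemma~\ref{l:se}); the attached $s$-weights $s^{1/2}$, $s$, $s^{3/2}$ raised to the exponents $1/4, 1/2, 1/4$ multiply to $s$, matching the LHS. An analogous pointwise bound, namely $\|f\|_{L^8_x} \lesssim \|\nabla f\|_{L^2}^{1/2}\|\nabla^2 f\|_{L^2}^{1/2}$, or $\|f\|_{L^{16/5}_x} \lesssim \|f\|_{L^2}^{1/4}\|\nabla f\|_{L^2}^{3/4}$, or $\|f\|_{L^{16}_x} \lesssim \|f\|_{L^8_x}^{3/4}\|\nabla f\|_{L^8_x}^{1/4}$, handles \eqref{eq:ps8}, \eqref{eq:ps165}, and \eqref{eq:ps16}, respectively. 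Next, I would take the $L^\infty_t$ or $L^2_t$ norm dictated by the left-hand side of each estimate, distributing across the product via H\"older in $t$ whenever needed; this is always compatible since the Gagliardo--Nirenberg exponents sum to $1$. For \eqref{eq:ps16} in particular, this produces the passage from $L^2_t L^{16}_x$ on the left to $L^2_t L^8_x$-factors on the right via H\"older in $t$ with exponents $4/3$ and $4$.

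Finally, I would take $\Ls^\infty \cap \Ls^2$ in $s$. For the $\Ls^\infty$-piece, $\sup_s$ of a product is controlled by the product of sups. For the $\Ls^2$-piece, I would use H\"older in $\ds$ assigning each factor to $\Ls^2$, which is consistent because the Gagliardo--Nirenberg exponents $a_i$ sum to $1$ and hence $\sum a_i/2 = 1/2$. The $\Ls^\infty \cap \Ls^2 L^\infty_t L^2_x$-bounds on $s\nabla \nabla_{t,x}\psi_s$ and $s^{3/2}\nabla^2 \nabla_{t,x}\psi_s$, as well as the $\Ls^\infty \cap \Ls^2 L^2_t L^8_x$-bounds on $s\nabla_{t,x}\psi_s$ and $s^{3/2}\nabla\nabla_{t,x}\psi_s$, are furnished by Proposition~\ref{p:preg8}, while the baseline $\|s^{1/2}\nabla_{t,x}\psi_s\|_{\Ls^\infty\cap\Ls^2 L^\infty_t L^2}$ and $\|s^{1/2}\psi_s\|_{\Ls^\infty\cap\Ls^2 L^2_t L^8}$ come directly from the definition of $\calS(I)$. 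There is no genuine obstacle in this argument; the main care goes into checking parabolic-scaling consistency, which I would do explicitly at the outset of each of the four estimates.
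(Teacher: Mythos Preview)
Your proposal is correct and follows essentially the same approach as the paper: the paper's own proof simply says that \eqref{eq:psinfinf} follows from two applications of Gagliardo--Nirenberg together with Proposition~\ref{p:preg8}, and that the other three estimates are similar. Your write-up spells out exactly these interpolation steps, the accompanying $s$-weight bookkeeping, and the H\"older argument in $t$ and in $\ds$ needed to pass to the mixed norms, so it is a faithful (and more detailed) version of the intended proof.
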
 

\begin{proof} 
The proof of~\eqref{eq:psinfinf} follows from two applications of the Gagliardo-Nirenberg inequality followed by~\eqref{eq:preg8}.  The proofs of~\eqref{eq:ps8}~\eqref{eq:ps165}, and~\eqref{eq:ps16} are similar.  
\end{proof} 


With Lemma~\ref{l:16} in hand we can establish the following estimates for $\Psi$ and  $A$. 

\begin{cor}\label{c:APsi} 
Let $\Psi$
and $A$ 
be as above and assume that the bootstrap assumptions~\eqref{eq:bs} and \eqref{eq:bsp} hold. 
Then,   
\begin{align} \label{eq:pa16}
&\| s^{\frac{1}{8}} \Psi \|_{ \Ls^{\infty} \cap \Ls^2 L^{2}_t L^{16}_x} \lesssim \| \psi_s \|_{\mathcal{S}(I)}  \\
& \| s^{\frac{1}{4}} \Psi \|_{ \Ls^{\infty} \cap \Ls^2 L^{2}_t L^{\infty}_x} \lesssim \| \psi_s \|_{\mathcal{S}(I)} \label{eq:pa2inf} \\
& \| s^{\frac{1}{4}} \Psi \|_{ \Ls^\I \cap \Ls^2 L^{\infty}_t L^{8}_x} \lesssim \| \psi_s \|_{\mathcal{S}(I)} \label{eq:pa8}  \\
& \| s^{\frac{1}{2}} \Db \Psi \|_{ \Ls^\I \cap \Ls^2 L^{\infty}_t L^{4}_x} \lesssim \| \psi_s \|_{\mathcal{S}(I)} \label{eq:Dpa4}  
\end{align}
Moreover, 
\begin{align} \label{eq:Ainf1} 
&\| A \|_{ \Ls^\infty L^1_t L^\I_x} \lesssim   \| \psi_s \|_{\mathcal{S}(I)}^2 \\
& \| s^{\frac{1}{2}} A \|_{\Ls^\I \cap  \Ls^2 L^\I_t L^\I_x} \lesssim \eps \| \psi_s \|_{\mathcal{S}(I)}  \label{eq:Ainf2}
\end{align}
\end{cor}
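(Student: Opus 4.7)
The plan is to derive every bound by integrating in $s$ from infinity, using the two fundamental identities
\begin{equation*}
A_\al(s_0) = -\int_{s_0}^{\infty} F_{s\al}(s)\,ds,
\qquad
\psi_\al(s_0) = -\int_{s_0}^{\infty} \na_\al \psi_s(s)\,ds - \int_{s_0}^{\infty} A_\al \psi_s(s)\,ds,
\end{equation*}
together with the pointwise bound $|F_{s\al}| \lesssim |\psi_s||\Psi|$ from~\eqref{eq:Fsapw}. The point is that all the estimates for $\Psi$ and $A$ in the corollary have the shape of $s$-weighted mixed-Lebesgue norms of $\psi_s$ that have already been proved in Proposition~\ref{p:preg8} and Lemma~\ref{l:16}, but shifted in the $s$-weight by exactly the amount one gains by integrating from $s_0$ to $\infty$.

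The workhorse argument is the following Schur-type lemma: if $f \in \Ls^\infty \cap \Ls^2(\R^+)$ and $\gmm > 0$, then
\begin{equation*}
	s_0 \mapsto \int_{s_0}^{\infty} (s_0/s)^{\gmm}\, f(s)\,\ds
\end{equation*}
is bounded on $\Ls^\infty \cap \Ls^2(\R^+)$, as is seen by verifying $\int (s_0/s)^\gmm \ds = C$ and $\int(s_0/s)^\gmm \, ds_0/s_0 = C$. For instance, to obtain \eqref{eq:pa16} one writes
$\|\psi_\al(s_0)\|_{L^2_t L^{16}_x} \leq \int_{s_0}^\infty \|\na_{t,x}\psi_s(s)\|_{L^2_t L^{16}_x}\,ds$, sets $f(s) = s^{9/8}\|\na_{t,x}\psi_s(s)\|_{L^2_t L^{16}_x} \in \Ls^\infty \cap \Ls^2$ by~\eqref{eq:ps16}, and applies the lemma with $\gmm = 1/8$. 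Exactly the same scheme proves \eqref{eq:pa2inf}, \eqref{eq:pa8} once one interpolates via Gagliardo--Nirenberg; for example, in 4D one has $\|f\|_{L^\infty_x}^2 \lesssim \|f\|_{L^8_x}\|\na f\|_{L^8_x}$, which combined with~\eqref{eq:ps8} and the $\cS$-norm yields $\|s^{5/4}\na_{t,x}\psi_s\|_{L^2_t L^\infty_x} \lesssim \|\psi_s\|_{\cS(I)}$, and integration produces~\eqref{eq:pa2inf}.

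The estimate~\eqref{eq:Dpa4} requires one more derivative and is handled by writing $\Db_\bt\psi_\al(s_0) = -\int_{s_0}^\infty \Db_\bt\Db_\al \psi_s(s)\,ds$, expanding $\Db_\bt\Db_\al \psi_s = \nb_\bt\nb_\al\psi_s + (\nb_\bt A_\al)\psi_s + A_{(\bt}\nb_{\al)}\psi_s + A_\bt A_\al \psi_s$, and then estimating via~\eqref{eq:preg8}, Lemma~\ref{l:A}, Lemma~\ref{l:16}, and~\eqref{eq:Ainf}. For the $A$ bounds, \eqref{eq:Ainf1} follows from
\begin{equation*}
	\|A(s_0)\|_{L^1_t L^\infty_x} \leq \int_{s_0}^\infty \|\psi_s(s)\|_{L^2_t L^\infty_x}\|\Psi(s)\|_{L^2_t L^\infty_x}\,ds \leq \int_0^\infty a(s) b(s)\,\tfrac{ds}{s},
\end{equation*}
after using the Gagliardo--Nirenberg consequences $\|s^{3/4}\psi_s\|_{L^2_t L^\infty_x},\|s^{1/4}\Psi\|_{L^2_t L^\infty_x} \in L^\infty_{ds/s}\cap L^2_{ds/s}$ (with norms $\lesssim \|\psi_s\|_{\cS(I)}$) and Cauchy--Schwarz in the $\tfrac{ds}{s}$-measure; the crucial cancellation of weights $s^{3/4}\cdot s^{1/4} = s$ turns the integrand into $ab\,\tfrac{ds}{s}$. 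Finally \eqref{eq:Ainf2} uses~\eqref{eq:psinfinf} to get $\|s\psi_s\|_{L^\infty_{t,x}} \in \Ls^\infty\cap\Ls^2$ and~\eqref{eq:painf} to get $\|s^{1/2}\Psi\|_{L^\infty_{t,x}} \lesssim \eps$, producing a factor of $\eps$ and the weight $s^{1/2}$, followed by the averaging lemma with $\gmm = 1/2$.

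The main technical obstacle is the bookkeeping of the nonlinear (cubic) corrections coming from the term $\int_{s_0}^\infty A_\al \psi_s\,ds$ in~\eqref{eq:paps} and from expanding $\Db$ into $\nb + A$ in~\eqref{eq:Dpa4}: each produces an extra factor of $A$, which is controlled by $\eps$ via Lemma~\ref{l:A} and~\eqref{eq:Ainf}, so that these contributions are $O(\eps)$ times the desired right-hand side and harmless by taking $\eps$ small. The proof thus reduces to a systematic bookkeeping of $s$-weights combined with the averaging lemma and Gagliardo--Nirenberg interpolation, with no genuinely new analytic input beyond Lemma~\ref{l:16}, Proposition~\ref{p:preg8}, and Lemma~\ref{l:A}.
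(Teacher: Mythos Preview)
Your proposal is correct and follows essentially the same approach as the paper: integrate the identities \eqref{eq:AF} and \eqref{eq:paps} from infinity, redistribute the $s$-weights as $(s_0/s')^\gamma$, and apply Schur's test together with the interpolation inputs from Lemma~\ref{l:16}, Proposition~\ref{p:preg8}, and Lemma~\ref{l:A}. One small correction on \eqref{eq:Dpa4}: the identity $\Db_\bt\psi_\al(s_0) = -\int_{s_0}^\infty \Db_\bt\Db_\al\psi_s\,ds$ is not literally true, since $\p_s(\Db_b\psi_\al) = \Db_b\Db_\al\psi_s + F_{sb}\psi_\al$; the paper sidesteps this by first reducing to $\|s^{1/2}\nb\Psi\|_{L^\infty_t L^4_x}$ (the difference being $O(\eps)$ via~\eqref{eq:AL4}) and then using the exact formula $\nb_b\psi_\al = -\int_s^\infty \nb_b\Db_\al\psi_s\,ds'$, but the extra curvature term in your version is of the same schematic shape as the terms you already bound and causes no difficulty.
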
 

\begin{proof}
 To prove~\eqref{eq:pa16}  note that for each $s>0$ we have 
  \ant{
  \psi_\al(s)  = - \int_s^\infty s' \na_\al \psi_s  \,  \dsp - \int_s^\I s' A_\al \psi_s \, \dsp
}
Note that
\begin{align*}
\nrm{s^{\frac{1}{8}} \Psi(s)}_{L^{2}_{t} L^{16}_{x}}
\aleq & \int_{s}^{\infty} \bb( \frac{s}{s'} \bb)^{\frac{1}{8}} \nrm{(s')^{\frac{9}{8}} \nb_{t,x} \psi_{s}}_{L^{2}_{t} L^{16}_{x}} \dsp \\
& + \int_{s}^{\infty} \bb( \frac{s}{s'} \bb)^{\frac{1}{2}} \nrm{(s')^{\frac{1}{8}} A}_{L^{\infty}_{t,x}} \nrm{(s')^{\frac{5}{8}} \psi_{s}}_{L^{2}_{t} L^{16}_{x}} \dsp
\end{align*}
Therefore, \eqref{eq:pa16} follows by an application of Schur's test, \eqref{eq:Ainf} and \eqref{eq:ps16}.

Similarly, using Gagliardo-Nirenberg we obtain, 
\ant{
\| s^{\frac{1}{4}}\Psi (s)\|_{L^{2}_t L^{\infty}_x} \lesssim  & \int_s^\infty  \big(\frac{s}{s'} \big)^{\frac{1}{4}}  \| s' \na_{ t, x} \psi_s \|_{L^2_t L^{8}_x}^{\frac{1}{2}}\| (s')^{\frac{3}{2}}\nb \nb_{t,x} \psi_s \|_{L^2_t L^{8}_x}^{\frac{1}{2}} \, \dsp    \\
&+   \int_s^\infty  \big(\frac{s}{s'} \big)^{\frac{1}{4}}\|(s')^{\frac{1}{2}} A\|_{L^{\infty}_{t, x}}  \| (s')^{\frac{1}{2}}  \psi_s \|_{L^2_t L^{8}_x}^{\frac{1}{2}}\| s' \na_{t, x} \psi_s \|_{L^2_t L^{8}_x}^{\frac{1}{2}}  \, \dsp 
 }
 Then \eqref{eq:pa2inf} follows by Schur's test as before.

Next, we prove~\eqref{eq:pa8}. As above we have 
\ant{
\| s^{\frac{1}{4}} \Psi \|_{ L^{\infty}_t L^{8}_x} \lesssim   & \int_s^\infty  \Big( \frac{s}{s'} \Big)^{\frac{1}{4}} \| (s')^{\frac{5}{4}}  \na_{t, x} \psi_s \|_{L^\infty_t L^8_x} \,  \dsp \\
& +  \int_s^\infty  \Big( \frac{s}{s'} \Big)^{\frac{1}{4}} \| (s')^{\frac{1}{2}} A \|_{L^\infty_t L^\infty_x}  \| (s')^{\frac{3}{4}}  \psi_s \|_{L^\infty_t L^8_x} \,  \dsp \\
}
By Schur's test followed by Sobolev embedding  we  see that 
\ant{
\| s^{\frac{1}{4}} \Psi \|_{  \Ls^\I \cap \Ls^2 L^{\infty}_t L^{8}_x} &\lesssim \| s^{\frac{5}{4}}  \na_{t, x} \psi_s \|_{ \Ls^\I \cap \Ls^2 L^\infty_t L^8_x} \\
& \quad +  \| s^{\frac{1}{2}} A \|_{\Ls^\I L^\infty_t L^\infty_x}  \| s^{\frac{3}{4}}  (- \De)^{\frac{3}{4}} \psi_s \|_{\Ls^\I \cap \Ls^2 L^\infty_t L^2_x} \\
& \lesssim  \|   \psi_s \|_{ \cS(I)}
}
where the last line above follows from~\eqref{eq:pa8},~\eqref{eq:Ainf}, interpolation, and~\eqref{eq:preg8}. 

To prove~\eqref{eq:Dpa4} we claim that it suffices  to show 
\EQ{ \label{eq:more}
\| s^{\frac{1}{2}} \na \Psi \|_{\Ls^\I \cap \Ls^2 L^\infty_t L^4_x} \lesssim \| \psi_s \|_{\cS(I)}
}
To see this, note that using~\eqref{eq:pa8}, Gagliardo-Nirenberg, ~\eqref{eq:AL4}, and~\eqref{eq:dkAL2} we have 
 \begin{multline*} 
 \abs{  \| s^{\frac{1}{2}} \na \Psi \|_{\Ls^\I \cap \Ls^2 L^\infty_t L^4_x} -  \| s^{\frac{1}{2}} \Db \Psi \|_{\Ls^\I \cap \Ls^2 L^\infty_t L^4_x}}  \\
 \lesssim \| A \|^{\frac{1}{2}}_{\Ls^\I L^\I_t L^4_x}  \| s^{\frac{1}{2}}\na  A \|^{\frac{1}{2}}_{\Ls^\I L^\I_t L^4_x}  \| s^{\frac{1}{4}}   \Psi \|_{ \Ls^\I \cap \Ls^2 L^\infty_t L^8_x} \lesssim \eps \| \psi_s \|_{\cS(I)}
 \end{multline*} 
To prove~\eqref{eq:more}, we start with
\ant{
\na_b \psi_\al =-  \int_s^\I \na_b \Db_\al \psi_s \, ds'
}
It follows that 
\ant{
\| s^{\frac{1}{2}} \na \Psi \|_{ L^\infty_t L^4_x} &\lesssim   \int_{s}^\I  \Big(\frac{s}{s'} \Big)^{\frac{1}{2}} \| (s')^{\frac{3}{2}}  \na \na_{t, x}  \psi_s \|_{L^\infty_t L^4_x} \, \dsp  \\
& +  \int_{s}^\I  \Big(\frac{s}{s'} \Big)^{\frac{1}{2}} \| (s')^{\frac{1}{2}}A\|_{L^\I_t L^\I_x} \| s'   \na_{t, x}  \psi_s \|_{L^\infty_t L^4_x} \, \dsp  \\
& +  \int_{s}^\I  \Big(\frac{s}{s'} \Big)^{\frac{1}{2}}  \| s'\na A\|_{L^\I_t L^\I_x} \| (s')^{\frac{1}{2}}    \psi_s \|_{L^\infty_t L^4_x} \, \dsp  
}
By Schur's test and~\eqref{eq:Ainf} we obtain~\eqref{eq:more}. 

Next, using~\eqref{eq:AF} and~\eqref{eq:Fdu} we see that 
\ant{
\| A \|_{ \Ls^\infty L^1_t L^\I_x}  &\lesssim  \int_0^\I  \| s^{\frac{3}{4}}\psi_s \|_{L^2_t L^\I_x} \| s^{\frac{1}{4}}\Psi \|_{L^2_t L^\I_x} \, \ds \\
& \lesssim \left(  \int_0^\I  \| s^{\frac{3}{4}}\psi_s \|_{L^2_t L^\I_x}^2  \, \ds \right)^{\frac{1}{2}} \left(  \int_0^\I   \| s^{\frac{1}{4}}\Psi \|_{L^2_t L^\I_x}^2   \, \ds \right)^{\frac{1}{2}}\\ 
& \lesssim \left(  \int_0^\I  \| s^{\frac{1}{2}}\psi_s \|_{L^2_t L^8_x} \| s \na \psi_s \|_{L^2_t L^8_x} \, \ds \right)^{\frac{1}{2}} \left(  \int_0^\I   \| s^{\frac{1}{4}}\Psi\|_{L^2_t L^\I_x}^2   \, \ds \right)^{\frac{1}{2}}  \\
& \lesssim  \| \psi_s \|_{\Ls^\I \cap \Ls^2 \mathcal{S}_{s}(I)}^2 = \nrm{\psi_{s}}_{\calS(I)}^{2}
}
where we used~\eqref{eq:pa2inf} in the last line. 
Lastly, we establish~\eqref{eq:Ainf2}. Arguing similarly, we have 
\ant{
\| s^{\frac{1}{2}} A \|_{L^\I_tL^\I_x} \lesssim \int_s^\I \Big( \frac{s}{s'} \Big)^{\frac{1}{2}} \|s' \psi_s\|_{L^\I_t L^\I_x} \| (s')^{\frac{1}{2}} \Psi \|_{L^\I_t L^\I_x} \, \dsp
} 
Using Schur's test, we deduce that  
\ant{
\| s^{\frac{1}{2}} A \|_{ \Ls^\I \cap \Ls^2 L^\I_tL^\I_x} &\lesssim \|s \psi_s\|_{\Ls^\I \cap \Ls^2 L^\I_t L^\I_x} \| s^{\frac{1}{2}} \Psi \|_{\Ls^\I L^\I_t L^\I_x} \\
& \lesssim \eps \| \psi_s \|_{ \cS(I)}
}
where the last line follows from~\eqref{eq:psinfinf} and~\eqref{eq:painf}. 
This completes the proof. 
\end{proof}

 \section{Analysis of the wave equation satisfied by $\psi_s$}  \label{s:wave} 
 
In this section we complete the proof of~\eqref{eq:psap} under the bootstrap assumption~\eqref{eq:bs}. Indeed we establish the following proposition. 
\begin{prop}\label{p:pSI} Let $\psi_s$ denote the heat tension field as defined in~\eqref{eq:htf}. There exists $\eps_{2}>0$ small enough so that if the bootstrap assumptions~\eqref{eq:bs} and \eqref{eq:bsp} hold with $\eps <\eps_{2}$, then 
\EQ{ \label{eq:psisap} 
\| \psi_s \|_{\mathcal{S}(I)} &\lesssim  \| s^{\frac{1}{2}} \na_{t, x}\psi_s \rest_{t = 0} \|_{\Ls^\infty \cap \Ls^2 L^2_x} \\
& \lesssim  \| (d u_0, u_1) \|_{H^1 \times H^1(\Hp^4; T\NN)} = \eps
} 
for an implicit constant that is independent of $I$, as well as the constants $C_0$ from~\eqref{eq:bs} and $C_{1}$ from~\eqref{eq:bsp}.  \end{prop}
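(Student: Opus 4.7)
The plan is to start from the Strichartz estimate~\eqref{eq:psstr}, which already reduces the a priori estimate to controlling the six nonlinear terms on its right-hand side. The initial data term is bounded by $\eps$ via Proposition~\ref{p:en} and the hypothesis~\eqref{eq:smalldata} on $(du_0, u_1)$. For each of the remaining five terms I aim for a bound of the form $C\eps \|\psi_s\|_{\cS(I)}$ or $C \eps^{a}\|\psi_s\|_{\cS(I)}^{b}$ with $a+b \geq 2$, so that closure follows from a standard continuity argument once $\eps$ is taken sufficiently small.

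The three multilinear terms $s^{1/2}F_s{}^\alp\psi_\alp$, $s^{1/2}A^\alp\nb_\alp\psi_s$, and $s^{1/2}A^\alp A_\alp\psi_s$ are all multilinear in $\psi_s$, $\Psi$, and $A$, and can be dispatched directly using the $s$-weighted Strichartz-type bounds collected in Lemma~\ref{l:PA}, Lemma~\ref{l:16}, Proposition~\ref{p:preg8}, and Corollary~\ref{c:APsi}, together with the pointwise identity~\eqref{eq:Fsapw} and Hölder--Schur-type arguments in $s, t, x$. The term $s^{1/2}\nb^\alp A_\alp \psi_s$ is more delicate because $\nb A$ does not admit a pointwise bound of the correct scaling: here I would use $\p_s A_\alp = F_{s\alp}$ together with the boundary condition $A_\alp \to 0$ as $s \to \I$ to express
\begin{equation*}
\nb^\alp A_\alp(s) = -\int_s^\I \nb^\alp F_{s\alp}(s')\, ds' = -\int_s^\I \bigl(\Db^\alp F_{s\alp} - [A^\alp, F_{s\alp}]\bigr)(s')\, ds',
\end{equation*}
and then apply the divergence identity~\eqref{eq:divFs} of Lemma~\ref{l:divFs} to rewrite $\Db^\alp F_{s\alp}$ as curvature expressions in $\psi_s$, $\Psi$, and the wave tension field $w$. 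After Schur's test in $s$, the $w$-free part of this term is controlled as in the multilinear case; the contribution involving $w$ couples into the final step.

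The main obstacle is the term $s^{1/2}\p_s w$. Since $w$ satisfies the covariant parabolic equation~\eqref{eq:hw} with zero initial data $w(0)=0$, the strategy is to decompose $\Db^b\Db_b = \Delta + 2A^b\nb_b + A^bA_b + (\nb^b A_b)$, rewrite~\eqref{eq:hw} as a scalar heat equation $(\p_s - \Delta)w = H$, and apply the inhomogeneous parabolic regularity estimates of Lemma~\ref{lem:ih} pointwise in $t$ before integrating. This should yield
\begin{equation*}
\|s^{1/2}\p_s w\|_{\Ls^\I \cap \Ls^2 L^1_t L^2_x} + \|s^{1/2}\Delta w\|_{\Ls^\I \cap \Ls^2 L^1_t L^2_x} \lesssim \|s^{1/2}H\|_{\Ls^\I \cap \Ls^2 L^1_t L^2_x},
\end{equation*}
where $H$ consists of the RHS of~\eqref{eq:hw} --- in which the divergence terms $\Db^\alp F_\alp{}^b\psi_b$ and $\Db^b F^\alp{}_b\psi_\alp$ must themselves be expanded via Lemma~\ref{l:divFs} --- plus the perturbative $A$-corrections from converting $\Db^b\Db_b$ to $\Delta$. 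The $A$-corrections produce a factor of $\eps^2$ and are absorbed into the left-hand side via a subsidiary continuity argument for a suitable $w$-norm; the genuine source terms are multilinear in small quantities and yield bounds of the form $\eps\|\psi_s\|_{\cS(I)}^2$ or $\eps^2\|\psi_s\|_{\cS(I)}$ after exploiting the estimates of Sections~\ref{s:pr}--\ref{s:wide}. Combining all four steps produces an inequality of the shape $\|\psi_s\|_{\cS(I)} \leq C\eps + C\eps\|\psi_s\|_{\cS(I)} + C\|\psi_s\|_{\cS(I)}^2$, which for $\eps < \eps_2$ sufficiently small implies $\|\psi_s\|_{\cS(I)} \lesssim \eps$, strictly improving the bootstrap~\eqref{eq:bsp} and concluding the proof.
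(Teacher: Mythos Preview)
Your overall strategy matches the paper's: start from~\eqref{eq:psstr}, treat the three ``easy'' multilinear terms directly, handle $\nb^\alp A_\alp$ via the integral representation and the divergence identity~\eqref{eq:divFs}, and control $\p_s w$ through the parabolic equation~\eqref{eq:hw} with zero data. That is exactly how the paper proceeds (the paper packages the last two steps as Lemma~\ref{l:w} and Corollary~\ref{c:A83}).

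There is, however, a real gap in your $w$-step. The claimed inequality
\[
\|s^{1/2}\p_s w\|_{\Ls^\infty\cap\Ls^2 L^1_tL^2_x}+\|s^{1/2}\Delta w\|_{\Ls^\infty\cap\Ls^2 L^1_tL^2_x}\lesssim\|s^{1/2}H\|_{\Ls^\infty\cap\Ls^2 L^1_tL^2_x}
\]
does \emph{not} follow from Lemma~\ref{lem:ih}: that lemma operates at the scaling $s^\alpha(-\Delta)^\alpha$ with input $\|sG\|$, not $\|s^{1/2}G\|$, and the $\Ls^\infty$ endpoint of your estimate is in fact false by a logarithmic divergence near $s'=s$ in the Duhamel integral. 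The paper does not use Lemma~\ref{lem:ih} here; it writes $w(s)=\int_0^s e^{(s-s')\Delta}G_2(s')\,ds'$, splits at $s/2$, and applies the $L^p$ semigroup bounds of Lemma~\ref{l:hk} with Schur's test. The near-diagonal piece forces you to distribute half a derivative onto the source, so one must control \emph{both} $\|s^{1/2}G_2\|_{\Ls^\infty\cap\Ls^2 L^1_tL^2_x}$ and $\|s\,\nb G_2\|_{\Ls^\infty\cap\Ls^2 L^1_tL^2_x}$; see~\eqref{eq:G2} and~\eqref{eq:naG2}. The $\nb G_2$ estimate is a genuine additional step (it requires differentiating the curvature terms via~\eqref{eq:DF}).

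A second omission: the $w$-contribution to $\nb^\alp A_\alp$ (the $\bfR^{(0)}(\psi_s,w)$ term from~\eqref{eq:divFs}) needs $w$ in an $L^2_t$-based norm, not the $L^1_t$ norm you established for $\p_s w$. The paper proves a separate estimate~\eqref{eq:w222}, namely $\|s^{1/4}\nb w\|_{\Ls^2 L^2_tL^2_x}\lesssim\eps\|\psi_s\|_{\cS(I)}^2$, by another Duhamel-plus-Schur argument, and only then closes Corollary~\ref{c:A83}. Your sketch says the $w$-piece ``couples into the final step'' but does not identify this second norm.
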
 

\begin{rem} \label{r:ineq2} 
Note that Proposition~\ref{p:pSI} closes the bootstrap hypothesis~\eqref{eq:bsp}. The other bootstrap assumption \eqref{eq:bs} will be closed in Section~\ref{s:proof}.
We also remark that the second inequality in~\eqref{eq:psisap} has already been established using the theory of the harmonic map heat flow and our assumptions on the initial data $(u_0, u_1)$. 
\end{rem} 

Once we have established the a priori estimates in Proposition~\ref{p:ap}, the estimate~\eqref{eq:psisap} holds for the heat tension field $\psi_s$ associated to any smooth wave map $u$ on any time interval $I$ and with smooth data as in
~\eqref{eq:sd}.  One can then deduce the following uniform control of the $\mathcal{S}(I)$ norm of $\De \psi_s$. This control of higher derivatives of $\psi_s$ will be used in the proof of Theorem~\ref{t:main1}.  

\begin{cor}\label{c:hr} 
 Let $(u_0, u_1)$ be initial data as in~\eqref{eq:data},~\eqref{eq:data1} with 
\EQ{
\| (d u_0, u_1) \|_{ H^3 \times H^3( \Hp^4; T\NN)} =: B_0 < \infty
}
and let $(u, \partial_t u)$ be a smooth wave map on a time interval $I \subset \R $ with initial data $(u, \partial_t u) \rest_{t = 0}  = (u_0, u_1)$. Let $\psi_s$ denote the heat tension field associated to $u$ as defined  above. Then,  there exists $\eps_{3}>0$ small enough so that if 
\EQ{ \label{eq:psapb} 
\| \psi_s \|_{\mathcal{S}(I)}  \le  \eps_{3}
}
 we have the following uniform estimates on the $\calS(I)$ norm of $(\De \psi_s, \p_t \De \psi_s)$  in terms of the $H^3 \times H^3$ norm of the data $(d u_0, u_1)$, 
 \EQ{ \label{eq:DepsiS} 
 \|   \De \psi_s \|_{ \cS(I)} \lesssim B_0
}
\end{cor}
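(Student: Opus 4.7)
The proof will mirror that of Proposition~\ref{p:pSI}, applied this time to $\Dlt \psi_s$ instead of $\psi_s$. Since $\psi_s$ is $\bbR^{n}$-valued and treated component-wise as a scalar function on $\bbR \times \bbH^{4}$, and since the domain metric is independent of $t$, $\Dlt$ commutes with both $\Box_{\bbH^{4}}$ and $\rd_{t}$. Applying $\Dlt$ to the wave equation \eqref{eq:psis} therefore yields
\[
\Box_{\bbH^{4}} \Dlt \psi_s = \Dlt\bb(\rd_s w - \etab^{\alp\bt} F_{s\alp} \psi_\bt - 2 \etab^{\alp\bt} A_\alp \nb_\bt \psi_s - \etab^{\alp\bt} A_\alp A_\bt \psi_s - \etab^{\alp\bt}(\nb_\alp A)_\bt \psi_s\bb).
\]
Applying Strichartz estimates (Theorem~\ref{t:str}) with the admissible pairs used to derive \eqref{eq:psstr} and taking the $s^{\frac{1}{2}}$-weighted $\Ls^{\infty} \cap \Ls^{2}$ norms, matters are reduced to controlling the $\Ls^{\infty} \cap \Ls^{2} L^{1}_{t} L^{2}_{x}$ norms of each term on the right, together with the initial data quantity $\nrm{s^{\frac{1}{2}} \nb_{t,x} \Dlt \psi_s \rest_{t=0}}_{\Ls^{\infty} \cap \Ls^{2} L^{2}}$. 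The latter is controlled by $B_{0}$ thanks to Lemma~\ref{l:hrhf} (which gives $L^{2}$-parabolic bounds with two fewer powers of $s$) together with the equivalence-of-norms argument in Section~\ref{s:eqnorm}.

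Before estimating the nonlinear terms, I would first establish higher-derivative analogues of Proposition~\ref{p:preg8}, Lemma~\ref{l:PA}, and Corollary~\ref{c:APsi}, using Lemma~\ref{l:hrhf} in place of Lemma~\ref{l:hmhf} and Lemma~\ref{lem:lin-hm}. These give bounds for $\nb^{(k)} \psi_s$, $\nb^{(k)} \Psi$, $\nb^{(k)} A$ (and their $\p_t$ counterparts) with two fewer powers of $s$, at the cost of replacing $\eps$ by $B_{0}$. Combined with the already-established bounds from Sections~\ref{s:cg}--\ref{s:wave} and the bootstrap smallness $\nrm{\psi_s}_{\cS(I)} \le \eps_{3}$, every term obtained by expanding $\Dlt$ on the nonlinearity of \eqref{eq:psis} via the Leibniz rule can be handled by distributing derivatives and applying Hölder's inequality, Sobolev embedding, and Schur's test in $s$, exactly as in the proofs of Proposition~\ref{p:preg8} and Proposition~\ref{p:pSI}. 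Terms containing $\Dlt \psi_s$ or $\Dlt \nb_{t,x} \psi_s$ linearly pick up a factor of $\nrm{\psi_s}_{\cS(I)} \le \eps_{3}$ and can be absorbed into the left-hand side of the Strichartz inequality, while all remaining terms are bounded by $B_{0}$ through the higher-regularity bounds.

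The main obstacle, as in Proposition~\ref{p:pSI}, is the term $\Dlt \rd_s w$. We control it by applying $\Dlt$ to the covariant heat equation \eqref{eq:hw} for $w$ (which has vanishing initial data at $s = 0$) and invoking the $L^{p}_{x}$-parabolic regularity of the scalar heat semigroup from Lemma~\ref{l:hk}. This reduces matters to estimating $\Dlt$ applied to each of the source terms in \eqref{eq:hw}, namely products of $\psi_s$, $\Psi$, and curvature-type quantities built from $F$. Distributing the two extra spatial derivatives among the factors forces us to use precisely the higher-derivative bounds obtained in the preliminary step; once this is done, careful bookkeeping of $s$-weights shows that each resulting product lies in the appropriate mixed Lebesgue space with bound either $\eps_{3} \nrm{\Dlt \psi_s}_{\cS(I)}$ (absorbable) or $B_{0}$ (acceptable). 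A standard continuity argument in $s$, together with choosing $\eps_{3}$ sufficiently small, then closes the estimate and yields \eqref{eq:DepsiS}.
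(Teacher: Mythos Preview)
Your proposal is correct and follows essentially the same approach as the paper. The paper's own proof is just a brief remark stating that the argument is identical to that of Proposition~\ref{p:pSI} after applying $\Dlt$ to \eqref{eq:psis}, yielding an effectively linear equation for $\Dlt \psi_s$ with coefficients that are small because of \eqref{eq:psapb}; your outline spells out precisely this strategy, including the use of Lemma~\ref{l:hrhf} for the initial-data term and the handling of $\Dlt\rd_s w$ via the heat equation for $w$. One small slip: the continuity argument at the end should be in $t$ (bootstrapping $\nrm{\Dlt\psi_s}_{\cS(I)}$ over the time interval), not in $s$.
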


\begin{rem} We note that given the a priori control~\eqref{eq:psapb}, the uniform control of higher derivatives of  $\De \psi_s$ as in Corollary~\ref{c:hr} follows from essentially the same proof as the proof of Proposition~\ref{p:pSI} after taking the Laplacian of~\eqref{eq:psis}. Indeed, one obtains an effectively linear equation for $\De \psi_s$ with coefficients that are small because of~\eqref{eq:psapb}.  To avoid repetition, we omit the details of the proof of Corollary~\ref{c:hr} and instead refer the reader to the details of the proof of Proposition~\ref{p:pSI}. 

Lastly, we remark that the introduction of $\eps_{3}$ can be avoided by carefully making use of divisible norms; cf. Remark~\ref{r:extra-dlt}.
\end{rem}

The proof of Proposition~\ref{p:pSI} requires the following two additional lemmas. 
\begin{lem}\label{l:w}
Let $w  = w(s, t, x)$ denote the wave tension field defined as in~\eqref{eq:wtf}. Then, assuming~\eqref{eq:bs} and~\eqref{eq:bsp}, we have 
\begin{align} \label{eq:w12} 
&\|s^{\frac{1}{2}} \p_s w\|_{ \Ls^\I \cap \Ls^2 L^1_t L^2_x} \lesssim  \eps \| \psi_s \|_{ \mathcal{S}(I)}^2 \\
& \|s^{\frac{1}{4}}  \na w \|_{ \Ls^2 L^2_t L^2_x} \lesssim   \eps \| \psi_s\|_{\mathcal{S}(I)}^2   \label{eq:w222}
\end{align}
\end{lem}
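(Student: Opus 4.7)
For each fixed $t$, we first rewrite \eqref{eq:hw} as a scalar inhomogeneous heat equation for $w$. Using $\bfD^b \bfD_b w = \Dlt w + 2 A^b \nb_b w + (\nb^b A_b) w + A^b A_b w$ and moving the correction terms to the right, we obtain $(\p_s - \Dlt) w = M$ with $w(0, t, \cdot) = 0$, where $M$ is the sum of the right-hand side of \eqref{eq:hw} and the three connection-correction terms. Applying Lemma~\ref{l:divFs} (together with the antisymmetry of $F$) to the divergence terms $\bfD^\alp F_{\alp}^{\,b} \psi_b$ and $\bfD^b F^\alp_{\,b} \psi_\alp$ expresses them in terms of $\psi_s$, $\bfD\Psi$, $w$, and pullback curvatures, so $M$ splits schematically into: (i) trilinear and quartic expressions in $(\psi_s, \Psi, \bfD\Psi, A)$; (ii) a $w$-linear piece of the form $\bfR^{(0)}(\Psi, w) \Psi$ from the $\bfR^{(0)}(\psi_{\mathbf{a}}, w)$ contribution in \eqref{eq:divFs}; and (iii) the connection corrections $A \nb w$, $(\nb A) w$, $A^2 w$. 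The integral representation $w(s) = \int_0^s e^{(s-s') \Dlt} M(s') \, ds'$, combined with $\p_s w = \Dlt w + M$, reduces \eqref{eq:w12} and \eqref{eq:w222} to $s$-weighted $L^p_t L^2_x$ bounds on $M$, which are extracted via Lemma~\ref{lem:ih} and Lemma~\ref{lem:hh-L2:infty} applied in the Banach-valued setting with target $L^1_t L^2_x$ or $L^2_t L^2_x$.

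\textbf{Multilinear estimates.} The trilinear and quartic terms of type (i) are bounded by distributing $s$-weights across the factors and applying H\"older in $(t, x)$, drawing on the full collection of estimates already available: the $\cS(I)$ norm and Proposition~\ref{p:preg8}, Lemma~\ref{l:16} for $\psi_s$; the mixed norms of Lemma~\ref{l:PA} and Corollary~\ref{c:APsi}, especially \eqref{eq:painf}, \eqref{eq:pa2inf}, \eqref{eq:pa8}, \eqref{eq:pa16}, \eqref{eq:Dpa4}, for $\Psi$ and $\bfD\Psi$; and Lemma~\ref{l:A}, \eqref{eq:Ainf}, \eqref{eq:Ainf2} for $A$. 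Every term in $M$ of type (i) contains at least one $\psi_s$, controlled through the critical $L^2_t L^8_x$-component of $\cS(I)$ with weight $s^{1/2}$, together with at least two factors of $\Psi$ or $A$, each carrying a pointwise $L^\infty_{t,x}$ smallness of order $\eps s^{-1/2}$. Choosing the $s$-weight distribution so the total $s$-exponent matches the target ($s$ for \eqref{eq:w12}, $s^{3/4}$ for \eqref{eq:w222}) and the H\"older exponents produce $L^1_t L^2_x$ or $L^2_t L^2_x$ then yields a bound of the form $\eps \|\psi_s\|_{\cS(I)}^2$ in the $\Ls^\infty \cap \Ls^2$ sense.

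\textbf{Self-referential terms and main obstacle.} The type (ii) and (iii) contributions to $M$ are linear in $w$ or $\nb w$, but each coefficient carries a pointwise smallness factor: $\|s^{1/2} \Psi\|_{L^\infty_{t,x}}, \|s^{1/2} A\|_{L^\infty_{t,x}} \lesssim \eps$ and $\|s \nb_{t,x}\underline{A}\|_{L^\infty_{t,x}} \lesssim \eps^2$ by \eqref{eq:painf}, \eqref{eq:Ainf}. Their total contribution is therefore bounded by $C \eps \bigl( \|s^{1/2} w\|_{\Ls^\infty \cap \Ls^2 L^1_t L^2_x} + \|s^{1/4} \nb w\|_{\Ls^2 L^2_t L^2_x} \bigr)$. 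The main obstacle is this self-referential character of the estimate, which is resolved in the standard way: one first proves the coupled a priori bound on a truncated interval $s \in (0, s_0]$, where smoothness of $u$ and the parabolic regularity theory from Section~\ref{s:hmhf} guarantee finiteness of all the quantities appearing; absorbs the $C \eps$ multiple of the left-hand side using $\eps \ll 1$; and finally lets $s_0 \to \infty$ by a continuity argument. The secondary difficulty is purely a bookkeeping one—the careful balancing of $s$-weights across triple and quadruple products with mixed Lebesgue exponents—but the collection of bounds \eqref{eq:painf}--\eqref{eq:Dpa4}, \eqref{eq:Ainf}, \eqref{eq:Ainf2}, \eqref{eq:preg8}, \eqref{eq:psinfinf}--\eqref{eq:ps16} is precisely tailored so that each such distribution closes.
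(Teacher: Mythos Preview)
Your overall strategy matches the paper's: rewrite \eqref{eq:hw} as a scalar heat equation $(\rd_s - \Dlt)w = G_2$, use Duhamel with $w(0)=0$, bound the forcing with self-referential $w$-terms carrying $\eps$-small coefficients, and absorb. The expansion of the divergence terms via Lemma~\ref{l:divFs} and the classification of the pieces of the forcing are also correct.

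The gap is in the choice of self-referential quantity for \eqref{eq:w12}. You propose to close on $\|s^{1/2} w\|_{\Ls^\infty\cap\Ls^2 L^1_t L^2_x}$, but this is too weak to handle the terms $(\nb^b A_b) w$, $A^b\nb_b w$, $A^b A_b w$, and $\bfR^{(0)}(w,\psi^a)\psi_a$ inside $\|s^{1/2} G_2\|_{L^1_t L^2_x}$. For instance, $\|s^{1/2}(\nb A)w\|_{L^2_x}$ needs either $\|\nb A\|_{L^\infty_x}\|s^{1/2}w\|_{L^2_x}$ (unavailable, since $\nb A$ only obeys $\|s\nb A\|_{L^\infty_x}\aleq\eps^2$) or $\|s\nb A\|_{L^\infty_x}\|s^{-1/2}w\|_{L^2_x}$. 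The paper therefore first proves the auxiliary bound
\[
\|s^{-1/2} w\|_{\Ls^\infty\cap\Ls^2 L^1_t L^2_x}+\|s^{1/2}\Dlt w\|_{\Ls^\infty\cap\Ls^2 L^1_t L^2_x}\aleq\eps\|\psi_s\|_{\cS(I)}^2,
\]
and it is this pair of quantities on which the absorption closes; \eqref{eq:w12} then follows from $\rd_s w=\Dlt w+G_2$. Relatedly, Lemma~\ref{lem:ih} as stated does not produce the negative $s$-weight $s^{-1/2}$ on $w$: the paper instead uses the pointwise heat-semigroup bounds of Lemma~\ref{l:hk} directly (splitting the Duhamel integral at $s/2$) together with Schur's test. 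For \eqref{eq:w222} your choice $\|s^{1/4}\nb w\|_{\Ls^2 L^2_t L^2_x}$ is the correct self-referential quantity, and the paper's argument there is exactly what you sketch.
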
 

\begin{cor} \label{c:A83}
Under the assumptions of the previous lemma, we have
\EQ{ \label{eq:naA83}
  \| \na^\al A_\al \|_{\Ls^\I  L^{2}_t L^{\frac{8}{3}}_x} \lesssim    \| \psi_s \|_{ \cS(I)}^2
}
\end{cor}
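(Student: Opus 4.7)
The plan is to estimate $\nb^\alpha A_\alpha$ by integrating $\p_s A_\alpha = F_{s\alpha}$ from $s_0$ to $\infty$, using the boundary condition $A_\alpha \to 0$ as $s \to \infty$, and then applying the spacetime divergence identity of Lemma~\ref{l:divFs}. More precisely, taking the spatial divergence of the resulting identity yields
$$
\nb^\alpha A_\alpha(s_0) = -\int_{s_0}^\infty \nb^\alpha F_{s\alpha}(s)\, ds.
$$
Using $\nb^\alpha F_{s\alpha} = \bfD^\alpha F_{s\alpha} - [A^\alpha, F_{s\alpha}]$, together with \eqref{eq:divFs} at $\mathbf{a} = s$, the representation $F_{s\alpha} = \bfR^{(0)}(\psi_s, \psi_\alpha)$, and the pointwise bound \eqref{eq:DR-ptwise} on the $\bfR^{(m)}$, I will obtain the pointwise estimate
$$
|\nb^\alpha F_{s\alpha}| \lesssim |\bfD \psi_s|\, |\Psi| + |\psi_s|\, |w| + |\Psi|^2\, |\psi_s| + |A|\, |\psi_s|\, |\Psi|.
$$
The problem thus reduces to bounding, uniformly in $s_0$, the $s$-integral of the $L^2_t L^{8/3}_x$ norm of each of these four products.

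The hard part will be the term $|\psi_s|\, |w|$, for which only $\Ls^2$-type control on $w$ is available through~\eqref{eq:w222}. I will handle it by first upgrading \eqref{eq:w222} via the Sobolev embedding $\|w\|_{L^4_x} \lesssim \|\nb w\|_{L^2_x}$ on $\Hp^4$ from Lemma~\ref{l:se}, obtaining $\|s^{1/4} w\|_{\Ls^2 L^2_t L^4_x} \lesssim \eps \|\psi_s\|_{\cS(I)}^2$. For the companion factor $\psi_s$, the Gagliardo--Nirenberg estimate $\|\nb \psi_s\|_{L^{8/3}_x} \lesssim \|\nb \psi_s\|_{L^2_x}^{1/2} \|\nb^2 \psi_s\|_{L^2_x}^{1/2}$, followed by Sobolev $\|\psi_s\|_{L^8_x} \lesssim \|\nb \psi_s\|_{L^{8/3}_x}$ and the bounds from Proposition~\ref{p:preg8} (applied to $\|s^{1/2}\nb\psi_s\|_{\Ls^2 L^\infty_t L^2_x}$ and $\|s\nb^2\psi_s\|_{\Ls^2 L^\infty_t L^2_x}$), will give $\|s^{3/4} \psi_s\|_{\Ls^2 L^\infty_t L^8_x} \lesssim \|\psi_s\|_{\cS(I)}$. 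Cauchy--Schwarz with respect to $ds/s$ then yields
$$
\int_0^\infty \|\psi_s\, w\|_{L^2_t L^{8/3}_x}\, ds \le \|s^{3/4} \psi_s\|_{\Ls^2 L^\infty_t L^8_x}\, \|s^{1/4} w\|_{\Ls^2 L^2_t L^4_x} \lesssim \eps \|\psi_s\|_{\cS(I)}^3,
$$
which is acceptable under the bootstrap \eqref{eq:bsp}.

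The remaining three products are free of $w$ and are handled by the collection of $L^p$ bounds on $\psi_s$, $\Psi$, $\bfD\psi_s$, and $A$ from Proposition~\ref{p:preg}, Proposition~\ref{p:preg8}, Lemma~\ref{l:A}, Lemma~\ref{l:PA}, Lemma~\ref{l:16}, and Corollary~\ref{c:APsi}. For the bilinear term $|\bfD \psi_s|\, |\Psi|$ I will use a Hölder split such as $L^2_t L^8_x \cdot L^\infty_t L^4_x$, invoking Proposition~\ref{p:preg8} for $\bfD \psi_s$ (after writing $\bfD = \nb + A$ and using Lemma~\ref{l:A}) and a uniform $L^\infty_t L^4_x$ bound on $\Psi$ coming from Proposition~\ref{p:preg} and Sobolev. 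For the cubic terms $|\Psi|^2|\psi_s|$ and $|A||\psi_s||\Psi|$, I will peel off one factor in $\Ls^\infty L^\infty_t L^\infty_x$ (with an $\eps$-factor gain) using Lemma~\ref{l:PA} or Lemma~\ref{l:A}, and treat the rest by the same type of Hölder split. The main technical point throughout is to match the $s$-weights supplied by Proposition~\ref{p:preg8} and Corollary~\ref{c:APsi} so that the resulting $ds/s$-integrals converge via Schur's test or Cauchy--Schwarz to a bound of the form $\lesssim \|\psi_s\|_{\cS(I)}^2$ (with additional $\eps$-factors for the cubic contributions).
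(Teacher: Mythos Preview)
Your overall strategy is exactly the paper's: integrate $F_{s\alp}$ in $s$, pass to $\bfD^{\alp} F_{s\alp}$ via the commutator $[A^{\alp}, \cdot]$, invoke the divergence identity \eqref{eq:divFs}, and estimate the four resulting products. Your treatment of the hardest term $|\psi_{s}||w|$ is correct and coincides with the paper's (Sobolev on $w$ to reach $L^{4}_{x}$, then pair with $\|s^{3/4}\psi_{s}\|_{\Ls^{2} L^{\infty}_{t} L^{8}_{x}}$ and Cauchy--Schwarz in $\ds$).

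There is, however, a genuine gap in the split you propose for the bilinear term $|\bfD_{t,x}\psi_{s}|\,|\Psi|$. With the choice $L^{2}_{t} L^{8}_{x} \cdot L^{\infty}_{t} L^{4}_{x}$, the only available $L^{2}_{t} L^{8}_{x}$ bound on $\nb_{t,x}\psi_{s}$ (Proposition~\ref{p:preg8}) carries weight $s^{1}$, while the ``uniform $L^{\infty}_{t} L^{4}_{x}$ bound on $\Psi$'' you cite has only $\Ls^{\infty}$-in-$s$ control (no positive $s$-weight, hence no $\Ls^{2}$ bound). The integrand $\|s\,\nb_{t,x}\psi_{s}\|_{L^{2}_{t} L^{8}_{x}}\,\|\Psi\|_{L^{\infty}_{t} L^{4}_{x}}$ is therefore only in $\Ls^{2}$, not $\Ls^{1}$, and the $\ds$-integral does not close. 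The same obstruction hits your cubic terms once you peel off a factor in $\Ls^{\infty} L^{\infty}_{t,x}$ and try to close the remaining bilinear with $\Psi \in L^{\infty}_{t} L^{4}_{x}$.

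The fix (and what the paper does) is to put the $L^{2}_{t}$ integrability on $\Psi$ rather than on $\bfD_{t,x}\psi_{s}$, using the positively $s$-weighted bounds of Corollary~\ref{c:APsi}. Concretely, for $|\bfD_{t,x}\psi_{s}|\,|\Psi|$ the paper uses
\[
\|s\,\bfR^{(0)}(\bfD^{\alp}\psi_{s},\psi_{\alp})\|_{L^{2}_{t} L^{8/3}_{x}}
\lesssim \|s^{7/8}\nb_{t,x}\psi_{s}\|_{L^{\infty}_{t} L^{16/5}_{x}}\,\|s^{1/8}\Psi\|_{L^{2}_{t} L^{16}_{x}},
\]
with both factors in $\Ls^{2}$ by \eqref{eq:ps165} and \eqref{eq:pa16}, so Cauchy--Schwarz in $\ds$ closes. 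For the cubic term $|\Psi|^{2}|\psi_{s}|$ the paper similarly uses $\|s^{1/2}\psi_{s}\|_{L^{2}_{t} L^{8}_{x}}\,\|s^{1/4}\Psi\|_{L^{\infty}_{t} L^{8}_{x}}^{2}$ and the $\Ls^{\infty}\cap\Ls^{2}$ bound \eqref{eq:pa8}. Once you replace the uniform $L^{4}_{x}$ bound on $\Psi$ by these weighted $L^{8}_{x}$ or $L^{16}_{x}$ bounds, your argument goes through.
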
 

We postpone the proofs of Lemma~\ref{l:w}  and Corollary~\ref{c:A83} and complete the proof of Proposition~\ref{p:pSI}. 

\begin{proof}[Proof of Proposition~\ref{p:pSI}]

The starting point is~\eqref{eq:psstr} which we recall was the inequality 
\ant{
\| \psi_s \|_{\mathcal{S}(I)} &\lesssim  \| s^{\frac{1}{2}} \na_{t, x}\psi_s \rest_{t = 0} \|_{\Ls^\infty \cap \Ls^2 L^2_x} +  \| s^{\frac{1}{2}} \p_s w\|_{\Ls^\infty \cap \Ls^2 L^1_t L^2_x} \\
& \quad + \|  s^{\frac{1}{2}} \tensor{F}{_{s}^{\alp}} \psi_\al\|_{\Ls^\infty \cap \Ls^2 L^1_t L^2_x}    + \|  s^{\frac{1}{2}}\na^\al A_\al \psi_s \|_{\Ls^\infty \cap \Ls^2 L^1_t L^2_x} \\
&\quad  + \|s^{\frac{1}{2}}A^\al  \nb_\al \psi_s\|_{\Ls^\infty \cap \Ls^2 L^1_t L^2_x} + \|s^{\frac{1}{2}}A^\al A_\al  \psi_s\|_{\Ls^\infty \cap \Ls^2 L^1_t L^2_x} 
}
established in Section~\ref{s:outline} using Strichartz estimates. 

We estimate the last five terms on the right-hand side above as follows. First by Lemma~\ref{l:w} we have 
\ant{
\|s^{\frac{1}{2}} \p_s w\|_{ \Ls^\I \cap \Ls^2 L^1_t L^2_x} \lesssim  \eps \| \psi_s \|_{ \mathcal{S}(I)}^2
}
Next using Sobolev embedding, ~\eqref{eq:pa16} and~\eqref{eq:12preg} we have  
\ant{
\|s^{\frac{1}{2}} F_{s}^{\, \, \al} \psi_\al \|_{ \Ls^\I \cap \Ls^2 L^1_t L^2_x} &\lesssim \|s^{\frac{1}{8}} \Psi \|_{ \Ls^\I \cap \Ls^2 L^2_t L^{16}_x}^2 \| s^{\frac{1}{4}} \p_su \|_{ \Ls^\I L^\infty_t L^{\frac{8}{3}}_x}  \\
&\lesssim  \|s^{\frac{1}{8}} \Psi \|_{ \Ls^\I \cap \Ls^2 L^2_t L^{16}_x}^2 \| s^{\frac{1}{4}}(-\De)^{\frac{1}{4}} \p_su \|_{ \Ls^\I  L^\infty_t L^{2}_x} \\
& \lesssim  \eps \| \psi_s \|_{\cS(I)}^2 
}
For the next term we use~\eqref{eq:naA83} to deduce 
\ant{
\|  s^{\frac{1}{2}}\na^\al A_\al \psi_s \|_{\Ls^\infty \cap \Ls^2 L^1_t L^2_x} &\lesssim \|\na^\al A_\al \|_{ \Ls^\infty L^2_t L^{\frac{8}{3}}_x} \| s^{\frac{1}{2}}\psi_s \|_{\Ls^\I \cap \Ls^2 L^2_t L^8_x} \\
&\lesssim  \eps \|\psi_s \|^2_{\cS(I)}
}
Similarly, this time using~\eqref{eq:Ainf1} we see that  
\ant{
\|s^{\frac{1}{2}}A^\al  \nb_\al \psi_s\|_{\Ls^\infty \cap \Ls^2 L^1_t L^2_x} &\lesssim \| A\|_{\Ls^\I  L^1_t L^\I_x} \| s^{\frac{1}{2}} \na_{t,x} \psi_s \|_{\Ls^\I \cap \Ls^2 L^\infty_t L^2_x} \\
& \lesssim \eps \|\psi_s \|^2_{\cS(I)}
}
Finally, we use~\eqref{eq:AL4}, ~\eqref{eq:Ainf1}, and Sobolev embedding to estimate
\ant{
\|s^{\frac{1}{2}}A^\al A_\al  \psi_s\|_{\Ls^\infty \cap \Ls^2 L^1_t L^2_x}  & \lesssim  \|A \|_{\Ls^\I L^\I_t L^4_x} \|A \|_{\Ls^\I  L^1_t L^\I_x} \|s^{\frac{1}{2}} \psi_s \|_{\Ls^\I \cap \Ls^2 L^\infty_t L^4} \\
&\lesssim \eps^3 \|\psi_s \|^2_{\cS(I)}
}
Putting this together we have proved that 
\EQ{ \label{eq:ap1} 
\| \psi_s \|_{\mathcal{S}(I)} \lesssim  \| s^{\frac{1}{2}} \na_{t, x}\psi_s \rest_{t = 0} \|_{\Ls^\infty \cap \Ls^2 L^2_x} + \eps \| \psi_s \|_{\mathcal{S}(I)}^2 
}
By the usual continuity argument it follows that 
\ant{
\| \psi_s \|_{\mathcal{S}(I)} \lesssim  \| s^{\frac{1}{2}} \na_{t, x}\psi_s \rest_{t = 0} \|_{\Ls^\infty \cap \Ls^2 L^2_x} 
}
as desired. In view of Remark~\ref{r:ineq2} this completes the proof of Proposition~\ref{p:pSI}. 
\end{proof}

It remains to prove Lemma~\ref{l:w} and Corollary~\ref{c:A83}. 

\begin{proof}[Proof of Lemma~\ref{l:w}] 
We begin with the proof of~\eqref{eq:w12}. We first show that 
\EQ{ \label{eq:wdew}
\|s^{-\frac{1}{2}} w\|_{\Ls^\I \cap \Ls^2 L^1_tL^2_x}+ \| s^{\frac{1}{2}} \De w\|_{\Ls^\I \cap \Ls^2 L^1_tL^2_x} \lesssim \eps \| \psi_s \|_{\cS(I)}^2
}
Recall that $w$ obeys the covariant heat equation~\eqref{eq:hw}. Expanding out the right-hand side using~\eqref{eq:Fdu} and~\eqref{eq:DF} gives
\ant{
 \p_s w - \Db^a \Db_a w  &=  \bfR^{(0)}( w,  \psi^a) \psi_a + 3 \bfR^{(0)}( \psi_\mu, \psi^a) \Db^\mu \psi_a +  \bfR^{(1)}( \psi^\mu;  \psi_\mu ,\psi^a) \psi_a  \\
 & \quad + \bfR^{(1)}( \psi^a;  \psi^\mu, \psi_a) \psi_\mu + \bfR^{(0)}(\psi_\mu, \Db^\mu \psi^a) \psi_a + \bfR^{(0)}( \Db^a \psi^\mu, \psi_a) \psi_\mu
}
On the other hand, expanding out the left-hand side using $\Db_a = \na_a + A_a$ yields, 
\begin{align} 
\p_s w - \De w &= -2 A^a \p_a w - \na^a A_a w - A^a A_a w  \notag \\
&  \,  +  \bfR^{(0)}( w,  \psi^a) \psi_a + 3 \bfR^{(0)}( \psi_\mu, \psi^a) \Db_a \psi^\mu +  \bfR^{(1)}( \psi^\mu;  \psi_\mu ,\psi^a) \psi_a  \notag \\
 & \, + \bfR^{(1)}( \psi^a;  \psi^\mu, \psi_a) \psi_\mu + \bfR^{(0)}(\psi_\mu, \Db^a \psi^\mu) \psi_a + \bfR^{(0)}( \Db^a \psi^\mu, \psi_a) \psi_\mu
 \notag \\
 &=: G_2(s) \label{eq:weq}
\end{align}

Now, since $w(s ,t, x)$ satisfies $w(0, t, x) = 0$ we can write 
\EQ{ \label{eq:s-w} 
s^{-\frac{1}{2}}w(s) =  \int_0^{\frac{s}{2}} s^{-\frac{1}{2}}e^{(s-s') \De} G_2(s') \, ds' +  \int_{\frac{s}{2}}^s s^{-\frac{1}{2}} e^{(s-s') \De} G_2(s') \, ds'
}
It follows that 
\EQ{ \label{eq:dew-s}
\hspace{-.1in} s^{\frac{1}{2}} \De w = \int_0^{\frac{s}{2}} s^{\frac{1}{2}} \De e^{(s-s') \De} G_2(s') \, ds' -  \int_{\frac{s}{2}}^s s^{\frac{1}{2}} (-\De)^{\frac{1}{2}}e^{(s-s') \De} (-\De)^{\frac{1}{2}}G_2(s') \, ds'
}
Next, we use~\eqref{eq:pest} on~\eqref{eq:s-w} and then~\eqref{eq:depest} and~\eqref{eq:napest} on~\eqref{eq:dew-s} to deduce that 
\ant{
\| s^{-\frac{1}{2}}  w\|_{L^1_tL^2_x}& \lesssim \int_0^{\frac{s}{2}} \Big( \frac{s'}{s} \Big)^{\frac{1}{2}} \| (s')^{\frac{1}{2}} G_2(s')\|_{L^1_t L^2_x} \, \dsp   \\
 &\quad  + \int_{\frac{s}{2}}^s \Big( \frac{s'}{s} \Big)^{\frac{1}{2}} \|(s')^{\frac{1}{2}} G_2(s')\|_{L^1_t L^2_x} \, \dsp 
} 
and
\ant{
\| s^{\frac{1}{2}} \De w\|_{L^1_tL^2_x}& \lesssim \int_0^{\frac{s}{2}} \frac{s^{\frac{1}{2}}(s')^{\frac{1}{2}}}{s-s'} \| (s')^{\frac{1}{2}} G_2(s')\|_{L^1_t L^2_x} \, \dsp   \\
 &\quad  + \int_{\frac{s}{2}}^s \frac{s^{\frac{1}{2}}}{(s-s')^{\frac{1}{2}}} \|s' \na G_2(s')\|_{L^1_t L^2_x} \, \dsp \\
  &  \lesssim  \int_0^{\frac{s}{2}} \Big(\frac{s'}{s} \Big)^{\frac{1}{2}} \| (s')^{\frac{1}{2}} G_2(s')\|_{L^1_t L^2_x} \, \dsp   \\
  &\quad + \int_{\frac{s}{2}}^s \frac{s^{\frac{1}{2}}}{(s-s')^{\frac{1}{2}}} \|s' \na G_2(s')\|_{L^1_t L^2_x} \, \dsp \\
}
Applying Schur's test to the two previous displayed inequalities we obtain 
\ant{
\|s^{-\frac{1}{2}} w\|_{\Ls^\I \cap \Ls^2 L^1_tL^2_x}+ \| s^{\frac{1}{2}} \De w\|_{\Ls^\I \cap \Ls^2 L^1_tL^2_x} &\lesssim \| s^{\frac{1}{2}} G_2\|_{\Ls^\I \cap \Ls^2 L^1_tL^2_x} \\
& \quad +  \|s \na G_2\|_{\Ls^\I \cap \Ls^2 L^1_tL^2_x}
}
Hence it suffices to show that 
\EQ{ \label{eq:G2} 
 \| s^{\frac{1}{2}} G_2\|_{\Ls^\I \cap \Ls^2 L^1_tL^2_x} & \lesssim  \eps^{2} \Big( \|s^{-\frac{1}{2}} w\|_{\Ls^\I \cap \Ls^2 L^1_tL^2_x}+ \| s^{\frac{1}{2}} \De w\|_{\Ls^\I \cap \Ls^2 L^1_tL^2_x} \Big) \\
& \quad + \eps \| \psi_s\|_{\cS(I)}^2
}
and 
\EQ{ \label{eq:naG2} 
  \|s \na G_2\|_{\Ls^\I \cap \Ls^2 L^1_tL^2_x} &\lesssim  \eps^{2} \Big( \|s^{-\frac{1}{2}} w\|_{\Ls^\I \cap \Ls^2 L^1_tL^2_x}+ \| s^{\frac{1}{2}} \De w\|_{\Ls^\I \cap \Ls^2 L^1_tL^2_x}\Big) \\
& \quad + \eps \| \psi_s\|_{\cS(I)}^2
}
We first estimate each of the terms that arise on the left-hand side of~\eqref{eq:G2}, recalling that $G_2(s)$ is defined in~\eqref{eq:weq}.  First, using~\eqref{eq:Ainf}  we have 
\ant{
\| s^{\frac{1}{2}}A^a \p_a w\|_{\Ls^\I \cap \Ls^2 L^1_tL^2_x} &\lesssim \| s^{\frac{1}{2}}A \|_{\Ls^\infty L^\I_t L^\I_x} \| \na w\|_{\Ls^\I \cap \Ls^2 L^1_tL^2_x}  \\
&\lesssim \eps^2 \|s^{-\frac{1}{2}} w\|_{\Ls^\I \cap \Ls^2 L^1_tL^2_x}^{\frac{1}{2}}  \| s^{\frac{1}{2}} \De w\|_{\Ls^\I \cap \Ls^2 L^1_tL^2_x}^{\frac{1}{2}}
}
and 
\ant{
\| s^{\frac{1}{2}}\na^aA_a  w\|_{\Ls^\I \cap \Ls^2 L^1_tL^2_x} &\lesssim \| s \na A \|_{\Ls^\infty L^\I_t L^\I_x} \| s^{-\frac{1}{2}} w\|_{\Ls^\I \cap \Ls^2 L^1_tL^2_x}  \\
&\lesssim \eps^2 \|s^{-\frac{1}{2}} w\|_{\Ls^\I \cap \Ls^2 L^1_tL^2_x}
}
and
\ant{
\| s^{\frac{1}{2}}A^a A_a w\|_{\Ls^\I \cap \Ls^2 L^1_tL^2_x} &\lesssim \| s^{\frac{1}{2}}A \|_{\Ls^\infty L^\I_t L^\I_x}^2 \| s^{-\frac{1}{2}} w\|_{\Ls^\I \cap \Ls^2 L^1_tL^2_x}  \\
&\lesssim \eps^4 \|s^{-\frac{1}{2}} w\|_{\Ls^\I \cap \Ls^2 L^1_tL^2_x}
}
Next, note that 
\ant{
\| s^{\frac{1}{2}}\bfR^{(0)}( w,  \psi^a) \psi_a \|_{\Ls^\I \cap \Ls^2 L^1_tL^2_x} & \lesssim  \| s^{-\frac{1}{2}} w\|_{\Ls^\I \cap \Ls^2 L^1_tL^2_x}  \| s^{\frac{1}{2}} \Psi \|^2_{ \Ls^\I  L^\I_t L^\I_x} \\
&\lesssim  \eps^2  \| s^{-\frac{1}{2}} w\|_{\Ls^\I \cap \Ls^2 L^1_tL^2_x}
}
where we have used~\eqref{eq:painf} in the last line above. 

Next, we observe that using~\eqref{eq:AL4},~\eqref{eq:pa16},  Sobolev embedding, and Gagliardo--Nirenberg  we have 
\ant{
\| &s^{\frac{1}{2}}\bfR^{(0)}( \psi_\mu, \psi^a) \Db_a \psi^\mu \|_{\Ls^\I \cap \Ls^2 L^1_tL^2_x}   \\
&\lesssim  \| s^{\frac{1}{8} }\Psi \|_{\Ls^\I \cap \Ls^2 L^2_tL^{16}_x}^2 \Big(  \| s^{\frac{1}{4}} \na \Psi \|_{ \Ls^\I L^\I_t  L^{\frac{8}{3}}_x} + \| s^{\frac{1}{4}} \Psi \|_{ \Ls^\I L^\I_t  L^{8}_x} \| A \|_{\Ls^\I L^\I_t L^4_x} \Big)  \\
& \lesssim  \| \psi_s \|_{ \cS(I)}^2   \|  \na \Psi \|_{\Ls^\I L^\I_t L_{x}^2}^{\frac{1}{2}} \|s^{\frac{1}{2}}  \na^2 \Psi\|_{\Ls^\I L^\I_t L_{x}^2}^{\frac{1}{2}}   \lesssim \eps  \| \psi_s \|_{ \cS(I)}^2
}
where in the last line above we used~\eqref{eq:eqnak}. Arguing similarly we have 
\begin{multline*}
 \| s^{\frac{1}{2}}\bfR^{(1)}( \psi^\mu;  \psi_\mu ,\psi^a) \psi_a \|_{\Ls^\I \cap \Ls^2 L^1_tL^2_x}  \\
  \lesssim  \| s^{\frac{1}{8}} \Psi \|_{\Ls^\I \cap \Ls^2 L^2_t L^{16}_x}^2  \| s^{\frac{1}{4}} \Psi \|_{ \Ls^\I L^\I_t  L^{8}_x}  \| \Psi \|_{ \Ls^\I L^\I_t  L^{4}_x} 
  \lesssim \eps^2 \| \psi_s \|_{\cS(I)}^2
 \end{multline*}
The remaining  terms in~\eqref{eq:weq} are controlled in a similar fashion as the previous two,  thus completing the proof of~\eqref{eq:G2}. A nearly identical argument is used to prove~\eqref{eq:naG2}. This proves~\eqref{eq:wdew}. Finally, to complete the proof of~\eqref{eq:w12} we note that from equation~\eqref{eq:weq} we have 
\ant{
 \| s^{\frac{1}{2}} \p_s w\|_{\Ls^\I \cap \Ls^2 L^1_tL^2_x} &\lesssim \| s^{\frac{1}{2}} \De w\|_{\Ls^\I \cap \Ls^2 L^1_tL^2_x} + \| s^{\frac{1}{2}} G_2 \|_{\Ls^\I \cap \Ls^2 L^1_tL^2_x} \\
 & \lesssim  \eps \| \psi_s\|_{\cS(I)}^2
 }
 where we have used~\eqref{eq:G2} followed by~\eqref{eq:wdew} in the last line above. 
 
 Next, we establish~\eqref{eq:w222}. First note that it suffices to show that 
 \EQ{ \label{eq:w222a} 
 \|s^{\frac{1}{4}}  (-\De)^{\frac{1}{2}} w \|_{ \Ls^2 L^2_t L^2_x} \lesssim   \eps \| \psi_s\|_{\mathcal{S}(I)}^2
 }
 Using that $w(0, t, x) = 0$ from the Duhamel formula  and with $G_2$ as above we have 
 \ant{
 s^{\frac{1}{4}} (-\De)^{\frac{1}{2}} w (s)  =  \int_0^s s^{\frac{1}{4}}(-\De)^{\frac{1}{2}} e^{(s-s') \De} G_2(s') \, ds'
 }
 Using~\eqref{eq:napest} we see that 
 \ant{
 \|s^{\frac{1}{4}} (-\De)^{\frac{1}{2}} w(s) \|_{L^2_tL^2_x} \lesssim \int_0^s \frac{ s^{\frac{1}{4}} (s')^{\frac{1}{4}}}{(s-s')^{\frac{1}{2}}} \| (s')^{\frac{3}{4}}G_2(s')\|_{L^2_t L^2_x} \,  \dsp
 }
By Schur's test we then deduce that 
\ant{
\|s^{\frac{1}{4}} (-\De)^{\frac{1}{2}} w(s) \|_{\Ls^2 L^2_tL^2_x} \lesssim  \| s^{\frac{3}{4}} G_2(s) \|_{\Ls^2 L^2_t L^2_x}
}
Therefore, it suffices to show that 
\EQ{ \label{eq:G22} 
\| s^{\frac{3}{4}} G_2(s) \|_{\Ls^2 L^2_t L^2_x} \lesssim    \eps^{2} \| s^{\frac{1}{4}} (-\De)^{\frac{1}{2}} w \|_{\Ls^2 L^2_tL^2_x}  +  \eps  \| \psi_s \|_{ \cS(I)}^2
}
We estimate each of the terms in $G_2$, which is defined in~\eqref{eq:weq}.  We first control all of the terms involving $w$. Using~\eqref{eq:Ainf2} we have 
\ant{
\| s^{\frac{3}{4}} A^a \p_a w \|_{\Ls^2 L^2_t L^2_x} &\lesssim  \| s^{\frac{1}{2}} A \|_{\Ls^\I L^\I_t L^\I_x}  \| s^{\frac{1}{4}} \na w \|_{\Ls^2 L^2_t L^2_x}  \\
& \lesssim  \eps^{2}  \| s^{\frac{1}{4}} (-\De)^{\frac{1}{2}} w \|_{\Ls^2 L^2_t L^2_x} 
}
Next, we use~\eqref{eq:dkAL2} to deduce that 
\ant{
\| s^{\frac{3}{4}} \na^a A_a w \|_{\Ls^2 L^2_t L^2_x} &\lesssim  \| s^{\frac{1}{2}}  \na A \|_{ \Ls^\I L^\I_t L_{x}^4} \|  s^{\frac{1}{4}}  w \|_{\Ls^2 L^2_t L^4_x}  \\
& \lesssim \| s^{\frac{1}{2}}  \na^2 A \|_{ \Ls^\I L^\I_t L_{x}^2} \|  s^{\frac{1}{4}} (-\De)^{\frac{1}{2}}  w \|_{\Ls^2 L^2_t L^2_x}  \\
& \lesssim \eps^{2}  \|  s^{\frac{1}{4}}  (-\De)^{\frac{1}{2}} w \|_{\Ls^2 L^2_t L^2_x}
}
 Continuing as above, using~\eqref{eq:Ainf2} and~\eqref{eq:AL4} we observe 
 \ant{
 \| A^a A_a w  \|_{\Ls^2 L^2_t L^2_x}  & \lesssim  \|s^{\frac{1}{2}} A \|_{\Ls^\I L^\I_t L^\I_x } \| A \|_{ \Ls^\I L^\I_t L^4_x} \| s^{\frac{1}{4}} w \|_{\Ls^2 L^2_t L^4_x} \\
 & \lesssim  \eps^4  \|  s^{\frac{1}{4}}  (-\De)^{\frac{1}{2}} w \|_{\Ls^2 L^2_t L^2_x}
 }
 And lastly, using~\eqref{eq:pa8} and~\eqref{eq:bsp} we see that 
 \ant{
 \|  \bfR^{(0)}( w,  \psi^a) \psi_a \|_{\Ls^2 L^2_t L^2_x}  & \lesssim   \| s^{\frac{1}{4}} w \|_{\Ls^2 L^2_t L^4_x}   \| s^{\frac{1}{4}} \Psi \|_{\Ls^\I  L^\I_t L^8_x}^2 \\
 & \lesssim   \|  s^{\frac{1}{4}}  (-\De)^{\frac{1}{2}} w \|_{\Ls^2 L^2_t L^2_x} \| \psi_s \|_{ \cS(I)}^2 \\
 & \lesssim \eps^2 \|  s^{\frac{1}{4}}  (-\De)^{\frac{1}{2}} w \|_{\Ls^2 L^2_t L^2_x}
 }
 Next, we estimate the terms that do not contain $w$. To start we use~\eqref{eq:pa16},~\eqref{eq:pa8}, and~\eqref{eq:DP165}  to establish that 
 \begin{multline*}
  \|s^{\frac{3}{4}} \bfR^{(0)}( \psi_\mu, \psi^a) \Db_a \psi^\mu   \|_{\Ls^2 L^2_t L^2_x}  \\ \lesssim  \| s^{\frac{1}{8}}  \Psi \|_{ \Ls^2L^2_t L^{16}_x} \| s^{\frac{1}{4}} \Psi \|_{ \Ls^\I L^\I_t L^8_x} \| s^{\frac{3}{8}} \Db  \Psi \|_{ \Ls^\I L^\I_t  L^{\frac{16}{5}}_x}  
  \lesssim  \eps  \| \psi_s \|_{ \cS(I)}^2
  \end{multline*} 
  Similarly, we use~\eqref{eq:pa16},~\eqref{eq:pa8}, Sobolev embedding,~\eqref{eq:DP165}, and~\eqref{eq:eqna} to show that 
  \ant{
   \| s^{\frac{3}{4}} & \bfR^{(1)}( \psi^\mu;  \psi_\mu ,\psi^a) \psi_a \|_{\Ls^2 L^2_t L^2_x}   \\
   & \lesssim   \| s^{\frac{1}{8}}  \Psi \|_{ \Ls^2L^2_t L^{16}_x} \| s^{\frac{1}{4}} \Psi \|_{ \Ls^\I L^\I_t L^8_x} \| s^{\frac{3}{8}}  \Psi \|_{ \Ls^\I L^\I_t L^{16}_x} \| \Psi \|_{ \Ls^\I L^\I_t L^4_x} \\
   & \lesssim  \| \psi_s \|_{ \cS(I)}^2   \| s^{\frac{3}{8}}   \na  \Psi \|_{ \Ls^\I L^\I_t L^{\frac{16}{5}}_x} \|  \na  \Psi \|_{ \Ls^\I L^\I_t L^2_x} \\
   & \lesssim \eps^2  \| \psi_s \|_{ \cS(I)}^2  
  }
 The remaining terms in~\eqref{eq:weq} can be handled in an identical fashion as the previous two, concluding the proof of~\eqref{eq:G22}.  This completes the proof of~\eqref{eq:w222}. 
\end{proof}

Lastly, we prove Corollary~\ref{c:A83}. 

\begin{proof}[Proof of Corollary~\ref{c:A83}]
Recall that 
\ant{
A_\al(s)  =  - \int_s^\I s' F_{s \al}  \dsp
}
It follows that  
\ant{
\na^\al A_\al(s) =  - \int_{s}^\I  s' \Db^\al F_{s \al}  \, \dsp  + \int_s^\I s' [A^\al, F_{s \al}]  \, \dsp
}
and thus, 
\EQ{ \label{eq:naA283}
 \| \na^\al A_\al \|_{ \Ls^\I L^2_t L_{x}^{\frac{8}{3}}}  \lesssim   \int_{0}^\I   \| s' \Db^\al F_{s \al} \|_{L^2_t L^{\frac{8}{3}}_x}  \, \dsp  +  \int_0^\I \|s' [A^\al ,F_{s \al}]\|_{L^2_t L^{\frac{8}{3}}_x}   \, \dsp
}
Using the notation from Section~\ref{s:curv} and recalling that $w = \Db^\al \psi_\al$ we write 
\ant{
 \Db^\al F_{s \al} =  \bfR^{(1)}( \psi^\al; \psi_s, \psi_\al) +  \bfR^{(0)}(  \Db^\al \psi_s, \psi_\al) +  \bfR^{(0)}( \psi_s , w) 
 }
 and we estimate each term on the right as follows: 
 \ant{
 & \|  s' \bfR^{(1)}( \psi^\al; \psi_s, \psi_\al) \|_{L^2_t L^{\frac{8}{3}}_x} \lesssim  \| (s')^{\frac{1}{2}} \psi_s \|_{L^2_tL^8_x}  \| (s')^{\frac{1}{4}} \Psi \|_{L^{\infty}_t L^8_x}^2 }
 Next, 
 \ant{
    \|  \bfR^{(0)}(  \Db^\al \psi_s, \psi_\al) \|_{L^2_t L^{\frac{8}{3}}_x}& \lesssim  \| (s')^{\frac{7}{8}}  \Db_{t, x} \psi_s \|_{L^{\infty}_t L^{\frac{16}{5}}_x} \|(s')^{\frac{1}{8}} \Psi \|_{L^2_t L^{16}_x} \\
  &   \lesssim \| (s')^{\frac{7}{8}}  \na_{t, x} \psi_s \|_{L^{\infty}_t L^{\frac{16}{5}}_x} \|(s')^{\frac{1}{8}} \Psi \|_{L^2_t L^{16}_x} \\
  }
  where the last line follows using \eqref{eq:AL4}.
 For the term involving $w$ we have  
  \ant{
    \|\bfR^{(0)}( \psi_s , w)  \|_{L^2_t L^{\frac{8}{3}}_x}  &\lesssim   \| (s')^{\frac{3}{4}} \psi_s \|_{L^\I_t L^8_x} \| (s')^{\frac{1}{4}} w \|_{L^2_t L^4_x} \\
    & \lesssim  \|  (s')^{\frac{1}{2}} \psi_s \|_{L^\infty_t L_{x}^4}^{\frac{1}{2}} \| s' \na \psi_s \|_{L^\I_t L_{x}^4}^{\frac{1}{2}} \| (s')^{\frac{1}{4}} \na w \|_{L^2_t L^2_x}\\
        & \lesssim  \|  (s')^{\frac{1}{2}} \na \psi_s \|_{L^\infty_t L^2_x}^{\frac{1}{2}} \| s' \na^2 \psi_s \|_{L^\I_t L^2_x}^{\frac{1}{2}} \|  (s')^{\frac{1}{4}} \na w \|_{L^2_t L^2_x}
 }
 For the last term on the right-hand side of~\eqref{eq:naA283} we will use the estimate 
 \ant{
 \|s' [A^\al, F_{s \al}]\|_{L^2_t L^{\frac{8}{3}}_x} & \lesssim \| (s')^{\frac{1}{4}} A \|_{L^\I_t L^8_x} \| (s')^{\frac{1}{2}} \psi_s  \|_{L^2_t L^8_x}  \| (s')^{\frac{1}{4}}  \Psi \|_{L^\I_t L^8_x} \\ 
 & \lesssim  \| A \|_{L^\I_t L^4_x}^{\frac{1}{2}}  \| (s')^{\frac{1}{2}} \na  A \|_{L^\I_t L^4_x}^{\frac{1}{2}}  \| (s')^{\frac{1}{2}} \psi_s  \|_{L^2_t L^8_x}  \| (s')^{\frac{1}{4}}  \Psi \|_{L^\I_t L^8_x} \\
 & \lesssim \eps^2 \| (s')^{\frac{1}{2}} \psi_s  \|_{L^2_t L^8_x}  \| (s')^{\frac{1}{4}}  \Psi \|_{L^\I_t L^8_x}
 } 
 where the last line follows from Sobolev embedding,~\eqref{eq:AL4}, and~\eqref{eq:dkAL2}. 
Plugging all of the preceding estimates back into~\eqref{eq:naA283} we obtain
\ant{
 \| \na^\al A_\al \|_{ \Ls^\I L^2_t L_{x}^{\frac{8}{3}}}  & \lesssim  \int_0^\I  \| (s')^{\frac{1}{2}} \psi_s \|_{L^2_tL^8_x}  \| (s')^{\frac{1}{4}} \Psi \|_{L^{\infty}_t L^8_x}^2 \,   \dsp  \\
 & \quad +  \int_0^\I  \| (s')^{\frac{7}{8}}  \na_{t, x} \psi_s \|_{L^{\infty}_t L^{\frac{16}{5}}_x} \|(s')^{\frac{1}{8}} \Psi \|_{L^2_t L^{16}_x} \, \dsp \\
 & \quad +  \int_0^\I \|  (s')^{\frac{1}{2}} \na \psi_s \|_{L^\infty_t L^2_x}^{\frac{1}{2}} \| s' \na^2 \psi_s \|_{L^\I_t L^2_x}^{\frac{1}{2}} \| (s')^{\frac{1}{4}} \na w \|_{L^2_t L^2_x}
 \, \dsp  \\
 &\quad +  \eps^2 \int_0^\I   \| (s')^{\frac{1}{2}} \psi_s  \|_{L^2_t L^8_x}  \| (s')^{\frac{1}{4}}  \Psi \|_{L^\I_t L^8_x}  \, \dsp \\
 &  \lesssim \| \psi_s \|_{ \cS(I)}^2
}
Here the estimate on the last line follows by several application of Cauchy-Schwarz along with~\eqref{eq:ps165},~\eqref{eq:pa16},~\eqref{eq:pa8}, and~\eqref{eq:w222}.  This concludes the proof. 
\end{proof}

\section{Proofs of Proposition~\ref{p:ap} and Theorem~\ref{t:main1}} \label{s:proof}

The first goal of this section is to complete the proof of Proposition~\ref{p:ap} by closing the bootstrap~\eqref{eq:bs}$\Longrightarrow$\eqref{eq:bsc}.  We then conclude with the proof of Theorem~\ref{t:main1}. 

We begin by establishing the following a priori estimates,  which together with Proposition~\ref{p:pSI} can be used to deduce Proposition~\ref{p:ap}.

\begin{prop}\label{p:s=0}
Let $\psi_s$ denote the heat tension field defined in~\eqref{eq:htf} and let  $\Psi  =  \psi_\al dx^\al$ as above. Then 
\EQ{
\|  \Db   \Psi  \rest_{s=0} \|_{L^\infty_t L^2_x( I \times \Hp^4)}  \lesssim \|\psi_s \|_{S(I)}.
}
where the implicit constant above is independent of $C_0$ in~\eqref{eq:bs}. 
\end{prop}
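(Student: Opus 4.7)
The plan is to express $\bfD\Psi\rest_{s=0}$ as an $s$-integral over $(0,\infty)$ using the caloric gauge structure, and then to estimate the resulting integrals using the mixed-norm bounds on $\psi_s$, $\Psi$, and $A$ established in Sections~\ref{s:caloric}--\ref{s:pr}. Specifically, since $\bfD\Psi(s) \to 0$ as $s \to \infty$ by the pointwise decay from Proposition~\ref{p:cg} and Lemma~\ref{l:udec}, the fundamental theorem of calculus gives
\[
\bfD_b\psi_\al(0, t, x) = -\int_0^\infty \p_s(\bfD_b\psi_\al) \, ds.
\]
In the caloric gauge we have $A_s = 0$, so $\bfD_s = \p_s$; using $\bfD_s\psi_\al = \bfD_\al\psi_s$ (torsion-free, cf.~\eqref{eq:Dsps}) and the commutator $[\bfD_s, \bfD_b]\psi_\al = F_{sb}\psi_\al$ (the domain curvature contribution vanishes since one index is $s$), the $s$-derivative simplifies and we arrive at
\[
\bfD_b\psi_\al(0, t, x) = -\int_0^\infty \bfD_b\bfD_\al\psi_s \, ds - \int_0^\infty F_{sb}\psi_\al \, ds.
\]

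The second integral is straightforward: using $|F_{sb}| \lesssim |\psi_s||\Psi|$ from \eqref{eq:Fsapw}, Minkowski, and H\"older in $x$, the $L^\infty_t L^2_x$ norm is reduced to an $s$-integral of a product of mixed $L^p_{t,x}$-norms of $\psi_s$ and $\Psi$. Inserting the pointwise-in-$s$ bounds from Lemma~\ref{l:PA}, Corollary~\ref{c:APsi}, and Lemma~\ref{l:16}, and pairing the $\Ls^\infty$ and $\Ls^2$ components of these bounds via Cauchy-Schwarz in $s$, one obtains a bound of the form $\eps\|\psi_s\|_\calS$ for this contribution.

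The first integral is handled by splitting $\bfD_b\bfD_\al\psi_s = \na_b\bfD_\al\psi_s + A_b\bfD_\al\psi_s$. The piece $A_b\bfD_\al\psi_s$ is controlled using the pointwise bounds on $A$ from Lemma~\ref{l:A} and Corollary~\ref{c:APsi} together with the $L^p$ bounds on $\bfD\psi_s$ from \eqref{eq:DP165}. The principal piece $\na_b\bfD_\al\psi_s$ is estimated by combining the parabolic smoothing bounds \eqref{eq:preg8} for $\na\na_{t,x}\psi_s$ at small $s$ with the improved decay at large $s$ coming from \eqref{eq:kpreg} and the higher-regularity bounds \eqref{eq:12preg}; again Cauchy-Schwarz in $s$ with appropriate fractional weights is used to secure convergence. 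Collecting everything gives $\|\bfD\Psi\rest_{s=0}\|_{L^\infty_t L^2_x} \lesssim \|\psi_s\|_\calS + \eps\|\psi_s\|_\calS$, and the smallness of $\eps$ from the bootstrap assumption yields the proposition.

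The hardest step is securing integrability at the $s \to 0^+$ endpoint of the first integral, since the direct pointwise estimate $\|\bfD\bfD\psi_s\|_{L^2_x} \lesssim s^{-1}\|\psi_s\|_\calS$ coming from the $\Ls^\infty$ component of \eqref{eq:preg8} is only borderline non-integrable. The key is that the $\Ls^2$ component of the smoothing estimates must be used in tandem with $\Ls^\infty$ control, via a Cauchy-Schwarz split in $s$, to extract the needed $L^1(ds)$ integrability on any bounded $s$-interval.
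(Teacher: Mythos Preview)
Your approach has a genuine gap at the $s \to 0^{+}$ endpoint that cannot be repaired by a Cauchy--Schwarz split in $s$. The principal term you must control is $\int_{0}^{\infty}\|\bfD_{b}\bfD_{\alp}\psi_{s}(s)\|_{L^{2}_{x}}\,ds$. The only bounds the paper supplies on $\bfD^{2}\psi_{s}$ (Proposition~\ref{p:preg8}, Corollary~\ref{c:en}, and the extrinsic analogues in Proposition~\ref{p:preg}) are of the form $\|s\,\bfD^{2}\psi_{s}\|_{L^{\infty}_{\frac{ds}{s}} \cap L^{2}_{\frac{ds}{s}}(L^{\infty}_{t}L^{2}_{x})} \aleq \|\psi_{s}\|_{\calS}$. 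If you set $g(s)=\|\bfD^{2}\psi_{s}(s)\|_{L^{2}_{x}}$, these say $s\,g(s)\le C$ and $\int_{0}^{\infty} s\,g(s)^{2}\,ds \le C^{2}$; attempting $\int_{0}^{1}g\,ds \le \big(\int_{0}^{1}s\,g^{2}\,ds\big)^{1/2}\big(\int_{0}^{1}s^{-1}\,ds\big)^{1/2}$ fails because the second factor diverges, and interpolating to any $L^{r}_{\frac{ds}{s}}$ with $2\le r\le\infty$ gives the same logarithmic obstruction. Thus the hand-wave ``Cauchy--Schwarz in $s$ with appropriate fractional weights'' cannot close: the estimate is exactly borderline, not merely apparently so.

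The paper's proof sidesteps this by an \emph{energy method}: pairing the covariant heat equation~\eqref{eq:covheat} for $\psi_{a}$ against $\bfD_{c}\bfD^{c}\psi^{a}$ and integrating over $(s_{1},s_{2})\times\bbH^{4}$ yields an identity expressing $\|\bfD\underline{\Psi}(s_{1})\|_{L^{2}_{x}}^{2}$ in terms of $\|\bfD\underline{\Psi}(s_{2})\|_{L^{2}_{x}}^{2}$ plus $\int_{s_{1}}^{s_{2}}\|s^{1/2}\bfD^{2}\underline{\Psi}\|_{L^{2}_{x}}^{2}\,\frac{ds}{s}$ and cubic/quartic error terms. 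After sending $s_{2}\to\infty$ and $s_{1}\to 0$, the main term is $\|s^{1/2}\bfD^{2}\underline{\Psi}\|_{L^{2}_{\frac{ds}{s}}L^{\infty}_{t}L^{2}_{x}}^{2}$, which is finite by Lemma~\ref{l:s=0 temp 2} (itself proved by the FTC formula \emph{from $s$ to $\infty$}, where the Schur kernel $(s/s')^{1/2}$ against $\|(s')^{3/2}\bfD^{3}\psi_{s}\|$ is integrable). The squaring inherent in the energy identity is precisely what converts the borderline $L^{1}_{ds}$ requirement in your approach into the available $L^{2}_{\frac{ds}{s}}$ control.
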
 

Before beginning the proofs of Proposition~\ref{p:s=0} and Proposition~\ref{p:ap}, we first note that once these results have been established we can also prove uniform estimates on higher derivatives of $\Psi$ assuming appropriate control on higher Sobolev norms of the initial data $(u_0, u_1)$.  As in Corollary~\ref{c:hr} the key point here is that $\| \psi_s \|_{ \cS(I)}$ is a controlling norm for all of the dynamic variables. We state the following corollary of Proposition~\ref{p:pSI}, Corollary~\ref{c:hr} and Proposition~\ref{p:s=0}, which will be needed in the proof of Theorem~\ref{t:main1}. We omit the proof as it is nearly identical to the proof of Proposition~\ref{p:s=0} after commuting the covariant Laplacian, $\Db^b \Db_b$, with~\eqref{eq:covheat} and integrating by parts to control the full third order covariant derivative $\Db^{3} \Psi$ in $L^{2}_{x}$.

\begin{cor} \label{c:hr1} 
Let $(u_0, u_1)$ be initial data as in~\eqref{eq:data},~\eqref{eq:data1} with  
\EQ{
\| (d u_0, u_1) \|_{ H^3 \times H^3( \Hp^4; T\NN)} =: B_0 < \infty
}
and let $(u, \partial_t u)$ be a smooth wave map on a time interval $I \subset \R $ with initial data $(u, \partial_t u) \rest_{t = 0}  = (u_0, u_1)$. Let $\psi_s$ be the associated heat tension field and $\Psi = \psi_\al dx^\al$ be the representation of $d_{t, x} u$ in the caloric gauge as defined as above.  Then there exists $\eps_{4}>0$ with the following property.  Let $\eps_{3}>0$ be as in Corollary~\ref{c:hr} and assume that $\| \psi_s \|_{ \cS(I)} = \eps<   \eps_{3} < \eps_{4}$ is small enough so that the hypothesis of Corollary~\ref{c:hr} are satisfied so that  
 \EQ{ \label{eq:DepsiS1} 
 \|   \De \psi_s \|_{ \cS(I)} \lesssim B_0
}
holds. Then $\eps_{4}>0$ can be chosen small enough so that 
\EQ{ 
 \|  \Db^3 \Psi \rest_{s=0} \| _{ L^\infty_t L^2_x( I \times \Hp^4) } \lesssim B_0.
}
with an implicit constant that is independent of $I$. 
\end{cor}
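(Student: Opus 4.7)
The strategy follows that of Proposition~\ref{p:s=0}, but at one higher order of differentiation. The starting point is the covariant heat equation for $\psi_{\alp}$ from Lemma~\ref{l:pah}, namely
\begin{equation*}
(\rd_{s} - \bfD^{b} \bfD_{b}) \psi_{\alp} = c_{\alp} \psi_{\alp} + \bfR^{(0)}(\psi_{\alp}, \psi^{b}) \psi_{b},
\end{equation*}
where $c_{\alp} = 3$ for spatial $\alp$ and $c_{\alp} = 0$ for $\alp = t$. The plan is to commute $\bfD^{c} \bfD_{c}$ through this equation so as to produce a covariant heat equation for $\bfD^{c} \bfD_{c} \psi_{\alp}$, and then run an energy argument analogous to that of Proposition~\ref{p:s=0}, using as additional inputs the higher regularity bound $\nrm{\Dlt \psi_{s}}_{\calS(I)} \aleq B_{0}$ from Corollary~\ref{c:hr} together with the small caloric-gauge norms already at our disposal.

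First I would commute $\bfD^{c} \bfD_{c}$ with the covariant heat operator $\rd_{s} - \bfD^{b} \bfD_{b}$ on $\psi_{\alp}$. The resulting commutators split into three classes: (i) domain-curvature contributions from $[\nb_{c}, \nb_{b}]$ acting on $\bbR^{n}$-valued tensors, which by the formula \eqref{eq:commDD} and the identity \eqref{Ricci formula} are constant-coefficient lower-order terms absorbable on the left; (ii) gauge-curvature contributions involving $F$, $\bfD F$, and $\bfD^{(2)} F$, which by \eqref{eq:DF} and \eqref{eq:DR} are pointwise controlled by expressions of the form $\bfR^{(k)}(\ldots)$ involving up to two extra factors of $\Psi$ and one extra $\bfD \Psi$; and (iii) the image of $\bfD^{c} \bfD_{c}$ applied to the semilinear term $\bfR^{(0)}(\psi_{\alp}, \psi^{b}) \psi_{b}$, which expands via the Leibniz rule into schematic trilinear and quintilinear expressions of the type $\bfR^{(\leq 2)}(\Psi, \Psi; \Psi, \bfD^{\leq 2}\Psi)$.

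The energy argument then proceeds by testing the resulting equation against $\bfD^{c}\bfD_{c}\psi_{\alp}$ in $L^{2}_{x}$, integrating by parts, and using the fact that (by the caloric boundary condition and Lemma~\ref{l:udec}) $\bfD^{c}\bfD_{c}\psi_{\alp}(s) \to 0$ as $s \to \infty$ in every reasonable norm, so that the $L^{\infty}_{t} L^{2}_{x}$ norm of $\bfD^{c}\bfD_{c}\psi_{\alp} \rest_{s=0}$ is recovered by integrating from $s = \infty$. Each nonlinear contribution is estimated by distributing $L^{\infty}_{t,x}$ and $L^{\infty}_{t} L^{q}_{x}$ norms onto the low-order factors using the bounds of Section~\ref{s:eqnorm}, Section~\ref{s:wide}, and Lemma~\ref{l:A}, while the single high-derivative factor is placed in an $\Ls^{\infty}\cap\Ls^{2} L^{\infty}_{t} L^{2}_{x}$-type space; the latter norm is in turn bounded by $\nrm{\Dlt\psi_{s}}_{\calS(I)} \aleq B_{0}$ via the appropriately upgraded version of Corollary~\ref{c:en} applied to the hypothesis of the corollary. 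At $s = 0$, we further exploit the wave-map identity $w \equiv 0$ to eliminate the temporal divergence of $\Psi$ and convert $\bfD^{c}\bfD_{c} \psi_{t}$ into controllable spatial expressions, just as in Proposition~\ref{p:s=0}. Finally I would use the torsion-free identity $\bfD_{\alp}\psi_{\bt} = \bfD_{\bt}\psi_{\alp}$ together with the divergence identities of Lemma~\ref{l:divFs} to upgrade the $L^{2}_{x}$ bound for $\bfD^{c}\bfD_{c}\Psi \rest_{s=0}$ to the full third covariant derivative $\bfD^{3}\Psi \rest_{s=0}$.

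The principal obstacle is purely bookkeeping: the commutators produced in step one and, more seriously, the $\bfD^{c}\bfD_{c}$-expansion of the cubic term $\bfR^{(0)}(\psi_{\alp}, \psi^{b})\psi_{b}$ generate a proliferation of multilinear expressions involving up to three covariant derivatives distributed on three factors, with additional curvature insertions. One must verify that in every such term there is at most one factor carrying the top-order derivative, so that it can be estimated in $L^{2}_{x}$ against $\nrm{\Dlt\psi_{s}}_{\calS(I)}$ while the remaining factors, carrying at most one derivative each, absorb the two remaining smallness constants $\eps$ from the bootstrap assumption via the $L^{\infty}$-type bounds in Lemma~\ref{l:PA} and Corollary~\ref{c:APsi}. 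Provided $\eps_{4}$ is chosen smaller than all previously introduced thresholds, the resulting closed inequality yields the claimed bound $\nrm{\bfD^{3} \Psi \rest_{s=0}}_{L^{\infty}_{t} L^{2}_{x}} \aleq B_{0}$.
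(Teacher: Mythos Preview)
Your proposal is correct and follows essentially the same route the paper describes: the paper omits the proof, stating only that it is ``nearly identical to the proof of Proposition~\ref{p:s=0} after commuting the covariant Laplacian $\Db^b \Db_b$ with~\eqref{eq:covheat} and integrating by parts to control the full third order covariant derivative $\Db^{3} \Psi$ in $L^{2}_{x}$.'' Your sketch expands this outline accurately, including the commutator bookkeeping and the upgrade from $\Db^c\Db_c\Psi$ to the full $\Db^3\Psi$ via the torsion-free identity; the one minor remark is that the appeal to $w\equiv 0$ at $s=0$ is not actually used in the paper's proof of Proposition~\ref{p:s=0} (the $\psi_t$ case there is handled directly from its own scalar covariant heat equation), though invoking it does no harm.
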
 

We now turn to the proofs of Proposition~\ref{p:s=0}, Proposition~\ref{p:ap}, and Theorem~\ref{t:main1}. First we assume Proposition~\ref{p:s=0} and use it to deduce Proposition~\ref{p:ap}. 

\begin{proof}[Proof of Proposition~\ref{p:ap} assuming Proposition~\ref{p:s=0}]
Recall that Proposition~\ref{p:ap} follows by showing that there exist constants $\eps_{1}>0$ and $C_0>0$ so that~\eqref{eq:bs} implies~\eqref{eq:bsc} for all $\eps \le \eps_{1}$. Requiring first that $\eps, \eps_{1}>0$ satisfy $ \eps   \le \eps_{1} < \eps_{2}$, where $\eps_{2}>0$ is the constant in Proposition~\ref{p:pSI} we can combine Proposition~\ref{p:pSI} and Proposition~\ref{p:s=0} to obtain 
\EQ{ \label{eq:DPpSI} 
\|  \Db   \Psi  \rest_{s=0} \|_{L^\infty_t L^2_x( I \times \Hp^4)} & \lesssim \| \psi_s \|_{\mathcal{S}(I)} \lesssim   \| s^{\frac{1}{2}} \na_{t, x}\psi_s \rest_{t = 0} \|_{\Ls^\infty \cap \Ls^2 (L^2_x)} \\
& \lesssim  \| (du_0, u_1) \|_{H^1 \times H^1(\Hp^4; T\NN)}   = \eps 
}
for an implicit constant that is independent of the choice of $C_0$ in~\eqref{eq:bs}. Therefore it suffices to show that 
\EQ{ \label{eq:eqnorm} 
  \|   \na d_{t, x} u  \rest_{s=0} \|_{L^\infty_t L^2_x( I \times \Hp^4)} \lesssim  \|  \Db   \Psi  \rest_{s=0} \|_{L^\infty_t L^2_x( I \times \Hp^4)}
}
for an implicit constant independent of $C_0$. Finally to prove~\eqref{eq:eqnorm} we note
that by using the schematic $e(\Db\Psi) =  D d_{t,x}u  =  \na d_{t,x} u + S(u)( du, d_{t,x}u),$ we can show that for any time interval $J  \subset I$ we have 
\ant{
 \|   \na  d_{t, x}u  \rest_{s=0} \|_{L^\infty_t L^2_x( J \times \Hp^4)} \lesssim  \|  \Db   \Psi  \rest_{s=0} \|_{L^\infty_t L^2_x( J \times \Hp^4)} +  \|   \na  d_{t, x} u  \rest_{s=0} \|_{L^\infty_t L^2_x( J \times \Hp^4)}^2.
 }
The estimate \eqref{eq:eqnorm} now follows from the usual continuity argument. To conclude we choose $C_0>0$ sufficiently large to account for the constants in~\eqref{eq:DPpSI} and~\eqref{eq:eqnorm} and then $\eps_{1}>0$ sufficiently small relative to $C_0$. This completes the proof. 
%
%
%
%
\end{proof} 

Before turning to the proof of Proposition~\ref{p:s=0} we state two auxiliary lemmas which will be required for the proof.
\begin{lem} \label{l:s=0 temp 1}Let $\psi_s$  be as in the statement of Proposition \ref{p:s=0}. Then
\EQ{ \label{eq:Db123} 
& \| s^{\frac{1}{2}}  \Db_{t, x} \psi_s \|_{ \Ls^\infty \cap \Ls^2 L^{\infty}_tL^2_x}  \lesssim  \| \psi_s \|_{ \cS(I)},\\
& \| s \Db \Db_{t, x} \psi_s \|_{ \Ls^\infty \cap \Ls^2 L^{\infty}_tL^2_x}  \lesssim  \| \psi_s \|_{ \cS(I)},\\
&  \| s^{\frac{3}{2}} \Db^2 \Db_{t, x} \psi_s \|_{ \Ls^\infty \cap \Ls^2 L^{\infty}_tL^2_x}  \lesssim  \| \psi_s \|_{ \cS(I)}.
}
\end{lem}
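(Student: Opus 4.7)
The strategy is to reduce $\Db$-derivatives to ordinary covariant derivatives $\na$ via the identity $\Db = \na + A$, so that the leading $\na$-terms are bounded by $\|\psi_s\|_{\cS(I)}$ directly from the definition of $\cS(I)$ and from Proposition~\ref{p:preg8}, while the remaining lower-order terms are handled perturbatively using the bounds on $A$ from Lemma~\ref{l:A}, Lemma~\ref{l:PA}, Corollary~\ref{c:APsi} and the bounds on $\psi_{s}$ from Proposition~\ref{p:preg} and Lemma~\ref{l:16}.

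For the first estimate I write $\Db_{t,x}\psi_{s} = \na_{t,x}\psi_{s} + A_{t,x}\psi_{s}$. The leading term satisfies $\|s^{\frac{1}{2}}\na_{t,x}\psi_{s}\|_{\Ls^{\infty}\cap\Ls^{2}L^{\infty}_{t}L^{2}_{x}} \le \|\psi_{s}\|_{\cS(I)}$ by definition, and for the bilinear error I apply H\"older placing $s^{\frac{1}{2}}A$ in $\Ls^{\infty}\cap\Ls^{2} L^{\infty}_{t,x}$ via \eqref{eq:Ainf2} (which yields $\eps\,\|\psi_{s}\|_{\cS(I)}$) against $\psi_{s} \in \Ls^{\infty} L^{\infty}_{t} L^{2}_{x}$ (which yields $\eps$ via Proposition~\ref{p:preg}), giving the desired $O(\eps^{2}\|\psi_{s}\|_{\cS(I)}) \lec \|\psi_{s}\|_{\cS(I)}$.

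For the second estimate I expand
\begin{equation*}
\Db_{a}\Db_{\alp}\psi_{s} = \na_{a}\na_{\alp}\psi_{s} + (\na_{a}A_{\alp})\psi_{s} + A_{\alp}\na_{a}\psi_{s} + A_{a}\na_{\alp}\psi_{s} + A_{a}A_{\alp}\psi_{s}.
\end{equation*}
The main term is $O(\|\psi_{s}\|_{\cS(I)})$ by Proposition~\ref{p:preg8}. The terms $s A\na\psi_{s}$ are controlled by pairing $\|s^{\frac{1}{2}}A\|_{\Ls^{\infty}\cap\Ls^{2}L^{\infty}_{t,x}}$ from \eqref{eq:Ainf2} with $\|s^{\frac{1}{2}}\na\psi_{s}\|_{\Ls^{\infty}L^{\infty}_{t}L^{2}_{x}} \le \|\psi_{s}\|_{\cS(I)}$, the term $sA^{2}\psi_{s}$ is controlled by $\|s^{\frac{1}{2}}A\|_{\Ls^{\infty}L^{\infty}_{t,x}}^{2}$ from \eqref{eq:Ainf} against $\psi_{s} \in \Ls^{\infty}L^{\infty}_{t}L^{2}_{x}$, and the term $s(\na A)\psi_{s}$ is handled by exploiting the integral representation $\na A_{\alp}(s) = -\int_{s}^{\infty}\na F_{s\alp}(s')\,ds'$ and the identity $F_{s\alp} = \bfR^{(0)}(\psi_{s},\psi_{\alp})$ (so that $\na F_{s}$ has one factor involving $\psi_{s}$ or $\na\psi_{s}$), allowing one to extract a linear $\|\psi_{s}\|_{\cS(I)}$ factor, paired with another $\eps$-small factor from $\Psi$ and $A$ estimates in the spirit of \eqref{eq:Ainf2}.

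For the third estimate, a similar but more elaborate Leibniz expansion of $\Db_{a}\Db_{b}\Db_{\alp}\psi_{s}$ produces the main term $\na_{a}\na_{b}\na_{\alp}\psi_{s}$ (bounded via Proposition~\ref{p:preg8}) together with error terms of schematic types $(\na^{p}A)(\na^{q}\psi_{s})$ with $p+q \le 2$, $A(\na A)\psi_{s}$, $A^{2}\na\psi_{s}$, and $A^{3}\psi_{s}$. Each is bounded by an appropriate H\"older pairing, using $\|s^{3/2}\na^{2}A\|_{\Ls^{\infty}L^{\infty}_{t,x}} \lec \eps^{2}$ from \eqref{eq:Ainf}, the $\Ls^{2}$ portion of $\|s^{\frac{1}{2}}A\|_{\Ls^{\infty}\cap\Ls^{2}L^{\infty}_{t,x}}$, the pointwise bounds on $s\psi_{s}$ from \eqref{eq:psinfinf}, and the intermediate bound $\|s\na\na_{t,x}\psi_{s}\|_{\Ls^{\infty}\cap\Ls^{2}L^{\infty}_{t}L^{2}_{x}}$ from Proposition~\ref{p:preg8}, with the linear $\|\psi_{s}\|_{\cS(I)}$ factor always extracted either from a $\na^{k}\na_{t,x}\psi_{s}$ factor or from the integral formula for $A$. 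The main obstacle is purely the bookkeeping of all these perturbative terms; no new analytical ideas beyond those already deployed in the proofs of Proposition~\ref{p:preg8} and Corollary~\ref{c:APsi} are needed.
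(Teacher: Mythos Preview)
Your approach is correct and is essentially the same as the paper's: expand $\Db = \na + A$, bound the leading $\na$-terms by Proposition~\ref{p:preg8} and the definition of $\cS(I)$, and treat all $A$-terms perturbatively. The paper packages this more compactly by recording the single inequality
\[
\|\Db_{t,x}\Phi\|_{L^{2}_{x}} \le \|\na_{t,x}\Phi\|_{L^{2}_{x}} + \|A\|_{L^{4}_{x}}\|\Phi\|_{L^{4}_{x}} \lesssim \|\na_{t,x}\Phi\|_{L^{2}_{x}}
\]
(via \eqref{eq:AL4} and Sobolev) and applying it iteratively to $\Phi=\psi_{s}$, $\Db_{t,x}\psi_{s}$, $\Db\Db_{t,x}\psi_{s}$, so that the linear $\|\psi_{s}\|_{\cS(I)}$ factor is automatically inherited at each step.

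One simplification you can make: the detour through the integral representation $\na A_{\alp}(s)=-\int_{s}^{\infty}\na F_{s\alp}\,ds'$ is unnecessary. Lemma~\ref{l:PA} and Lemma~\ref{l:A} already furnish all the needed bounds on $\na A$ and $\na^{2}A$ directly (e.g.\ $\|s\na A\|_{L^{\infty}_{t,x}}$, $\|s^{1/2}\na^{2}A\|_{L^{\infty}_{t}L^{2}_{x}}$), and pairing these via $L^{4}$--$L^{4}$ with Sobolev on $\psi_{s}$ (so that $\|s^{1/2}\psi_{s}\|_{L^{4}_{x}}\lesssim\|s^{1/2}\na\psi_{s}\|_{L^{2}_{x}}\le\|\psi_{s}\|_{\cS(I)}$) extracts the linear $\|\psi_{s}\|_{\cS(I)}$ factor without opening up the integral. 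The same remark applies to your $sA^{2}\psi_{s}$ term: use \eqref{eq:Ainf2} for one $A$ factor rather than \eqref{eq:Ainf} for both, or else route through $L^{4}$ as above, to keep the linear dependence on $\|\psi_{s}\|_{\cS(I)}$ manifest.
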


\begin{proof}
This is a consequence of Proposition~\ref{p:preg8}, as well as bounds on $A$ in \eqref{eq:AL4} and Lemma~\ref{l:PA}. In order to replace $\nabla$ in \eqref{eq:preg8} by $\Db$  we note that for any smooth $E_{n}$-valued tensor $\Phi$ we have  $\Db \Phi =   \na \Phi +  A \Phi$, and thus  by Sobolev embedding, 
\ali{\label{s0 temp 2}
\| \Db_{t, x} \Phi\|_{\Ls^2L_t^\infty L_x^2} &\lesssim \|\nabla_{t, x} \Phi\|_{\Ls^2L_t^\infty L_x^2}+\|A\|_{\Ls^\infty L_t^\infty L_x^4}\|\nabla \Phi\|_{\Ls^2L_t^\infty L_x^2} \\
& \lesssim \|\nabla_{t, x} \Phi\|_{\Ls^2L_t^\infty L_x^2},
} 
Now apply~\eqref{s0 temp 2} to $\Phi = \psi_{s}, \Db_{t, x} \psi_s$, and $\Db \Db_{t, x} \psi_s$, followed by~\eqref{eq:AL4},~\eqref{eq:Ainf} and Proposition~\ref{p:preg8}.
\end{proof} 

\begin{lem} \label{l:s=0 temp 2} Let   $\psi_s$ and $\Psi$ be as in the statement of Proposition~\ref{p:s=0}. Then, 
\EQ{
 \|  s^{\frac{1}{2}} \Db^2 \Psi \|_{\Ls^2  L^\I_t L^2_x} \lesssim  \| \psi_s \|_{\calS(I)}.
}
\end{lem}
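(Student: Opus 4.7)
The plan is to derive an integral representation for $\Db^{2}\Psi$ from the evolution $\rd_{s}\psi_{\alp} = \Db_{\alp}\psi_{s}$ by integrating from $s_{0}$ to $\infty$, commuting covariant derivatives through the $s$-derivative, and then applying Schur's test.  Since $\Db_{s} = \rd_{s}$ in the caloric gauge, and acting on sections we have $[\rd_{s}, \Db_{b}] = F_{sb}$, we obtain successively
\begin{align*}
\psi_{\alp}(s_{0}) &= -\int_{s_{0}}^{\infty} \Db_{\alp}\psi_{s}(s)\, ds, \\
\Db_{b}\psi_{\alp}(s_{0}) &= -\int_{s_{0}}^{\infty}\bigl(\Db_{b}\Db_{\alp}\psi_{s} + F_{sb}\psi_{\alp}\bigr)(s)\, ds, \\
\Db_{c}\Db_{b}\psi_{\alp}(s_{0}) &= -\int_{s_{0}}^{\infty}\bigl(\Db_{c}\Db_{b}\Db_{\alp}\psi_{s} + \Db_{c}(F_{sb}\psi_{\alp}) + F_{sc}\Db_{b}\psi_{\alp}\bigr)(s)\, ds.
\end{align*}
This identity is justified by the rapid decay of $\Db^{2}\Psi(s)$ as $s \to \infty$ guaranteed by Lemma~\ref{l:udec} and Corollary~\ref{c:en}.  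Multiplying by $s_{0}^{1/2}$, taking $L^{\infty}_{t}L^{2}_{x}$ norms, and applying Schur's test with the kernel $(s_{0}/s)^{1/2}$ to each term in the integrand reduces the proof to bounding the resulting $\Ls^{2}$ norm in $s$ of the integrand.

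For the principal term $\Db_{c}\Db_{b}\Db_{\alp}\psi_{s}$, Schur's test yields
\[
\Bigl\| s_{0}^{1/2}\int_{s_{0}}^{\infty}\|\Db^{2}\Db_{t,x}\psi_{s}(s)\|_{L^{\infty}_{t}L^{2}_{x}}\,ds\Bigr\|_{\Ls^{2}_{s_{0}}} \lesssim \|s^{3/2}\Db^{2}\Db_{t,x}\psi_{s}\|_{\Ls^{2}L^{\infty}_{t}L^{2}_{x}} \lesssim \|\psi_{s}\|_{\cS(I)},
\]
where the last inequality is precisely the third bound of Lemma~\ref{l:s=0 temp 1}.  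For the curvature-type corrections, we use $F_{s\alp} = \bfR^{(0)}(\psi_{s},\psi_{\alp})$ from \eqref{eq:Fdu} together with the differentiation formula \eqref{eq:DF} (and $\Db_{c}\psi_{b} = \Db_{b}\psi_{c}$ from torsion-freeness) to obtain the pointwise bound
\[
|\Db(F_{s}\Psi)| + |F_{s}\Db\Psi| \aleq |\Db_{t,x}\psi_{s}|\,|\Psi|^{2} + |\psi_{s}|\,|\Psi|^{3} + |\psi_{s}|\,|\Psi|\,|\Db\Psi|.
\]

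Each of these triple products is handled by H\"older's inequality distributing one factor into $L^{2}_{x}$ and placing the remaining factors into $L^{\infty}_{x}$, together with the available $s$-weighted bounds:\ $\|s^{1/2}\Db_{t,x}\psi_{s}\|_{\Ls^{\infty}\cap\Ls^{2}L^{\infty}_{t}L^{2}_{x}} \lesssim \|\psi_{s}\|_{\cS(I)}$ from Lemma~\ref{l:s=0 temp 1}; $\|s\,\psi_{s}\|_{\Ls^{\infty}\cap\Ls^{2}L^{\infty}_{t,x}} \lesssim \|\psi_{s}\|_{\cS(I)}$ from~\eqref{eq:psinfinf}; $\|s^{1/2}\Psi\|_{\Ls^{\infty}L^{\infty}_{t,x}} \lesssim \eps$ from~\eqref{eq:painf}; and $\|\Db\Psi\|_{\Ls^{\infty}L^{\infty}_{t}L^{2}_{x}} \lesssim \eps$ (which follows from $\Db = \nb + A$ together with \eqref{eq:eqna} and \eqref{eq:AL4}).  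A typical term such as $|\psi_{s}|\,|\Psi|\,|\Db\Psi|$ gives, after extracting the weights $s^{-3/2} = s^{-1}\cdot s^{-1/2}$ and applying Schur's test, a contribution $\lesssim \eps^{2}\|\psi_{s}\|_{\cS(I)}$, which is even better than required; the remaining two terms are analogous.

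The main technical obstacle is simply bookkeeping: checking that for each of the finitely many quadratic and cubic curvature terms one can split the $s$-weight $s_{0}^{1/2}$ and a compensating factor $s^{-1/2-\alp}$ so that Schur's test applies and the surviving $\Ls^{2}$ or $\Ls^{\infty}$ norms in $s$ are among those already controlled in Proposition~\ref{p:preg}, Lemma~\ref{l:PA}, Lemma~\ref{l:16}, and Lemma~\ref{l:s=0 temp 1}.  No new estimates beyond those already established are required.
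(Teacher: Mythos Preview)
Your proposal is correct and follows essentially the same approach as the paper: derive the integral representation for $\Db^{2}\Psi$ by integrating $\rd_{s}\psi_{\alp}=\Db_{\alp}\psi_{s}$ and commuting $\Db$ through $\rd_{s}$, then apply Schur's test with kernel $(s_{0}/s)^{1/2}$, handling the principal term via Lemma~\ref{l:s=0 temp 1} and the curvature corrections via H\"older. The only cosmetic difference is your choice of H\"older exponents for the curvature terms (you place two factors in $L^{\infty}_{x}$ and one in $L^{2}_{x}$, whereas the paper uses intermediate exponents via \eqref{eq:pa8} and \eqref{eq:Dpa4}); both arrangements close with the available bounds.
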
 
\begin{proof}
First note that for any $a, b \in \{1,  \dots, 4 \}$ and $ \gamma \in \{0, 1, \dots, 4 \}$ we have 
\ant{
\p_s \Db_a \Db_b \psi_\gamma = F_{s a}  \Db_b \psi_\ga  + F_{s b} \Db_a \psi_\ga  +  \Db_a F_{s b} \psi_\ga + \Db_a \Db_b \Db_\ga \psi_s
}
from which it follows that 
\ant{
s^{\frac{1}{2}}\Db_a \Db_b \psi_\gamma (s)  &=   - s^{\frac{1}{2}} \int_s^\I  \Db_a \Db_b \Db_\ga  \psi_s \ \,  ds'   -  s^{\frac{1}{2}}\int_s^\I F_{s a}  \Db_b \psi_\ga\, ds'    \\
& \quad - s^{\frac{1}{2}}\int_s^\I F_{s b} \Db_a \psi_\ga \, ds' -s^{\frac{1}{2}} \int_s^\I \Db_a F_{s b} \psi_\ga   \, ds'
} 
Therefore, using~\eqref{eq:Fdu} and~\eqref{eq:DF} we have 
\ant{
 \| s^{\frac{1}{2}}\Db^2  \Psi  \|_{ \Ls^2 L^\I_t L^2_x}^2   &\le \int_0^\I  \bigg[  \int_s^\I  \Big( \frac{s}{s'} \Big)^{\frac{1}{2}}  \| (s')^{\frac{3}{2}}  \Db^2  \Db_{t, x} \psi_s \|_{L^\I_t L^2_x} \, \dsp \bigg]^2 \, \ds \\
 & +  \int_0^\I  \bigg[  \int_s^\I  \Big( \frac{s}{s'} \Big)^{\frac{1}{2}}  \| (s')^{\frac{3}{2}}  \bfR^{(0)}( \psi_s,  \underline{\Psi}) \Db  \Psi  \|_{L^\I_t L^2_x} \, \dsp \bigg]^2 \, \ds \\
  & +  \int_0^\I  \bigg[  \int_s^\I  \Big( \frac{s}{s'} \Big)^{\frac{1}{2}}  \| (s')^{\frac{3}{2}} \bfR^{(1)}( \underline{\Psi};  \psi_s, \underline{\Psi}) \Psi \|_{L^\I_t L^2_x} \, \dsp \bigg]^2 \, \ds \\
 & +  \int_0^\I  \bigg[  \int_s^\I  \Big( \frac{s}{s'} \Big)^{\frac{1}{2}}  \| (s')^{\frac{3}{2}}  \bfR^{(0)}(  \Db  \psi_s, \underline{\Psi}) \Psi \|_{L^\I_t L^2_x} \, \dsp \bigg]^2 \, \ds \\
 & +  \int_0^\I  \bigg[  \int_s^\I  \Big( \frac{s}{s'} \Big)^{\frac{1}{2}}  \| (s')^{\frac{3}{2}}  \bfR^{(0)}(    \psi_s, \Db\underline{\Psi}) \Psi \|_{L^\I_t L^2_x} \, \dsp \bigg]^2 \, \ds 
}
The lemma now follows from an application of Schur's test,~\eqref{eq:Db123},~\eqref{eq:pa8}~\eqref{eq:Dpa4}, and~\eqref{eq:painf}. 
\end{proof} 

\begin{proof}[Proof of Proposition~\ref{p:s=0}]
Recall that 
\ant{
\|  \Db   \Psi  \rest_{s=0} \|_{L^\infty_t L^2_x( I \times \Hp^4)}^2 = \|  \Db   \underline{\Psi}  \rest_{s=0} \|_{L^\infty_t L^2_x( I \times \Hp^4)}^2 + \|  \Db   \psi_t  \rest_{s=0} \|_{L^\infty_t L^2_x( I \times \Hp^4)}^2
}
We prove 
\EQ{ \label{eq:Dbx} 
 \|  \Db   \underline{\Psi}  \rest_{s=0} \|_{L^\infty_t L^2_x( I \times \Hp^4)} \lesssim \| \psi_s\|_{ \cS(I)}
 }
 as the proof of the same bound for $\|  \Db   \psi_t  \rest_{s=0} \|_{L^\infty_t L^2_x( I \times \Hp^4)}$ is nearly identical. 
Recall from Lemma~\ref{l:pah}  that for each $a  = 1, \dots, 4$,  $\psi_a$ satisfies the covariant heat equation 
\EQ{ \label{eq:covheat} 
\p_s \psi_a = \Db_s \psi_a  = \Db^b \Db_b \psi_a  + 3 \psi_a + \tensor{F}{_{a}^{b}} \psi_b
 }
Using the shorthand $\psi^a = \h^{ab} \psi_b$ and pairing the above with $\Db_c \Db^c \psi^a$ gives 
\ant{ 
\ang{ \Db_s \psi_a, \Db_c \Db^c \psi^a} = \ang{\Db^b \Db_b \psi_a, \Db_c \Db^c \psi^a}  + 3 \ang{\psi_a, \Db_c \Db^c \psi^a} + \ang{\tensor{F}{_{a}^{b}} \psi_b, \Db_c \Db^c \psi^a}
} 
We expand the first term above as follows. 
\ant{
\ang{ \Db_s \psi_a, \Db_c \Db^c \psi^a} &=  \na_c \ang{  \Db_s \psi_a,  \Db^c \psi^a}  - \ang{ \Db_c \Db_s \psi_a,  \Db^c \psi^a} \\
& =  \na_c \ang{  \Db_s \psi_a,  \Db^c \psi^a}  - \ang{ F_{cs} \psi_a,  \Db^c \psi^a} - \frac{1}{2} \p_s \ang{ \Db_c \psi_a,  \Db^c \psi^a}
}
Note furthermore that
\ant{
\ang{\psi_a, \Db_c \Db^c \psi^a} = \nb_c \ang{\psi_a, \Db^c \psi^a} - \ang{\Db_c \psi_a, \Db^c \psi^a}
}
From the previous three formulas, we have
\ant{
 - \frac{1}{2} \p_s \ang{ \Db_c \psi_a,  \Db^c \psi^a} &=  - 3 \ang{\Db_c \psi_a, \Db^c \psi^a} + \ang{ \Db_b \Db^b \psi_a, \Db_c \Db^c \psi^a}  + \ang{ F_{cs} \psi_a,  \Db^c \psi^a}  \\
&  \quad + \ang{\tensor{F}{_a^b} \psi_b, \Db_c \Db^c \psi^a} 
   -\na_c \ang{  \Db_s \psi_a,  \Db^c \psi^a} + 3 \na_c \ang{\psi_a, \Db^c \psi^a} 
 }
 Integrating over $(s_1, s_2) \times \Hp^4$ gives,  
\EQ{ \label{eq:DbxP12} 
\frac{1}{2}\| \Db  \underline{\Psi}  (s_1) \|_{L^2_x}^2 
&= \frac{1}{2}\| \Db  \underline{\Psi}  (s_2)\|_{L^2_x}^2 - 3\int_{s_1}^{s_2} s \nrm{\Db \underline{\Psi}}_{L^2_x}^2 \, \dvol_{\bfh} \, \ds   \\
& \quad + \int_{s_1}^{s_2}  \| s^{\frac{1}{2}}\Db^b \Db_b \underline{\Psi}   \|_{L^2_x}^2 \,  \ds \\
&  \quad +  \int_{s_1}^{s_2}  \int_{\Hp^4} s\ang{ F_{cs} \psi_a,  \Db^c \psi^a} \,\dvol_{\bfh} \,  \ds \\
&  \quad +  \int_{s_1}^{s_2}  \int_{\Hp^4} s \ang{\tensor{F}{_a^b} \psi_b, \Db_c \Db^c \psi^a} \, \dvol_{\bfh} \, \ds 
}
First note that the second term on the right-hand side is non-positive, and thus can be ignored when proving an upper bound for the left-hand side.
Next we claim that for each fixed $t \in I$ we have
\EQ{ \label{eq:DbxPs2}
\lim_{s_2 \to \infty} \| \Db  \underline{\Psi}  (s_2, t)\|_{ L^2_x}^2  = 0
}
This qualitative statement is a quick consequence of Poincar\'e's inequality, the identity $e \bfD \underline{\Psi} = \nb d u + S(u)(d u, du)$ and Proposition~\ref{p:preg}. Indeed, we have
\begin{align*}
	\nrm{\bfD \underline{\Psi}(s, t)}_{L^{2}_{x}}
	\aleq & \nrm{\nb d u(s, t)}_{L^{2}_{x}} + \nrm{S(u)(du, du)(s, t)}_{L^{2}_{x}} \\
	\aleq & s^{-\frac{1}{2}} \nrm{s^{\frac{1}{2}} \nb^{2} d u(s, t)}_{L_{x}^{2}} + s^{-1} \nrm{s^{\frac{1}{2}} \nb^{2} d u(s, t)}_{L_{x}^{2}}^{2}
\end{align*}
where on the last line we used the Sobolev inequality followed by the Poincar\'e inequality. By Proposition~\ref{p:preg}, the right-hand side vanishes as $s \to \infty$ as desired.

Therefore, by letting $s_2 \to \infty$ in~\eqref{eq:DbxP12}, then taking the supremum over $s_1$, and then the supremum over $t  \in I$ yields the inequality 
\ali{\label{s0 temp 1}
\frac{1}{2}\| \Db  \underline{\Psi}   \|_{\Ls^\infty L_t^\infty L^2_x}^2 & \le   \int_{0}^{\infty}  \| s^{\frac{1}{2}}\Db ^2 \underline{\Psi}  (s) \|_{ L_t^\infty L^2_x}^2 \,  \ds \\
&  \quad +  \sup_{t\in I}\int_{0}^{\infty}   \int_{\Hp^4}   \abs{s\ang{ F_{cs} \psi_a,  \Db^c \psi^a}} \,\dvol_{\bfh} \, \ds \\
&  \quad +   \sup_{t\in I}\int_{0}^{\infty}   \int_{\Hp^4}   \abs{ s\ang{\tensor{F}{_a^b}\psi_b, \Db_c \Db^c \psi^a}} \, \dvol_{\bfh} \, \ds
}
From~\eqref{s0 temp 1} we claim the estimate 
\EQ{ \label{Dbx1} 
\| \Db  \underline{\Psi}  \|_{\Ls^\infty L_t^\infty L^2_x}^2 \lesssim  \|\psi_s\|_{S(I)}^2+\eps\|\Db \underline{\Psi}  \|_{\Ls^\infty L_t^\infty L_x^2}^2
}
Note that~\eqref{Dbx1} implies~\eqref{eq:Dbx} since the second term on the right above can be absorbed into the left-hand side. 

To prove~\eqref{Dbx1} we estimate each of the terms on the right-hand side of~\eqref{s0 temp 1}. To begin the first, main term is handled by Lemma~\ref{l:s=0 temp 2}, 
\ant{
\int_{0}^{\infty}  \| s^{\frac{1}{2}}\Db ^2 \underline{\Psi}  (s) \|_{ L_t^\infty L^2_x}^2 \,  \ds \lesssim \| \psi_s\|_{ \cS(I)}^2
}
For the second term we note that 
\ant{
F_{cs} \psi_a =  \bfR^{(0)}( \psi_c, \psi_s) \psi_a
}
and thus
\ant{
 \abs{s\ang{ F_{cs} \psi_a,  \Db^c \psi^a}}  \lesssim  \abs{ \underline{\Psi}  }^2 |s^{\frac{1}{2}} \psi_s |  | s^{\frac{1}{2}} \Db  \underline{\Psi}  | 
}
It follows that 
\EQ{
 \sup_{t\in I}\int_{0}^{\infty}   \int_{\Hp^4}  & \abs{s\ang{ F_{cs} \psi_a,  \Db^c \psi^a}} \,\dvol_{\bfh} \, \ds  \\
 & \lesssim  \| \underline{\Psi}   \|_{\Ls^\infty L_t^\infty L_x^4}^2 \int_0^\infty \| s^{\frac{1}{2}} \psi_s \|_{L_t^\infty L_x^4} \| s^{\frac{1}{2}} \Db \underline{\Psi}  \|_{L_t^\infty L_x^4} \ds \\
 &  \lesssim 
    \eps^2 \| \psi_s \|_{\cS(I)}^2 
}
where we have used Sobolev embedding and~\eqref{eq:eqna} to get the factor of $\eps^2$ in the last line above and~\eqref{eq:Dpa4}.

To bound the third term on the right-hand side of \eqref{s0 temp 1}, we note that
\ant{
\tensor{F}{_a^b} \psi_b = \bfR^{(0)}(\psi_a, \psi^b) \psi_b
}
Therefore, 
\ant{
 \abs{ s\ang{ \tensor{F}{_a^b} \psi_b, \Db_c \Db^c \psi^a}} \lesssim \abs{ \underline{\Psi} }^3 \abs{ \Db^2 \underline{\Psi} }
 }
 It follows that 
 \begin{align} \notag
  \sup_{t\in I}\int_{0}^{\infty}   \int_{\Hp^4}  & \abs{ s\ang{\tensor{F}{_a^b} \psi_b, \Db_c \Db^c \psi^a}} \, \dvol_{\bfh} \, \ds  \\
& \lesssim   \| \underline{\Psi}  \|^2_{ \Ls^\I L^\I_t L^4_x} \sup_{t \in I}  \int_0^\infty  \| s^{\frac{1}{2}} \underline{\Psi} (s, t) \|_{L^\I_x}  \|  s^{\frac{1}{2}} \Db^2 \underline{\Psi}  \|_{L^\I_t L^2_x}  \, \ds  \notag\\
& \lesssim   \| \na  \underline{\Psi}  \|^2_{ \Ls^\I L^\I_t L^2_x}  \| s^{\frac{1}{2}} \underline{\Psi}   \|_{ L^\I_t \Ls^2 L^\I_x}  \|  s^{\frac{1}{2}} \Db^2 \underline{\Psi}  \|_{\Ls^2 L^\I_t L^2_x} \label{tired2} 
 \end{align}
%
By \eqref{eq:AL4}, we have
\ant{
\nrm{\nb \underline{\Psi}}_{L^\infty_{\ds} L^\infty_t L^2_x}
\lesssim \nrm{\Db \underline{\Psi}}_{L^\infty_{\ds} L^\infty_t L^2_x}
}
  Using the previous inequality together with~\eqref{eq:painf},  Lemma~\ref{l:s=0 temp 1}, and Proposition~\ref{p:pSI}, \eqref{tired2} can be estimated by
  \ant{
  \| \na  \underline{\Psi}  \|^2_{ \Ls^\I L^\I_t L^2_x}  \| s^{\frac{1}{2}} \underline{\Psi}   \|_{ L^\I_t \Ls^2 L^\I_x}  \|  s^{\frac{1}{2}} \Db^2 \underline{\Psi}  \|_{\Ls^2 L^\I_t L^2_x} & \lesssim \eps \| \psi_s \|_{ \cS(I)}  \| \Db  \underline{\Psi}   \|_{ L^\I_t \Ls^\I L^2_x}^2  \\
  & \lesssim  \eps^2  \| \Db  \underline{\Psi}   \|_{ L^\I_t \Ls^\I L^2_x}^2
  }
  which can be absorbed into the left-hand side of~\eqref{s0 temp 1} as long as $\eps>0$ is small enough. This completes the proof of~\eqref{eq:Dbx} and the proof of Proposition~\ref{p:s=0}. 
\end{proof}

Now that we have established the a priori estimates in Proposition~\ref{p:ap} we can complete the proof of Theorem~\ref{t:main1}. The key point is that the conclusion of Proposition~\ref{p:pSI} now holds unconditionally. 

\begin{proof}[Proof of Theorem~\ref{t:main1} and of Theorem~\ref{t:main}]

We begin by proving the global existence and regularity statement in Theorem~\ref{t:main1}. This is a standard argument now that we have established the a priori estimates in Proposition~\ref{p:ap} and Proposition~\ref{p:pSI} and so we give only a brief sketch.  

Let $\eps_{0} = \min \set{\eps_{1}, \eps_{2}, \eps_{3}, \eps_{4}}$, where the epsilons on the right-hand side are as in Proposition~\ref{p:ap}, Proposition~\ref{p:pSI}, Corollary~\ref{c:hr} and Corollary~\ref{c:hr1}, respectively.
Let $(u_0, u_1)$ be any smooth initial data set satisfying~\eqref{eq:data},~\eqref{eq:data1} and with 
\EQ{\label{eq:dataeps0} 
 \| (d u_0, u_1) \|_{ H^1 \times H^1( \Hp^4; T\NN)} =  \eps < \eps_{0}
 }
 Our assumption that the data is smooth and constant outside a  compact subset of $\Hp^4$ means that we can further assume that, say,  
\EQ{ \label{eq:h4data}
  \| (d u_0, u_1) \|_{ H^3 \times H^3( \Hp^4; T\NN)}   = B_0 <  \infty.
  }
 Using the usual argument based on energy estimates one can find a positive time $T>0$ and a unique solution $(u, \partial_t u) \in C( [0, T); H^4 \times H^3( \Hp^4; T\NN))$. Moreover, the time of existence $T>0$ depends only $B_0$ from~\eqref{eq:h4data}.  One then arrives at a finite time blow-up criterion: if the solution $(u, \partial_t u)$ cannot be smoothly extended past a finite time $T^*< \infty$, then 
 \EQ{ \label{eq:bu} 
  \limsup_{t \nearrow T^*}\| (d u, \partial_t u)(t) \|_{ H^3 \times H^3( \Hp^4; T\NN)}  = + \infty.
  }
We will show that~\eqref{eq:bu} is impossible for $T_*<\infty$ under the hypothesis of Theorem~\ref{t:main1}. Assume for contradiction that $(u, \partial_t u)$ with data as in~\eqref{eq:dataeps0} cannot be extended as a smooth solution beyond a time $0<T_*< \infty$ and denote by  $I$ the interval $I = [0, T_*)$. Since $\eps_{0}>0$ in~\eqref{eq:dataeps0} is small enough so that Proposition~\ref{p:ap} holds, we know by Proposition~\ref{p:pSI} that  
\EQ{ \label{eq:pseps} 
 \| \psi_s \|_{ \cS(I)} \lesssim \eps \lesssim \eps_{0}.
}
Note that Proposition~\ref{p:ap} along with~\eqref{eq:pseps} imply that all of the additional estimates from Sections~\ref{s:pr},~\ref{s:wave}, and~\ref{s:proof} hold true. Moreover, since $\eps_{0}$ is small enough so that the hypothesis of Corollary~\ref{c:hr}  and Corollary~\ref{c:hr1} are satisfied, we obtain the uniform bounds 
\EQ{
 \| \De \psi_s \|_{ \cS(I)} \lesssim B_0, \quad \| \Db^3 \Psi \rest_{s=0} \|_{L^\I_t L^2_x( I \times \Hp^4)} \lesssim B_0.
 }
 But then we can argue as in the proof of Proposition~\ref{p:ap}, using the smallness  
 \ant{
 \| (d u, \partial_t u) \rest_{s=0} \|_{L^\I_t (I; H^1 \times H^1( \Hp^4))} \lesssim \eps_{0}
 }
  to conclude that in fact 
 \EQ{
  \sup_{t \in I} \| (d u, \partial_t u)(t) \|_{ H^3 \times H^3( \Hp^4; T\NN)}  \lesssim B_0,
 }
 which contradicts~\eqref{eq:bu}. The same argument applied to negative times shows that any smooth solution $(u, \partial_t u)$ with sufficiently small data as in the statement of Theorem~\ref{t:main1} is defined and  globally regular on $I = \R$.   
 
 Since Proposition~\ref{p:ap} holds now with $I = \R$, the statements~\eqref{eq:psap} and~\eqref{eq:Depap} follow from Proposition~\ref{p:pSI} and Corollary~\ref{c:hr}. Moreover, the scattering statement~\eqref{eq:pscat} is a standard consequence of the argument used to prove Proposition~\ref{p:pSI}. 
 
 It remains to establish~\eqref{eq:upwdec}. Let $u(s, t, x)$ denote the harmonic map heat flow resolution of the wave map $u(t, x)$ and we write  $u(0, t, x) = u(t, x)$. By Lemma~\ref{l:hmhf} we know that each fixed $(t, x) \in  \R \times  \Hp^4$ we have 
 \ant{
   u(s, t, x) \to u_\infty \mas s \to \infty.
 }
 It follows that for each fixed $(t, x) \in  \R \times  \Hp^4$ 
 \ant{
  \abs{ u(t, x)  -  u_\infty} =  \abs{ u(0, t, x) - u_{\infty}}  =  \abs{ \int_0^\infty  \p_s u(s, t, x)  \, ds}.
}
Recalling that $\p_s u = e \psi_s$, we see that 
\ant{
 \lim_{t \to \infty} \| u(t)  -  u_\infty \|_{L^\I_x}  \le   \lim_{t \to \infty} \int_0^\infty  \| \p_s u(s, t) \|_{L^\infty_x}  \, ds  =   \lim_{t \to \infty}  \int_0^\I  \|  \psi_s(s, t) \|_{L^\I_x} \, ds.
}
From the above we observe that the proof of~\eqref{eq:upwdec} is now a consequence of the following two statements and the dominated convergence theorem. 
\begin{itemize}
\item[(a)] There exists an integrable function $f(s) \ge 0$, $ \int_0^\I f(s)  \, ds < \infty$ so that 
\EQ{
 \| \p_s u(s, t) \|_{L^\infty_x} = \| \psi_s (s, t) \|_{L^\infty_x}  \le f(s),
 }
 uniformly in $t \in \R$. 
 \item[(b)] For each fixed $s \in (0, \infty)$ we have 
 \EQ{
  \| \psi_s(s, t) \|_{L^\I_x}  \to 0 \mas t \to \infty.
  }
  \end{itemize} 
 
To prove (a) we define $f(s)$ simply by $f(s):=   \| \psi_s(s) \|_{L^\infty_{t, x}}$. First, we estimate $f(s)$ on the interval $s \in (0, 1]$. The key ingredient in this region is the higher regularity estimate~\eqref{eq:Depap},  which breaks the scaling in $s$. Indeed, by Gagliardo-Nirenberg we have 
 \EQ{
  \| \psi_s( s) \|_{L^\infty_{t, x}} &\lesssim \|\psi_s(s) \|^{\frac{1}{2}}_{ L^\infty_t L^8_x}  \| (-\De)^{\frac{1}{2}}\psi_s(s) \|^{\frac{1}{2}}_{ L^\infty_t L^8_x} \\
  & \lesssim s^{-\frac{3}{4}} \|s^{\frac{1}{2}}\na \psi_s(s) \|^{\frac{1}{4}}_{ L^\infty_t L^2_x} \| s \na^2 \psi_s(s) \|_{L^\I_t L^2_x}^{\frac{1}{2}} \| s^{\frac{1}{2}} \na  \De\psi_s(s) \|^{\frac{1}{4}}_{ L^\infty_t L^2_x}   \\
  & \lesssim s^{-\frac{3}{4}} \| \psi_s \|_{ \cS(\R)}^{\frac{3}{4}} \| \De \psi_s \|_{ \cS(\R)}^{\frac{1}{4}} \lesssim s^{-\frac{3}{4}} ,
  }
  where the last two lines follows from the definition of the norm $ \cS(\R)$,~\eqref{eq:psap}, \eqref{eq:Depap}, and~\eqref{eq:preg8}. Therefore, 
  \ant{
  \int_0^1  \| \psi_s( s) \|_{L^\infty_{t, x}} \, ds  \lesssim  \int_{0}^1 s^{-\frac{3}{4}} \, ds  \lesssim 1.
  } 
 For the interval $s \in [1, \infty)$ we note that~\eqref{eq:psdec} provides, say,  the bound 
 \ant{
 \| \psi_s ( s) \|_{L^\infty_{t,x}}  \lesssim  s^{-2} \mas s \to \infty 
 }
 so that 
 \ant{ 
 \int_1^\infty \| \psi_s ( s) \|_{L^\infty_{t,x}} \, ds \lesssim 1,
 }
which completes the proof of (a). 
 
  Next, we prove (b). Fix $s_0>0$. By an approximation argument, we may assume that the scattering profile $\phi_+(s_0)$ in~\eqref{eq:pscat} is smooth with compactly supported initial data $ \vec \phi_+(s_0, 0)$. For such a free wave, i.e., for $\phi_+(s_0, t)$,  we can deduce from the Gagliardo-Nirenberg inequality and the global dispersive estimate~\eqref{eq:longd} for the free propagator that 
  \EQ{
   \| \phi_+(s_0, t) \|_{L^\I_x} \to 0 \mas t \to \infty.
  }
  Next, again using Gagliardo-Nirenberg, we have 
  \begin{align*}
   \| &\psi_s(s_0, t) -  \phi_+(s_0, t) \|_{L^\infty_x}  \\ &\lesssim \| \na( \psi_s(s_0, t) -  \phi_+(s_0, t)) \|_{L^2_x}^{\frac{1}{2}} \Big(  \| \na^3 \psi_s(s_0) \|_{L^\infty_t L^2_x} +  \| \na^3 \phi_+(s_0) \|_{L^\infty_t L^2_x} \Big)^{\frac{1}{2}}  \\
   &\lesssim C_{s_0, \phi_+} \| \na( \psi_s(s_0, t) -  \phi_+(s_0, t)) \|_{L^2_x}^{\frac{1}{2}} \longrightarrow 0 \mas t  \to \infty,
   \end{align*} 
where we have used~\eqref{eq:preg8} to control the first term in parenthesis \emph{for fixed $s_0>0$}, and then~\eqref{eq:pscat} to obtain the convergence in the last line. This completes the proof of Theorem~\ref{t:main1} and thus of Theorem~\ref{t:main} as well. \end{proof}
\bibliographystyle{plain}
\bibliography{researchbib}

 \bigskip

\centerline{\scshape Andrew Lawrie, Sung-Jin Oh}
\smallskip
{\footnotesize
 \centerline{Department of Mathematics, The University of California, Berkeley}
\centerline{970 Evans Hall \#3840, Berkeley, CA 94720, U.S.A.}
\centerline{\email{ alawrie@math.berkeley.edu, sjoh@math.berkeley.edu}}
} 

 \medskip

\centerline{\scshape Sohrab Shahshahani}
\medskip
{\footnotesize
 \centerline{Department of Mathematics, The University of Michigan}
\centerline{2074 East Hall, 530 Church Street
Ann Arbor, MI  48109-1043, U.S.A.}
\centerline{\email{shahshah@umich.edu}}
} 

\end{document}